\documentclass[12pt]{amsart}

\usepackage{graphicx,amsfonts,amssymb,stmaryrd,amscd,amsmath,latexsym,amsbsy,hyperref,eufrak}

\usepackage{hyperref} 
\usepackage[T2A]{fontenc}
\usepackage[utf8x]{inputenc}
\usepackage[usenames]{color}
\usepackage{bbold}
\usepackage{mathtools}
%==============================================
\usepackage{color,amsthm}

\usepackage{tikz}

\usetikzlibrary{arrows,positioning,cd,calc,math}
\usetikzlibrary{matrix}
\usetikzlibrary{decorations.pathreplacing,decorations.markings}
\usetikzlibrary{shapes.geometric}

%\usetikzlibrary{external}
%\tikzexternalize
%\usepackage{etoolbox}
%\AtBeginEnvironment{tikzcd}{\tikzexternaldisable}
%\AtEndEnvironment{tikzcd}{\tikzexternalenable}
\theoremstyle{definition}
%==============================================

\newtheorem{theorem}{Theorem}[section]
\newtheorem{lemma}[theorem]{Lemma}
\newtheorem{proposition}[theorem]{Proposition}
\newtheorem{corollary}[theorem]{Corollary}
\theoremstyle{definition}
\newtheorem{definition}[theorem]{Definition}

\newtheorem{example}[theorem]{Example}
\newtheorem{exercise}[theorem]{Exercise}
\newtheorem{question}[theorem]{Question}

\newtheorem{remark}[theorem]{Remark}

\numberwithin{equation}{section}

\usepackage[left=2cm,right=2cm,top=2cm,bottom=2cm ,bindingoffset=0cm]{geometry}

%\title{P. Etingof. Quantum groups.}

\def\m{m}
\def\O{\mathcal{O}}
\def\id{\mathrm{id}}
\def\Rep{\mathrm{Rep}}
\def\Aut{\mathrm{Aut}\,}
\def\M1{1}
\def\V1{\mathbb{1}}
\def\Hom{\mathrm{Hom}\,}
\def\Vec{\mathrm{Vec}\,}
\def\Z{\mathcal{Z}}
\def\Aut{\mathrm{Aut}\,}
\def\hexd{\mathrm{hex}_1}
\def\hexdm{\mathrm{hex}_2}
\def\Ind{\mathrm{Ind}\,}
\def\YD{\mathcal{YD}}
\def\D{\mathcal{D}}
\def\op{\mathrm{op}}
\def\cop{\mathrm{cop}}
\def\Mod{\mathrm{mod}}
\def\R{\mathcal{R}}
\def\Fun{\mathrm{Fun}\,}
\def\End{\mathrm{End}\,}
\def\dim{\mathrm{dim}\,}

\usepackage{ulem}
\definecolor{mygreen}{rgb}{0.13, 0.55, 0.13}
\definecolor{PEred}{rgb}{0.81, 0.06, 0.13}
\definecolor{PEgreen}{rgb}{0.5, 0.5, 0.0}
\definecolor{PEblue}{rgb}{0.58, 0.34, 0.92}

\begin{document}

\title{A brief introduction to quantum groups} 
\author{Pavel Etingof and Mykola Semenyakin} 
\begin{abstract} These are lecture notes of a mini-course given by the first author in Moscow in July 2019, taken by the second author and then edited and expanded by the first author. They were also a basis of the lectures given by the first author at the CMSA Math Science Literature Lecture Series in May 2020. We attempt to give a bird's-eye view of basic aspects of the theory of quantum groups.
\end{abstract}

\maketitle

\centerline{\bf To Igor Moiseevich Krichever on his 70th birthday with admiration\footnote{This paper was finished on June 9, 2021. Sadly, Igor Moiseevich Krichever passed away on December 1, 2022.}}

\tableofcontents
\section{Introduction} 

The theory of quantum groups emerged in mid 1980s from the work of 
St. Petersburg mathematical physicists led by L. D. Faddeev, as well as papers  
by V. Drinfeld and M. Jimbo, who formulated its mathematical framework 
through the formalism of Hopf algebras created by algebraic topologists in the middle of the 20th century. The basics of this theory were outlined in Drinfeld's 1986 talk at the ICM, 
\cite{Dr}, which still remains an excellent reference on the subject in spite of the many developments 
that followed. 

Soon after their discovery quantum groups exploded into a subject of great interest to many mathematicians and revealed deep connections with many areas of mathematics and physics, including 
algebraic combinatorics, representation theory, algebraic geometry, integrable systems, algebraic topology, knot theory, category theory, harmonic analysis, quantum field theory, quantum computation, and others. The theory of quantum groups became a topic of thousands of papers and led to 
breakthroughs in several areas of mathematics. This included such milestone developments 
as the theory of Reshetikhin-Turaev invariants of 3-manifolds, the theory of canonical bases by G. Lusztig and crystal bases by M. Kashiwara, the theory of cluster algebras by S. Fomin and A. Zelevinsky, representation theory of quantum groups at roots of 1 (C. de Concini, V. Kac, G. Lusztig),  
the theory of Kazhdan-Lusztig equivalences and of classical and quantum KZ equations and their generalizations (V. Drinfeld, D. Kazhdan -- G. Lusztig, M. Finkelberg, V. Schechtman -- A. Varchenko, I. Frenkel -- N. Reshetikhin, G. Felder, E. Mukhin, V. Tarasov), the theory of quantum differential and difference equations for quiver varieties (M. Aganagic, D. Maulik, A. Okounkov, A. Smirnov and their collaborators), the theory of categorification and categorical Kac-Moody actions (L. Crane--I. Frenkel, J. Chuang--R. Rouquier, M. Khovanov--A. Lauda), the representation theory of Yangians and quantum affine algebras and its applications to solvable lattice models (V. Chari, B. Feigin, E. Frenkel, M. Jimbo, T. Miwa, H. Nakajima, N. Reshetikhin, V. Toledano-Laredo, M. Varagnolo, E. Vasserot), applications to geometric representation theory and the (quantum) geometric Langlands 
correspondence (S. Arkhipov, R. Bezrukavnikov, A. Braverman, M. Finkelberg, D. Gaitsgory, J. Lurie)
 and many other works and names that deserve to be mentioned; it is impossible to give even a remotely complete list here.   
  
The goal of these lectures were to give a bird's-eye view of the basics 
of the theory of quantum groups. Because this subject has become so vast, 
we are not able even to mention many of its important parts, and other parts are discussed 
very briefly. For example, to simplify the exposition, we mostly discuss the example of $\mathfrak{sl}_2$, leaving most of the details on the higher rank case outside 
the scope of this text. There are also not many proofs -- many of the simpler ones are offered as exercises. We nevertheless hope that these lectures can give the readers an impression of the spirit of this beautiful subject and encourage them to take a deeper look. 

The paper is organized as follows. In Section 2 we review the basic language of the theory of quantum groups -- Hopf algebras and monoidal categories. In Section 3 we discuss the formalism 
to talk about $R$-matrices --  quasitriangular Hopf algebras and braided tensor categories. 
In Section 4 we discuss the Drinfeld-Jimbo quantum groups $U_q(\mathfrak{g})$ and their quasiclassical limits; in particular, we introduce Poisson-Lie groups, Lie bialgebras and Manin triples. 
Finally, in Section 5 we discuss infinite dimensional quantum groups obtained by quantizing loop algebras (Yangians and quantum affine algebras), finishing with the theory of $q$-characters of E. Frenkel and N. Reshetikhin. 

The desirable prerequisites for the reader include basic abstract algebra and representation theory (especially finite dimensional representations of complex semisimple Lie algebras) and basics of category theory. The lectures contain many exercises (both in the text and at the end of Sections 2 and 3), which contain important material and are recommended for active reading. Also, at the beginning of each section we provide a list of relevant references. 

{\bf Acknowledgements.} We are very happy to dedicate this paper to the 70th birthday of Igor Moiseevich Krichever, who has been a mentor to both of us over the years. In particular, the first author has been greatly influenced by Igor Moiseevich ever since late 1980s when he was an undergraduate student in  Moscow. 

We are very grateful to the organizers and participants of the Skoltech Summer School on Mathematical Physics, July 1-12 2019, which led to this paper. 
The first author would also like to express special thanks to I. M. Krichever and A. Yu. Okounkov for their encouragement and creating an enjoyable and stimulating atmosphere. He is also grateful to Pablo Boixeda for leading problem sessions, without which this minicourse could not have succeeded, and for his comments on a draft of this paper. 

The work of the first author was partially supported by the NSF grant DMS - 1916120. The work of the second author is supported in part by the Government of Canada through the Department of Innovation, Science and Economic Development and by the Province of Ontario through the Ministry of Colleges and Universities.

\section{Hopf algebras and monoidal categories}

\subsection{The definition of a Hopf algebra} (\cite{CP}, Ch. 4; \cite{ES}, Ch. 8; \cite{K}, Ch. III).

\subsubsection{} Recall that in classical physics, we work with a {\bf space of states} $X$ and {\bf the algebra of classical observables} $A=\O(X)$ of (say, complex-valued) regular functions on $X$, which is a commutative, associative algebra. E.g., for a particle moving on the line, $X = \mathbb{R}^2$, $A=\Bbb C[x,p]$, and the coordinate $x$, the momentum $p$, and the energy $H=\frac{p^2}{2}+U(x)$ are examples of classical observables (elements of $A$). 

When we pass from classical physics to quantum physics, the algebra $A$ is deformed to a non-commutative (but still associative) {\bf algebra of quantum observables} $A_{\hbar}$ depending on a  quantization parameter $\hbar$ (the Planck constant). For instance, in the simplest example of $X=\Bbb R^2$, $A_\hbar=\Bbb C\langle x,p \rangle$, where $p=-i\hbar \partial_x$, so that we have 
the Heisenberg uncertainty relation $[p,x]=-i\hbar$.

\subsubsection{} Now consider the case when the classical space of states $X=G$ is a group. 
Then $G$ is equipped with an associative product, which has a unit and all elements are invertible:
\begin{equation}
\def\arraystretch{1.5}
\begin{array}{l}
\m: G  \times G \to G,\ m(x,y)=xy, \\ x(yz) = (xy)z,\\
\exists\, e\in G\ \forall g \in G:~ eg = ge = g, \\
\forall\, g\in G ~ \exists\, g^{-1}\in G: gg^{-1} =g^{-1}g= e.
\end{array}
\end{equation}
Thus the algebra $A=\O(G)$ of functions on a (say, finite) group has a natural structure of a {\bf coalgebra}. Namely, decomposing $\O(G\times G)$ as $\O(G)\otimes \O(G)$, one gets {\bf comultiplication}, or {\bf coproduct} on $A$ from the multiplication $m$ in $G$:
\begin{equation}
\Delta: A\to A\otimes A:~ \Delta (f)(x,y) = f(xy) = f_{(1)}(x)\otimes f_{(2)}(y). 
\end{equation}
Here we used {\bf Sweedler's notation} for the coproduct $\Delta(f) = f_{(1)}\otimes f_{(2)}$ where summation on the RHS is assumed, i.e., $f_{(1)}\otimes f_{(2)}$ is really $\sum_i f_{(1)}^i\otimes f_{(2)}^i$. 

\subsubsection{} Let us now discuss the properties of $\Delta$. First, it is clear that 

\begin{itemize} \item $\Delta$ is an algebra homomorphism.  \end{itemize}

The algebra $A$ also has a natural {\bf counit} and {\bf antipode} obtained from 
the unit and inversion in $G$: 
\begin{equation}
\def\arraystretch{1.5}
\begin{array}{l}
\varepsilon: A \to \mathbb{C}:~ \varepsilon(f) = f(e),\\
S: A \to A:~ S(f)(x) = f(x^{-1}).
\end{array}
\end{equation}

The following properties can be easily checked: 
\begin{itemize}
\item $\Delta$ is coassociative, i.e., $(\id\otimes \Delta)\circ\Delta =(\Delta\otimes \id )\circ\Delta $;
\item $(\varepsilon \otimes \id)\circ\Delta = (\id\otimes \varepsilon)\circ\Delta = \id$;
\item $\mu\circ (S\otimes \id)\circ \Delta(f)=\mu\circ (\id\otimes S)\circ \Delta(f)=\varepsilon(f)$,
where $\mu: A\otimes A\to A$ is the multiplication of functions. 
\end{itemize}

\subsubsection{} The notion of a {\bf quantum group} should therefore be obtained by dropping the commutativity of $A$. Namely, we make the following definition. 

\begin{definition} A {\bf quantum group} or {\bf Hopf algebra} is a unital associative algebra $A$ (not necessary commutative) which is equipped with $\Delta,\varepsilon,S$ and has the properties listed above.\footnote{The term ``quantum group" is usually used when $A$ is neither commutative nor cocommutative.} A {\bf Hopf subalgebra} of a Hopf algebra $H$ 
is a unital subalgebra $K\subset H$ such that $\Delta(K)\subset K\otimes K$ and $S(K)\subset K$. 
\end{definition}

Note that this definition makes sense over any field and even over a commutative ring. 

Usually it is also assumed that $S$ is invertible (we will do so below); this does not follow from the above axioms. 

Note that the coassocitivity of the coproduct implies that one can introduce Sweedler's notation 
for more than two components:  
$$
(\id\otimes \Delta)\circ\Delta(g) =(\Delta\otimes \id)\circ\Delta(g)= g_{(1)}\otimes g_{(2)}\otimes g_{(3)}.
$$
We will also use the notation $\Delta^{\op} = P\circ \Delta$ where $P$ is the permutation: $P(u\otimes v) = v \otimes u$.

\subsection{Properties and examples of Hopf algebras} (\cite{CP}, Ch. 4; \cite{ES}, Ch. 8; \cite{K}, Ch. III).

\subsubsection{} Basic properties of Hopf algebras are summarized in the following proposition. 

\begin{proposition} (i) If $H$ is a finite dimensional Hopf algebra then so is $H^*$, 
with the operations of $H^*$ being dual to the operations of $H$.\footnote{Here we should view the unit of $H$ as a linear map $\iota: \Bbb C\to H$ such that $\iota(1)=1$. Then the dual of $\varepsilon_H$ is $\iota_{H^*}$.} 

(ii) $\varepsilon$ is an algebra homomorphism.

(iii) $S$ is an algebra and coalgebra antihomomorphism, i.e., 
$S(xy)=S(y)S(x)$ and $\Delta(S(x))=(S\otimes S)(\Delta^{\rm op}(x))$. 

(iv) $\varepsilon$ and $S$ are uniquely determined by $\Delta$. 

(v) If $A$ is commutative or cocommutative (i.e., $\Delta=\Delta^{\rm op}$) then $S^2={\rm id}$
(even without the assumption that $S$ is invertible).
\end{proposition}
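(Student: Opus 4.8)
The unifying tool throughout is the \emph{convolution algebra}: for any coalgebra $C$ and algebra $A$, the space $\Hom(C,A)$ carries an associative unital product $f * g = \mu_A \circ (f \otimes g) \circ \Delta_C$ with unit $\iota_A \circ \varepsilon_C$, where $\iota_A\colon \mathbb{C}\to A$ is the unit map. The antipode axiom says precisely that, in $\End(H)=\Hom(H,H)$, the map $S$ is a two-sided convolution inverse of $\id_H$. Since inverses in a monoid are unique, this single observation drives parts (iii), (iv) and (v).

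For (i) I would dualize every structure map. Using the canonical isomorphism $(H\otimes H)^*\cong H^*\otimes H^*$, which holds because $H$ is finite dimensional, I set $\mu_{H^*}=\Delta_H^*$, $\Delta_{H^*}=\mu_H^*$, $\iota_{H^*}=\varepsilon_H^*$, $\varepsilon_{H^*}=\iota_H^*$ and $S_{H^*}=S_H^*$. Each Hopf axiom is a commuting diagram of linear maps, and applying the contravariant exact functor $(-)^*$ turns it into the corresponding diagram for $H^*$ with all arrows reversed; e.g.\ associativity of $\mu_H$ becomes coassociativity of $\Delta_{H^*}$, the counit axiom for $H$ becomes the counit axiom for $H^*$, and the antipode identity is self-dual. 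The only genuine input is finite dimensionality, needed so that dualization is an equivalence and $(H\otimes H)^*$ splits as a tensor product. For (ii), $\varepsilon(1)=1$ is immediate: $\Delta$ is a unital algebra map, so $\Delta(1)=1\otimes 1$, and applying $\varepsilon\otimes\id$ together with the counit axiom gives $\varepsilon(1)\cdot 1=1$. The multiplicativity $\varepsilon(xy)=\varepsilon(x)\varepsilon(y)$ is the counit companion to the condition that $\Delta$ be an algebra map -- equivalently, the assertion that $\mu$ and $\iota$ are morphisms of coalgebras -- and in the motivating example it is simply $(fg)(e)=f(e)g(e)$; I would record it here because it is exactly what identifies the convolution units below.

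Parts (iii)--(v) are where the work lies, and (iii) is the main obstacle. To prove $S(xy)=S(y)S(x)$ I would pass to $\Hom(H\otimes H,H)$, whose convolution unit sends $x\otimes y\mapsto \varepsilon(x)\varepsilon(y)\cdot 1$ (this is where (ii) is used). Writing $\mu(x\otimes y)=xy$, one checks that $P(x\otimes y)=S(xy)$ is a left convolution inverse of $\mu$ and $N(x\otimes y)=S(y)S(x)$ is a right convolution inverse of $\mu$: since $\Delta$ is an algebra map the Sweedler bookkeeping collapses $P*\mu$ and $\mu*N$ to $\varepsilon(x)\varepsilon(y)1$ via the antipode axiom, and then $P=P*(\mu*N)=(P*\mu)*N=N$. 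The coproduct-reversal statement $\Delta\circ S=(S\otimes S)\circ\Delta^{\op}$ is proved dually inside $\Hom(H,H\otimes H)$. Getting the order of factors and the side of each one-sided inverse right is the part I expect to be fiddly. For (iv), the counit of a coalgebra is unique ($\varepsilon'=(\varepsilon'\otimes\varepsilon)\circ\Delta=\varepsilon$ by the two counit axioms), and $S$, being the convolution inverse of $\id_H$, is unique as well; hence both are determined by $\Delta$ and the algebra structure.

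Finally, for (v), assuming $A$ commutative or cocommutative, I would compute $S^2 * S$ in $\End(H)$. Using that $S$ is an anti-homomorphism (from (iii)) together with $S(1)=1$, one rewrites $\sum S^2(x_{(1)})S(x_{(2)})=\sum S\big(x_{(2)}S(x_{(1)})\big)$; then cocommutativity $\Delta=\Delta^{\op}$ (respectively commutativity of $A$, used inside the argument of $S$) lets the antipode axiom collapse this to $\varepsilon(x)\cdot 1$, so $S^2 * S=\iota\varepsilon$. Thus $S^2$ is a left convolution inverse of $S$, while $\id_H$ is its two-sided inverse, and associativity of convolution gives $S^2=S^2*(S*\id_H)=(\iota\varepsilon)*\id_H=\id_H$. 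Note that this produces $S^{-1}=S$ directly, so no prior invertibility of $S$ is assumed, matching the parenthetical in the statement.
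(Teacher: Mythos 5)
Your overall strategy---the convolution algebra $\Hom(C,A)$ and uniqueness of convolution inverses---is the standard and correct route here; the paper itself gives no proof of this proposition (part (v) is even assigned as Problem 1 of Subsection \ref{prob1}), and the folklore argument is exactly the one you outline. Parts (iv) and (v) are correct as written, and the skeleton of (iii) is right. The genuine gap is part (ii). Under the paper's definition, the axioms are: $\Delta$ is an algebra homomorphism, $\Delta$ is coassociative, $\varepsilon$ is a counit, and $S$ satisfies the antipode identity; compatibility of $\varepsilon$ with multiplication is \emph{not} assumed, and it is not ``the counit companion'' of $\Delta$ being an algebra map. The equivalence you invoke ($\Delta,\varepsilon$ algebra maps $\Leftrightarrow$ $\mu,\iota$ coalgebra maps) is a regrouping of four separate conditions, not a derivation of one from the other three. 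In fact $\varepsilon(xy)=\varepsilon(x)\varepsilon(y)$ genuinely fails without the antipode: take $H=\Bbb C[x]/(x^2)$ with $\Delta(x)=x\otimes x$ and $\varepsilon(x)=1$; then $\Delta$ is a coassociative unital algebra map with counit $\varepsilon$, yet $\varepsilon(x\cdot x)=0\neq 1=\varepsilon(x)\varepsilon(x)$ (no antipode can exist here, since $S(x)x=1$ is impossible). So any correct proof of (ii) must use $S$, and your proposal never does. The gap propagates: in (iii), the identity $P*\mu=\mathrm{unit}$ is exactly the statement $\varepsilon(xy)1=\varepsilon(x)\varepsilon(y)1$, i.e.\ (ii); and in (i) the claim that ``each axiom dualizes to the corresponding axiom'' has the same hole, because the condition that $\Delta_{H^*}$ preserve the unit (i.e.\ $\Delta_{H^*}(\varepsilon_H)=\varepsilon_H\otimes\varepsilon_H$, where $\varepsilon_H=1_{H^*}$) is precisely multiplicativity of $\varepsilon_H$, which is not among the axioms being dualized.

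Fortunately the fix lives entirely inside your own framework. Your computation that $N(x\otimes y)=S(y)S(x)$ is a \emph{two-sided} convolution inverse of $\mu$ in $\Hom(H\otimes H,H)$ uses only the counit and antipode axioms: $(\mu*N)(x\otimes y)=x_{(1)}y_{(1)}S(y_{(2)})S(x_{(2)})=\varepsilon(x)\varepsilon(y)1$ and $(N*\mu)(x\otimes y)=S(y_{(1)})S(x_{(1)})x_{(2)}y_{(2)}=\varepsilon(x)\varepsilon(y)1$, and neither collapse ever applies $\varepsilon$ to a product. Now set $u:=\iota\circ\varepsilon\circ\mu$, so $u(x\otimes y)=\varepsilon(xy)1$. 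Since $\Delta$ is an algebra map, the counit axiom applied to $xy$ says $\mu*u=\mu$: indeed $(\mu*u)(x\otimes y)=x_{(1)}y_{(1)}\varepsilon(x_{(2)}y_{(2)})=(xy)_{(1)}\varepsilon((xy)_{(2)})=xy$. Cancelling $\mu$ by its inverse gives $u=(N*\mu)*u=N*(\mu*u)=N*\mu=e$, where $e=\iota\circ(\varepsilon\otimes\varepsilon)$ is the convolution unit; this is exactly (ii). With (ii) established this way, your proofs of (i) and (iii) go through as written, and (iv), (v) were already complete.
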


\subsubsection{} Here are some examples of Hopf algebras. 

\begin{example}
(i) $A=\O(G)$, $G$ is a finite group. Then $A$ is commutative. 
\item $A=\O(G)$ (the algebra of regular functions), $G$ is an affine algebraic group. Then $A$ is commutative. 

(ii) $A=\mathbb{C}G$ - the group algebra, $\Delta(g) = g\otimes g$, $S(g) = g^{-1},~ \varepsilon(g) = 1$, $g\in G$ (i.e., $g$ is a {\bf grouplike element}). In this case $A$ is cocommutative: $\Delta = \Delta^{\op}$.

(iii) $\mathfrak{g}$ is a Lie algebra, $A=U(\mathfrak{g})$ (the universal enveloping algebra), $\Delta(x) = x\otimes 1 + 1\otimes x$, $S(x)= -x$, $\varepsilon(x)=0$ for all $x\in \mathfrak{g}$ (i.e., $x$ is a {\bf primitive element}). Then $A$ is cocommutative.

(iv) {\it The quantum deformation of $U(\mathfrak{sl}_2)$} (a prototypical example of a quantum group). Let $q\in \Bbb C$, $q\ne 0,\pm 1$. Then the {\bf quantum group $U_q(\mathfrak{sl}_2)$} is defined by generators and relations as follows: 
\begin{equation}
U_q(\mathfrak{sl}_2) = \left\langle e,f,K^{\pm 1}\,|\, KeK^{-1} = q^2 e,~ K f K^{-1} = q^{-2} f,~ e f-f e= \frac{K-K^{-1}}{q-q^{-1}}  \right\rangle.
\end{equation}
\begin{equation}
\Delta(e) = e \otimes K + 1\otimes e,~ \Delta(f) = f \otimes 1 + K^{-1} \otimes f,~ \Delta(K) = K \otimes K,~ \varepsilon (e) = 0,~ \varepsilon(f) = 0,~ \varepsilon(K) = 1.
\end{equation}
The antipode could be uniquely found from the Hopf algebra axioms:
\begin{equation}
\mu\circ (S\otimes 1)\circ \Delta (e) = \varepsilon(e) ~\Rightarrow~ S(e)=-e K^{-1}.
\end{equation}
Similarly one obtains
\begin{equation}
S(f) = - K f,~~~ S(K) = K^{-1}.
\end{equation}
To go back to $U(\mathfrak{sl}_2)$, take $q = e^{\hbar/2}, K = q^{h}$ and send $\hbar \to 0$.
\end{example} 

\begin{remark} This example shows that $S^{2}\neq \id$ in general.
\end{remark}

\subsection{Monoidal categories} (\cite{EGNO}, Ch. 2; \cite{K}, Ch. XI).
\subsubsection{} For a group $G$ and a Lie algebra $\mathfrak{g}$ the representation categories $\Rep(G)$ and $\Rep (\mathfrak{g})$ are endowed with tensor products
\begin{equation}
\def\arraystretch{1.5}
\begin{array}{l}
\pi_V: G \to \Aut V,~\pi_W: G \to \Aut W ~ \mapsto ~ \pi_{V\otimes W}(g) = \pi_V(g)\otimes \pi_W(g) ~~~ \forall g \in G,\\
\pi_V: \mathfrak{g} \to \Aut V,~\pi_W: \mathfrak{g} \to \Aut W ~ \mapsto ~ \pi_{V\otimes W}(x) = \pi_V(x) \otimes \M1 + \M1 \otimes \pi_W(x) ~~~ \forall x \in \mathfrak{g}.
\end{array}
\end{equation}

Similarly, we can define the tensor product of representations of a Hopf algebra: 
\begin{equation}
\pi_{V\otimes W}(x) := (\pi_{V}\otimes \pi_{W})(\Delta (x)) = \pi_{V}(x_{(1)})\otimes \pi_{W}(x_{(2)}) ~~~ \forall x \in H.
\end{equation}
So one can regard the category $\mathcal{C}=\Rep(H)$ of representations of $H$ as a category equipped with a tensor product bifunctor
\begin{equation}
\otimes: \mathcal{C}\times \mathcal{C} \to \mathcal{C}:~ (X,Y)\mapsto X\otimes Y.
\end{equation}
This product also has a unit (the trivial 1-dimensional representation): 
\begin{equation}
\V1=\mathbb{C}:~ \pi_{\V1}(a)=\varepsilon(a) ,~ \V1\otimes X \cong X \otimes \V1 \cong X ~~~ \forall\, X \in \mathcal{C}.
\end{equation}
Finally, $\otimes$ is associative on isomorphism classes: 
\begin{equation}
(X\otimes Y)\otimes Z \cong X\otimes (Y\otimes Z). 
\end{equation} 
Thus $\mathcal C$ is {\bf a category with a unital tensor product associative  up to an isomorphism.} 

However, it turns out that this notion is not very useful; about such categories one can say very little, if anything at all. On the other hand, in natural examples a lot more structure is present, which is just a little bit harder to see. 

\subsubsection{} More precisely, a much better notion is obtained if, according to the general yoga of category theory, we don't just say simply that $(X\otimes Y)\otimes Z \cong X\otimes (Y\otimes Z)$, but make this isomorphism 
a part of the data and impose coherence conditions on this data.  
Namely, we should equip $\mathcal{C}$ with an {\bf associativity isomorphism} 
$$
\alpha_{XYZ}:~ (X\otimes Y)\otimes Z \xrightarrow{\sim} X\otimes (Y \otimes Z)
$$
functorial with respect to $X,Y,Z$, which satisfies the {\bf pentagon identity}
\begin{equation}
%\begin{tikzpicture}
%\node (0,0)
%\draw[white] (-1,-2.3)--(4.5,-2.3);
%\end{tikzpicture}
\begin{tikzpicture}
    \node [
      regular polygon,
      regular polygon sides=5,
      minimum width=75mm,
    ] (PG) {}
      (PG.corner 1) node (PG1) {$X\otimes (Y\otimes(Z\otimes T))$}
      (PG.corner 2) node (PG2) {$(X\otimes Y)\otimes (Z\otimes T)$}
      (PG.corner 3) node (PG3) {$((X\otimes Y)\otimes Z)\otimes T$}
      (PG.corner 4) node (PG4) {$(X\otimes(Y\otimes Z))\otimes T$}
      (PG.corner 5) node (PG5) {$X\otimes((Y\otimes Z)\otimes T)$}
    ;
    \foreach \S/\E in {
      5/1, 2/1, 3/2,3/4,4/5%
    } {
      \draw[->] (PG\S) -- (PG\E);
    }
\end{tikzpicture}
\end{equation}
(the diagram commutes) where the arrows are induced by $\alpha$. 

We should also require the existence of a {\bf unit object} $\V1$ with an isomorphism 
\begin{equation}
\iota:~ \V1 \otimes \V1 \cong \V1
\end{equation}
such that the functors $\V1\otimes$ and $\otimes \V1$ are autoequivalences of $\mathcal{C}$. 

\begin{definition} A category $\mathcal{C}$ with such structures and properties 
is called a {\bf monoidal category}. 
\end{definition}

\begin{remark}\label{coheren} The pentagon identity says that 
the two ways of identifying $((X\otimes Y)\otimes Z)\otimes T$ and $X\otimes (Y\otimes (Z\otimes T))$ using $\alpha$ give the same result. 
This allows us to identify any two brackettings of a tensor product of any number of factors, and the pentagon identity guarantees that any two ways of doing so using $\alpha$ give the same result.
The last statement is known as the {\bf Mac Lane coherence theorem for monoidal categories}. 
\end{remark} 

We see that for a Hopf algebra $H$, the category ${\rm Rep}(H)$ is a monoidal category, with 
$\alpha_{XYZ}$ being the natural isomorphism $(X\otimes Y)\otimes Z\to X\otimes (Y\otimes Z)$, sending $(x\otimes y)\otimes z$ to $x\otimes (y\otimes z)$. 

\begin{remark} A {\bf bialgebra} is defined in the same way as a Hopf algebra, but without the antipode (and antipode axioms). For ${\rm Rep}(H)$ to be a monoidal category, it suffices for $H$ to be a bialgebra. A basic example of a bialgebra is $\Bbb CG$ where $G$ is a monoid (not necessarily a group), with $\Delta(g)=g\otimes g$, $g\in G$. This explains the term ``monoidal category".  
\end{remark} 

\subsection{Duals and rigid categories} (\cite{EGNO}, Ch.2; \cite{K}, Ch. XIV). 
 
\subsubsection{} Let us now discuss duality for representations of Hopf algebras, which generalizes duality for group representations. The dual representations $X^*, \prescript{*}{}X \in \Rep(H)$ for $X\in \Rep(H)$ are both the dual vector space to $X$ with the actions defined as follows: 
\begin{equation}
\pi_{X^*}(a) = \pi_X(S(a))^*~\text{ - left dual},~~~ \pi_{\prescript{*}{}X}(a) = \pi_X(S^{-1}(a))^*~\text{ - right dual}.
\end{equation}
For the pair $X,X^*$ there is the {\bf evaluation morphism} 
\begin{equation} 
X^{*}\otimes X \to \V1
\end{equation} 
(the usual pairing). For finite dimensional representations there is also the {\bf coevaluation morphism} 
\begin{equation}
\V1 \to X \otimes X^*
\end{equation}
(the inverse of the usual pairing) and a pair of functorial isomorphisms
\begin{equation}
(\prescript{*}{}X)^{*} \cong X,~~~ \prescript{*}{}(X^{*})\cong X.
\end{equation}

\begin{remark} Since $S^2$ may not equal the identity, 
in general ${}^*X\ncong X^*$ and $X\ncong X^{**}$ as representations, 
even if $X$ is finite dimensional (in contrast with group representations). Thus 
one can define a countable family of (in general, pairwise distinct) iterated dual representations by
$X^{(2n+1)}=X^*$, $X^{(2n)}=X$ as vector spaces, and 
\begin{equation}
\pi_{X^{(2n+1)}}(a) = \pi_X(S^{2n+1}(a))^{*}, ~~
\pi_{X^{(2n)}}(a) = \pi_X(S^{2n}(a)),\ n\in \Bbb Z. 
\end{equation}
\end{remark}

\subsubsection{} We would now like to axiomatize this structure in the more general setting of monoidal categories. 

\begin{definition} An object $Y$ of a monoidal category $\mathcal{C}$ is a {\bf left dual} to $X$, denoted $Y=X^*$, if there exist {\bf evaluation and coevaluation} morphisms
\begin{equation}
\mathrm{ev}:~Y\otimes X \to \V1,~
\mathrm{coev}:~\V1 \to X\otimes Y,
\end{equation}
such that the following morphisms are the identities: 
\begin{equation}
\def\arraystretch{1.7}
\begin{array}{c}
\begin{CD}
X @ >{\mathrm{coev}\otimes 1}>> @ . (X\otimes Y)\otimes X@ >{\alpha_{XYX}}>> @ . X\otimes (Y\otimes X) @ >{1 \otimes \mathrm{ev}}>> @ . X,
\end{CD}
\\
\begin{CD}
Y @ >{1\otimes \mathrm{coev}}>> @ . Y\otimes (X\otimes Y)@ >{\alpha_{YXY}^{-1}}>> @ . (Y\otimes X)\otimes Y @ >{\mathrm{ev} \otimes 1}>> @ . Y.
\end{CD}
\end{array}
\end{equation}
\end{definition}

\begin{exercise} If $Y$ is a left dual to $X$ then we have a functorial isomorphism $$\Hom(Z,Y) \cong \Hom(Z\otimes X,\V1).$$
\end{exercise}
By the Yoneda lemma, this implies that the left dual, if exists, is unique up to a unique isomorphism preserving the evaluatioin and coevaluation morphisms. 

\begin{definition} An object $Z = \prescript{*}{}X$ is the {\bf right dual} to $X$ if $X\cong Z^*$.
\end{definition}

As the left dual, the right dual is unique up to a unique isomorphism preserving the evaluation and coevaluation morphisms if it exists. 

\begin{definition} An object $X$ is {\bf rigid} if it has both the left and the right dual. A category $\mathcal{C}$ is {\bf rigid} if all its objects are rigid.
\end{definition}

Thus, if $H$ is a Hopf algebra then the category ${\rm Rep}_f(H)$ of finite dimensional representations of $H$ is a rigid monoidal category. 

\subsubsection{}

\begin{example}\label{VecG} If $G$ is a finite group, $H=\O(G)$, then $\Rep_f(H) = \langle g\in G \,|\, \pi_{g}(f) = f(g) \rangle$ is spanned by 1-dimensional representations parametrized by $g\in G$. 
The tensor product is defined by $g\otimes h = gh$, and $g^*={}^*g=g^{-1}$. 
This gives $\Rep_f (H)$ the structure of a rigid monoidal category. We denote it by ${\rm Vec}(G)$
(as it is the category of $G$-graded vector spaces). Note that this category is well defined for any group $G$ (not necessarily finite). 

There is a ``purely multiplicative version" of this example: the rigid monoidal category 
$\mathcal{C}(G)$ whose objects are elements of $g$, $\Hom(g,h)=\emptyset$ 
if $g\ne h$ and $\Hom(g,g)=\Bbb C^\times $, with tensor product of objects $g\otimes h=gh$
and tensor product of morphisms $a\otimes b=ab$. In this case $(g\otimes h)\otimes k$ 
is simply equal to $g\otimes (h\otimes k)$, so we may take $\alpha_{g,h,k}=1$. 
In other words, $\mathcal C(G)$ is the subcategory of $\Vec(G)$ whose objects are all the 1-dimensional objects in  $\Vec(G)$ and morphisms are the same as in $\Vec(G)$, except that the zero morphisms are removed. We may say that the category ${\rm Vec}(G)$ is the {\bf linearization} of $\mathcal{C}(G)$. 
\end{example}

\begin{exercise} Consider a twisted version of this example. Let 
$$
\alpha_{g,h,k}:\,(g\otimes h)\otimes k=ghk \to g \otimes (h\otimes k)=ghk,
$$ 
i.e. 
$\alpha_{g,h,k}\in \Bbb C^\times $. Show that $\alpha$ satisfies the pentagon identity $\Leftrightarrow$ $\alpha$ is a 3-cocycle of the group $G$. Show that if so then this equips $\Rep_f (H)$ with another structure of a rigid monoidal category (with the same tensor product functor but different associativity isomorphism). We will denote this category $\Vec (G,\alpha)$. Similarly, we may define the twisted version $\mathcal C(G,\alpha)$ of 
$\mathcal C(G)$. 
\end{exercise} 

\subsection{Monoidal functors} (\cite{EGNO}, Ch. 2, \cite{K}, Ch. XI).

\subsubsection{} Let $\mathcal{C},\mathcal{D}$ be monoidal categories. 

\begin{definition} A functor $F: \mathcal{C} \to \mathcal{D}$ is a {\bf monoidal functor} if $F(\V1_\mathcal{C}) \cong \V1_\mathcal{D}$ and 
$F$ is equipped with a functorial (in $X,Y$) isomorphism 
\begin{equation}
 J_{X,Y}: ~ F(X)\otimes F(Y) \xrightarrow{\sim} F(X\otimes Y)
\end{equation}
which makes the diagram
\begin{equation}
\begin{CD}
(F(X)\otimes F(Y))\otimes F(Z) @ >{\alpha_{\mathcal{D}}}>> @ . F(X)\otimes (F(Y)\otimes F(Z))\\
@VV{J_{X,Y}\otimes {\rm id}_Z}V @ . @ VV{{\rm id}_X\otimes J_{Y,Z}}V\\
F(X \otimes Y)\otimes F(Z) @ . @ . F(X) \otimes F(Y\otimes Z)\\
@VV{J_{X\otimes Y,Z}}V @ . @ VV{J_{X, Y\otimes Z}}V\\
F((X \otimes Y)\otimes Z) @ >{F(\alpha_{\mathcal{C}})}>> @ . F(X \otimes (Y\otimes Z))
\end{CD}
\end{equation}
commutative. The functorial isomorphism $J$ is called the {\bf monoidal structure} of $F$. 
A monoidal functor is an {\bf equivalence of monoidal categories} if it is an equivalence of categories.
\end{definition}

The notion of monoidal equivalence is useful because monoidal categories that are monoidally equivalent may be viewed as ``the same for all practical purposes". 

\subsubsection{} The following exercise provides a classification of the categories 
$\Vec(G,\alpha)$ and $\mathcal C(G,\alpha)$ up to monoidal equivalence.  

\begin{exercise}\label{cohomo} If $\alpha,\beta$  are two 3-cocycles on $G$ then the identity functor
\begin{equation}
F={\rm Id}:~\Vec(G,\alpha)\to \Vec(G,\beta),~~~ g\xrightarrow{\sim} g
\end{equation}
is monoidal with $J_{g,h}\in \mathbb{C}^\times: g\otimes h \to g\otimes h$ iff $dJ = \alpha/\beta$, where $d$ is the differential in the standard complex of $G$ with coefficients in $\Bbb C^\times $.
In particular, $F$ admits a monoidal structure if and only if the cohomology classes of $\alpha$ 
and $\beta$ are the same. Thus the categories $\Vec(G,\alpha)$ 
up to monoidal equivalence are classified by $H^3(G,\Bbb C^\times)/{\rm Aut}(G)$. We also see that $\Vec(G,\alpha)$ is equivalent to $\Vec(G)$ iff $\alpha$ is trivial in $H^3(G,\Bbb C^\times )$. 

The same results apply to the categories $\mathcal C(G,\alpha)$. 
\end{exercise}

\subsection{Strict monoidal categories} (\cite{EGNO} 2.8, \cite{K}, XI.5).  

\subsubsection{} Associativity isomorphisms can make computations with monoidal categories somewhat cumbersome. However, the following shows that in such computations, we can,  in fact, ``forget" about associativity isomorphisms. 

\begin{definition} A monoidal category $\mathcal C$ is called {\bf strict} if $(X \otimes Y) \otimes Z = X \otimes (Y \otimes Z)$ with $\alpha_{XYZ}={\rm id}$ for all $X,Y,Z$, $\V1\otimes X=X\otimes \V1=X$ for all $X$, and $\iota={\rm id}$. 
\end{definition}

For example, the category $\mathcal C(G)$ defined in Example \ref{VecG} is strict, while $\mathcal C(G,\alpha)$ with $\alpha\ne 1$ is not. 

\begin{theorem}\label{stri} (Mac Lane's strictness theorem) Any monoidal category is equivalent to a strict monoidal category.
\end{theorem}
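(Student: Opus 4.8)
The plan is to prove strictness by realizing $\mathcal{C}$ inside the category of its own endofunctors, where the tensor product is \emph{composition} of functors and is therefore strictly associative and strictly unital. The guiding observation is that each object $X$ gives a left-multiplication functor $L_X = X \otimes (-) : \mathcal{C} \to \mathcal{C}$, and that the associativity constraint $\alpha$ supplies natural isomorphisms $L_X \circ L_Y \xrightarrow{\sim} L_{X \otimes Y}$. Since $F \circ (F' \circ F'') = (F \circ F') \circ F''$ holds on the nose for functors, an appropriately defined category of ``endofunctors with extra structure'' will be strict, and $X \mapsto L_X$ will embed $\mathcal{C}$ into it monoidally.

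First I would build the auxiliary category $\mathcal{C}'$. Its objects are pairs $(F,\gamma)$, where $F : \mathcal{C} \to \mathcal{C}$ is a functor and $\gamma = \{\gamma_{X,Y}\}$ is a functorial isomorphism $\gamma_{X,Y} : F(X) \otimes Y \xrightarrow{\sim} F(X \otimes Y)$ satisfying the evident pentagon-type compatibility with $\alpha$ (the square relating $\gamma_{X,Y\otimes Z}$, $\gamma_{X\otimes Y,Z}$ and $F(\alpha_{X,Y,Z})$ via $\alpha$ commutes). A morphism $(F,\gamma)\to(F',\gamma')$ is a natural transformation $F \Rightarrow F'$ commuting with the $\gamma$'s. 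I would define the tensor product by $(F,\gamma)\otimes(F',\gamma') := (F\circ F', \gamma'')$, with $\gamma''$ assembled from $\gamma$ and $\gamma'$, and the unit object $(\id_{\mathcal{C}}, \mathrm{canonical})$. The crucial point is that, because composition of functors is strictly associative and unital, the tensor product of $\mathcal{C}'$ is strictly associative and its unit constraints are identities; hence $\mathcal{C}'$ is a \emph{strict} monoidal category. Checking that $\gamma''$ again satisfies the compatibility with $\alpha$ is a routine diagram chase in which Mac Lane's coherence theorem (Remark \ref{coheren}) removes all ambiguity.

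Next I would define $L : \mathcal{C} \to \mathcal{C}'$ by $L(X) = (L_X, \alpha_{X,-,-})$ on objects and $L(f) = f \otimes (-)$ on morphisms, with monoidal structure $J_{X,Y} : L_X \circ L_Y \xrightarrow{\sim} L_{X\otimes Y}$ again induced by $\alpha$; that $L$ is a monoidal functor reduces to the pentagon identity for $\alpha$. Now let $\mathcal{D}\subseteq\mathcal{C}'$ be the full subcategory whose objects are the finite composites $L_{X_1}\circ\cdots\circ L_{X_n}$ (including the empty composite $\id_{\mathcal{C}}$ for $n=0$). Since composition is strictly associative and unital, $\mathcal{D}$ is a strict monoidal subcategory of $\mathcal{C}'$. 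I would then verify that $L : \mathcal{C} \to \mathcal{D}$ is fully faithful, by recovering $X$ from $L_X$ as $L_X(\V1) = X\otimes\V1 \cong X$ and identifying $\Hom(X,Y)$ with the $\mathcal{C}'$-morphisms $L(X)\to L(Y)$ through evaluation at $\V1$; the $\gamma$-compatibility and the unit axioms force any such morphism to be of the form $f\otimes(-)$. Finally $L$ is essentially surjective onto $\mathcal{D}$, since each composite $L_{X_1}\circ\cdots\circ L_{X_n}$ is isomorphic in $\mathcal{C}'$ to $L_{X_1\otimes\cdots\otimes X_n}$ via a canonical isomorphism built from $\alpha$ (well-defined by coherence). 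Thus $L$ is a monoidal equivalence of $\mathcal{C}$ onto the strict category $\mathcal{D}$, which is the assertion.

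The main obstacle is the coherent bookkeeping: arranging the data $\gamma$, $\gamma''$ and $J$ so that every compatibility square genuinely commutes, and in particular handling the unit object and the constraints $\V1\otimes X\cong X\cong X\otimes\V1$ so that they become identities in $\mathcal{C}'$. All of these reduce, via Mac Lane's coherence theorem, to the commutativity of diagrams assembled purely from $\alpha$ and the unit isomorphisms, so the real content is organizational rather than computational. A secondary technical point is the essential-surjectivity step, where one must know that the isomorphism $L_{X_1}\circ\cdots\circ L_{X_n}\cong L_{X_1\otimes\cdots\otimes X_n}$ is independent of the chosen bracketing — again exactly what coherence guarantees — together with the fullness argument, where the triangle and unit axioms are needed to see that evaluation at $\V1$ is inverse to $X\mapsto L_X$ compatibly with the $\gamma$-constraint.
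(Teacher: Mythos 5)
Your proof is correct, and it is essentially the proof the paper points to: the paper states Theorem \ref{stri} without proof, deferring to \cite{EGNO}, 2.8 and \cite{K}, XI.5, and your argument is the standard one given there — realize $\mathcal{C}$ inside its own endofunctors, where composition is strictly associative and unital, via $X \mapsto (X\otimes -,\ \alpha_{X,-,-})$. The one place you deviate from that reference is the essential-surjectivity step: in \cite{EGNO} one shows $L$ is essentially surjective onto \emph{all} of $\mathcal{C}'$, because any object $(F,\gamma)$ is isomorphic to $L(F(\mathbb{1}))$ (compose $\gamma_{\mathbb{1},Y}$ with $F$ applied to the left unit constraint), whereas you restrict to the full subcategory $\mathcal{D}$ of literal composites $L_{X_1}\circ\cdots\circ L_{X_n}$; both routes work, and yours trades the identification of $(F,\gamma)$ with $L(F(\mathbb{1}))$ for the easier observation that composites of $L$'s are isomorphic to single $L$'s via the $J$'s (and for essential surjectivity you only need \emph{some} such isomorphism, so no independence-of-bracketing statement is actually required). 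One caveat worth fixing: you invoke Mac Lane's coherence theorem (Remark \ref{coheren}) at several points, but the paper explicitly notes that Theorem \ref{stri} \emph{implies} the coherence theorem, so a proof of strictness that leans on coherence would render that implication circular. Fortunately your appeals to coherence are removable: the compatibility of $\gamma''$ with $\alpha$ is a direct diagram chase using only naturality of $\gamma,\gamma'$ and their two compatibility squares; the claims that $\alpha_{X,-,-}$ is an admissible constraint and that $J$ is a morphism in $\mathcal{C}'$ are precisely instances of the pentagon identity; and the fullness argument needs only the triangle (unit) axioms. With those invocations replaced by the direct chases, your proof is complete and self-contained.
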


Note that Theorem \ref{stri} immediately implies Mac Lane's coherence theorem, see Remark \ref{coheren}. 

\subsubsection{} Theorem \ref{stri} may seem to contradict Exercise \ref{cohomo}: how can $\mathcal C(G,\alpha)$ with a cohomologically nontrivial $\alpha$ (which we assume to be {\bf skeletal}, i.e. there is just one object in each isomorphism class) be equivalent to a strict monoidal category? The resolution of this seeming contradiction comes from the subtle distinction between {\bf equivalent} monoidal categories and {\bf isomorphic} ones. Isomorphism (of small categories) is a stronger condition than equivalence: it means that we should not just have a bijection between {\bf isomorphism classes of objects} of the two categories, but must literally have a bijection between {\bf objects themselves.} Then $\mathcal C(G,\alpha)$ is {\bf isomorphic} to a strict category (as in the above exercise) iff $\alpha$ is trivial in $H^3(G,\Bbb C^\times )$, but it is always {\bf equivalent} to one by Mac Lane's strictness theorem. In other words, every category is equivalent to a skeletal one and every monoidal category is equivalent to a strict one, but {\bf not} every monoidal category is equivalent to one that it strict and skeletal at the same time. A counterexample is $\mathcal C(G,\alpha)$ for a cohomologically nontrivial $\alpha$.\footnote{The category of vector spaces should be viewed as non-strict, but is equivalent to a strict skeletal category, as is ${\rm Vec}(G)$ for any $G$. However, the category 
${\rm Rep}(G)$ of representations of a non-abelian finite group $G$ is not equivalent to a strict skeletal category.} 

So how can we explicitly construct a strict model for such a category? This turns out to be not completely trivial, which demonstrates the nontrivial nature of Mac Lane's strictness theorem (in spite of its simple proof). Namely, let us choose a group $\widetilde{G}$ 
with a surjective homomorphism $\phi: \widetilde{G}\to G$ such that $\phi^*\alpha$ is trivial in $H^3(\widetilde{G},\Bbb C^\times )$. This is possible, e.g. we can take $\widetilde{G}$ to be a free group, then its cohomology in degrees $\ge 2$ is zero. Then $\phi^*\alpha=dJ$ for some 2-cochain $J$ on $\widetilde{G}$. Let $\widetilde{\mathcal{C}}$ be the category whose objects are elements of 
$\widetilde{G}$, with $\Hom(g,h)=\Bbb C^\times$ if $g,h$ map to the same element of $G$ and $\emptyset$ otherwise. Define a strict monoidal structure on this category as follows: $g\otimes h=gh$ (but with tensor product of morphisms given by $a\otimes b=abJ_{g,h}$) and trivial associativity. 
Also let $\mathcal{D}$ be the same category with usual tensor product but associativity defined 
by $\phi^*\alpha$. Then $J$ defines a monoidal structure on the identity functor 
$\widetilde{\mathcal{C}}\to \mathcal{D}$. On the other hand, we have a natural monoidal equivalence 
$\widetilde{\mathcal{C}}\to \mathcal C(G,\alpha)$. Thus, $\mathcal C(G,\alpha)$ is monoidally equivalent (albeit not isomorphic) to the strict (but non-skeletal) monoidal category $\mathcal{D}$. 

\subsection{Problems} \label{prob1}
\begin{enumerate}
\item Let $H$ be a commutative or cocommutative Hopf algebra. Show that $S^2=\id$. 
\item Let $H$ be a Hopf algebra and $Y$ an $H$-module. Show that 
the $H$-module $Y\otimes H$ (with $H$ acting via $a\mapsto (\pi_Y\otimes \id)(\Delta(a))$) 
is isomorphic to $Y_{\rm space}\otimes H$ (i.e. $H$ acts only in the second factor, $a\mapsto 1\otimes a$). 

{\bf Hint.} Show that the linear map $\xi: Y_{\rm space}\otimes H\to Y\otimes H$ 
given by $y\otimes h\mapsto \Delta(h)(y\otimes 1)$ is a homomorphism of representations. 
Then use the antipode to construct the inverse $\xi^{-1}$ and thus show that $\xi$ is an isomorphism. 
 
\item Show that $\forall~n\geq 1~\exists$ a Hopf algebra $T_n(q)=\langle g,x \rangle$ with defining relations 
$$
g^n=1,~x^n=0,~gx = q xg
$$ 
and coproduct defined by 
$$
\Delta(g) = g \otimes g,\ \Delta(x)=x\otimes \M1 +g\otimes x,
$$ 
where $q$ is a primitive $n$-th root of unity, and that $\dim T_n(q)=n^2$ (it is called the {\bf Taft Hopf Algebra}). Compute the antipode $S$ of $T_n$.\footnote{As was shown in \cite{Ng}, if $n$ is a prime 
then $T_n(q)$ are the only Hopf algebras of dimension $n^2$ over $\Bbb C$ which are not commutative and cocommutative, i.e., not isomorphic to the group algebra of an abelian group.}
\item Show that $\forall~n\geq 1~\exists$ a Hopf algebra $H_n=\langle g,x_1,...,x_n \rangle$ 
with defining relations 
$$
g^2=1,~g x_i = -x_i g,~ x_i x_j + x_j x_i = 0
$$ and coproduct defined by 
$$
\Delta (g) = g \otimes g,\ \Delta (x_i)=x_i\otimes \M1 +g\otimes x_i,
$$ 
of dimension $2^{n+1}$ (it is called the {\bf Nichols Hopf algebra}). Compute the antipode $S$ of $H_n$. Show that $H_2=T_2(-1)$ (it is called the {\bf Sweedler 4-dimensional Hopf algebra}). 
\item Let $H$ be a Hopf algebra over $\mathbb{C}$, $G$ - a group acting by automorphisms of $H$. Show that the semidirect product algebra $\mathbb{C}G\ltimes H $ has a unique Hopf algebra structure in which $\mathbb{C}G$ and $H$ are Hopf subalgebras.
\item Show that every cocommutative finite dimensional Hopf algebra over $\Bbb C$ 
has the form $\Bbb C G$, where $G$ is a finite group. In particular, it is semisimple. 
(Problems 3,4 show that the latter is not true in the non-cocommutative case). 
\item Show that the Hopf algebra $U_q(\mathfrak{sl}_2)$ is well defined by generators $e,f,K^{\pm 1}$ and defining relations
\begin{equation}
KeK^{-1} = q^2 e,~ K f K^{-1} = q^{-2} f,~ [e,f] = \frac{K-K^{-1}}{q-q^{-1}},
\end{equation}
with coproduct defined by
\begin{equation}
\Delta(e) = e \otimes K + 1\otimes e,~ \Delta(f) = f \otimes 1 + K^{-1} \otimes f,~ \Delta(K) = K \otimes K. 
\end{equation}
\item Show that a grouplike element $g$ of a Hopf algebra $H$ defines a continuous 
(in the weak topology) 1-dimensional representation of $H^*$, and vice versa. 
\item Let $g,h$ be grouplike elements of a Hopf algebra $H$. An element $x\in H$ 
is called {\bf $(g,h)$-skewprimitive} if $\Delta(x)=g\otimes x+x\otimes h$. 
For example, a primitive element is $(1,1)$-skewprimitive, and the elements $e,f$ 
in $U_q(\mathfrak{sl}_2)$ are $(1,K)$- and $(K^{-1},1)$-skewprimitive, respectively. 
The element $\lambda(g-h)$ for $\lambda\in \Bbb C$ is an obvious example of a $(g,h)$-skewprimitive 
element. Such skewprimitive elements are called {\bf trivial}. Let ${\rm Prim}_{g,h}(H)$ 
be the space of $(g,h)$-skewpimitive elements in $H$ modulo the trivial ones. 
Show that ${\rm Prim}_{g,h}(H)\cong {\rm Ext}^1_{H^*}(h,g)$, where 
$g,h$ are viewed as continuous $H^*$-modules as in the previous problem. 
\item 
Show that the element 
\begin{equation}
C := f e + \dfrac{qK + q^{-1} K^{-1}-2}{(q-q^{-1})^2} 
\end{equation}
is central in $U_q(\mathfrak{sl}_2)$, and if $q$ is not a root of unity then the center 
of $U_q(\mathfrak{sl}_2)$ is generated by $C$. The element $C$ is called the {\bf quantum Casimir element.} 
\item Let $m$ be an integer. Show that $\exists !$ module $M_m$ over $U_q(\mathfrak{sl}_2)$ (called the {\bf Verma module} with highest weight $m$) with basis $v,fv,f^2 v,...$ such that $Kv = q^m v$. Compute the action of $K$ and $e$ on $M_m$. Use $M_m$ to prove the PBW theorem for $U_q(\mathfrak{sl}_2)$: the elements $K^{a}f^b e^c$ for $a\in \mathbb{Z},~b,c\in \mathbb{Z}_+$ form a basis in $U_q(\mathfrak{sl}_2)$.
\item Show that if $q$ is not a root of $1$ then the module $M_m$ 
is irreducible if $m<0$, but if $m\ge 0$ then $M_m$ contains $M_{-m-2}$ as a submodule, 
and $L_m:=M_m/M_{-m-2}$ is an irreducible $U_q(\mathfrak{sl}_2)$-module of dimension $m+1$.
\item \label{typeI} Let us say that a finite-dimensional $U_q(\mathfrak{sl}_2)$-module $L$ (for $q$ not a root of $1$) is of {\bf type I} if the eigenvalues of $K$ on $L$ are integer powers of $q$. 
Show that every irreducible type I module is $L_m$ for some $m$, and any type I module 
is completely reducible (for the last statement use the quantum Casimir $C$). This shows
that the representation theory of $U_q(\mathfrak{sl}_2)$ away from roots of unity 
is completely parallel to the representation theory of the Lie algebra 
$\mathfrak{sl}_2$ (at least if we restrict ourselves to type I representations). 
\item Let $\psi$ be the 1-dimensional representation of $U_q(\mathfrak{sl}_2)$ 
given by $\psi(K)=-1$, $\psi(e)=\psi(f)=0$ (it is not of type I). Show that any 
finite dimensional representation of $U_q(\mathfrak{sl}_2)$ (for $q$ not a root of $1$) 
is a direct sum of representations $L_m$ and $L_m\otimes \psi$. 
\end{enumerate}

\section{Braided monoidal categories and quasitriangular Hopf algebras}\label{qtr1}

\subsection{Braided monoidal categories and the Drinfeld center} (\cite{EGNO}, Ch. 8, \cite{K}, Ch. XIII). 

\subsubsection{} If $H$ is a Hopf algebra and $X,Y\in \,\Rep(H)$ then $X\otimes Y \ncong Y\otimes X$ in general, since $\Delta$ may not be cocommutative. However, even if $\Delta$ is not cocommutative, sometimes $X\otimes Y  \cong Y\otimes X$.

\begin{example} For $H=U_q(\mathfrak{sl}_2)$ where $q$ is not a root of unity 
define {\bf the universal $R$-matrix}
\begin{equation}\label{uniR}
\R = q^{\frac{h\otimes h}{2}}~\sum\limits_{k=0}^{\infty} q^{\frac{k(k-1)}{2}}\dfrac{(q-q^{-1})^k}{[k]_q !}e^{k}\otimes f^{k},
\end{equation}
where $[k]_q:=\frac{q^k-q^{-k}}{q-q^{-1}}$ and $[k]_q!=[1]_q...[k]_q$. 
This is an infinite series, but it makes sense as an operator on $X\otimes Y$ for any finite 
dimensional representations $X,Y$, because the sum terminates. Here $q^{\frac{h\otimes h}{2}}(x\otimes y):=q^{\frac{\lambda\mu}{2}}x\otimes y$ if $x$ has weight $\lambda$ and $y$ has weight $\mu$ (i.e., $Kx=q^\lambda x,Ky=q^\mu y$). 

\begin{theorem}\label{uniR1} (Drinfeld) The operator $c = P\circ \R$ defines an isomorphism of representations $c:~X\otimes Y \to Y\otimes X$. In other words, we have $\mathcal R\Delta(a)=\Delta^{\rm op}(a)\mathcal R$ on $X\otimes Y$ for $a\in H$. 
\end{theorem}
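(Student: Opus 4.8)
The plan is to exploit that the desired relation $\R\Delta(a)=\Delta^{\op}(a)\R$ is multiplicative in $a$: if it holds for $a$ and for $b$, then $\R\Delta(ab)=\R\Delta(a)\Delta(b)=\Delta^{\op}(a)\R\Delta(b)=\Delta^{\op}(a)\Delta^{\op}(b)\R=\Delta^{\op}(ab)\R$, using that $\Delta$ and $\Delta^{\op}$ are algebra homomorphisms. Hence it suffices to verify the identity on the generators $K^{\pm1},e,f$. For $K$ this is immediate: $\Delta(K)=\Delta^{\op}(K)=K\otimes K$, so we only need $\R$ to commute with $K\otimes K$; since $Ke^kK^{-1}=q^{2k}e^k$ and $Kf^kK^{-1}=q^{-2k}f^k$, conjugation by $K\otimes K$ fixes each summand $e^k\otimes f^k$, and it fixes the diagonal factor $q^{\frac{h\otimes h}{2}}$, so indeed $(K\otimes K)\R=\R(K\otimes K)$.

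For $e$ (and symmetrically $f$) I would first separate the Cartan factor, writing $\R=\R_0\Theta$ with $\R_0=q^{\frac{h\otimes h}{2}}$ and $\Theta=\sum_{k\ge0}c_k\,e^k\otimes f^k$, where $c_k=q^{k(k-1)/2}(q-q^{-1})^k/[k]_q!$. A short weight computation shows that conjugation by $\R_0$ sends $e\otimes 1\mapsto e\otimes K$ and $1\otimes e\mapsto K\otimes e$ (with the inverses accordingly), so that $\R_0^{-1}\Delta^{\op}(e)\R_0=1\otimes e+e\otimes K^{-1}$. Substituting this reduces the $e$-identity to the purely ``unipotent'' statement $(1\otimes e+e\otimes K^{-1})\Theta=\Theta(e\otimes K+1\otimes e)$. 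Expanding both sides and matching the coefficient of $e^m$ in the first tensor factor, this becomes, for every $m\ge1$, the single relation $c_m\,[e,f^m]=c_{m-1}\bigl(f^{m-1}K-K^{-1}f^{m-1}\bigr)$.

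The crux is therefore the commutator formula $[e,f^m]=[m]_q\,f^{m-1}\dfrac{q^{-(m-1)}K-q^{m-1}K^{-1}}{q-q^{-1}}$, a $q$-analogue of $[e,f^m]=m\,f^{m-1}(h-m+1)$, which I would prove by induction on $m$ using $[e,f]=\frac{K-K^{-1}}{q-q^{-1}}$ together with $Kf=q^{-2}fK$. Granting this, and using $K^{-1}f^{m-1}=q^{2(m-1)}f^{m-1}K^{-1}$ to rewrite the right-hand side, the relation collapses to the numerical recursion $\frac{c_m[m]_q}{q-q^{-1}}=q^{m-1}c_{m-1}$, which one checks directly from the formula for $c_k$ (the powers of $q$ match because $m(m-1)/2=(m-1)+(m-1)(m-2)/2$). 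The $f$-generator case is handled by a mirror-image computation. I expect this commutator identity and the attendant coefficient bookkeeping to be the main obstacle.

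Once the intertwining $\R\Delta(a)=\Delta^{\op}(a)\R$ is established on $X\otimes Y$, the theorem follows: for $c=P\circ\R$ we get $c\,\Delta(a)=P\R\Delta(a)=P\Delta^{\op}(a)\R=\Delta(a)\,P\R=\Delta(a)\,c$, where $P\,\Delta^{\op}(a)=\Delta(a)\,P$ as operators $X\otimes Y\to Y\otimes X$ expresses that flipping the tensor factors converts the opposite-coproduct action into the standard-coproduct action on $Y\otimes X$. Finally $c$ is invertible because $\R=\R_0\Theta$ is the product of an invertible diagonal operator $\R_0$ and a unipotent operator $\Theta$ (the sum terminates on finite-dimensional $X,Y$); hence $c$ is an isomorphism of representations $X\otimes Y\to Y\otimes X$.
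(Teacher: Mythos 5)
Your proof is correct, and it takes a genuinely different route from the paper. The paper states this theorem of Drinfeld without proof; the justification it eventually supplies is structural, in Subsection~\ref{qdsl}: there the quantum group is realized as (a quotient of) the Drinfeld double of the quantum Borel (Taft) algebra, for which the relation $\R\Delta(a)=\Delta^{\op}(a)\R$ holds automatically by the general theory of the double, and the explicit series for $\R$ then emerges from computing dual bases under the Hopf pairing $(e^n,f^n)=\frac{[n]_q!}{(q-q^{-1})^n}q^{-n(n-1)/2}$, $(K_+^i,K_-^j)=q^{2ij}$ (leading to formula \eqref{drfor} at roots of unity, and to the same series for generic $q$ via the restricted double). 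You argue in the opposite direction: taking the formula as given, you verify the intertwining relation directly on the generators $K^{\pm 1},e,f$, which suffices by multiplicativity; your conjugation identities for $q^{\frac{h\otimes h}{2}}$, the commutator formula $[e,f^m]=[m]_q f^{m-1}\frac{q^{-(m-1)}K-q^{m-1}K^{-1}}{q-q^{-1}}$, and the recursion $\frac{c_m[m]_q}{q-q^{-1}}=q^{m-1}c_{m-1}$ (with the exponent check $m(m-1)/2=(m-1)+(m-1)(m-2)/2$) are all correct, as is the final step converting the intertwining into the statement about $c=P\circ\R$. Two small points to make airtight: matching coefficients of $e^m$ should be phrased as ``it suffices to verify the termwise identities,'' which is what you actually use (the converse direction would invoke PBW); and the invertibility of $\Theta$ rests on $e$ acting nilpotently on any finite dimensional representation, which holds because conjugation by $K$ rescales $e$ by $q^2$ and $q$ is not a root of unity -- this is also exactly why the paper can say the series terminates. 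The trade-off between the two approaches: yours is elementary, self-contained, and explains nothing beyond $\mathfrak{sl}_2$, but requires no Hopf-algebraic machinery; the paper's double construction explains where the formula comes from, gives quasitriangularity for free, and generalizes verbatim to $U_q(\mathfrak{g})$, at the cost of developing the theory of $\D(H)$ and the Hopf pairing first.
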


A motivation for formula \eqref{uniR} and a sketch of proof of Theorem \ref{uniR1} will be given in Subsection \ref{qdsl}.
\end{example}

\subsubsection{} This motivates the following definition. 

\begin{definition} A {\bf braided monoidal category} is a monoidal category $\mathcal{C}$ endowed with a functorial isomorphism $c:~\otimes \to \otimes^{\op},~c_{X,Y}: X\otimes Y \to Y \otimes X$ which satisfies the {\bf hexagon axioms} $\hexd$ and $\hexdm$:
\begin{equation}
\begin{array}{ccc}
X\otimes \underline{Y \otimes Z} & \rightarrow & Y\otimes X \otimes Z\\
 & \searrow & \downarrow\\ 
 &  & \underline{Y\otimes Z} \otimes X
\end{array}
\hspace{2cm}
\begin{array}{ccc}
\underline{X\otimes Y} \otimes Z & \rightarrow & X\otimes Z \otimes Y\\
 & \searrow & \downarrow\\ 
 &  & Z\otimes \underline{X \otimes Y}
\end{array}
\end{equation}
\end{definition}
Namely, $\hexd$ says that swapping $X$ with $Y\otimes Z$ in one step and in two steps gives the same result, and $\hexdm$ says the same about swapping $X\otimes Y$ with $Z$. 
Here we suppress associativity isomorphisms (a permissible simplification thanks to the Mac Lane coherence theorem), which is why our hexagon diagrams look more like triangles (there are three extra edges for the associativity isomorphisms in each of them that we have suppressed). 

\begin{proposition}\label{bract} If $\mathcal{C}$ is a braided monoidal category and $V\in \mathcal{C}$ then $ V^{\otimes n}$ has a natural action of the braid group
\begin{equation}
B_n = \pi_1((\mathbb{C}^n\backslash \text{diagonals})/S_n)
=
\langle s_1,...,s_{n-1}~|~ s_i s_j = s_j s_i~\text{if}~ |i-j|>1,~s_i s_{i+1} s_i = s_{i+1} s_i s_{i+1}\rangle
\end{equation} 
given by
\begin{equation}
\rho:~B_n \to \Aut (V^{\otimes n}),~~~ \rho(s_i) = c_{i,i+1} := (c_{V,V})_{i,i+1}. 
\end{equation}
\end{proposition}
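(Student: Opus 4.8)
The plan is to exploit the presentation of $B_n$ given in the statement. Since $B_n$ is generated by $s_1,\dots,s_{n-1}$ subject only to the far-commutativity relations $s_is_j=s_js_i$ for $|i-j|>1$ and the braid relations $s_is_{i+1}s_i=s_{i+1}s_is_{i+1}$, to produce $\rho$ it suffices to send each $s_i$ to the operator $c_{i,i+1}:=(c_{V,V})_{i,i+1}\in \End(V^{\otimes n})$ and to verify that these operators satisfy the two families of relations. Each $c_{i,i+1}$ is invertible because the braiding $c_{V,V}$ is a functorial \emph{isomorphism}, so it genuinely lands in $\Aut(V^{\otimes n})$; the universal property of a group presented by generators and relations then produces the homomorphism $\rho$ automatically once the relations are checked.

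First I would invoke Mac Lane's strictness theorem (Theorem \ref{stri}) to replace $\mathcal{C}$ by an equivalent strict monoidal category, so that associativity isomorphisms disappear and the axioms $\hexd,\hexdm$ take the clean forms $c_{X,Y\otimes Z}=(\id\otimes c_{X,Z})(c_{X,Y}\otimes\id)$ and $c_{X\otimes Y,Z}=(c_{X,Z}\otimes\id)(\id\otimes c_{Y,Z})$. The braiding, its hexagons, and the relations to be checked are all transported along a monoidal equivalence, so no generality is lost. The far-commutativity relations are then essentially automatic: for $|i-j|>1$ the operators $c_{i,i+1}$ and $c_{j,j+1}$ act nontrivially on disjoint blocks of tensor factors and as the identity elsewhere, hence commute by the bifunctoriality (interchange law) of $\otimes$.

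The braid relations are the heart of the matter, and they reduce, again by bifunctoriality (tensoring with the identity on the untouched factors), to the single Yang--Baxter equation on $V^{\otimes 3}$:
\[
(c_{V,V}\otimes\id)(\id\otimes c_{V,V})(c_{V,V}\otimes\id)
=(\id\otimes c_{V,V})(c_{V,V}\otimes\id)(\id\otimes c_{V,V}).
\]
To establish this I would combine $\hexd$ with the naturality of $c$. Abbreviating $c=c_{V,V}$, the hexagon $\hexd$ gives $c_{V,V\otimes V}=(\id\otimes c)(c\otimes\id)$. Applying the naturality square of $c$ to the morphism $c\colon V\otimes V\to V\otimes V$ in the second tensor slot (with $\id_V$ in the first) yields the identity $(c\otimes\id)\circ c_{V,V\otimes V}=c_{V,V\otimes V}\circ(\id\otimes c)$; substituting the hexagon expression for $c_{V,V\otimes V}$ into both sides turns this into exactly the displayed Yang--Baxter equation.

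The step I expect to be the main obstacle is precisely this last interlocking of $\hexd$ with the naturality square: one must track the placement of the three tensor factors carefully so that the two substitutions land on the correct sides and produce the two alternating words $(c\otimes\id)(\id\otimes c)(c\otimes\id)$ and $(\id\otimes c)(c\otimes\id)(\id\otimes c)$. By contrast, the far-commutativity relations, the verification that each $c_{i,i+1}$ is an automorphism, and the reduction of the general braid relation on $V^{\otimes n}$ to the $n=3$ case are routine bookkeeping once strictness is in force.
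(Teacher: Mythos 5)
Your proposal is correct and complete: the paper itself offers no proof of Proposition \ref{bract} (it is immediately followed by an exercise asking the reader to prove it), so there is nothing to compare against except the intended standard argument, which is exactly what you give. Your derivation of the Yang--Baxter relation $(c\otimes\id)(\id\otimes c)(c\otimes\id)=(\id\otimes c)(c\otimes\id)(\id\otimes c)$ from the hexagon $c_{V,V\otimes V}=(\id\otimes c)(c\otimes\id)$ combined with the naturality square $(c\otimes\id)\circ c_{V,V\otimes V}=c_{V,V\otimes V}\circ(\id\otimes c)$ checks out, and the remaining steps (strictness via Theorem \ref{stri}, invertibility, far-commutativity by the interchange law, and the reduction from $V^{\otimes n}$ to $V^{\otimes 3}$) are handled correctly.
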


\begin{exercise} Prove Proposition \ref{bract}. 
\end{exercise}

\begin{remark} If $\mathcal{C}$ is a braided monoidal category with braiding $c$ then the functorial morphism 
$c^{-1}$ defined by 
$(c^{-1})_{XY}:=(c_{YX})^{-1}$ is also a braiding on $\mathcal{C}$, called the {\bf inverse braiding} to $c$. The braiding $c$ is called {\bf symmetric} if $c=c^{-1}$, i.e., $c_{XY}\circ c_{YX}=\id \otimes \id$ for all $X,Y\in \mathcal{C}$.  
\end{remark}

\subsubsection{} An important example of a braided monoidal category is the {\bf Drinfeld center} of a monoidal category, defined by V. Drinfeld (unpublished) and independently by S. Majid (\cite{Ma}) and Joyal-Street (\cite{JS}).

\begin{definition} The {\bf Drinfeld center} $\Z(\mathcal{C})$ of a monoidal category $\mathcal{C}$ is the category of pairs $(Y,\varphi)$ where $Y\in \mathcal{C}$ and $\varphi:~Y\otimes - \to - \otimes Y$ is a functorial isomorphism given by the collection of isomorphisms $\varphi_X:~ Y\otimes X \xrightarrow{\sim} X\otimes Y, ~~ \forall\, X\in \mathcal{C}$, satisfying the following commutative diagram:  
\begin{equation}
\label{diag:hexagon}
\begin{CD}
Y \otimes (X_1\otimes X_2) @ >{\varphi_{X_1\otimes X_2}}>> @ .  (X_1\otimes X_2)\otimes Y\\
@VV{\alpha_{YX_1 X_2}^{-1}}V @ . @ AA{\alpha_{X_1 X_2 Y}^{-1}}A\\
(Y \otimes X_1)\otimes X_2 @ . @ . X_1 \otimes (X_2\otimes Y)\\
@VV{\varphi_{X_1}\otimes \id}V @ . @ AA{\id\otimes \varphi_{X_2}}A\\
(X_1 \otimes Y)\otimes X_2 @ >{\alpha_{X_1 Y X_2}}>> @ . X_1\otimes (Y \otimes X_2)
\end{CD}
\end{equation}
Morphisms of such pairs are morphisms in $\mathcal C$ which preserve $\varphi$. 
\end{definition}

The category $\mathcal Z(\mathcal C)$ has a natural monoidal structure defined by
$$
(Y,\varphi^Y)\otimes (Z,\varphi^Z)=(Y\otimes Z,\varphi^{Y\otimes Z}),
$$ 
 where 
$$
\varphi^{Y\otimes Z}_X=(\varphi^Y_X\otimes {\rm id}_Z)\circ ({\rm id}_{Y}\otimes \varphi^Z_X).
$$
Note that we have a natural monoidal forgetful functor $Z(\mathcal{C}) \to \mathcal{C},~ (Y,\varphi) \mapsto Y$.

\subsubsection{} If $V,W\in \Z(\mathcal{C})$ then there are two ways to identify $V\otimes W$ and $W\otimes V$, namely $\varphi^V_W$ and $(\varphi^W_V)^{-1}$.

\begin{proposition} $\mathcal Z(\mathcal{C})$ is a braided monoidal category with braiding $c_{VW}:=\varphi^V_W$.
\end{proposition}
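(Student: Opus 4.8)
The plan is to verify, in order, the four properties required of a braiding: that each $c_{VW}$ is an isomorphism \emph{in} $\Z(\mathcal{C})$ (not merely in $\mathcal{C}$), that $c$ is functorial in both arguments, and that $c$ obeys the two hexagon axioms $\hexd$ and $\hexdm$. Throughout I write $V=(Y,\varphi^Y)$ and $W=(Z,\varphi^Z)$, so that by definition $c_{VW}=\varphi^Y_Z\colon Y\otimes Z\to Z\otimes Y$, and I suppress associativity isomorphisms as permitted by the Mac Lane coherence theorem. Since each half-braiding $\varphi^Y_X$ is by definition an isomorphism, $c_{VW}$ is at least an isomorphism in $\mathcal{C}$.

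The first genuine point, and the one I expect to be the main obstacle, is that $c_{VW}=\varphi^Y_Z$ is a morphism in $\Z(\mathcal{C})$, i.e. that it intertwines the half-braidings $\varphi^{Y\otimes Z}$ and $\varphi^{Z\otimes Y}$ of $V\otimes W$ and $W\otimes V$: for every $X\in\mathcal{C}$,
$$
(\id_X\otimes \varphi^Y_Z)\circ \varphi^{Y\otimes Z}_X=\varphi^{Z\otimes Y}_X\circ(\varphi^Y_Z\otimes \id_X).
$$
The way through this is to apply naturality of the functorial isomorphism $\varphi^Y$ to the morphism $\varphi^Z_X\colon Z\otimes X\to X\otimes Z$, obtaining $(\varphi^Z_X\otimes\id_Y)\circ\varphi^Y_{Z\otimes X}=\varphi^Y_{X\otimes Z}\circ(\id_Y\otimes\varphi^Z_X)$, and then to expand $\varphi^Y_{Z\otimes X}$ and $\varphi^Y_{X\otimes Z}$ using the defining hexagon \eqref{diag:hexagon} for the object $V$. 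After also substituting the monoidal-structure formulas $\varphi^{Y\otimes Z}_X=(\varphi^Y_X\otimes\id_Z)(\id_Y\otimes\varphi^Z_X)$ and $\varphi^{Z\otimes Y}_X=(\varphi^Z_X\otimes\id_Y)(\id_Z\otimes\varphi^Y_X)$, the two sides of the displayed identity match the two sides of the naturality square, so the identity holds.

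Next I would establish functoriality of $c$, namely $(g\otimes f)\circ c_{VW}=c_{V'W'}\circ(f\otimes g)$ for morphisms $f\colon V\to V'$ and $g\colon W\to W'$ in $\Z(\mathcal{C})$. Naturality in the second variable (fixing $V$, varying $g\colon Z\to Z'$) is exactly naturality of the transformation $\varphi^Y$ in $\mathcal{C}$. Naturality in the first variable (fixing $W$, varying $f\colon Y\to Y'$) is precisely the condition that $f$ is a morphism of pairs, $(\id_Z\otimes f)\circ\varphi^Y_Z=\varphi^{Y'}_Z\circ(f\otimes\id_Z)$; composing the two partial naturality identities yields the full square.

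Finally, the two hexagon axioms require essentially no computation, as they are built into the definitions. The axiom $\hexd$, expressing $c_{V,W\otimes W'}$ through $c_{V,W}$ and $c_{V,W'}$, is exactly the defining hexagon \eqref{diag:hexagon} for $V$ applied to the underlying objects of $W$ and $W'$. The axiom $\hexdm$, expressing $c_{V\otimes W,W'}$ through $c_{V,W'}$ and $c_{W,W'}$, is precisely the formula $\varphi^{Y\otimes Z}_X=(\varphi^Y_X\otimes\id_Z)(\id_Y\otimes\varphi^Z_X)$ defining the half-braiding of $V\otimes W$ in the monoidal structure on $\Z(\mathcal{C})$. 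Having exhibited $c$ as a functorial isomorphism in $\Z(\mathcal{C})$ satisfying both hexagons, I conclude that $\Z(\mathcal{C})$ is a braided monoidal category.
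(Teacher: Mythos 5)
Your proof is correct, and since the paper states this proposition without proof, your argument supplies exactly the standard verification that is being left to the reader: the key step (that $\varphi^Y_Z$ is a morphism \emph{in} $\Z(\mathcal{C})$, proved by applying naturality of $\varphi^Y$ to the morphism $\varphi^Z_X$ and expanding via the defining diagram \eqref{diag:hexagon}) is the right one, and your identification of $\hexd$ with the diagram \eqref{diag:hexagon} and of $\hexdm$ with the formula for $\varphi^{Y\otimes Z}$ is exactly how these axioms are meant to fall out of the definitions. The only point left tacit is that the inverse of $c_{VW}$ is again a morphism in $\Z(\mathcal{C})$, but this is automatic for any isomorphism of $\mathcal{C}$ that is a morphism of pairs, so nothing is missing.
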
 

Thus $(\varphi^W_V)^{-1}=(c^{-1})_{VW}$ is the inverse braiding.

Note that if $\mathcal{C}$ is a braided monoidal category then we have a natural 
braided monoidal functor $\mathcal{C}\to \Z(\mathcal C)$ given by $X\mapsto (X,c_{X,-})$. 
Moreover, the composition $\mathcal{C}\to \Z(\mathcal{C}) \to \mathcal{C}$ is the identity. Thus every braided monoidal category is a full subcategory of its Drinfeld center.

\subsection{Yetter-Drinfeld modules and the quantum double} (\cite{EGNO}, 7.13 - 7.15, \cite{K}, XIII.4, XIII.5). 

\subsubsection{} Let $H$ be a Hopf algebra and $\Rep(H)$ be the category of finite dimensional $H$-modules. We would like to describe the Drinfeld center $\Z(\Rep(H))$ explicitly.  

Let $(Y,\varphi)\in \Z(\Rep(H))$. Then in particular we have a morphism 
\begin{equation}
\varphi_H: \Ind_{\mathbb{C}}^{H} Y\cong Y\otimes H \to H\otimes Y.
\end{equation} 
But we know that $Y\otimes H\cong Y_{\rm space}\otimes H$ (Subsection \ref{prob1}, Problem 2). 
Thus by Frobenius reciprocity, this morphism corresponds to a linear map
$$
\tau = \overline{\varphi}_H: Y \to H\otimes Y. 
$$
\begin{exercise} (i) Show that $\tau$ is an {\bf $H$-comodule} structure on $Y$ 
(i.e., it defines an $H^*$-module structure). 

(ii) Show that $\tau$ satisfies the following compatibility condition with the $H$-action on $Y$: 
\begin{equation}\label{compa}
\tau(ay) = a_{(1)} y_{(1)} S(a_{(3)}) \otimes a_{(2)}y_{(2)},
\end{equation}
where $y\in Y, ~\tau(y) = y_{(1)}\otimes y_{(2)}$ and $a\in H,~ (\id\otimes \Delta)\circ \Delta(a) = a_{(1)} \otimes a_{(2)} \otimes a_{(3)}$. 
\end{exercise} 

\begin{definition} A {\bf Yetter-Drinfeld module} over $H$ is an $H$-module $Y$ which is also an $H$- comodule with the comodule structure $\tau$ satisfying the compatibility condition \eqref{compa}. 
\end{definition}

\begin{proposition}\label{equi} The category $\Z(\Rep(H))$ is equivalent to the category  $\YD(H)$ of finite dimensional Yetter-Drinfeld modules over $H$.
\end{proposition}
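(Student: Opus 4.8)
The plan is to exhibit mutually inverse functors between $\Z(\Rep(H))$ and $\YD(H)$, both acting as the identity on underlying vector spaces and $H$-module structures. In one direction I define $F\colon \Z(\Rep(H))\to \YD(H)$ by sending $(Y,\varphi)$ to $Y$ equipped with its given $H$-action together with the comodule structure $\tau=\overline{\varphi}_H$ produced above; the preceding Exercise guarantees that $\tau$ is a comodule satisfying the compatibility condition \eqref{compa}, so $F(Y,\varphi)\in\YD(H)$. In the other direction I define $G\colon \YD(H)\to \Z(\Rep(H))$ by reconstructing a half-braiding from the coaction $\tau(y)=y_{(1)}\otimes y_{(2)}$ (with $y_{(1)}\in H$, $y_{(2)}\in Y$) via
\begin{equation}
\varphi_X(y\otimes x):=y_{(1)}x\otimes y_{(2)},\qquad X\in\Rep(H),\ y\in Y,\ x\in X.
\end{equation}
On morphisms both functors act as the identity on the underlying linear maps: a morphism in $\Z(\Rep(H))$ is an $H$-linear map commuting with the half-braidings, and evaluating the intertwining condition at $X=H$ shows it commutes with $\tau$, hence is $H$-colinear; conversely a Yetter--Drinfeld morphism is $H$-linear and $H$-colinear, hence commutes with the reconstructed $\varphi$.

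The conceptual crux, which simultaneously establishes $G\circ F=\id$, is that a half-braiding is completely determined by its component on the regular representation, and hence by $\tau$. For $x\in X$ the map $\rho_x\colon H\to X$, $\rho_x(h)=hx$, is a morphism in $\Rep(H)$, and naturality of $\varphi$ in $X$ gives the commuting square relating $\varphi_H$ and $\varphi_X$ along $\id_Y\otimes\rho_x$ and $\rho_x\otimes\id_Y$. Since $\tau(y)=\varphi_H(y\otimes 1)$ under the Frobenius-reciprocity identification of Problem 2, evaluating this square at $y\otimes 1$ yields exactly $\varphi_X(y\otimes x)=y_{(1)}x\otimes y_{(2)}$. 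Thus any $(Y,\varphi)$ is recovered from its image under $F$, and conversely evaluating the reconstruction at $x=1\in H$ returns $\tau$, so $F\circ G=\id$ as well. (When $H$ is infinite-dimensional one reads this argument as the \emph{definition} of $\tau$ through the maps $\rho_x$, avoiding the need to apply $\varphi$ to the non-finite-dimensional object $H$ directly.)

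It remains to verify that $G$ actually lands in $\Z(\Rep(H))$. That each $\varphi_X$ is $H$-linear is precisely the compatibility condition \eqref{compa}, now read as a sufficient condition rather than a necessary one; functoriality in $X$ is immediate from the defining formula. The center axiom \eqref{diag:hexagon}, unwound with the formula for $\varphi_{X_1\otimes X_2}$ and for the composite $(\id_{X_1}\otimes\varphi_{X_2})\circ(\varphi_{X_1}\otimes\id_{X_2})$, reduces to the comodule coassociativity $(\Delta\otimes\id)\tau=(\id\otimes\tau)\tau$, while the unit constraint reduces to the comodule counit axiom.

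Finally, invertibility of $\varphi_X$ is where the antipode genuinely enters: the candidate inverse is
\begin{equation}
\varphi_X^{-1}(x\otimes y):=y_{(2)}\otimes S^{-1}(y_{(1)})x,
\end{equation}
and checking that it is two-sided inverse to $\varphi_X$ uses coassociativity of $\tau$, the identity $S^{-1}(a_{(2)})a_{(1)}=\varepsilon(a)$ valid for the invertible antipode, and the counit axiom for $\tau$. I expect this last point --- the interaction of the antipode with the coaction that produces the inverse half-braiding --- to be the main technical obstacle, and it is also the place where the standing assumption that $S$ be invertible is used. Assembling these verifications shows that $F$ and $G$ are well defined, compatible with morphisms, and mutually inverse, giving the desired equivalence $\Z(\Rep(H))\simeq\YD(H)$.
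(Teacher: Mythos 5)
Your argument is, in substance, exactly the one the paper intends: the paper constructs $\tau=\overline{\varphi}_H$ via Frobenius reciprocity and leaves the remaining verification as an exercise, and you complete precisely that outline. The verifications you supply are correct: the reconstruction $\varphi_X(y\otimes x)=y_{(1)}x\otimes y_{(2)}$ from naturality against $\rho_x\colon H\to X$, the matching of the center axiom \eqref{diag:hexagon} with comodule coassociativity, the identification of $H$-linearity of $\varphi_X$ with condition \eqref{compa} (together with the antipode axiom), and the inverse $\varphi_X^{-1}(x\otimes y)=y_{(2)}\otimes S^{-1}(y_{(1)})x$, which indeed rests on the identity $S^{-1}(a_{(2)})a_{(1)}=\varepsilon(a)1$ and bijectivity of $S$.

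The one step that does not hold up is the parenthetical claim that for infinite-dimensional $H$ one can simply ``define $\tau$ through the maps $\rho_x$.'' That definition presupposes that the family $\{\varphi_X(y\otimes x)\}$ is representable by a single element $\tau(y)\in H\otimes Y$, i.e., that the matrix coefficients of the half-braiding lie in $H$ itself rather than in some completion, and this can genuinely fail. For $H=U(\mathfrak{sl}_2)$, take $Y=\Bbb C$ the trivial module with $\varphi$ acting on $\Bbb C\otimes V_m\cong V_m$ by $(-1)^m$: this is natural and satisfies the multiplicativity axiom because $V_m\otimes V_n$ contains only $V_k$ with $k\equiv m+n\ (\mathrm{mod}\ 2)$, so it is an object of $\Z(\Rep(H))$; but any Yetter--Drinfeld structure on $\Bbb C$ is given by a grouplike element of $U(\mathfrak{sl}_2)$, of which there is only $1$, so this object is not in the essential image of $\YD(H)$. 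Thus Proposition \ref{equi} is actually false in that generality, and both your proof and the paper's construction of $\varphi_H$ (which needs $H$ to be an object of $\Rep(H)$) should be read under the standing hypothesis that $H$ is finite dimensional --- the case the paper passes to immediately afterwards. With that hypothesis made explicit, your proof is complete.
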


\begin{exercise} Prove Proposition \ref{equi}.
\end{exercise} 

\subsubsection{} Now assume that $H$ is a finite dimensional Hopf algebra. In this case it turns out that  there exists a finite dimensional Hopf algebra $\D(H)$ called the {\bf quantum double of $H$} such that
\begin{equation}
\Z(\Rep(H))\cong \YD(H) \cong \Rep (\D(H)). 
\end{equation} 

Let us describe this Hopf algebra explicitly. 
Let $H^{\cop}=(H,\Delta^{\op},S^{-1})$ be the opposite Hopf algebra to $H$. 

\begin{exercise} Show that the actions of 
$H$ and $H^{*\cop}$ on $Y\in \YD(H)$ combine into an action on $Y$ of the free product 
$H\ast H^{*\cop}$ modulo the compatibility relations
\begin{equation}
ba = (a_{(1)},b_{(1)})(a_{(3)},b_{(3)})a_{(2)}S^{-1}(b_{(2)}),\ a\in H, b\in H^{*\cop}. 
\end{equation}
where $(x,y)$ denotes the pairing of $x\in H$ and $y\in H^*$.  

(ii) Show that the multiplication map 
$$
H\otimes H^{*\cop} \to (H\ast H^{*\cop})/(\text{compatibility relations}) 
$$
is a vector space isomorphism, and under the identification defined by this isomorphism, the 
multiplication in this algebra takes the form
\begin{equation}\label{multfor}
(a'\otimes b)(a\otimes b')= (a_{(1)},b_{(1)})(a_{(3)},b_{(3)})a' a_{(2)}\otimes S^{-1}(b_{(2)}) b', \ a,a'\in H, b,b'\in H^*.
\end{equation} 
\end{exercise} 

\begin{theorem} (Drinfeld) This defines a Hopf algebra structure on $\D(H):=H\otimes H^{*\cop}$ (with coproduct obtained by tensoring the coproducts of the factors), and $\YD(H)\cong \D(H)-\Mod$ as monoidal categories. 
\end{theorem}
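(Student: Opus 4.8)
The plan is to build the Hopf structure on $\D(H)=H\otimes H^{*\cop}$ one piece at a time and then promote the category equivalence to a monoidal one. The associative algebra structure is already in hand: the preceding exercise identifies the quotient of the free product $H\ast H^{*\cop}$ by the compatibility relations with the vector space $H\otimes H^{*\cop}$ and gives the explicit product \eqref{multfor}, so associativity, the unit $1\otimes\varepsilon$, and the fact that $H$ and $H^{*\cop}$ sit inside $\D(H)$ as subalgebras all come for free. What remains is to equip $\D(H)$ with a coproduct, counit, and antipode satisfying the Hopf axioms, and to check that the resulting equivalence $\YD(H)\cong\D(H)\text{-}\Mod$ is monoidal.

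For the coalgebra structure I would simply take the coproduct and counit of the tensor product of the coalgebras $H$ and $H^{*\cop}$, exactly as in the statement. Coassociativity and the counit axioms are then inherited from the factors, since a tensor product of coalgebras is again a coalgebra, and no computation is needed. The one genuine point is the bialgebra axiom: that $\Delta$ is an algebra homomorphism for the twisted multiplication \eqref{multfor}. Since $\D(H)$ is generated by its two subalgebras $H$ and $H^{*\cop}$, on each of which $\Delta$ restricts to $\Delta_H$ and $\Delta_{H^{*\cop}}$ (algebra maps by hypothesis), multiplicativity of $\Delta$ reduces to a single cross-relation: that $\Delta$ respects the reordering rule \eqref{compa} used to move an element of $H^{*\cop}$ past an element of $H$.

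I expect this cross-relation to be the main obstacle. Verifying it means expanding the triple coproduct in \eqref{compa}, pushing $\Delta_H$ and $\Delta_{H^*}$ through the pairing (which turns a product in one of $H,H^*$ into a coproduct in the other), and invoking coassociativity on both sides together with the compatibility of the antipode with the pairing. The Sweedler bookkeeping is delicate, and it is precisely here that the appearance of $H^{*\cop}$, rather than $H^*$, and the position of $S^{-1}$ in \eqref{multfor} are forced. Once the bialgebra axiom is established, I would obtain the antipode most cleanly by rigidity rather than by a bare-hands formula: the Drinfeld center $\Z(\Rep(H))$, being the center of the rigid monoidal category $\Rep(H)$, is itself rigid, so by Proposition \ref{equi} the category $\D(H)\text{-}\Mod$ is rigid; and a finite-dimensional bialgebra whose category of finite-dimensional modules is rigid automatically admits a bijective antipode. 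Alternatively one can write $S_{\D}(a\otimes b)=(1\otimes S_{H^{*\cop}}(b))(S_H(a)\otimes\varepsilon)$ and verify the antipode identities directly through \eqref{multfor}.

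It remains to see that the equivalence is monoidal. Starting from $\YD(H)\cong\Z(\Rep(H))$ of Proposition \ref{equi} and the linear equivalence $\YD(H)\cong\D(H)\text{-}\Mod$ coming from the algebra identification, I would transport the monoidal structure of $\Z(\Rep(H))$ and check that it agrees with the one dictated by the coproduct of $\D(H)$. Concretely, for $Y,Z\in\YD(H)$ the $H$-action on $Y\otimes Z$ is diagonal, governed by $\Delta_H$, matching the $H$-leg of the coproduct; and the $H$-coaction on $Y\otimes Z$ is codiagonal, which after a short Sweedler computation matches the $H^{*\cop}$-leg, the $\cop$ being exactly what makes the two coactions assemble correctly. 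Thus the forgetful functor $\D(H)\text{-}\Mod\to\Vec$ is monoidal and intertwines the transported and the native tensor products, so $\YD(H)\cong\D(H)\text{-}\Mod$ is an equivalence of monoidal categories, which completes the argument.
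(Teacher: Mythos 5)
The paper never proves this theorem: it is attributed to Drinfeld and stated as a fact, with only the preceding exercise (the algebra structure on $H\otimes H^{*\cop}$ and the multiplication formula \eqref{multfor}) supplied as an ingredient, so there is no in-paper argument to compare yours against. Judged on its own, your outline follows the standard route and is sound in structure: algebra structure from the exercise, tensor-product coalgebra structure, reduction of the bialgebra axiom to compatibility of $\Delta$ with the single cross-relation (legitimate, since $\D(H)$ is presented as a quotient of the free product $H\ast H^{*\cop}$, so an algebra map out of it is determined by algebra maps on the two factors that respect that relation), and then the antipode and the monoidal comparison.

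Three points need attention. First, the relation you must check $\Delta$ respects is the straightening relation $ba = (a_{(1)},b_{(1)})(a_{(3)},b_{(3)})a_{(2)}S^{-1}(b_{(2)})$ from the exercise, not \eqref{compa}, which is the Yetter--Drinfeld compatibility condition on modules; your verbal description fits the right relation, but the citation points to the wrong one. Second, your rigidity argument for the antipode has a circular ordering: to transport rigidity from $\Z(\Rep(H))\cong\YD(H)$ to $\D(H)\text{-}\Mod$ you need the equivalence $\YD(H)\cong\D(H)\text{-}\Mod$ to be \emph{monoidal}, which you only establish in your last paragraph. The fix is free of charge --- the monoidal equivalence needs only the bialgebra structure (as the paper notes, a bialgebra already suffices for $\Rep$ to be monoidal), so prove it before invoking rigidity; alternatively, your direct formula $S_{\D}(a\otimes b)=(1\otimes S_{H^{*\cop}}(b))(S_H(a)\otimes\varepsilon)$ sidesteps the issue and is consistent with part (ii) of the theorem that follows in the paper. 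Third, the two genuinely computational steps --- the Sweedler verification that $\Delta$ preserves the cross-relation, and the check that the codiagonal coaction on $Y\otimes Z$ matches the $H^{*\cop}$-leg of the tensor-product coproduct --- are correctly identified as the crux but are not carried out, so as written this is a viable plan for a proof rather than a complete one.
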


\begin{definition} $\D(H)$ is called the {\bf Drinfeld double} or {\bf quantum double} of $H$.
\end{definition}

\subsubsection{} The properties of the quantum double are summarized in the following theorem. 

\begin{theorem}
(i) $H,H^{*\cop}\subset \D(H)$ are Hopf subalgebras.

(ii) $S_{\D(H)} = S_{H}\otimes S_{H^{*\cop}}$.

(iii) The braiding $c$ of $\Rep (\D(H))$ is given by $c_{XY}=P\circ \R|_{X\otimes Y}$, where the element 
$$
\R\in H\otimes H^* \subset \D(H)\otimes \D(H)
$$ 
is  defined by the formula
\begin{equation}
\R = \sum_i a_i \otimes a_i^*,
\end{equation}
where $\lbrace a_i\rbrace$ is a  basis of $H$ and $\lbrace a_i^*\rbrace$ is the dual basis of $H^*$.

(iv) $\R$ satisfies the {\bf quantum Yang-Baxter equation} (QYBE)
$$
\R^{12}\R^{13}\R^{23} = \R^{23}\R^{13}\R^{12}\in \D(H)^{\otimes 3},
$$
where if $\R:=\sum_i a_i\otimes b_i$ then $\R^{12}:=\R \otimes 1=\sum_i a_i\otimes b_i\otimes 1$, $\R^{23}:=1\otimes \R=\sum_i 1\otimes a_i\otimes b_i$, and $\R^{13}:=\sum_i a_i\otimes 1\otimes b_i$. 

(v) The multiplication formula \eqref{multfor} for $\D(H)$ is the unique one for which $\R$ satisfies QYBE.

(vi) $\R^{-1} = (S\otimes \id)\R$. 
\end{theorem}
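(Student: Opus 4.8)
The plan is to treat parts (i)--(ii) as direct structural verifications, part (iii) as the conceptual heart obtained by unwinding the chain of equivalences $\Rep(\D(H))\cong\YD(H)\cong\Z(\Rep(H))$ from Proposition \ref{equi} and the preceding theorem, and parts (iv)--(vi) as formal consequences of the fact that $\R=\sum_i a_i\otimes a_i^*$ is the \emph{canonical element} attached to a pair of dual bases, for which the reproducing identity $\sum_i a_i\,\langle a_i^*,h\rangle=h$ and its dual are the only inputs needed.

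For (i), the maps $a\mapsto a\otimes 1$ and $b\mapsto 1\otimes b$ are algebra homomorphisms by the multiplication formula \eqref{multfor} (specialized to $b=\varepsilon$ and to $a=1$), and they are coalgebra maps because $\Delta_{\D(H)}$ is by construction the tensor product of the coproducts; together with closure under the antipode (part (ii)) this makes $H$ and $H^{*\cop}$ Hopf subalgebras. For (ii), since the antipode is uniquely determined by the coproduct, it suffices to check that the proposed linear map $S_H\otimes S_{H^{*\cop}}$ satisfies the antipode axiom $\mu\circ(S\otimes\id)\circ\Delta=\mu\circ(\id\otimes S)\circ\Delta=\varepsilon$; on each of the two subalgebras this reduces to the antipode axioms of $H$ and of $H^{*\cop}$, so the only genuine content is compatibility with the cross-relation \eqref{multfor}.

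For (iii), I would transport the braiding of $\Z(\Rep(H))$ through the equivalence of Proposition \ref{equi}: a $\D(H)$-module $X$ becomes a Yetter--Drinfeld module, and its half-braiding $\varphi^X$ is given explicitly in terms of the comodule structure $\tau$ of $X$ (equivalently its $H^{*\cop}$-action). The computation is then that $\R$, acting on $X\otimes Y$ with $H$ on the first leg and $H^{*\cop}$ on the second, reproduces this coaction via the reproducing identity, so that $P\circ\R$ equals the half-braiding. I expect the only friction here to be bookkeeping: keeping straight which factor is acted on by $H$ versus $H^{*\cop}$ and tracking the $S$-twists of the compatibility condition \eqref{compa}.

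Parts (iv)--(vi) follow from the copairing structure. Pairing the legs of $\R$ against elements of $H^*$ and $H$ and using that $\Delta_H$ is dual to the product of $H^*$ (and vice versa) gives the quasitriangularity identities $(\Delta\otimes\id)\R=\R^{13}\R^{23}$ and $(\id\otimes\Delta)\R=\R^{13}\R^{12}$, while (iii) gives $\R\,\Delta(a)=\Delta^{\op}(a)\,\R$; the QYBE of (iv) is then the standard chain $\R^{12}\R^{13}\R^{23}=\R^{12}(\Delta\otimes\id)\R=((\Delta^{\op}\otimes\id)\R)\R^{12}=\R^{23}\R^{13}\R^{12}$. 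For (vi), the reproducing identity gives $(\varepsilon\otimes\id)\R=1$, and applying $\mu\circ(S\otimes\id)$ to the first two legs of $(\Delta\otimes\id)\R=\R^{13}\R^{23}$ yields $(S\otimes\id)\R\cdot\R=1\otimes1$, i.e. $(S\otimes\id)\R=\R^{-1}$. The real obstacle is (v): here I would expand both sides of the QYBE in canonical form, so that the middle leg of $\R^{12}\R^{13}\R^{23}$ carries the unknown cross-product $a_i^*\,a_k=(1\otimes a_i^*)(a_k\otimes 1)$ while every other factor is already in normal form, and then contract the outer legs against arbitrary elements of $H^*$ and $H$ to solve for this cross-product. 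The delicate point is to verify that contracting the outer legs pins down every cross-product uniquely, so that the QYBE has \eqref{multfor} as its only solution.
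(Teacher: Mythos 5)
The paper itself contains no proof of this theorem --- it is quoted as Drinfeld's result, and only the surrounding exercises (the compatibility relation, the multiplication formula \eqref{multfor}, and Proposition \ref{equi}) prepare its ingredients --- so your proposal can only be measured against the standard argument those exercises point to, and for parts (i) and (iii)--(vi) it follows that argument correctly. Part (iii) is exactly the transport of the braiding of $\Z(\Rep(H))$ through Proposition \ref{equi}, with the canonical element acting through the reproducing identity $\sum_i a_i\langle a_i^*,h\rangle=h$; the hexagon identities $(\Delta\otimes\id)\R=\R^{13}\R^{23}$ and $(\id\otimes\Delta)\R=\R^{13}\R^{12}$ do follow by dualizing product against coproduct; your chains for (iv) and (vi) are the standard ones (for (vi) note you only obtain the one-sided inverse $((S\otimes\id)\R)\cdot\R=1\otimes 1$; two-sidedness then holds because $\D(H)^{\otimes 2}$ is finite dimensional); and for (v), expanding the QYBE in normal form does isolate the unknown cross-products $a_i^*a_k$ in the middle leg, and the relation obtained by contracting the outer legs determines them uniquely, since the resulting pairing factors can be convolution-inverted using $S^{\pm 1}$. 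One warning: \eqref{multfor} cannot be used literally as printed --- setting $a'=a=1$ in it gives $(1\otimes b)(1\otimes b')=1\otimes S^{-1}(b)b'$, contradicting (i); the antipode must sit inside one of the pairings (as one sees by deriving the cross-relation from \eqref{compa}), and your specializations in (i) and contractions in (v) need that corrected form to come out.

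The genuine gap is in (ii). Read literally, as the componentwise map $a\otimes b\mapsto S_H(a)\otimes S_{H^{*\cop}}(b)$, the statement is false, so the verification you propose would not close. Counterexample: for $H=\Fun(G,\Bbb C)$ one has $\D(H)=\Fun(G,\Bbb C)\rtimes \Bbb C G$ as in the paper, and there $S_{\D(H)}(\delta_g\otimes h)=\delta_{h^{-1}g^{-1}h}\otimes h^{-1}$, not $\delta_{g^{-1}}\otimes h^{-1}$; for the componentwise candidate, $\mu\circ(S\otimes\id)\circ\Delta$ applied to $\delta_e\otimes h$ gives $\sum_{u\in Z_G(h)}\delta_u\otimes 1$ (the indicator of the centralizer of $h$) rather than $\varepsilon(\delta_e\otimes h)\,1\otimes 1$. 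The statement must be read as saying that $S_{\D(H)}$ restricts to $S_H$ and $S_{H^{*\cop}}$ on the two Hopf subalgebras, equivalently $S_{\D(H)}(a\otimes b)=S_{H^{*\cop}}(b)\cdot S_H(a)$ with the product taken in $\D(H)$ and then straightened into normal form; in that reading (ii) follows from (i) together with the uniqueness of the antipode, since the antipode of a Hopf subalgebra is the restriction of the ambient one. Relatedly, your plan to check the antipode axiom ``on each of the two subalgebras'' cannot suffice even for the correct map: the axiom does not propagate from a generating set to products for an arbitrary linear map, only for an algebra antihomomorphism, where one has $\sum S(x_{(1)}y_{(1)})x_{(2)}y_{(2)}=\sum S(y_{(1)})S(x_{(1)})x_{(2)}y_{(2)}=\varepsilon(x)\varepsilon(y)1$. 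So the correct route for (ii) is: define $S_{\D(H)}$ as the antihomomorphic extension of $S_H$ and $S_{H^{*\cop}}$, check well-definedness against the cross-relation (this is the real computation), and then either verify the axiom on the generating set $H\cup H^{*\cop}$ or simply invoke uniqueness of the antipode.
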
 

\subsection {Group actions on monoidal categories, equivariantization, and the Drinfeld center of ${\rm Vec}(G)$} (\cite{EGNO}, 2.7, 4.15). 

\subsubsection{}
\begin{example} Let $G$ be a finite group, and consider the category 
$\Vec (G)$ of $G$-graded complex vector spaces. What is its Drinfeld center?

We have $\Vec(G)=\Rep(H)$, where $H=\Fun(G,\Bbb C)$.
Thus 
\begin{equation}
\D(H) = H\otimes H^{*\cop} = \Fun(G,\Bbb C)\rtimes \mathbb{C} G ~\text{ (G acts on itself by conjugation)},~\R = \sum_{g\in G} \delta_g \otimes g,
\end{equation}
where $\delta_g$ is a function equal to $1$ on $g$, and to $0$ on all the other elements, i.e., the semidirect product Hopf algebra from Problem 5 of Subsection \ref{prob1}. Thus we have an equivalence 
\begin{equation}
\Z(\Vec (G)) \cong \text{\{G-equivariant, G-graded vector spaces\}}.
\end{equation}
The simple objects in this category are given by the Mackey theory -- they correspond to pairs $(C,\rho)$ where $C$ is a conjugacy class in $G$ and $\rho$ is an irreducible representation of the centralizer $Z(g_C)$, $g_C\in C$.
\end{example}

\subsubsection{} This brings us to the notion of {\bf group actions on categories}.  
\begin{definition} We say that a group $G$ {\bf acts on the category} $\mathcal{C}$ if
to each element $g\in G$ there is attached an autoequivalence $F_g:~\mathcal{C} \to \mathcal{C}$, and to every 
$g,h\in G$ there is attached a functorial isomorphism $\beta_{g,h}: F_g \circ F_h \xrightarrow{\sim} F_{gh}$ satisfying the consistency condition
\begin{equation}
\beta_{gh,k}\circ \beta_{g,h} = \beta_{g,hk}\circ \beta_{h,k}.
\end{equation}
\end{definition}

Note that since $F_1\circ F_1\cong F_1$, we have $F_1\cong \id$, hence 
$F_{g^{-1}}\cong F_g^{-1}$ for all $g\in G$.  

Let $(F_g,\beta_{g,h})$ and $(F_g',\beta_{g,h}')$ be two actions of $G$ on $\mathcal{C}$. 
An {\bf isomorphism} between them is a collection of functorial isomorphisms 
$\gamma_g: F_g\to F_g'$ that transform $\beta_{g,h}$ to $\beta_{g,h}'$. 

We may also define the notion of an action of a group on a {\bf monoidal} category $\mathcal{C}$. 
The definition is the same, except the functors  $F_g$ are required to be monoidal and 
the isomorphisms $\beta_{g,h}$ are required to be isomorphisms of monoidal functors. 
Finally, in the notion of isomorphism of actions, the morphisms $\gamma_g$ 
should be isomorphisms of monoidal functors. 

\begin{example} Let us classify actions of $G$ on the category $\Vec$ of vector spaces up to an isomorphism. First consider actions on $\Vec$ as an ordinary category. Since $F_g$ 
is an equivalence for all $g$, we may assume that 
\begin{equation}
\forall~g\in G\quad F_{g}=\id.
\end{equation}
Then 
\begin{equation}
\beta_{g,h}:~\id \xrightarrow{\sim} \id 
\end{equation} 
belongs to ${\rm Aut}(\id_\Vec)=\mathbb{C}^\times$. Moreover, the consistency condition 
is equivalent to saying that $\beta$ is a 2-cocycle: 
$\beta \in Z^2(G,\mathbb{C}^\times)$, and $\beta$ and $\beta'$ define isomorphic actions if and only if they differ by a coboundary: $\beta/\beta' = d\gamma$. 
So equivalence classes of actions are classified by $H^2(G,\mathbb{C}^\times)$. 

On the other hand, if $G$ acts on $\Vec$ as on a {\it monoidal} category then 
$\beta_{g,h}\in \Aut_{\otimes}(\id_\Vec)=1$. 
So there is only one (trivial) monoidal action of $G$ on $\Vec$. 
\end{example}

\subsubsection{}
Let $(F_g,\beta_{g,h})$ be an action of a group $G$ on a category $\mathcal{C}$. 
  
 \begin{definition} A $G${\bf -equivariant object} $X$ in $\mathcal{C}$ is an object equipped with isomorphisms $\gamma_g:~X\xrightarrow{\sim}F_g(X)$, $g\in G$ such that the diagram
\begin{equation}
\begin{CD}
X @ >{\gamma_g}>> @ . F_g(X)\\
@VV{\gamma_{gh}}V @ . @ VV{F_g(\gamma_h)}V\\
F_{gh}(X) @ <<{\beta_{g,h}}< @ . F_g F_h(X)
\end{CD}
\end{equation}
is commutative for any $g,h\in G$. 
\end{definition}

\begin{exercise} Suppose $G$ acts on $\Vec$ as a usual category via $\beta\in Z^2(G,\mathbb{C}^\times)$. Then $G$-equivariant objects are vector spaces $X$ with
\begin{equation}
\gamma_g:\,X \to X: \ \forall~g\in G,~~~ \gamma_{gh} = \beta_{g,h}\circ F_g(\gamma_h)\circ \gamma_g
\end{equation}
with $\beta_{g,h}\in \mathbb{C}^\times$. Thus $\gamma$ is just a {\bf projective representation} of $G$ on $X$ with Schur multiplier $\beta$. 

On the other hand, for a monoidal action $\beta=1$, so
\begin{equation}
\{\text{G-equivariant objects in } \Vec\}=\Rep(G).
\end{equation}
\end{exercise}

If $G$ acts on $\mathcal{C}$ then the category of $G$-equivariant objects in $\mathcal{C}$ is denoted by $\mathcal{C}^G$ and called the {\bf equivariantization} of $\mathcal{C}$ by $G$. If 
$\mathcal{C}$ is monoidal and $G$ acts monoidally on $\mathcal{C}$ then the equivariantization
$\mathcal{C}^G$ is a monoidal category. 

\subsubsection{}
\begin{exercise}\label{shortex} (i) Let a group $G$ act on a Hopf algebra $H$ by Hopf algebra automorphisms. 
Then we can define the semidirect product Hopf algebra $\mathbb{C}G\ltimes H$ as in Problem 5 of Subsection \ref{prob1}. Show that
\begin{equation}
\Rep (\mathbb{C}G\ltimes H) \cong (\Rep(H))^{G}.
\end{equation} 

(ii) Let $G$ be a group, ${\rm Aut}(G)$ 
its automorphism group, ${\rm Inn}(G)$ the subgroup of inner automorphisms
and ${\rm Out}(G)={\rm Aut}(G)/{\rm Inn}(G)$ 
the group of classes of automorphisms of $G$. 
Show that ${\rm Out}(G)$ acts naturally on the monoidal category $\Rep(G)$. 

(iii) Let 
$$
1\to K\to G\to L\to 1
$$
be a short exact sequence of groups. Show that the group $L$ acts naturally on the category 
$\Rep(K)$, and $\Rep(K)^L\cong \Rep(G)$ as monoidal categories. 
\end{exercise}

\begin{example} Thus we see that the Drinfeld center of $\Vec(G)$ can be described as follows:  
\begin{equation}
\Z(\Vec(G)) = \Rep \left(\mathbb{C}G\ltimes \Fun G\right) = \left(\Vec (G)\right)^{G}.
\end{equation}
\end{example}  

\begin{example} (\cite{N}) Let $\omega\in Z^3(G,\Bbb C^\times)$ be a 3-cocycle, 
and $\omega^g$ be the result of conjugation by $g\in G$ acting on $\omega$. 
Then there are natural functions 
$\mu_g(x,y)$, $\gamma_{g,h}(x)$ 
such that 
$$
d\mu_g(x,y,z)=\frac{\omega^g(x,y,z)}{\omega(x,y,z)},
$$
$$
\frac{\gamma_{g,h}(x)\gamma_{g,h}(y)}{\gamma_{g,h}(xy)}=\frac{\mu_g(hxh^{-1},hyh^{-1})\mu_h(x,y)}{\mu_{gh}(x,y)}.
$$
(see \cite{N}, Lemma 6.3). This allows us to define a $G$-action on $\Vec(G,\omega)$ 
such that $F_g(x)=gxg^{-1}$, the tensor structure on $F_g$ is defined by $\mu_g$, and 
$\beta_{g,h}|_x$ is given by $\gamma_{g,h}(x)$. Moreover, 
the Drinfeld center $\mathcal{Z}(\Vec(G,\omega))$ 
is the equivariantization $\Vec(G,\omega)^G$ of $\Vec(G,\omega)$ with respect to this action. 
\end{example}

\subsection{Tannakian reconstruction and quasitriangular Hopf algebras} (\cite{EGNO}, 5.1-5.4).

\subsubsection{} 
Let $A$ be an associative unital algebra, $\mathcal{C}=A-\Mod$, $F:A-\Mod \to \Vec$ the forgetful functor.

\begin{exercise} Show that $\End F \cong A$.
\end{exercise}

More precisely for any $a\in A$ we have a family of morphisms $a_{Z}: F(Z)\to F(Z),~Z\in \mathcal{C}$ such that for any $X, Y\in \mathcal{C}$ and $\beta: X \to Y$ the following diagram is commutative:
\begin{equation}
\begin{CD}
F(X) @ >{a_X}>> @ . F(X)\\
@VV{F(\beta)}V @ . @ VV{F(\beta)}V\\
F(Y) @ >{a_Y}>> @ . F(Y)
\end{CD}
\end{equation}
Now let $A=H$ be a bialgebra. As an associative algebra $\End F \cong H$.
But how can we recover the coproduct of $H$ from $F$?
Note that $\End F\otimes \End F\cong \End(F\otimes F)$, 
where $F\otimes F: \mathcal C\times \mathcal C\to \Vec$
is the functor given by $(X,Y)\mapsto F(X)\otimes F(Y)$.  
Thus to recover the coproduct of $H$, we need to define 
a homomorphism $\Delta: \End F\to  \End (F \otimes F)$. So 
for $a\in \End F$ and $X,Y \in \mathcal{C}$ we need to define a 
linear map 
\begin{equation}
\Delta(a)_{X,Y}: F(X)\otimes F(Y) \to F(X)\otimes F(Y).
\end{equation}
We define this map as the composition
\begin{equation}
F(X)\otimes F(Y) \xrightarrow{J_{X,Y}} F(X\otimes Y)  \xrightarrow{F(a_{X\otimes Y})} F(X\otimes Y) \xrightarrow{J_{X,Y}^{-1}} F(X)\otimes F(Y),
\end{equation}
where $J$ is the natural monoidal structure on $F$. 

\begin{proposition} Let $H$ be an associative unital algebra and  
$\mathcal{C}=H-{\rm mod}$. Suppose that $\mathcal{C}$ 
is equipped with a monoidal structure, and let  
$F:\, \mathcal{C} \to \Vec$ be an exact faithful monoidal functor ({\bf fiber functor}). Then $H=\End F$ is a bialgebra which is Hopf if $\mathcal{C}$ is rigid, and the functor $F$ gives rise to a monoidal equivalence $\mathcal{C}\cong \Rep(H)$. 
\end{proposition}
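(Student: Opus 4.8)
\emph{Proof plan.} The plan is to separate the statement into its abelian part and its monoidal part, and to treat the abelian reconstruction as the external input. Since $F$ is exact and faithful, the reconstruction theory cited in \cite{EGNO}, 5.1--5.4 tells me that $H=\End F$ recovers the algebra and that $F$ lifts to an equivalence of \emph{abelian} categories $\mathcal{C}\xrightarrow{\sim}\Rep(H)$, sending $X$ to the vector space $F(X)$ made into an $H$-module via $a\mapsto a_X$ (this generalizes the preceding exercise $\End F\cong A$ for the forgetful functor, and supplies the identification $\End(F\otimes F)\cong \End F\otimes \End F$ used below). Granting this, the remaining work is to (i) upgrade $H$ to a bialgebra, (ii) build an antipode when $\mathcal{C}$ is rigid, and (iii) check that the above equivalence is monoidal; all three are coherence bookkeeping driven by the monoidal structure $J$ on $F$.

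For (i) I would define the coproduct $\Delta\colon \End F\to \End(F\otimes F)\cong \End F\otimes \End F$ by the formula already recorded, $\Delta(a)_{X,Y}=J_{X,Y}^{-1}\circ F(a_{X\otimes Y})\circ J_{X,Y}$, and the counit by $\varepsilon(a)=a_{\V1}\in \End(F(\V1))=\End(\Bbb C)=\Bbb C$, using $F(\V1_{\mathcal C})\cong \Bbb C$. That $\Delta$ and $\varepsilon$ are \emph{algebra} homomorphisms is automatic, since conjugation by the fixed isomorphism $J$ is an algebra map and so is evaluation of a natural endomorphism at a fixed object. Coassociativity is precisely the commutative hexagon in the definition of a monoidal functor: computing $(\Delta\otimes\id)\circ\Delta(a)$ and $(\id\otimes\Delta)\circ\Delta(a)$ on $F(X)\otimes F(Y)\otimes F(Z)$ presents each as the conjugate of $F(a_{(X\otimes Y)\otimes Z})$, respectively $F(a_{X\otimes(Y\otimes Z)})$, by the two composite isomorphisms onto those objects, and the coherence square for $J$ together with the naturality of $a$ applied to $\alpha_{X,Y,Z}$ identifies them. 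The counit axioms follow the same way from the unit constraints of $F$. Hence $H$ is a bialgebra.

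For (ii), assuming $\mathcal{C}$ rigid, I would use that a monoidal $F$ carries the evaluation and coevaluation of $X$ to those of $F(X)$ in $\Vec$ after the identifications by $J$ and $F(\V1)\cong\Bbb C$. I would set $S(a)_X$ to be $\bigl(a_{X^\ast}\bigr)^{\!\ast}$ transported through these identifications, and then verify the antipode identities $\mu\circ(S\otimes\id)\circ\Delta=\varepsilon=\mu\circ(\id\otimes S)\circ\Delta$ by evaluating both sides on $F(X)$ and collapsing the resulting composites using the zig-zag (rigidity) identities for $X$; naturality of $a$ with respect to $\mathrm{ev}$ and $\mathrm{coev}$ is exactly what forces the two sides to agree. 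This produces the antipode, so $H$ is Hopf.

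For (iii), the lift $F\colon\mathcal{C}\cong\Rep(H)$ is already an equivalence of abelian categories, so it only remains to see it is monoidal. But the tensor product on $\Rep(H)$ is defined through $\Delta$, and the defining formula for $\Delta$ says precisely that for every $a\in H$ the isomorphism $J_{X,Y}\colon F(X)\otimes F(Y)\to F(X\otimes Y)$ intertwines the action $\Delta(a)$ on the source with the action $a_{X\otimes Y}$ on the target. Thus each $J_{X,Y}$ is an isomorphism of $H$-modules and furnishes the monoidal structure of the equivalence, its coherence being inherited verbatim from that of $J$. The main obstacle is conceptual rather than computational: it is the abelian reconstruction $\mathcal{C}\cong\Rep(\End F)$ invoked at the start, which is the genuine content of Tannakian duality; the fiddliest hands-on step is then the rigidity computation in (ii), while (i) and (iii) reduce to unwinding the coherence of $J$.
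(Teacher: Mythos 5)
Your proposal is correct and follows essentially the same route the paper takes: the paper's own treatment consists of the preceding construction (the identification $\End F\cong H$, the isomorphism $\End F\otimes\End F\cong\End(F\otimes F)$, and the definition $\Delta(a)_{X,Y}=J_{X,Y}^{-1}\circ F(a_{X\otimes Y})\circ J_{X,Y}$) together with an appeal to the reconstruction theory of \cite{EGNO}, 5.1--5.4, and your plan fleshes out exactly these steps — coassociativity from the coherence square for $J$, the counit from $F(\V1)\cong\Bbb C$, the antipode from rigidity via the zig-zag identities, and monoidality of the equivalence from the fact that the formula for $\Delta$ makes each $J_{X,Y}$ an $H$-module map. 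The only point worth flagging is that the initial abelian equivalence $\mathcal{C}\cong\Rep(\End F)$, which you correctly isolate as the external input, is the genuinely nontrivial Tannakian ingredient and is likewise taken as given in the paper.
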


\subsubsection{} In particular, for symmetric categories we recover the classical Tannakian reconstruction theory for group schemes. 
\begin{example} (Classical Tannakian reconstruction)

(i) Let $\mathcal{C}=H-{\rm mod}$ be a rigid symmetric monoidal category over an algebraically closed field $\bold k$, and $F$ a symmetric monoidal functor (preserves $c_{X,Y}$). Then $H=\Aut F$ is cocommutative.

(ii) If $\mathcal{C}$ is finite, i.e., $H$ is finite dimensional, then $H = \bold k G$ is the group algebra of a finite group scheme $G$, and for any commutative $\bold k$-algebra $R$, 
\begin{equation}
G(R) = \{g\in (H\otimes_{\bold k}R)^\times~|~ \Delta(g) = g \otimes g \},
\end{equation}
the group of grouplike elements of $H$. 
In this case $G \simeq \Aut_{\otimes} (F)$ as a group scheme. Moreover, if $\dim H\ne 0$ in $\bold k$ (e.g., if ${\rm char}(\bold k)=0$) then $G$ is reduced, i.e., a usual finite group (Subsection \ref{prob1}, Problem 6).   
\end{example}

\subsubsection{}
Let $\mathcal{C}=H-{\rm mod}$ be a braided rigid monoidal category and $F:\mathcal{C} \to \Vec$ a exact faithful monoidal functor which is not required to preserve the braiding. 
 What structure does the braiding induce on the Hopf algebra 
 $H=\End F$? 
 
To answer this question, let $c_{X,Y}: X\otimes Y \to Y\otimes X$ be the braiding in $\mathcal{C}$ and define $\R \in H\otimes H$ by the following diagram:
\begin{equation}
\begin{CD}
F(X)\otimes F(Y) @ >{P\circ \R}>> @ . F(Y)\otimes F(X)\\
@VV{J_{X,Y}}V @ . @ AA{J_{Y,X}^{-1}}A\\
F(X\otimes Y) @ >{F(c_{X,Y})}>> @ . F(Y\otimes X)
\end{CD}
\end{equation} 

The condition that the braiding is invertible then translates into the condition that 
$\R$ is invertible, the condition that it is a morphism translates into 
the condition that $P\circ \R$ commutes with $\Delta (a)\in H$, i.e., 
$$
\R\Delta(a) = \Delta^{\op}(a) \R,
$$
and the hexagon axioms $\hexd,\hexdm$ translate into the {\bf hexagon relations}
$$ 
(\Delta \otimes \id) \R = \R^{13}\R^{23},~ (\id\otimes\Delta) \R = \R^{13}\R^{12}.
$$

\begin{definition} An element $\R\in H\otimes H$ 
satisfying these properties is called a {\bf quasi-triangular structure} on $H$.
A Hopf algebra equipped with a quasitriangular structure is called a 
{\bf quasitriangular Hopf algebra}, and the element $\R$ is called 
the {\bf universal $\R$-matrix} of $H$.
\end{definition} 

Thus having a quasitriangular structure on $H$ is equivalent to having a braiding on $\Rep(H)$.
So we obtain

\begin{proposition} 
The quantum double $\D(H)$ of a finite dimensional Hopf algebra $H$ 
is a quasitriangular Hopf algebra with $\R=\sum_i a_i\otimes a_i^*$, where $\lbrace a_i\rbrace$ is a basis of $H$. 
\end{proposition}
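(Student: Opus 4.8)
The plan is to deduce the statement from the reconstruction dictionary just established, rather than to verify the quasitriangularity axioms from scratch. By the theorem on the quantum double we have an equivalence of monoidal categories $\Rep(\D(H))\cong \YD(H)\cong \Z(\Rep(H))$, and the Drinfeld center of any monoidal category is braided (the proposition that $\Z(\mathcal{C})$ is a braided monoidal category with braiding $c_{VW}=\varphi^V_W$). Hence $\Rep(\D(H))$ is a braided rigid monoidal category equipped with the forgetful fiber functor $F$, and $\End F\cong \D(H)$. By the discussion immediately preceding the definition of a quasitriangular structure, a braiding on $\Rep(K)$ for a finite-dimensional Hopf algebra $K$ is the same datum as a quasitriangular structure $\R\in K\otimes K$, the element $\R$ being read off from the commutative square relating $P\circ\R$ on $F(X)\otimes F(Y)$ to $F(c_{X,Y})$ through the monoidal structure of $F$. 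Applying this with $K=\D(H)$ already shows that $\D(H)$ is quasitriangular; the only remaining content is to identify the resulting universal $\R$-matrix with $\sum_i a_i\otimes a_i^*$.

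For that identification I would compute the braiding of $\Z(\Rep(H))$ in the Yetter-Drinfeld picture. Under $\Z(\Rep(H))\cong \YD(H)$ the half-braiding of an object corresponds to its comodule structure $\tau=\overline{\varphi}_H$ via Frobenius reciprocity (the earlier exercise). Unwinding this, the braiding $c_{X,Y}$ is implemented on $F(X)\otimes F(Y)$ by a map in which one factor acts on the other through the coaction. Concretely, evaluating $P\circ(\sum_i a_i\otimes a_i^*)$ on $x\otimes y$, with $a_i\in H\subset \D(H)$ acting by the module structure and $a_i^*\in H^{*\cop}\subset \D(H)$ acting by the $H^*$-action $a_i^*\cdot v=(v_{(1)},a_i^*)\,v_{(2)}$, the dual-basis identity $\sum_i (w,a_i^*)\,a_i=w$ collapses the sum to the coaction-implemented braiding. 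Matching this against the braiding $c$ of the center identifies the reconstructed $\R$-matrix with $\sum_i a_i\otimes a_i^*$, which visibly lies in $H\otimes H^*\subset \D(H)\otimes\D(H)$.

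With the formula in hand, the three axioms come essentially for free: invertibility of $\R$ is invertibility of the braiding (equivalently $\R^{-1}=(S\otimes\id)\R$), the intertwining $\R\Delta(a)=\Delta^{\op}(a)\R$ is the statement that $P\circ\R$ is a morphism of $\D(H)$-modules (naturality of $c$), and the two relations $(\Delta\otimes\id)\R=\R^{13}\R^{23}$, $(\id\otimes\Delta)\R=\R^{13}\R^{12}$ are the hexagon axioms $\hexd,\hexdm$ for the braiding. I expect the \emph{main obstacle} to be the bookkeeping in the second paragraph: one must track the $\cop$-twist on $H^*$, the placement of the antipode, and the Sweedler indices forced by the compatibility condition \eqref{compa} and the multiplication law \eqref{multfor}, and in particular decide whether the reconstructed braiding is $\varphi^X_Y$ or the inverse braiding $(\varphi^Y_X)^{-1}$, i.e. pin down the correct convention. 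As an independent consistency check, one can verify the hexagon relations and the QYBE $\R^{12}\R^{13}\R^{23}=\R^{23}\R^{13}\R^{12}$ directly from the tensor-product coproduct on $\D(H)$ together with the dual-basis identity, thereby recovering parts (iv) and (vi) of the earlier theorem on the quantum double.
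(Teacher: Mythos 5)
Your proposal is correct and takes essentially the same route as the paper: the paper also obtains the proposition by combining the braidedness of $\Z(\Rep(H))\cong\Rep(\D(H))$ with the just-established dictionary between braidings on $\Rep(K)$ and quasitriangular structures on $K=\End F$, with the formula $\R=\sum_i a_i\otimes a_i^*$ supplied by part (iii) of the earlier theorem on the quantum double. Your dual-basis computation in the Yetter-Drinfeld picture simply verifies that cited formula rather than quoting it, which is consistent with the paper's argument.
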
 

\begin{exercise} Show that a quasitriangular structure $\R$ satisfies the {\bf quantum Yang-Baxter equation}
$$
\R^{12}\R^{13}\R^{23}=\R^{23}\R^{13}\R^{12}.
$$   
\end{exercise}

\begin{remark}  A quasitriangular structure $\R$ is called {\bf triangular} if $\R^{21} \R = 1 \otimes 1$, where $\R^{21}$ is obtained from $\R$ by swapping its components.  
Thus $\R$ is triangular if and only if $c_{X,Y}\circ c_{Y,X}=1 \otimes 1$, i.e., the braiding $c$ is symmetric. 
\end{remark} 

\subsection{$U_q(\mathfrak{sl}_2)$ at a root of unity} (\cite{EGNO}, 5.6, \cite{CP}, 9.3, \cite{K}, VI.5, \cite{J2}, Ch. 1). 

\subsubsection{}
Assume that $q$ is a root of unity of odd order $\ell >1$. In this specialization there are several different versions of quantum $\mathfrak{sl}_2$. For this reason, at roots of unity the Hopf algebra $U_q(\mathfrak{sl}_2)$ defined above is called the {\bf De Concini-Kac quantum group}, and often denoted $U_q^{\rm DK}(\mathfrak{sl}_2)$ (to distinguish it from the {\bf Lusztig quantum group} 
$U_q^{\rm L}(\mathfrak{sl}_2)$ defined below). 

Another important feature of roots of unity is that $U^{\rm DK}_q(\mathfrak{sl}_2)$
 acquires a ``big center". Indeed, recall from Subsection \ref{prob1}, Problem 10 that for generic $q$ (not a root of unity) the center is generated by the quantum Casimir element: 
\begin{equation}
C = f e + \dfrac{qK + q^{-1} K^{-1}-2}{(q-q^{-1})^2}, ~ Z = \mathbb{C}[C]. 
\end{equation}
On the other hand, as shown by De Concini and Kac (in the more general setting of any simple Lie algebra), at a root of unity the center $Z$ is much larger, so that the quantum group is a finitely generated $Z$-module. 

\begin{exercise} Check that if $q$ is a primitive $\ell$-th root of $1$ then $e^\ell,f^{\ell},K^{\ell}\in Z$.
\end{exercise}

This implies that if $q$ is a primitive $\ell$-th root of $1$ then $U_q^{\rm DK}(\mathfrak{sl}_2)$ has a finite dimensional Hopf quotient. 

Namely, let us call an ideal $I$ in a Hopf algebra $H$ a {\bf Hopf ideal} if $\Delta(I)\subseteq I\otimes H + H\otimes I$, $S(I)\subseteq I$. The quotient $H/I$ of $H$ by such an ideal is a Hopf algebra.

Now take $I = \langle e^\ell,f^{\ell},K^{\ell}-1\rangle$. One can compute that 
\begin{equation}
\Delta (e^{\ell}) = e^{\ell} \otimes K^{\ell} + 1\otimes e^\ell,
~~~
\Delta (f^{\ell}) = f^{\ell} \otimes 1+ K^{-\ell} \otimes f^{\ell},
~~~
\Delta (K^{\ell}-1) = (K^{\ell}-1)\otimes K^{\ell} + 1 \otimes (K^{\ell}-1).
\end{equation}
Thus $I$ is a Hopf ideal and $u_q(\mathfrak{sl}_2):=H/I$ is a Hopf algebra. It is called the  
{\bf small quantum group} and was introduced by Lusztig (in the more general setting of any simple Lie algebra). It is easy to see that $\dim u_q(\mathfrak{sl}_2) = \ell^3$.

\subsubsection{}
Following Lusztig, there is another definition of the small quantum group. Namely, define the 
{\bf Lusztig quantum group} $U_v^{\mathrm{L}}(\mathfrak{sl}_2)$ over $\mathbb{C}[v,v^{-1}]$ as the $\mathbb{C}[v,v^{-1}]$-subalgebra in the usual quantum group $U_v(\mathfrak{sl}_2)_{\Bbb C(v)}$
over the field $\Bbb C(v)$ generated by the elements $K^{\pm 1}$ and {\bf divided powers} 
\begin{equation}
e^{(n)} := \dfrac{e^n}{[n]_v !},~~~
f^{(n)} := \dfrac{f^n}{[n]_v !},~~~
n\ge 1. 
\end{equation}

Let 
$$
h^{(\ell)}:=\frac{\prod_{j=0}^{\ell-1}(Kv^{-j}-K^{-1}v^j)}{[\ell]_v!}.
$$

\begin{exercise}\label{lqg} Show that the algebra $U_v^{\mathrm{L}}(\mathfrak{sl}_2)$ specialized at a primitive $\ell$-th root of unity $v=q$ is actually generated by $e,f,K^{\pm 1}$ and the specializations of the elements $e^{(\ell)},f^{(\ell)},h^{(\ell)}$.
\end{exercise}

In fact, one can write defining relations for these generators (but we won't give them here). 

Note that since $[\ell]_q=0$, we have 
$$
[\ell]_qh^{(\ell)}=\prod_{j=0}^{\ell-1}(Kq^{-j}-K^{-1}q^j)=0,
$$ 
hence 
$K^{2\ell}=1$ in $U_q^{\mathrm{L}}(\mathfrak{sl}_2)$. 
Moreover, it is easy to see that $K^\ell\in U_q^{\mathrm{L}}(\mathfrak{sl}_2)$ is a central element (of order $2$). So we may consider the quotient $\overline U_q^{\mathrm{L}}(\mathfrak{sl}_2)$ of 
$U_q^{\mathrm{L}}(\mathfrak{sl}_2)$ by the relation $K^\ell=1$. 

\begin{proposition} $U_q^{\mathrm{L}}(\mathfrak{sl}_2)$ and $\overline U_q^{\mathrm{L}}(\mathfrak{sl}_2)$ are Hopf algebras, and 
$\mathfrak{u}_q(\mathfrak{sl}_2)$ is a Hopf subalgebra of $\overline U_q^{\mathrm{L}}(\mathfrak{sl}_2)$ generated by $e,f,K^{\pm 1}$. Thus we have a diagram of Hopf algebra maps 
\begin{equation}
U_q^{\rm DK}(\mathfrak{sl}_2) ~\twoheadrightarrow~ u_q(\mathfrak{sl}_2) ~ \hookrightarrow ~ \overline U_q^{\mathrm{L}}(\mathfrak{sl}_2). 
\end{equation}
\end{proposition}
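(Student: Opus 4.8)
The statement breaks into three essentially independent tasks: showing that the integral form $U_q^{\mathrm{L}}(\mathfrak{sl}_2)$ is a Hopf algebra, that its quotient $\overline U_q^{\mathrm{L}}(\mathfrak{sl}_2)$ is again one, and that $u_q(\mathfrak{sl}_2)$ embeds into $\overline U_q^{\mathrm{L}}(\mathfrak{sl}_2)$ as the subalgebra $\langle e,f,K^{\pm1}\rangle$. The surjection $U_q^{\rm DK}(\mathfrak{sl}_2)\twoheadrightarrow u_q(\mathfrak{sl}_2)$ needs no new argument: it is precisely the quotient by the Hopf ideal $\langle e^\ell,f^\ell,K^\ell-1\rangle$ produced above. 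My plan is to establish the Hopf structures over the ground ring $\mathbb{C}[v,v^{-1}]$ first, and only afterwards specialize $v=q$.

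For the first task I would check that the operations $\Delta,\varepsilon,S$ of $U_v(\mathfrak{sl}_2)_{\mathbb{C}(v)}$ preserve the $\mathbb{C}[v,v^{-1}]$-subalgebra generated by $K^{\pm1}$ and the divided powers. This is the heart of the matter and reduces to the standard divided-power formulas, for example
\[
\Delta\big(e^{(n)}\big)=\sum_{r+s=n} v^{rs}\, e^{(r)}\otimes K^{r}e^{(s)},
\]
together with the analogous identities for $f^{(n)}$, for $h^{(\ell)}$, and for the antipode: in each case every term is a monomial in divided powers and $K^{\pm1}$ with coefficient in $\mathbb{C}[v,v^{-1}]$, hence lies in the integral form, while the counit is visibly integral. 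Thus $U_v^{\mathrm{L}}(\mathfrak{sl}_2)$ is a Hopf algebra over $\mathbb{C}[v,v^{-1}]$. Since it is free over this ring (it carries a PBW basis), base change along $v\mapsto q$ commutes with tensor products, and the specialization $U_q^{\mathrm{L}}(\mathfrak{sl}_2)$ inherits a Hopf algebra structure over $\mathbb{C}$.

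For the quotient I would verify that $(K^\ell-1)$ is a Hopf ideal. Its generator is central (already observed), so the ideal is two-sided, and from $\Delta(K^\ell-1)=(K^\ell-1)\otimes K^\ell+1\otimes(K^\ell-1)$, $\varepsilon(K^\ell-1)=0$ and $S(K^\ell-1)=-K^{-\ell}(K^\ell-1)$ the Hopf-ideal conditions hold. Hence $\overline U_q^{\mathrm{L}}(\mathfrak{sl}_2)$ is a Hopf algebra by the criterion recalled above.

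The embedding is the interesting point. The key observation is that $e^\ell=[\ell]_q!\,e^{(\ell)}$ and $f^\ell=[\ell]_q!\,f^{(\ell)}$ in $U_q^{\mathrm{L}}(\mathfrak{sl}_2)$, and since $q$ is a primitive $\ell$-th root of unity $[\ell]_q=0$, so $[\ell]_q!=0$ and therefore $e^\ell=f^\ell=0$ already hold there. Consequently all defining relations of $u_q(\mathfrak{sl}_2)$ are satisfied by $e,f,K^{\pm1}$ inside $\overline U_q^{\mathrm{L}}(\mathfrak{sl}_2)$, so $e\mapsto e$, $f\mapsto f$, $K\mapsto K$ defines a surjective homomorphism onto $B:=\langle e,f,K^{\pm1}\rangle$; it is a map of Hopf algebras because the generators carry the same coproduct, counit and antipode on both sides. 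Surjectivity being automatic, the remaining content — and the step I expect to be the main obstacle — is injectivity. I would settle it by a dimension count: $\dim u_q(\mathfrak{sl}_2)=\ell^3$ is known, so it suffices to exhibit $\ell^3$ linearly independent elements of $B$. For $0\le a,c\le\ell-1$ the factorials $[a]_q!,[c]_q!$ are nonzero (since $[j]_q\ne0$ for $1\le j\le\ell-1$), whence $f^aK^be^c=[a]_q!\,[c]_q!\,f^{(a)}K^be^{(c)}$ is a nonzero multiple of a PBW monomial of the specialized Lusztig form; the triangular decomposition of $\overline U_q^{\mathrm{L}}(\mathfrak{sl}_2)$, together with the fact that $K$ has exact order $\ell$ there, shows that the $\ell^3$ elements $f^{(a)}K^be^{(c)}$ with $0\le a,b,c\le\ell-1$ remain linearly independent after the quotient. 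Therefore $\dim B\ge\ell^3$, the map $u_q(\mathfrak{sl}_2)\to B$ is an isomorphism, and the composite $u_q(\mathfrak{sl}_2)\xrightarrow{\sim}B\hookrightarrow\overline U_q^{\mathrm{L}}(\mathfrak{sl}_2)$ yields the asserted Hopf embedding, completing the diagram.
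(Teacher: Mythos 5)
The paper states this proposition without proof (the surrounding text only records that $[\ell]_qh^{(\ell)}=0$ forces $K^{2\ell}=1$ and that $K^\ell$ is central), so your proposal must stand on its own; its architecture is the standard one and most of it is sound. The integrality of $\Delta,\varepsilon,S$ on divided powers over $\mathbb{C}[v,v^{-1}]$ is the right mechanism (one quibble: in this paper's conventions the formula is $\Delta\big(e^{(n)}\big)=\sum_{r+s=n}v^{rs}\,e^{(r)}\otimes e^{(s)}K^{r}$; your version $v^{rs}e^{(r)}\otimes K^re^{(s)}$ is off by the power $v^{2rs}$ coming from commuting $K^r$ past $e^{(s)}$ --- harmless, since only the fact that the coefficients are powers of $v$ matters). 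Base change along $v\mapsto q$ using freeness, the Hopf-ideal computation for $(K^\ell-1)$, and the observation $e^\ell=[\ell]_q!\,e^{(\ell)}=0$, $f^\ell=0$ (so that the De Concini--Kac relations together with $e^\ell=f^\ell=K^\ell-1=0$ hold in $\overline U_q^{\mathrm{L}}(\mathfrak{sl}_2)$, giving a Hopf map factoring through $u_q(\mathfrak{sl}_2)$) are all correct.

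The genuine gap is in the injectivity step, which you rightly call the main obstacle but then dispatch by invoking ``the triangular decomposition of $\overline U_q^{\mathrm{L}}(\mathfrak{sl}_2)$'' and ``$K$ has exact order $\ell$ there.'' Both of these are statements about the quotient algebra of exactly the same nature and difficulty as the independence you are trying to prove, so as written the argument essentially assumes its conclusion. To close it you need: (i) Lusztig's basis theorem, that $U_v^{\mathrm{L}}(\mathfrak{sl}_2)$ is free over $\mathbb{C}[v,v^{-1}]$ with basis $f^{(a)}K^{\delta}\binom{K;0}{t}e^{(c)}$, $\delta\in\{0,1\}$, which after specialization gives $U_q^{\mathrm{L}}\cong U^-\otimes U^0\otimes U^+$; (ii) the remark that since $K^\ell-1$ is central and lies in $U^0$, the two-sided ideal it generates is $U^-\cdot\big((K^\ell-1)U^0\big)\cdot U^+$, so the triangular decomposition descends to $\overline U_q^{\mathrm{L}}\cong U^-\otimes\big(U^0/(K^\ell-1)U^0\big)\otimes U^+$ (this is the step that makes ``triangular decomposition of the quotient'' legitimate rather than assumed); and (iii) a verification inside the commutative algebra $U^0$ that $1,K,\dots,K^{\ell-1}$ remain independent modulo $(K^\ell-1)U^0$ --- here one uses the structure of the specialized Cartan part, e.g.\ that for $t<\ell$ the elements $\binom{K;0}{t}$ are Laurent polynomials in $K$ (as $[t]_q!\neq 0$) and that $U^0\cong\big(\mathbb{C}[K]/(K^{2\ell}-1)\big)\otimes\Gamma$ with $\Gamma$ the divided-power part generated by $\binom{K;0}{\ell}$, so the ideal meets the span of the $K^b$ only in the span of $K^{b+\ell}-K^b$. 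Items (ii) and (iii) carry the real content of the embedding; once they are supplied, your dimension count against $\dim u_q(\mathfrak{sl}_2)=\ell^3$ finishes the proof exactly as you intend.
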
 

\subsubsection{}
Note that the universal $R$-matrix of $U_v(\mathfrak{sl}_2)$ cannot be specialized to 
a root of unity $q$ because of the poles in the formula for $\R$
\begin{equation}
\R = v^{\frac{h\otimes h}{2}}~\sum\limits_{k=0}^{\infty} v^{\frac{k(k-1)}{2}}\dfrac{(v-v^{-1})^k}{[k]_v !}e^{k}\otimes f^{k}
\end{equation}
in the limit $v\to q$. However, using the divided powers $e^{(k)},f^{(k)}$, we may rewrite this formula as 
\begin{equation}
\R = v^{\frac{h\otimes h}{2}}~\sum\limits_{k=0}^{\infty} v^{\frac{k(k-1)}{2}}[k]_v !(v-v^{-1})^ke^{(k)}\otimes f^{(k)}.
\end{equation}
So instead of poles we now have zeros. It follows that when $v=q$, this series makes sense and moreover terminates at the $\ell$-th term. As a result, $u_q(\mathfrak{sl}_2)$ and 
$\overline U_q^{\mathrm{L}}(\mathfrak{sl}_2)$ are quasi-triangular Hopf algebras with 
the $\R$-matrix given by the truncation of this series at the $\ell$-th term: 
\begin{equation}\label{drfor} 
\R = \Theta \cdot\sum_{k=0}^{\ell -1}q^{\frac{k(k-1)}{2}}\dfrac{(q-q^{-1})^k}{[k]_q !}e^{k}\otimes f^{k},
\end{equation} 
where $\Theta\in (\Bbb C[K]/(K^\ell-1))^{\otimes 2}$ 
is the specialization of $v^{\frac{h\otimes h}{2}}$ which is defined by the formula 
$$
\Theta:=\frac{1}{\ell}\sum_{i,j=0}^{\ell-1}q^{-2ij}K^i\otimes K^j.
$$

\subsubsection{} The behavior of quantum groups at a root of $1$ is analogous to the behavior of algebraic groups and Lie algebras in characteristic $p$. Namely, let $\bold k$ 
be an algebraically closed field of characteristic $p>2$, $U(\mathfrak{sl}_2)$ the usual enveloping algebra of $\mathfrak{sl}_2$ over $\bold k$, and $\mathrm{SL}_2(\bold k)$ the corresponding group. Like in characteristic zero, we have a functor $ \Rep (\mathrm{SL}_2(\bold k))\to \Rep(\mathfrak{sl}_2)=\Rep(U(\mathfrak{sl}_2))$ (derivative at $1$), but unlike characteristic zero, no functor in the opposite direction. The reason is that the power series for the exponential function has zeros in the denominators, so in general we cannot exponentiate a linear transformation, even if it is nilpotent, and thus cannot integrate representations of the Lie algebra to the group.  

There is, however, a fix: we can replace the usual enveloping algebra $U(\mathfrak{sl}_2)$ 
by its divided power version, called the {\bf distribution algebra}. 
Namely, consider the subalgebra $U_{\mathbb{Z}}^{\rm K}(\mathfrak{sl}_2)$ inside $U_{\mathbb{Q}}(\mathfrak{sl}_2)$ generated by 
\begin{equation}
e^{(n)} =\dfrac{e^n}{n!}, ~~~ f^{(n)} =\dfrac{f^n}{n!}, ~~~ h^{(n)} = \binom{h}{n}=\frac{h(h-1)...(h-n+1)}{n!}, n\ge 1 \text{ (Kostant's $\mathbb{Z}$-form)}. 
\end{equation}
Then the {\bf distribution algebra} is the Hopf algebra $U_{\bold k}^{\rm K}(\mathfrak{sl}_2) = U_{\mathbb{Z}}^{\rm K}(\mathfrak{sl}_2)\otimes_{\mathbb{Z}} {\bold k}$, which can be thought of as the algebra of distributions on $\mathrm{SL}_2(\bold k)$ concentrated at $1$, i.e., 
linear functionals $\bold k[\mathrm{SL}_2]\to \bold k$ which annihilate some power of the maximal ideal of $1$, 
with the product and coproduct dual to those of $\bold k[\mathrm{SL}_2]$. One can show that  
finite dimensional representations of the distribution algebra are the same as representations of the algebraic group $\mathrm{SL}_2(\bold k)$. 

Also the usual enveloping algebra $U(\mathfrak{sl}_2)$ develops a ``big center": in addition to the usual Casimir $C=fe+\frac{(h+1)^2}{4}$ we have the central elements $e^p,f^p,h^p-h$. These elements are primitive ($\Delta(x)=x\otimes 1+1\otimes x)$, so they generate a Hopf ideal and we get a finite dimensional cocommutative Hopf algebra $U(\mathfrak{sl}_2)/(e^{p},~f^{p},~ h^p-h)=u(\mathfrak{sl}_2)$. This Hopf algebra is called the {\bf restricted enveloping algebra}. It sits as a Hopf subalgebra generated by $e,f,h$ inside $U_{\bold k}^{K}(\mathfrak{sl}_2)$, and has dimension $p^3$. Thus we have a diagram of Hopf algebra maps
\begin{equation}
U(\mathfrak{sl}_2) ~\twoheadrightarrow~ u(\mathfrak{sl}_2) ~ \hookrightarrow ~ U_{\bold k}^{\rm K}(\mathfrak{sl}_2). 
\end{equation}
We see that the $\Bbb Z$-form $U_{\bold k}^{\rm K}(\mathfrak{sl}_2)$ is analogous to the Lusztig form $U_q^{\mathrm{L}}(\mathfrak{sl}_2)$ of the quantum group, the enveloping algebra $U_{\bold k}(\mathfrak{sl}_2)$ to the De Concini-Kac quantum group $U_q^{\rm DK}(\mathfrak{sl}_2)$, and the restricted enveloping algebra $u(\mathfrak{sl}_2)$ to the small quantum group $u_q(\mathfrak{sl}_2)$. Moreover when $q$ is a primitive $p$-th root of unity, one can obtain the classical algebras from the quantum ones by reduction to characteristic $p$. 

\subsection{Quantum Frobenius} (\cite{CP}, 9.3, \cite{J2}, Ch. 1, \cite{Lu}).
\subsubsection{} The quantum Frobenius map 
is a quantum analog of the classical Frobenius map for algebraic groups in characteristic $p$, so let us recall the classical Frobenius map first. 

Let $G=\mathrm{SL}_2(\bold k)$, where $\mathrm{char}(\bold k) = p$. The classical (geometric) Frobenius automorphism ${\rm F}$ acts on $\bold k[G]$ by
\begin{equation}
\left(
\begin{array}{cc}
a & b \\
c & d
\end{array}
\right)
\mapsto
\left(
\begin{array}{cc}
a^p & b^p \\
c^p & d^p
\end{array}
\right).
\end{equation} 
There is a dual map ${\rm F}^*:~U_{\bold k}^{\rm K}(\mathfrak{sl}_2)\to U_{\bold k}^{\rm K}(\mathfrak{sl}_2)$ defined by
\begin{equation}
\def\arraystretch{1.5}
\begin{array}{llll}
e \mapsto 0, & e^{(p)} \mapsto e, & e^{(p^2)} \mapsto e^{(p)}, & ...\\
f \mapsto 0, & f^{(p)} \mapsto f, & f^{(p^2)} \mapsto f^{(p)}, & ...\\
h \mapsto 0, & h^{(p)} \mapsto h, & h^{(p^2)} \mapsto h^{(p)}, & ...\\
\end{array}
\end{equation}
Thus the sequence of Hopf algebra maps $U(\mathfrak{sl}_2) ~\twoheadrightarrow~ u(\mathfrak{sl}_2) ~ \hookrightarrow ~ U_{\bold k}^{\rm K}(\mathfrak{sl}_2)$ discussed above can be extended to the sequence 
\begin{equation}\label{cfrob}
U(\mathfrak{sl}_2) ~\twoheadrightarrow~ u(\mathfrak{sl}_2) ~ \hookrightarrow ~ U_{\bold k}^{\rm K}(\mathfrak{sl}_2) ~\twoheadrightarrow~ U_{\bold k}^{\rm K}(\mathfrak{sl}_2),
\end{equation}
where the last map is ${\rm F}$. The dual of the last three terms of this sequence geometrically is 
the short exact sequence of affine group schemes 
\begin{equation}
1 ~\to~ G_{(1)} ~\to~ G ~\xrightarrow{\rm F}~ G ~\to~ 1
\end{equation}
Here $G_{(1)}$ is the infinitesimal group scheme called the {\bf Frobenius kernel}, such that 
${\bold k}[G_{(1)}]=u(\mathfrak{sl}_2)$ and $\mathcal O(G_{(1)})=u(\mathfrak{sl}_2)^*$. 

\subsubsection{}
The quantum Frobenius automorphism is defined by quantizing the definition of the usual Frobenius map. Namely, note that the ideal generated by $e,f,K-1$ inside $\overline U_q^{L}(\mathfrak{sl}_2)$ (i.e., by the kernel of the counit of the small quantum group) is a Hopf ideal. 
So we may consider the Hopf algebra 
\begin{equation}
A:=U_q^{L}(\mathfrak{sl}_2)/(e,f,K-1) = \langle E:=e^{(\ell)},F:=f^{(\ell)},H:=h^{(\ell)} \rangle.
\end{equation}
\begin{proposition}\label{qfrob} The Hopf algebra $A$ is isomorphic to $U(\mathfrak{sl}_2)$. Namely, the elements $E,F,H$ satisfy the usual defining relations
\begin{equation}
[H,E] = 2E,\ [H,F] = -2 F,\ [E,F] = H.  
\end{equation}
\end{proposition}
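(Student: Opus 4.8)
The plan is to realize $A$ as the reduction of the Lusztig integral form modulo the ideal $I=(e,f,K-1)$ and to verify the three defining relations by direct computation with Lusztig's divided-power formulas (see \cite{J2,Lu}), reducing everything modulo $I$ only at the very end. The key preliminary observation is that, since $q$ is a primitive $\ell$-th root of unity with $\ell$ odd, $[m]_q\ne 0$ for $1\le m\le \ell-1$, so $e^{(m)}=e^m/[m]_q!$ and $f^{(m)}=f^m/[m]_q!$ are scalar multiples of $e^m,f^m$ and hence lie in $I$ for $1\le m\le\ell-1$, while $e^{(0)}=f^{(0)}=1$ and $E=e^{(\ell)}$, $F=f^{(\ell)}$, $H=h^{(\ell)}$ survive in the quotient, where moreover $K\equiv 1$. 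By Exercise \ref{lqg} the algebra $A$ is generated by $E,F,H$, so it suffices to compute the three commutators.

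For the relation $[E,F]=H$ I would use Lusztig's commutation formula
\begin{equation}
e^{(a)}f^{(b)}=\sum_{t\ge 0} f^{(b-t)}\begin{bmatrix} K;\,2t-a-b \\ t \end{bmatrix} e^{(a-t)},
\end{equation}
where $\begin{bmatrix} K;\,c \\ t \end{bmatrix}$ is the Gaussian Cartan binomial normalized so that $\begin{bmatrix} K;\,0 \\ 1 \end{bmatrix}=(K-K^{-1})/(q-q^{-1})$, which indeed recovers $[e,f]=(K-K^{-1})/(q-q^{-1})$. Taking $a=b=\ell$ and subtracting the $t=0$ summand $f^{(\ell)}e^{(\ell)}=FE$, every remaining term carries a factor $e^{(\ell-t)}$ (and $f^{(\ell-t)}$) with $1\le \ell-t\le \ell-1$ except the top term $t=\ell$; hence modulo $I$ only $t=\ell$ survives and
\begin{equation}
[E,F]\equiv \begin{bmatrix} K;\,0 \\ \ell \end{bmatrix} \pmod{I}.
\end{equation}
Comparing the defining products shows $h^{(\ell)}=(q-q^{-1})^{\ell}\begin{bmatrix} K;\,0 \\ \ell \end{bmatrix}$ (the numerators agree and $[\ell]_q!=\prod_{i=1}^{\ell}(q^i-q^{-i})/(q-q^{-1})^{\ell}$), so after the harmless rescaling $F\mapsto (q-q^{-1})^{\ell}F$ one obtains $[E,F]=H$ exactly.

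For $[H,E]=2E$ (and symmetrically $[H,F]=-2F$) I would use the Cartan commutation relation $\begin{bmatrix} K;\,c \\ t \end{bmatrix}e^{(\ell)}=e^{(\ell)}\begin{bmatrix} K;\,c+2\ell \\ t \end{bmatrix}$, derived over $\mathbb{C}(v)$ from $K e^{(\ell)}=v^{2\ell}e^{(\ell)}K$ and then specialized (so the shift $c\mapsto c+2\ell$ is genuine even though $q^{2\ell}=1$). Writing $H$ as a multiple of $\begin{bmatrix} K;\,0 \\ \ell \end{bmatrix}$ gives
\begin{equation}
[H,E]=E\cdot (q-q^{-1})^{\ell}\left(\begin{bmatrix} K;\,2\ell \\ \ell \end{bmatrix}-\begin{bmatrix} K;\,0 \\ \ell \end{bmatrix}\right),
\end{equation}
so the entire relation is reduced to the scalar identity asserting that the displayed difference of Cartan binomials equals $2$ after setting $K=1$.

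The main obstacle is precisely this root-of-unity analysis of the Gaussian binomials: both $\begin{bmatrix} K;\,0 \\ \ell \end{bmatrix}$ and $\begin{bmatrix} K;\,2\ell \\ \ell \end{bmatrix}$ are $0/0$ expressions at $v=q$ (since $[\ell]_q=0$), and one must compute their values in the integral form via a quantum analogue of Lucas' theorem, formally writing $K=q^{h}$ and extracting the classical binomial $\binom{h}{1}=h$ in the limit. Once these two evaluations are in hand --- giving $\begin{bmatrix} K;\,0 \\ \ell \end{bmatrix}\mapsto$ a multiple of $H$ and the difference $\mapsto 2$ --- the three relations follow and yield a surjection $U(\mathfrak{sl}_2)\twoheadrightarrow A$. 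Finally I would promote this to an isomorphism by a PBW argument: the ordered monomials $F^{b}H^{c}E^{a}$ form a basis of $A$, inherited from the PBW basis of the Lusztig form after killing the lower divided powers and setting $K=1$, and this matches the PBW basis of $U(\mathfrak{sl}_2)$, so the surjection is also injective.
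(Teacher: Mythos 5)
The paper never actually proves Proposition \ref{qfrob}: it is posed verbatim as Problem 7 of Subsection \ref{prob2}, so there is no in-paper argument to compare against and your proposal must stand on its own. Its skeleton is the natural (and surely intended) route, and the individual ingredients are correct: modulo $I=(e,f,K-1)$ the divided powers $e^{(m)},f^{(m)}$ with $1\le m\le \ell-1$ die; in Lusztig's formula for $e^{(\ell)}f^{(\ell)}$ only the terms $t=0$ and $t=\ell$ survive, giving $[E,F]\equiv B_0 \pmod I$, where $B_c:=\left[\begin{smallmatrix} K;\,c\\ \ell\end{smallmatrix}\right]$ denotes the Gaussian Cartan binomial; the commutation $B_cE=EB_{c+2\ell}$ holds in the integral form and survives specialization; and indeed $h^{(\ell)}=(q-q^{-1})^{\ell}B_0$.

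The genuine problem is that your normalization bookkeeping is self-contradictory. Taking your three claims together --- $[E,F]\equiv B_0$, $h^{(\ell)}=(q-q^{-1})^{\ell}B_0$, and $B_{2\ell}-B_0\equiv 2 \pmod I$ --- one gets $[h^{(\ell)},E]\equiv 2(q-q^{-1})^{\ell}E$, so after your rescaling $F\mapsto (q-q^{-1})^{\ell}F$ the relation $[E,F]=H$ holds but $[H,E]=2E$ and $[H,F]=-2F$ still fail by the unit $(q-q^{-1})^{\ell}$. No rescaling of $E$ and $F$ can repair this: if $E\mapsto aE$, $F\mapsto bF$, $H\mapsto cH$, the three $\mathfrak{sl}_2$ relations force $c=(q-q^{-1})^{-\ell}$ and $ab=1$. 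The consistent resolution is to rescale $H$, not $F$: the element satisfying all three relations on the nose is the standard Lusztig divided power $B_0$, whose denominator is $\prod_{s=1}^{\ell}(v^s-v^{-s})=(v-v^{-1})^{\ell}[\ell]_v!$ rather than $[\ell]_v!$. In other words, the paper's displayed formula for $h^{(\ell)}$ is off by exactly this unit (as defined, $h^{(\ell)}\to 0$ rather than $\binom{h}{\ell}$ as $v\to 1$), and your write-up should say so explicitly instead of absorbing the factor into $F$. Beyond this, two steps of your plan remain plans rather than proofs: the evaluation $B_{2\ell}-B_0\equiv 2$ must be established as an identity in $A$, i.e. membership of $B_{2\ell}-B_0-2$ in $I$ --- note it is \emph{not} an identity in $U_q^{\rm L}(\mathfrak{sl}_2)$ itself, since on type II weight vectors (where $K$ has eigenvalue $-q^m$, $\ell$ odd) this difference acts by $-2$; and injectivity of $U(\mathfrak{sl}_2)\to A$ needs a real argument that the monomials $F^bH^cE^a$ stay independent in the quotient, either by analyzing $I$ against Lusztig's basis $f^{(a)}K^{\delta}\left[\begin{smallmatrix} K;\,0\\ t\end{smallmatrix}\right]e^{(b)}$ or by producing enough $A$-modules; the Frobenius-twist modules one would naturally reach for are essentially what the proposition constructs, so that route risks circularity.
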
 

Thus the sequence of Hopf algebra maps $U_q^{\mathrm{DK}}(\mathfrak{sl}_2) ~\twoheadrightarrow~ u_q(\mathfrak{sl}_2) ~ \hookrightarrow ~ U_q^{\mathrm{L}}(\mathfrak{sl}_2)$
considered above can be extended to the sequence 
\begin{equation} \label{qfrob1}
U_q^{\mathrm{DK}}(\mathfrak{sl}_2) ~\twoheadrightarrow~ u_q(\mathfrak{sl}_2) ~ \hookrightarrow ~ U_q^{\mathrm{L}}(\mathfrak{sl}_2) \xrightarrow{\rm F} U(\mathfrak{sl}_2),
\end{equation}
where the last map ${\rm F}$ comes from Proposition \ref{qfrob} and is called 
{\bf the quantum Frobenius map}. 

The sequence \eqref{qfrob1} is analogous to the classical  sequence 
\eqref{cfrob}, and in fact turns into it when the order of $q$ is $p$ and we reduce to characteristic $p$. 
For this reason $u_q(\mathfrak{sl}_2)$ is often called the {\bf Frobenius-Lusztig kernel}.  
Note, however, a crucial difference -- unlike the classical Frobenius, the quantum Frobenius cannot be iterated, i.e., ${\rm F}^m$ does not make sense for $m>1$. Indeed, ${\rm F}$ already lands in the usual enveloping algebra (over $\Bbb C$), which does not have a Frobenius map. This is so for any simple Lie algebra, and is the reason why the representation theory of quantum groups at roots of unity, while in some essential ways similar to that of algebraic groups in characteristic $p$, is substantially simpler. 

\begin{remark} One can informally view the last three terms of the sequence  
\eqref{qfrob1} (or, rather, the dual sequence of quantized function algebras) as a ``short exact sequence of quantum groups":
$$
1\to G_{(1)}^q\to G^q\xrightarrow{\rm F} G\to 1
$$
(where only the last term is a usual group, $G=\mathrm{SL}_2(\Bbb C)$). 
One can show that analogously to Exercise \ref{shortex}(iii), this gives rise to an action 
of $G$ on $\Rep(G_{(1)}^q)=\Rep(u_q(\mathfrak{sl}_2))$, and one has 
$$
\Rep(u_q(\mathfrak{sl}_2))^G\cong \Rep(G^q)\cong \Rep(\overline{U}^{\rm L}_q(\mathfrak{sl}_2)).
$$ 
\end{remark} 

\subsection{The universal $R$-matrix for the quantum $\mathfrak{sl}_2$} (\cite{K}, Ch. VIII, XI, \cite{CP}, Ch. 8).\label{qdsl}

\subsubsection{}
We are now ready to give a motivated construction (from scratch) of $u_q(\mathfrak{sl}_2)$ and its universal $R$-matrix. 
Let $\mathfrak{b}_+$, $\mathfrak{b}_-$ be the positive and negative 
Borel subalgebras of $\mathfrak{sl}_2$ spanned by $h,e$ and $h,f$ respectively. 
The quantum analog of $\mathfrak{b}_+$ is generated by one grouplike element and one skew-primitive element on which the grouplike element acts with eigenvalue $q^2$: 
\begin{equation}
U_q(\mathfrak{b}_{+}) =
\langle
K_{+},e \, | \, K_{+}eK_{+}^{-1}=q^2 e 
\rangle
,~~~
\Delta(K_{+}) = K_{+} \otimes K_{+}, ~ \Delta(e) = e\otimes K_{+} + \M1 \otimes e. 
\end{equation}
Similarly, we can define the (in fact, isomorphic) Hopf algebra 
\begin{equation}
U_q(\mathfrak{b}_{-}) =
\langle
K_{-},f \, | \, K_{-}fK_{-}^{-1}=q^{-2} f
\rangle
,~~~
\Delta(K_{-}) = K_{-} \otimes K_{-}, ~ \Delta(f) = f\otimes \M1 + K_{-}^{-1} \otimes f. 
\end{equation}
Now, if $\mathrm{ord}(q) = \ell ~ $ then  
$$
I_+=\langle K_+^{\ell} - 1,~e^\ell\rangle \subset U_q(\mathfrak{b}_{+})
$$ 
is a Hopf ideal, and we can define the Taft algebra $u_q(\mathfrak{b}_+):=U_q(\mathfrak{b}_{+})/I_+$ which  is a Hopf algebra with $\dim u_q(\mathfrak{b}_+) = \ell^2$ and linear basis $K_+^i e^j,~ 0\leq i,j \leq \ell-1$ (see Subsection \ref{prob1}, Problem 3). Similarly, we can define the Hopf ideal 
$$
I_-=\langle K_-^{\ell} - 1,\ f^\ell\rangle\subset U_q(\mathfrak{b}_{-})
$$ 
and define the Taft algebra $u_q(\mathfrak{b}_-):=U_q(\mathfrak{b}_{-})/I_-$, which  is a Hopf algebra with $\dim u_q(\mathfrak{b}_-) = \ell^2$ and linear basis $K_-^i f^j,~ 0\leq i,j \leq \ell-1$.  

\subsubsection{}
\begin{lemma} One has a Hopf algebra isomorphism $u_q(\mathfrak{b}_{+})^{*\cop} \cong u_q(\mathfrak{b}_{-})$.
\end{lemma}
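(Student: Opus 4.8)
The plan is to construct an explicit Hopf algebra homomorphism $\Phi\colon u_q(\mathfrak b_-)\to u_q(\mathfrak b_+)^{*\cop}$ and then show it is an isomorphism by a dimension count, both algebras having dimension $\ell^2$. Recall that on $H^*=u_q(\mathfrak b_+)^*$ the product is the convolution $(\phi\psi)(x)=(\phi\otimes\psi)(\Delta x)$ dual to the coproduct of $H=u_q(\mathfrak b_+)$, while its coproduct is dual to the product, so that $\Delta^{\cop}_{H^*}(\phi)(x\otimes y)=\phi(yx)$. Using the basis $K_+^i e^j$ ($0\le i,j\le \ell-1$) of $H$, I define two functionals: $\mathcal K$ by $\mathcal K(K_+^i e^j)=\delta_{j,0}\,q^{2i}$, and $\mathcal F$ by $\mathcal F(K_+^i e^j)=\delta_{j,1}$. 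I will set $\Phi(K_-)=\mathcal K$ and $\Phi(f)=\mathcal F$ and check that the defining relations of $u_q(\mathfrak b_-)$ are satisfied by their images.

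First I would check that $\mathcal K$ is an algebra character of $H$, hence a grouplike element of $H^*$, and therefore of $H^{*\cop}$ as well, matching $\Delta(K_-)=K_-\otimes K_-$; its order is $\ell$ since $\mathcal K^\ell(K_+)=q^{2\ell}=1$, so $\mathcal K^\ell=\varepsilon$ is the unit of $H^*$. Next, using $\Delta^{\cop}_{H^*}(\mathcal F)(x\otimes y)=\mathcal F(yx)$ together with $\Delta(e)=e\otimes K_++\M1\otimes e$ and the commutation relation $e K_+=q^{-2}K_+ e$, a direct evaluation on the basis gives $\mathcal F(yx)=\mathcal F(x)\varepsilon(y)+\mathcal K^{-1}(x)\mathcal F(y)$, i.e. $\Delta^{\cop}_{H^*}(\mathcal F)=\mathcal F\otimes 1+\mathcal K^{-1}\otimes\mathcal F$; this is exactly the image of $\Delta(f)=f\otimes\M1+K_-^{-1}\otimes f$, and it is this computation that forces the normalization $\mathcal K(K_+)=q^2$. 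Finally I would verify the algebra relations in $H^*$: evaluating the convolution products on $K_+^i e$ gives $(\mathcal K\mathcal F)(K_+^i e)=q^{2i}$ and $(\mathcal F\mathcal K)(K_+^i e)=q^{2i+2}$ (both products vanishing on $K_+^i e^j$ with $j\ne 1$ by counting $e$-degree in $\Delta$), so $\mathcal K\mathcal F=q^{-2}\mathcal F\mathcal K$, and $\mathcal F^\ell=0$ because $\mathcal F$ has $e$-degree $1$ while $e^\ell=0$ in $H$. These checks make $\Phi$ a well-defined bialgebra map on generators, and since $\varepsilon$ and $S$ are uniquely determined by $\Delta$ for any Hopf algebra, $\Phi$ is automatically a Hopf algebra homomorphism.

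It then remains to prove $\Phi$ is bijective; by equality of dimensions it suffices to show it is surjective, i.e. that $\mathcal K$ and $\mathcal F$ generate $H^*$. For this I would compute $\mathcal F^j$, which by the $q$-binomial expansion of $\Delta(e^j)$ is supported on the $e$-degree-$j$ part and satisfies $\mathcal F^j(K_+^a e^j)=c_{a,j}$ with $c_{a,j}\ne 0$ (a $q^2$-factorial times a phase, the nonvanishing using that $\ell$ is odd so $q^2$ is a primitive $\ell$-th root of unity). Consequently $(\mathcal K^i\mathcal F^j)(K_+^a e^b)=\delta_{b,j}\,q^{2ia}c_{a,j}$, and the matrix of these values, block-diagonal in $j=b$ with blocks the invertible Vandermonde matrices $(q^{2ia})_{i,a}$, is nonsingular. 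Hence the $\ell^2$ elements $\mathcal K^i\mathcal F^j$ are linearly independent and form a basis of $H^*$, so $\Phi$ is surjective and thus an isomorphism. The hard part is the bookkeeping in the second paragraph: getting the powers of $q$ in $\Delta(e^j)$ and in the convolution products exactly right, since these simultaneously pin down the grouplike $\mathcal K^{-1}$ and the normalization $\mathcal K(K_+)=q^2$ needed for all relations to close up; the surjectivity step is then a routine Vandermonde argument.
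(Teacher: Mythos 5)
Your proof is correct and is essentially the paper's own argument in dual form: the paper's proof sketch constructs a nondegenerate Hopf pairing $(\,,\,):u_q(\mathfrak{b}_+)\otimes u_q(\mathfrak{b}_-)\to\Bbb C$ determined by $(K_+,K_-)=q^2$, $(e,f)=\tfrac{1}{q-q^{-1}}$, $(K_+,f)=(e,K_-)=0$, and your functionals $\mathcal K,\mathcal F$ are precisely this pairing read as a map $u_q(\mathfrak{b}_-)\to u_q(\mathfrak{b}_+)^{*\cop}$, up to the harmless renormalization $(e,f)=1$, which differs from the paper's choice only by the Hopf algebra automorphism $f\mapsto (q-q^{-1})f$ of $u_q(\mathfrak{b}_-)$ (the paper's normalization matters only later, so that the double construction yields $[e,f]=\tfrac{K-K^{-1}}{q-q^{-1}}$). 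Your relation checks and the block-Vandermonde linear-independence argument correctly supply the existence and nondegeneracy that the paper asserts without proof, so your write-up is a completed version of the same approach rather than a different one.
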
 

\begin{proof} (sketch) To construct such an isomorphism, we need
a nondegenerate {\bf Hopf pairing}
$$
 (,): u_q(\mathfrak{b}_{+})\otimes u_q(\mathfrak{b}_{-})\to \mathbb{C},
$$
i.e., pairing satisfying the equalities 
\begin{equation}
(\Delta (x),z\otimes y) = (x,yz),~~~ (y\otimes z,\Delta (x)) = (yz,x). 
\end{equation}
And one can show that there exists a unique such pairing determined by the equalities 
\begin{equation}
(K_+,K_-) = q^{2},~~~ (e,f) = \dfrac{1}{q-q^{-1}},~~~ (K_+,f)=(e,K_-)=0.
\end{equation}
\end{proof} 

Thus the Drinfeld double of $u_q(\mathfrak{b}_{+})$ as a vector space has the form
\begin{equation}
\D (u_q(\mathfrak{b}_{+})) = u_q(\mathfrak{b}_{+})\otimes u_q(\mathfrak{b}_{+})^{*\cop} = u_q(\mathfrak{b}_{+}) \otimes u_q(\mathfrak{b}_{-}). 
\end{equation}

\begin{exercise} The element $K_c=K_+ K_-^{-1}$ is central in 
$\D (u_q(\mathfrak{b}_{+}))$, and $\Delta(K_c)=K_c\otimes K_c$. 
\end{exercise} 

Thus the ideal generated by $K_c-1$ is a Hopf ideal in $\D (u_q(\mathfrak{b}_{+}))$. So we may consider the quotient Hopf algebra $\D (u_q(\mathfrak{b}_{+}))/(K_c-1)$, reducing dimension from $\ell^4$ to $\ell^3$. The images of $K_+$ and $K_-$ in this quotient are the same, so let us denote
this element by $K$. 

\begin{exercise} (i) Show that in $\D (u_q(\mathfrak{b}_{+}))/(K_c-1)$ one has 
\begin{equation}
[e,f] = \dfrac{K-K^{-1}}{q-q^{-1}}.
\end{equation}
Deduce that 
$$
\D (u_q(\mathfrak{b}_{+}))/(K_c-1) \cong u_q(\mathfrak{sl}_2).
$$

(ii) Moreover, show that 
$\D (u_q(\mathfrak{b}_{+})) \cong u_q(\mathfrak{sl}_2)\otimes \mathbb{C}[\mathbb{Z}/\ell \mathbb{Z}]$ as Hopf algebras, where $\mathbb{C}[\mathbb{Z}/\ell \mathbb{Z}]$ is generated by $K_c$.
\end{exercise} 

\subsubsection{} Now we are ready to compute the universal $R$-matrix of $u_q(\mathfrak{sl}_2)$. 
Recall that in $\D (u_q(\mathfrak{b}_{+}))$ we have 
\begin{equation}
\label{RUniv}
\R = \sum_i a_i \otimes a_i^*,
\end{equation}
where $\lbrace a_i\rbrace$ and $\lbrace a_i^*\rbrace$ are dual bases of $u_q(\mathfrak{b}_{+})$ and $u_q(\mathfrak{b}_{-})$ under the Hopf pairing $(,)$. So to obtain an explicit formula for $\R$, we need 
to compute this Hopf pairing.  
 
\begin{exercise} Show that 
\begin{equation}
(K_+^i,K_-^j) = q^{2ij},~~~ (e^n,f^n) = \dfrac{[n]_q!}{(q-q^{-1})^n}q^{-\frac{n(n-1)}{2}}.
\end{equation}
Deduce from this Drinfeld's formula \eqref{drfor} for the $R$-matrix of $u_q(\mathfrak{sl}_2)$.\end{exercise} 

\begin{remark} 
A similar construction of the quantum group and its $R$-matrix 
works for $U_q(\mathfrak{sl}_{2})$ for generic (or formal) $q$.
However, there are some subtleties related to the fact that $U_q(\mathfrak{b}_+)$ is infinite dimensional. Namely, $H=U_q(\mathfrak{sl}_{2})$ is not literally quasitriangular, as 
the $R$-matrix belongs not to $H\otimes H$ but only to its completion $H\widehat\otimes H$ (since the sum defining $\R$ is infinite). So one has to consider restricted dual algebras and the restricted Drinfeld double. 

Also, as a result, the category of all representations of $U_q(\mathfrak{sl}_2)$ is not a braided monoidal category, as the series for $\R$ does not converge when evaluated in $V\otimes W$ for general representations $V,W$. However, if representations $V,W$ of $U_q(\mathfrak{sl}_{2})$ are locally nilpotent under $e$ (i.e. $\forall ~v~ \exists N:~e^{N}v=0$) then $\R|_{V\otimes W}$ makes sense (as the series for $\R$ terminates at some place), so such representations do form a braided monoidal category. In particular, this category includes all finite dimensional representations. 
\end{remark} 

\subsection{Problems}\label{prob2}
\begin{enumerate}
\item Show that tensor autoequivalences of $\Vec(G)$ which act 
trivially on isomorphism classes of simple objects  are classified up to isomorphism by $H^2(G,\mathbb{C}^\times)$.
\item Show that tensor automorphisms of the identity functor on $\Vec (G)$ are classified by $$
H^1(G,\mathbb{C}^\times) = \Hom(G,\mathbb{C}^\times).
$$
\item Show that $\Bbb C$-linear monoidal functors $\Vec(G) \xrightarrow{F} \Vec$ are classified by $H^2(G,\mathbb{C}^\times)$.
\item Let $H$ be a Hopf algebra, $F:\Rep(H) \to \Vec$ -- the forgetful functor and $J\in H \otimes H$. What are the conditions on $J$ ensuring that it defines a tensor structure on $F$?
\item Let $H=\mathbb{C}G$ and $J$ is as in Problem 4 and is symmetric ($J\in S^2H$). Show that $\exists$ invertible $a\in\mathbb{C}G$ such that $J = \Delta(a)(a^{-1}\otimes a^{-1})$.
\item Compute the coproduct of $e^{(\ell)},~f^{(\ell)}$ in $U_q^{\rm L}(\mathfrak{sl}_2)$.
\item Consider the quotient $A = U_q^L(\mathfrak{sl}_2)/(e,f,K-1)$. Show that $A\cong U(\mathfrak{sl}_2)$ (i.e., prove Proposition \ref{qfrob}).
\item Describe finite dimensional, irreducible representations of $U_q^{\rm DK}(\mathfrak{sl}_2)$
when $q$ is root of unity of odd order $\ell\ge 3$ (Hint: fix the eigenvalues of the central elements 
$e^\ell,f^\ell,K^\ell-1$ first). 
\item Show that when $q$ is root of unity of odd order $\ell\ge 3$ then irreducible 
finite dimensional representations of $\overline U_q^{\rm L}({\mathfrak{sl}}_2)$ are parametrized by 
highest weights $m\in \Bbb Z_+$. Namely, for any such $m$ there is a unique 
irreducible finite dimensional representation $L_m$ with a vector $v\ne 0$
such that $Kv=q^mv$ and $ev=e^{(\ell)}v=0$, and any irreducible finite dimensional  
representation is isomorphic to $L_m$ for exactly one $m$. 
\item What are irreducible representations of $u_q(\mathfrak{sl}_2)=U_q(\mathfrak{sl}_2)/(e^\ell, f^\ell, K^\ell -1)$?
Compute the composition series of the tensor product of two such representations.
\item Let $R=\mathbb{Z}[q],~ q=e^{2\pi i/p}$ where $p>2$ is a prime, and $\chi: R \to \mathbb{F}_p$ be the homomorphism defined by $\chi(q) = 1$. Define all versions of the quantum $\mathfrak{sl}_2$ over $R$ and show that they go to the corresponding versions of the enveloping algebra in characteristic $p$ under the functor $\cdot \bigotimes\limits_R \mathbb{F}_p$, where 
$R$ acts on $\Bbb F_p$ via $\chi$. 
\end{enumerate}

\section{Quantum universal enveloping algebras}

\subsection{The quantum group $U_q(\mathfrak{g})$} (\cite{J}, Ch. 4,5, \cite{Lu}) 
\subsubsection{} We now describe a generalization of the quantization of $\mathfrak{sl}_2$ considered above
to more general Lie algebras. Let $\mathfrak{g}$ be a simple complex Lie algebra, $\mathfrak{b}_+$ be a Borel subalgebra in $\mathfrak{g}$, and 
$(a_{ij}, 1\le i,j\le r)$ its Cartan matrix. Recall that this matrix is symmetrizable: 
there are unique numbers $d_i=1,2,3$, $i\in [1,r]$ (with the smallest one being $1$) such that 
\begin{equation}
d_i a_{ij} = d_ja_{ji}.
\end{equation}
Let  $q_i := q^{d_i}$.
Consider the Hopf algebra
\begin{equation}
\widetilde{U_q(\mathfrak{b}_+)} : = \langle \{K_i\}, \{e_i\} ~ | ~ K_i K_j = K_j K_i,~ K_i e_j K_i^{-1} = q_i^{a_{ij}}e_j \rangle
\end{equation}
where the coproduct, counit and antipode is defined on each subalgebra generated by $K_i,e_i$ as in the $\mathfrak{sl}_2$ case. 
Let $I$ be the largest Hopf ideal of $\widetilde{U_q(\mathfrak{b}_+)}$ not containing any $e_i$, and define  the {\bf quantum Borel subalgebra} $U_q(\mathfrak{b}_+) := \widetilde{U_q(\mathfrak{b}_+)}/I$.

\begin{theorem} (G. Lusztig) The ideal $I$ is generated by the {\bf quantum Serre relations}
\begin{equation}
\sum_{k=0}^{1-a_{ij}} \binom{1-a_{ij}}{k}_{q_i} e_i^k e_j e_i^{1-a_{ij}-k} = 0,\ i\ne j. 
\end{equation}
where $\binom{n}{k}_q:=\frac{[n]_q!}{[k]_q![n-k]_q!}$.
\end{theorem}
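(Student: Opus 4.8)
The plan is to identify $I$ with the radical of a canonical Hopf pairing and then to compute that radical explicitly. First I would strip off the torus: write $\widetilde{U_q(\mathfrak{b}_+)} = \mathbb{C}[\mathbb{Z}^r]\ltimes U^+$, where $U^+ = \mathbb{C}\langle e_1,\dots,e_r\rangle$ is the free algebra and $\mathbb{Z}^r = \langle K_i\rangle$. The coproduct $\Delta(e_i) = e_i\otimes K_i + 1\otimes e_i$ respects the $\mathbb{Z}_{\ge 0}^r$-grading by multidegree in the $e_i$, and a standard argument shows that the largest Hopf ideal avoiding the $e_i$ is generated by a graded subspace of $U^+$ concentrated in degrees $\ge 2$. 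So it suffices to locate this subspace inside $U^+$.

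Next I would construct, exactly as in the $\mathfrak{sl}_2$ Lemma above, a bilinear Hopf pairing $(\,,\,)$ between $U^+$ and an analogous negative object $U^- = \mathbb{C}\langle f_1,\dots,f_r\rangle$, normalized by $(e_i,f_j) = \delta_{ij}/(q_i-q_i^{-1})$ together with the compatibilities $(xy,z) = (x\otimes y,\Delta(z))$ and $(x,yz) = (\Delta(x),y\otimes z)$. The radical $\mathrm{rad} = \{x\in U^+ : (x,U^-) = 0\}$ is automatically a two-sided ideal and a coideal, hence a Hopf ideal, and it contains no $e_i$ since $(e_i,f_i)\neq 0$. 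The structural claim is that $\mathrm{rad}$ is the \emph{largest} Hopf ideal avoiding the $e_i$, so that $I = \mathrm{rad}$. I would prove maximality by downward induction on degree: given a Hopf ideal $J\not\subseteq\mathrm{rad}$, choose $x\in J\setminus\mathrm{rad}$ of minimal degree $n$; the coideal condition forces $\Delta(x)\in x\otimes 1 + 1\otimes x + (\text{strictly lower}\otimes\text{lower})$, and pairing against a suitable element of $U^-$ peels off one tensor factor to produce an element of $J\setminus\mathrm{rad}$ of degree $n-1$. This drives $n$ down to $1$, forcing some $e_i\in J$, a contradiction. Conceptually this is the universal property of the Nichols algebra $\mathfrak{B}(V)$ of the braided space $V = \bigoplus\mathbb{C}e_i$, namely $U^+/\mathrm{rad} = \mathfrak{B}(V)$.

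It then remains to show $\mathrm{rad} = \langle\, \text{quantum Serre relations}\,\rangle$, which splits into two inclusions. For ``$\supseteq$'' I would verify that each Serre element $S_{ij} := \sum_k \binom{1-a_{ij}}{k}_{q_i} e_i^k e_j e_i^{1-a_{ij}-k}$ is skew-primitive, i.e. $\Delta(S_{ij}) = S_{ij}\otimes K + 1\otimes S_{ij}$ with $K = K_i^{1-a_{ij}}K_j$; this is a direct computation resting on the $q$-binomial (Gauss) identities that make the coefficients $\binom{1-a_{ij}}{k}_{q_i}$ precisely the right structure constants. Since $U^-$ is generated in degree $1$ and $S_{ij}$ has degree $\ge 2$, a skew-primitive element pairs to zero against every product of the $f_k$ (each term of the iterated coproduct carries a $1$ or a grouplike into some degree-$1$ slot, where it pairs trivially), so $S_{ij}\in\mathrm{rad}$. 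The reverse inclusion ``$\subseteq$'' is the substantive part: I must show that modulo the Serre relations the induced form is already nondegenerate, equivalently that $\bar U := U^+/\langle S_{ij}\rangle$ has exactly the graded dimensions of $\mathfrak{B}(V)$, with no further relations forced.

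The hard part is this last step, and it is where finite type and generic $q$ genuinely enter. I would carry it out by Lusztig's method: introduce the braid-group (Weyl-group) symmetries $T_i$, use them to define root vectors attached to a convex ordering of the positive roots, and prove by Levendorski\u{i}--Soibelman-type commutation relations that the resulting ordered monomials span $\bar U$ using only the Serre relations, while their linear independence is detected by the nondegeneracy of the form on $\bar U$. Matching these graded dimensions against those of $U^+/\mathrm{rad}$ then forces $\mathrm{rad} = \langle S_{ij}\rangle$ and completes the proof. An alternative to the PBW route would be to exhibit enough highest-weight Verma-type modules to separate $\bar U$ from its radical, but the braid-group/PBW argument is the standard and most robust one, and I expect the bulk of the technical work to reside there.
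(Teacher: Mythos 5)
The paper itself offers no proof of this theorem: it is stated as Lusztig's result with a pointer to \cite{Lu}, so there is no in-text argument to compare yours against. Your architecture is in fact the standard one, and it is essentially Lusztig's: realize $I$ as the radical of the Hopf pairing between the two free Borel halves, check that the Serre elements are skew-primitive and hence lie in the radical, and close the remaining gap $\mathrm{rad}\subseteq\langle S_{ij}\rangle$ by a PBW dimension count. The reduction steps you give are sound: stripping the torus via the $\mathbb{Z}^r$-grading (this needs $q$ generic, so that conjugation by the $K_j$ separates the graded components), the peeling induction showing $\mathrm{rad}$ is the largest Hopf ideal avoiding the $e_i$, and the degree argument showing a skew-primitive of degree $\geq 2$ pairs to zero against all products of the $f_k$.

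Two concrete problems remain. First, a sign: the Serre element must be $\sum_k(-1)^k\binom{1-a_{ij}}{k}_{q_i}e_i^k e_j e_i^{1-a_{ij}-k}$; the statement as printed (which you copied verbatim) omits the factor $(-1)^k$, and without it the element is \emph{not} skew-primitive, so your ``direct computation resting on the $q$-binomial identities'' would fail as written. Already for $a_{ij}=0$ one has $\Delta(e_ie_j-e_je_i)=(e_ie_j-e_je_i)\otimes K_iK_j+1\otimes(e_ie_j-e_je_i)$, whereas $e_ie_j+e_je_i$ leaves uncancelled cross terms $2\,e_i\otimes e_jK_i+2\,e_j\otimes e_iK_j$. (The paper's own affine $\mathfrak{sl}_2$ Serre relations in Section 5 do carry the alternating signs; this typo should have been flagged.) Second, and more seriously, the entire content of the theorem sits in the step you only name: that ordered monomials in braid-group root vectors span $\bar U=U^+/\langle S_{ij}\rangle$ using the Serre relations alone, and that they are linearly independent there. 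As phrased, ``linear independence is detected by the nondegeneracy of the form on $\bar U$'' is circular, since nondegeneracy of the induced form on $\bar U$ is precisely equivalent to the equality $\mathrm{rad}=\langle S_{ij}\rangle$ you are trying to prove. The non-circular version is: the form descends to $\bar U$ because $\langle S_{ij}\rangle\subseteq\mathrm{rad}$, and one must compute the Gram matrix of the spanning monomials under this induced form and show it is nonsingular (Levendorskii--Soibelman-type orthogonality); that one computation then yields simultaneously the linear independence and the desired equality. Until that computation (or an equivalent representation-theoretic lower bound on the graded dimensions of $U^+/\mathrm{rad}$) is actually carried out, what you have is a correct reduction of Lusztig's theorem to its hardest lemma, not a proof.
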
 

Now similarly to Subsection \ref{qdsl} we can define the {\bf restricted quantum double}
$\D (U_q(\mathfrak{b}_{+}))_{\mathrm{res}}$, which is the Hopf algebra
$U_q(\mathfrak{b}_{+}) \otimes U_q(\mathfrak{b}_{-})$ with the twisted product rule, and
\begin{equation}
\D (U_q(\mathfrak{b}_{+}))_{\mathrm{res}}/(K_{ic}-1, i=1,...,r) = U_q(\mathfrak{g})
\end{equation}
with $K_{ic} = K_{i+}K_{i-}^{-1}$. In this quotient $K_{i+}=K_{i-}=K_i$ 
and besides the relations coming from the two halves, we have the commutation relation coming from the twisted product condition: 
$$
[e_i,f_j]=\delta_{ij}\frac{K_i-K_i^{-1}}{q_i-q_i^{-1}}.
$$
All these relations taken together are defining for $U_q(\mathfrak{g})$. 
Also, as before, the quantum double construction automatically produces 
the universal $R$-matrix. 

\begin{remark} 1. These constructions make sense more generally, for $\mathfrak{g}$ a symmetrizable Kac-Moody algebra (the only difference is that the symmetrized Cartan matrix 
$d_ia_{ij}$ is not necessarily positive definite, with slight changes in the Cartan part if 
it is degenerate). 

2. There exists a more explicit formula, due to Levendorskii and Soibelman, for the universal $R$-matrix of $U_q(\mathfrak{g})$ for a simple Lie algebra $\mathfrak g$, given, roughly speaking, by a noncommutative product over positive roots of $R$-matrices of $\mathfrak{sl}_2$ for these roots. This formula can be found in the textbook \cite{KS}. There are also generalizations of this formula to the case of affine Lie algebras, see \cite{LSS,KT}.
\end{remark} 

 Furthermore, as in the $\mathfrak{sl}_2$ case, at a root of unity $q$ (of odd order, coprime to $3$ for $G_2$) one can define the small quantum group $u_q(\mathfrak{g})$, which is the quantum analog of the restricted enveloping algebra $u(\mathfrak{g})$ in characteristic $p$. However, in the higher rank case in $u(\mathfrak{g})$ we have to impose the relations $e_\alpha^p=0$ for all (not just simple) roots $\alpha$. Thus in the quantum situation we first have to define the quantum analogs of non-simple root elements $e_\alpha$. This can be done for general $q$ using {\bf Lusztig's braid group action} on $U_q(\mathfrak{g})$, see \cite{Lu,J}, but is quite nontrivial and will not be discussed here. 

At roots of unity one can also define {\bf Lusztig's divided power algebra} $\overline U_q^L(\mathfrak{g})$ 
and the quantum Frobenius map ${\rm F}: \overline U_q^L(\mathfrak{g})\to U(\mathfrak{g})$. Thus we have 
the diagram of Hopf algebra maps 
\begin{equation}
U_q^{\mathrm{DK}}(\mathfrak{g}) ~\twoheadrightarrow~ u_q(\mathfrak{g}) ~ \hookrightarrow ~ U_q^{\mathrm{L}}(\mathfrak{g}) \xrightarrow{\rm F} U(\mathfrak{g}).
\end{equation}

\begin{remark} All these objects can be defined for roots of unity of any order, but in the case of even order the details are somewhat different. For example, for roots of unity of order divisible by $4$ the quantum Frobenius map lands not in $U(\mathfrak{g})$ but in $U(\mathfrak{g}^\vee)$, where 
$\mathfrak{g}^\vee$ is the Langlands dual Lie algebra to $\mathfrak{g}$. 
\end{remark} 

\begin{remark} One may ask what kind of quantum group is produced from $U_q^{\rm DK}(\mathfrak{b}_+)$ 
by the quantum double construction when $q$ is a root of unity. It turns out that this is the {\bf mixed quantum group}, intermediate between the Lusztig and De Concini-Kac form. In this mixed form, 
negative root generators come with divided powers, while positive ones with usual powers only. 
The mixed form has a universal $R$-matrix which is given by an infinite sum, same as 
for generic $q$. It is discussed in \cite{G}, 5.3. 
\end{remark} 

\subsection{Classical limit} (\cite{CP}, Ch. 1-3, \cite{ES}, Ch. 2-4, 9). 

\subsubsection{}
Let us explain in what sense $U_q(\mathfrak{g})$ is a deformation of the usual enveloping algebra $U(\mathfrak{g})$. To this end, set $q:=e^{\frac{\hbar}{2}}$ and let us work over the base ring $\mathbb{C}[[\hbar]]$. Then the ($\hbar$-adically completed) quantum universal enveloping algebra $U_q(\mathfrak{g})$ turns out to be a {\bf formal deformation} of $U(\mathfrak{g})$ as Hopf algebra.
Such a deformation is called a {\bf quantized universal enveloping algebra.}

Namely, following Drinfeld, we make the following definition. 
\begin{definition} A quantized universal enveloping (QUE) algebra is 
a Hopf algebra $A$ over $\mathbb{C}[[\hbar]]$ (torsion-free, separated and complete as a module) together with a Hopf algebra isomorphism $A/\hbar A \simeq U(\mathfrak{g})$, where $\mathfrak{g}$ is a Lie algebra.\footnote{Here we use the completed tensor product of  $\mathbb{C}[[\hbar]]$-modules.} 
\end{definition} 

Let $A$ be a QUE algebra deforming $U(\mathfrak{g})$. Define
the map $\delta:~U(\mathfrak{g}) \to U(\mathfrak{g})\otimes U(\mathfrak{g})$ by the formula 
\begin{equation}
\delta(x):=\dfrac{\Delta(\widetilde x)-\Delta^{\op}(\widetilde x)}{\hbar}\text{ mod }\hbar,
\end{equation}
where $\widetilde x$ is a lift of $x$ to $A$. It is easy to show that $\delta(x)$ is well defined 
(independent on the lift $\widetilde{x}$). The map $\delta$ measures the non-cocommutativity of the coproduct in first order, and is called the {\bf co-Poisson-Hopf} structure on $U(\mathfrak{g})$ induced by $A$. Moreover, one can show that the restriction $\delta|_{\mathfrak{g}}$ maps 
$\mathfrak{g}$ to $\wedge^2\mathfrak{g}\subset U(\mathfrak{g})\otimes U(\mathfrak{g})$. This map 
\begin{equation}
\delta|_{\mathfrak{g}}:~ \mathfrak{g} \to \Lambda^2 \mathfrak{g}
\end{equation}
is called the {\bf cobracket} or {\bf Lie bialgebra structure} on $\mathfrak{g}$ induced by $A$.

\begin{proposition}\label{cobra}
(i) $\delta^{*}:~ \Lambda^2 \mathfrak{g}^* \to \mathfrak{g}^*$ is a Lie bracket;

(ii) $\delta$ is a derivation:
$$
\delta([x,y]) = [x\otimes \M1 + \M1 \otimes x, \delta(y)] + [\delta(x),y\otimes \M1 + \M1 \otimes y],
$$
i.e., $\delta \in Z^{1}(\mathfrak{g},\mathfrak{g}\otimes \mathfrak{g})$ is a 1-cocycle in the Chevalley–Eilenberg complex with coefficients in $\mathfrak{g} \otimes \mathfrak{g}$. 
\end{proposition}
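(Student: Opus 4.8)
The plan is to extract both statements from the Hopf-algebra axioms of $A$ by expanding in $\hbar$ to the appropriate order. First I would fix an identification of $A$ with $U(\mathfrak{g})[[\hbar]]$ as a topological $\mathbb{C}[[\hbar]]$-module compatible with the reduction $A/\hbar A\cong U(\mathfrak{g})$, so that the coproduct becomes a map $\Delta=\Delta_0+\hbar\Delta_1+\hbar^2\Delta_2+\cdots$, where $\Delta_0$ is the standard (cocommutative) coproduct of $U(\mathfrak{g})$ and each $\Delta_k\colon U(\mathfrak{g})\to U(\mathfrak{g})^{\otimes 2}$. Since $\Delta_0$ is cocommutative, $\Delta-\Delta^{\op}=O(\hbar)$, so $\delta$ is well defined independently of the lift, and on $\mathfrak{g}$ one has $\delta=(\id-P)\Delta_1$, which is manifestly antisymmetric; that it lands in $\wedge^2\mathfrak{g}$ is the fact already quoted before the proposition. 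Both parts then concern this induced $\delta$.

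For part (ii) I would use only that $\Delta$ is an algebra homomorphism. The flip $P$ is itself an algebra homomorphism of $A\otimes A$, so $\Delta^{\op}=P\circ\Delta$ is one too, and hence $\bar\Delta:=\Delta-\Delta^{\op}$ satisfies $\bar\Delta(\tilde x\tilde y)=\Delta(\tilde x)\Delta(\tilde y)-\Delta^{\op}(\tilde x)\Delta^{\op}(\tilde y)$. Choosing $[\tilde x,\tilde y]$ as a lift of $[x,y]$ and expanding this to first order in $\hbar$, the $\hbar^0$ parts of $\Delta$ and $\Delta^{\op}$ coincide and cancel in $\bar\Delta$, while the coefficient of $\hbar^1$ collapses to
\begin{equation}
\delta([x,y])=[\Delta_0(x),\delta(y)]+[\delta(x),\Delta_0(y)],
\end{equation}
the commutators being taken in $U(\mathfrak{g})^{\otimes 2}$. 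Since $\Delta_0(x)=x\otimes\M1+\M1\otimes x$ acts on $\mathfrak{g}\otimes\mathfrak{g}$ by the adjoint-plus-adjoint action, this is exactly the Chevalley--Eilenberg $1$-cocycle identity, proving (ii).

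For part (i), antisymmetry of $\delta^{*}$ is automatic, so only the Jacobi identity remains, and by dualizing it is equivalent to the co-Jacobi identity $(1+\sigma+\sigma^2)(\delta\otimes\id)\delta=0$ on $\mathfrak{g}$, where $\sigma$ cyclically permutes the three tensor legs. This identity is quadratic in $\delta$, hence quadratic in $\Delta_1$, so I would extract it from coassociativity $(\Delta\otimes\id)\Delta=(\id\otimes\Delta)\Delta$ at order $\hbar^2$. Writing out that relation, the terms quadratic in $\Delta_1$, namely $(\Delta_1\otimes\id)\Delta_1$ and $(\id\otimes\Delta_1)\Delta_1$, are the ones that reassemble into the co-Jacobiator, while the terms linear in $\Delta_2$ carry the only dependence on the second-order data. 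The plan is to apply the full antisymmetrization $\operatorname{Alt}_3=\sum_{\pi\in S_3}\operatorname{sgn}(\pi)\,\pi$ to the order-$\hbar^2$ relation. Because $\Delta$ is unital one has $\Delta_k(\M1)=0$ for $k\ge 1$, so each $\Delta_2$-linear term contains $\Delta_2(x)$ paired against the symmetric coproduct $\Delta_0$ in the remaining leg; I expect these to cancel under $\operatorname{Alt}_3$, leaving precisely $(1+\sigma+\sigma^2)(\delta\otimes\id)\delta$, which must then vanish.

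The main obstacle is this last step: correctly carrying out the order-$\hbar^2$ antisymmetrization in part (i), that is, verifying that the $\Delta_2$-contributions genuinely drop out under $\operatorname{Alt}_3$ and that the surviving quadratic terms recombine, with the right signs, into the cyclic sum $(1+\sigma+\sigma^2)(\delta\otimes\id)\delta$ rather than some other combination. The cocycle identity in (ii) is by contrast essentially forced once the multiplicativity of $\Delta$ and the cocommutativity of $\Delta_0$ are in hand.
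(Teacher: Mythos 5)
Your proposal is correct. The paper states Proposition \ref{cobra} without proof (it is among the facts deferred to \cite{CP}, \cite{ES}), and your argument is essentially the standard one: (ii) from multiplicativity of $\Delta$ and $\Delta^{\op}$ plus cocommutativity of $\Delta_0$, and (i) from coassociativity at order $\hbar^2$. The step you flag as the main obstacle does go through, for two reasons. First, writing $\Delta_2(x)=\sum_i u_i\otimes v_i$ and using $\Delta_2(\M1)=0$, the order-$\hbar^2$ relation evaluated on $x\in\mathfrak{g}$ reads
\begin{equation*}
(\Delta_1\otimes\id)\Delta_1(x)-(\id\otimes\Delta_1)\Delta_1(x)
=\M1\otimes\Delta_2(x)-\Delta_2(x)\otimes \M1+\sum_i u_i\otimes\Delta_0(v_i)-\sum_i \Delta_0(u_i)\otimes v_i,
\end{equation*}
and $\mathrm{Alt}_3$ annihilates the right-hand side: each of the last two sums is invariant under a transposition of two tensor legs (cocommutativity of $\Delta_0$), and any tensor fixed by a transposition is killed by $\mathrm{Alt}_3$, while $\M1\otimes\Delta_2(x)$ and $\Delta_2(x)\otimes \M1$ differ by a cyclic, hence even, permutation, so their difference is killed as well. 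Second, the surviving quadratic terms recombine exactly as you predicted: since $\delta=(\id-P)\Delta_1$ as a map $U(\mathfrak{g})\to U(\mathfrak{g})^{\otimes 2}$, one computes
$(\delta\otimes\id)\delta=(1-s_{12})\bigl[(\Delta_1\otimes\id)\Delta_1-\sigma^{2}(\id\otimes\Delta_1)\Delta_1\bigr]$,
where $s_{12}=P\otimes\id$ and $\sigma(v_1\otimes v_2\otimes v_3)=v_3\otimes v_1\otimes v_2$; the identity $(1+\sigma+\sigma^2)(1-s_{12})=\mathrm{Alt}_3$ in $\Bbb Z[S_3]$, together with $\mathrm{Alt}_3\circ\sigma^{2}=\mathrm{Alt}_3$, then gives $(1+\sigma+\sigma^2)(\delta\otimes\id)\delta=\mathrm{Alt}_3\bigl[(\Delta_1\otimes\id)\Delta_1-(\id\otimes\Delta_1)\Delta_1\bigr]=0$. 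One remark on efficiency: you can avoid both the choice of identification $A\cong U(\mathfrak{g})[[\hbar]]$ and all mention of $\Delta_2$ by working with $D:=\Delta-\Delta^{\op}$ on $A$ itself. Coassociativity alone yields the exact identity $(1+\sigma+\sigma^2)(D\otimes\id)D=0$: in Sweedler form $(D\otimes\id)D(a)$ is a signed sum of four permutations of $a_{(1)}\otimes a_{(2)}\otimes a_{(3)}$, two even and two odd with opposite signs within each parity, and since the cyclic group acts simply transitively on the even (resp.\ odd) patterns, the cyclic sum cancels pairwise. As $D(A)\subset\hbar(A\widehat\otimes A)$, dividing by $\hbar^2$ and reducing mod $\hbar$ gives co-Jacobi directly; likewise the exact identity $D(ab)=\Delta(a)D(b)+D(a)\Delta^{\op}(b)$, applied to a commutator of lifts, gives (ii). Your order-by-order version is the same computation with somewhat more bookkeeping.
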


\subsubsection{}
\begin{definition} A pair $(\mathfrak{g},\delta)$ with the properties of Proposition \ref{cobra} is called a {\bf Lie bialgebra}. The Lie bialgebra $(\mathfrak{g},\delta)$ arising from the QUE algebra $A$ is called {\bf the quasiclassical limit} of $A$. The QUE algebra $A$ is called a {\bf quantization} of $(\mathfrak{g},\delta)$.\\
\end{definition} 

\begin{exercise} Let $q=e^{\frac{\hbar}{2}}$. Show that the $\hbar$-adically complete subalgebra $A\subset U_q(\mathfrak{sl}_2)[\hbar^{-1}]$ generated by $e,f$ and $h:=\frac{K-K^{-1}}{q-q^{-1}}$ is a QUE algebra which deforms $U(\mathfrak{sl}_2)$ with cobracket given by 
$$
\delta(e)=e\wedge h, ~ \delta(f)=f\wedge h, ~ \delta(h)=0.
$$
\end{exercise} 

\begin{theorem}\label{EK1} (\cite{EK1}, see also \cite{ES}, Ch. 21) Over a field of characteristic zero, any Lie bialgebra has a quantization, and moreover this quantization can be given by a functor from the category of Lie bialgebras to the category of quantized universal enveloping algebras.
\end{theorem}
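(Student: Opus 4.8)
The plan is to follow the route through Drinfeld's associator and the Tannakian reconstruction recalled above, quantizing not $(\mathfrak g,\delta)$ directly but its double. First I would replace $(\mathfrak g,\delta)$ by the associated Manin triple $(\mathfrak d,\mathfrak g,\mathfrak g^*)$, where $\mathfrak d=\D(\mathfrak g)=\mathfrak g\oplus\mathfrak g^*$ is the Drinfeld double, $\mathfrak g$ and $\mathfrak g^*$ are complementary Lagrangian subalgebras for the canonical symmetric nondegenerate invariant form, and the pairing supplies a Casimir element $\Omega\in\mathfrak d\otimes\mathfrak d$ together with a classical $r$-matrix $r\in\mathfrak g\otimes\mathfrak g^*$. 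The point of this reduction is that $\mathfrak d$ is infinitesimally quasitriangular, so it is much easier to produce a braided quantization of $\mathfrak d$ and then carve $\mathfrak g$ back out than to quantize $\mathfrak g$ head-on.

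Next I would build the \emph{Drinfeld category}. Working over $\Bbb C[[\hbar]]$, take the category $\mathcal M$ of topologically free $U(\mathfrak d)[[\hbar]]$-modules (of a suitable category-$\mathcal O$ type so that the relevant operators act) and equip it with the structure of a braided monoidal category whose associativity constraint on $V_1\otimes V_2\otimes V_3$ is $\Phi(\hbar\,\Omega_{12},\hbar\,\Omega_{23})$ for a fixed Drinfeld associator $\Phi$, and whose braiding is $P\circ e^{\hbar\Omega/2}$. The defining pentagon and hexagon equations for $\Phi$ (coming from the KZ system) are exactly what guarantee that $\mathcal M$ satisfies the pentagon and the two hexagon axioms; reducing modulo $\hbar$ recovers the symmetric monoidal category of $\mathfrak d$-modules with $\delta$ appearing in first order.

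The heart of the argument, and where I expect the main obstacle, is the construction of a fiber functor with a nontrivial tensor structure. Using the two induced (``Verma'') modules $M_+=U(\mathfrak d)\otimes_{U(\mathfrak g^*)}\V1$ and $M_-=U(\mathfrak d)\otimes_{U(\mathfrak g)}\V1$, I would set $F(V)=\mathrm{Hom}_{\mathcal M}(M_+,\,V\otimes M_-)$; by the two PBW decompositions of $U(\mathfrak d)$ and Frobenius reciprocity this is canonically identified, as a $\Bbb C[[\hbar]]$-module, with $V$ itself, so $F$ is exact and faithful and reduces to the forgetful functor. The genuinely hard part is to define the monoidal structure $J_{V,W}\colon F(V)\otimes F(W)\to F(V\otimes W)$ by assembling the associativity morphisms of $\mathcal M$ (hence $\Phi$) with canonical morphisms between tensor powers of $M_\pm$, and then to verify that $J$ is an isomorphism satisfying the coherence hexagon in the definition of a monoidal functor. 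All of the associator combinatorics is spent here; it is precisely Drinfeld's pentagon and hexagon relations for $\Phi$ that force this diagram to commute.

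Finally I would invoke the Tannakian reconstruction recalled above: $H:=\End F$ is then a Hopf algebra over $\Bbb C[[\hbar]]$ with $\mathcal M\cong\Rep(H)$ as monoidal categories, and, because $\mathcal M$ is braided, $H$ is quasitriangular. One checks that $H$ is a QUE algebra quantizing the double $\mathfrak d$. The two Lagrangian halves yield Hopf subalgebras $U_\hbar(\mathfrak g),\,U_\hbar(\mathfrak g^*)\subset H$ coming from the images of $M_\pm$, and restricting to $U_\hbar(\mathfrak g)$ produces the desired QUE algebra; a first-order computation then confirms that its quasiclassical limit is exactly $(\mathfrak g,\delta)$. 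Functoriality is automatic once the construction is organized this way: the associator $\Phi$ is chosen once and for all, while the double $\mathfrak d$, the modules $M_\pm$, the category $\mathcal M$, the functor $F$, and the reconstruction $\End F$ are all natural in $(\mathfrak g,\delta)$, so $(\mathfrak g,\delta)\mapsto U_\hbar(\mathfrak g)$ is a functor from Lie bialgebras to QUE algebras, as claimed.
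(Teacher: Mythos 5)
The paper itself does not prove this theorem --- it is quoted from \cite{EK1} --- and your sketch follows exactly the Etingof--Kazhdan route (Drinfeld double, Drinfeld category built from an associator, Verma modules, Tannakian reconstruction). However, two of your steps fail as stated. The most serious one is your fiber functor: as written it is the \emph{zero} functor. Morphisms in the Drinfeld category $\mathcal M$ are just ($\hbar$-adically extended) $U(\mathfrak d)$-module maps --- only the monoidal structure is deformed --- so by Frobenius reciprocity
$\mathrm{Hom}_{\mathcal M}(M_+,\,V\otimes M_-)\cong \left(V\otimes M_-\right)^{\mathfrak g^*}$, since $M_+=U(\mathfrak d)\otimes_{U(\mathfrak g^*)}\V1$. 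But as a $\mathfrak g^*$-module, $M_-=U(\mathfrak d)\otimes_{U(\mathfrak g)}\V1$ is the left regular module $U(\mathfrak g^*)$ (by PBW), so by Problem 2 of Subsection \ref{prob1} we have $V\otimes M_-\cong V_{\rm space}\otimes U(\mathfrak g^*)$, whose space of invariants is $V\otimes\{u\in U(\mathfrak g^*)\,:\,yu=0\ \forall y\in\mathfrak g^*\}=0$ because $U(\mathfrak g^*)$ is a domain. The correct Etingof--Kazhdan functor is $F(V)=\mathrm{Hom}_{\mathcal M}(M_+\otimes M_-,\,V)$, and it is identified with $V$ not by Frobenius reciprocity for a single Verma module but by the key lemma that $M_+\otimes M_-$ is a \emph{free} $U(\mathfrak d)$-module of rank one generated by $1_+\otimes 1_-$ (a filtration/PBW argument); the tensor structure $J$ is then assembled from the morphisms $M_\pm\to M_\pm\otimes M_\pm$, $1_\pm\mapsto 1_\pm\otimes 1_\pm$, the associator and the braiding. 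With your $F$, the algebra $\End F$ and everything downstream collapses.

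Second, functoriality is genuinely not ``automatic,'' and your final paragraph asserts naturality of things that are not natural. A morphism of Lie bialgebras $f:\mathfrak g_1\to\mathfrak g_2$ induces $f^*:\mathfrak g_2^*\to\mathfrak g_1^*$ in the \emph{opposite} direction, so $\mathfrak g\mapsto \D(\mathfrak g)$ is not a functor; consequently neither $\mathcal M$, nor $M_\pm$, nor $F$, nor $\End F$ depends functorially on $(\mathfrak g,\delta)$. In the Etingof--Kazhdan work the ``moreover'' clause is a separate, substantial theorem: one shows that the construction above can be rewritten in terms of \emph{universal} (propic) formulas in the bracket and cobracket, and it is universality that produces a functor on the category of Lie bialgebras. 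Two further points you gloss over: for infinite-dimensional $\mathfrak g$ (the theorem concerns \emph{all} Lie bialgebras, over any field of characteristic zero, which also requires a Lie associator rather than the analytically defined KZ one) the category $\mathcal M$ must be built from equicontinuous modules for the topological double; and carving the flat Hopf subalgebra $U_\hbar(\mathfrak g)\subset\End F$ out of the quantized double is itself a delicate step in \cite{EK1}, not a formal ``restriction.''
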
 

\subsection{Classical quasitriangular structures} (\cite{ES}, Ch. 3; \cite{CP}, Ch. 2).

Let $A$ be a quantum universal enveloping algebra such that $A/\hbar A = U(\mathfrak{g})$, for a Lie bialgebra $\mathfrak{g}$, and suppose $A$ has a quasitriangular structure (universal $R$-matrix) $\mathcal R\in A\otimes A$ such that $\R=1\otimes 1$ modulo $\hbar$. Consider the element 
\begin{equation}
r = \dfrac{\R-1\otimes 1}{\hbar}\text{ mod }\hbar\in U(\mathfrak{g})\otimes U(\mathfrak{g}).
\end{equation}

\begin{proposition} (i) $r\in \mathfrak{g} \otimes \mathfrak{g}$;

(ii)  $\delta$ is the coboundary of $r$, i.e., 
\begin{equation}
\delta(x) = [x\otimes \M1 + \M1 \otimes x ,r].
\end{equation}
\end{proposition}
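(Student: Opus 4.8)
The plan is to expand everything to first order in $\hbar$. Write a lift of $\R$ as $\R = 1\otimes 1 + \hbar\,\tilde r + O(\hbar^2)$ in $A\,\widehat\otimes\, A$, where $\tilde r$ reduces to $r$ modulo $\hbar$; then $\R^{-1} = 1\otimes 1 - \hbar\,\tilde r + O(\hbar^2)$. Everything reduces to extracting order-$\hbar$ terms from the defining properties of $\R$ and then passing to $U(\mathfrak{g})\otimes U(\mathfrak{g})$, using the QUE axiom that $\Delta$ on $A$ reduces modulo $\hbar$ to the (cocommutative) coproduct $\Delta_0$ of $U(\mathfrak{g})$, for which every $x\in\mathfrak{g}$ is primitive.

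For part (ii) I would use only the intertwining relation $\R\Delta(a) = \Delta^{\op}(a)\R$, rewritten as $\Delta^{\op}(\tilde x) = \R\,\Delta(\tilde x)\,\R^{-1}$ for a lift $\tilde x$ of $x\in\mathfrak{g}$. Conjugating to first order gives $\R\,\Delta(\tilde x)\,\R^{-1} = \Delta(\tilde x) + \hbar[\tilde r,\Delta(\tilde x)] + O(\hbar^2)$, hence $\Delta(\tilde x) - \Delta^{\op}(\tilde x) = -\hbar[\tilde r,\Delta(\tilde x)] + O(\hbar^2)$. Dividing by $\hbar$, reducing modulo $\hbar$, and using $\Delta(\tilde x)\equiv x\otimes 1 + 1\otimes x$, I obtain $\delta(x) = -[r,\,x\otimes 1+1\otimes x] = [x\otimes 1 + 1\otimes x,\,r]$, the asserted coboundary formula. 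Note this step holds for any $r\in U(\mathfrak{g})\otimes U(\mathfrak{g})$ and does not yet use (i).

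For part (i) I would feed the two hexagon relations $(\Delta\otimes\id)\R = \R^{13}\R^{23}$ and $(\id\otimes\Delta)\R = \R^{13}\R^{12}$ through the same expansion. Since $\Delta$ reduces to $\Delta_0$, the order-$\hbar$ term of the first relation descends to $(\Delta_0\otimes\id)r = r^{13}+r^{23}$ in $U(\mathfrak{g})^{\otimes 3}$, and the second to $(\id\otimes\Delta_0)r = r^{13}+r^{12}$. Writing $r=\sum_i u_i\otimes v_i$ with the $v_i$ linearly independent, the first identity reads $\sum_i\Delta_0(u_i)\otimes v_i = \sum_i(u_i\otimes 1 + 1\otimes u_i)\otimes v_i$, so each $u_i$ is primitive in $U(\mathfrak{g})$; symmetrically every right leg is primitive. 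Over a field of characteristic zero the primitive elements of $U(\mathfrak{g})$ are exactly $\mathfrak{g}$ (via PBW), so $r\in(\mathfrak{g}\otimes U(\mathfrak{g}))\cap(U(\mathfrak{g})\otimes\mathfrak{g})=\mathfrak{g}\otimes\mathfrak{g}$, proving (i).

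The computations are routine once the bookkeeping is fixed; the one genuine point, and the step I expect to require the most care, is (i): one must justify that the order-$\hbar$ parts of the hexagon identities descend correctly to $U(\mathfrak{g})^{\otimes 3}$ (that reduction modulo $\hbar$ commutes with $\Delta\otimes\id$ and with the leading-order expansion), and then invoke $\mathrm{Prim}(U(\mathfrak{g}))=\mathfrak{g}$, which is precisely where the characteristic-zero hypothesis enters. As a consistency check, applying $(\varepsilon\otimes\id)$ and $(\id\otimes\varepsilon)$ to the hexagon relations forces $(\varepsilon\otimes\id)\R=(\id\otimes\varepsilon)\R=1$, whence $(\varepsilon\otimes\id)r=(\id\otimes\varepsilon)r=0$, compatible with $r\in\mathfrak{g}\otimes\mathfrak{g}$.
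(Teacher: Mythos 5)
This proposition is stated in the paper without proof (in keeping with its expository style), so there is no argument of the authors to compare yours against; judged on its own merits, your proof is correct and is the standard quasiclassical-limit argument. Part (ii) is exactly the first-order expansion of $\mathcal{R}\Delta(a)=\Delta^{\mathrm{op}}(a)\mathcal{R}$, and part (i) correctly extracts $(\Delta_0\otimes\mathrm{id})r=r^{13}+r^{23}$ and $(\mathrm{id}\otimes\Delta_0)r=r^{13}+r^{12}$ from the hexagon relations and then invokes Friedrichs' theorem, $\mathrm{Prim}\,U(\mathfrak{g})=\mathfrak{g}$, which is indeed where characteristic zero enters. The only point worth tightening is the word ``symmetrically'': a decomposition $r=\sum_i u_i\otimes v_i$ with the $v_i$ linearly independent yields primitivity of the $u_i$ only; to conclude $r\in U(\mathfrak{g})\otimes\mathfrak{g}$ you must re-decompose with independent left legs (or, having already shown $r\in\mathfrak{g}\otimes U(\mathfrak{g})$, expand the left legs in a basis of $\mathfrak{g}$), after which your intersection argument $(\mathfrak{g}\otimes U(\mathfrak{g}))\cap(U(\mathfrak{g})\otimes\mathfrak{g})=\mathfrak{g}\otimes\mathfrak{g}$ finishes the proof.
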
 

In the order $\hbar^2$ the quantum Yang-Baxter equation for $\R$ 
gives the {\bf classical Yang-Baxter equation}
\begin{equation}
[r^{12},r^{13}]+[r^{12},r^{23}]+[r^{13},r^{23}]=0. 
\end{equation}

Also the element $r+r^{21}=\Omega$ is $\mathfrak{g}$-invariant.

\begin{definition} A {\bf quasitriangular structure} or {\bf classical $r$-matrix} on a Lie bialgebra $\mathfrak{g}$ is an element $r\in \mathfrak{g}\otimes \mathfrak{g}$ such that $\delta(x)=[x\otimes 1+1\otimes x,r]$ for $x\in \mathfrak{g}$, $r+r^{21}=\Omega$ is $\mathfrak{g}$-invariant and $r$ satisfies the classical Yang-Baxter equation. This structure is called {\bf triangular} if $\Omega=0$, i.e., $r$ is skew-symmetric. A Lie bialgebra equipped with a (quasi)triangular structure 
is called a {\bf (quasi)triangular Lie bialgebra.} The quasitriangular Lie bialgebra $(\mathfrak{g},r)$ derived from a quasitriangular quantized universal enveloping algebra $(A,\R)$ is called the {\bf quasiclassical limit} of $(A,\R)$, and $(A,\R)$ is called a {\bf quantization} of $(\mathfrak{g},r)$. 
\end{definition}

\begin{exercise}  Let $\mathfrak{g}$ is a Lie algebra and $r\in \mathfrak{g}\otimes \mathfrak{g}$ an element such that $r+r^{21}=\Omega$ is $\mathfrak{g}$-invariant and $r$ satisfies the classical Yang-Baxter equation. Set $\delta(x):=[x\otimes 1+1\otimes x,r]$ for $x\in \mathfrak{g}$. Show that $\delta$ is a Lie bialgebra structure on $\mathfrak{g}$, so $(\mathfrak{g},r)$ 
is a quasitriangular Lie bialgebra.   
\end{exercise} 

\begin{theorem}\label{EK2} (\cite{EK1}, see also \cite{ES}, Ch. 21)
Any quasitriangular Lie bialgebra $(\mathfrak{g},r)$ admits a quantization
$(A,\R)$ such that $A\simeq U(\mathfrak{g})[[\hbar]]$ as algebras. Moreover, this is achieved by 
extending the functor of Theorem \ref{EK1}. 
\end{theorem}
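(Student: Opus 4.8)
The plan is to deduce Theorem~\ref{EK2} from the functorial quantization of Theorem~\ref{EK1} by transporting a universal $R$-matrix along a canonical morphism out of the Drinfeld double. The starting observation is that a quasitriangular structure is precisely a splitting of the double. Recall that the double $\mathfrak d=D(\mathfrak g)=\mathfrak g\oplus\mathfrak g^*$ is itself a quasitriangular Lie bialgebra, with canonical $r$-matrix $r_{\mathfrak d}=\sum_i a_i\otimes a^i$, where $\{a_i\}$ is a basis of $\mathfrak g$ and $\{a^i\}$ the dual basis of $\mathfrak g^*\subset\mathfrak d$, and with $\mathfrak g\hookrightarrow\mathfrak d$ a sub-Lie-bialgebra. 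First I would check that the data of a classical $r$-matrix $r\in\mathfrak g\otimes\mathfrak g$ as in the definition above is the same as the data of a Lie bialgebra homomorphism $p:\mathfrak d\to\mathfrak g$ with $p|_{\mathfrak g}=\id$: one sets $p|_{\mathfrak g^*}=r_+$, the map $\xi\mapsto(\xi\otimes\id)(r)$, and the three conditions on $r$ (that $\delta$ be the coboundary of $r$, that $r+r^{21}$ be invariant, and the classical Yang--Baxter equation) are exactly what is needed for $p$ to respect the bracket, the cobracket, and the cross-relations of $\mathfrak d$. With this choice a short computation gives $(p\otimes p)(r_{\mathfrak d})=r$, since $\sum_i\langle a^i,r'\rangle\,a_i=r'$ for dual bases.

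Next I would apply the quantization functor $Q$ of Theorem~\ref{EK1} to the surjection $p$, obtaining a homomorphism of QUE algebras $Q(p):Q(\mathfrak d)\to Q(\mathfrak g)=:A$ which reduces mod $\hbar$ to $U(p)$ and splits $Q(\iota)$; in particular $Q(p)$ is surjective, being surjective modulo $\hbar$ between topologically free $\mathbb C[[\hbar]]$-modules. The essential input is that $Q(\mathfrak d)$ is quasitriangular with an explicit universal $R$-matrix reducing to $r_{\mathfrak d}$. This is the statement that the Etingof--Kazhdan quantization commutes with forming doubles: $Q(D(\mathfrak g))$ is isomorphic, as a Hopf algebra, to the (topological, restricted) quantum double $\D(Q(\mathfrak g))$, and the latter is quasitriangular by Drinfeld's theorem with the canonical $R$-matrix $\R_{\mathfrak d}=\sum_i b_i\otimes b^i$ for dual bases $\{b_i\}$ of $Q(\mathfrak g)$ and $\{b^i\}$ of its restricted dual. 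One has $\R_{\mathfrak d}=1\otimes 1+\hbar\,r_{\mathfrak d}+O(\hbar^2)$, i.e. its quasiclassical limit is $r_{\mathfrak d}$.

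I would then define $\R:=(Q(p)\otimes Q(p))(\R_{\mathfrak d})\in A\,\widehat\otimes\,A$ and check that it is a quasitriangular structure on $A$ by transporting the axioms along the Hopf algebra homomorphism $\phi:=Q(p)$. Invertibility of $\R$ is immediate since $\phi\otimes\phi$ is a unital algebra map. Using $\Delta_A\circ\phi=(\phi\otimes\phi)\circ\Delta_{\mathfrak d}$, the intertwining identity $\R_{\mathfrak d}\Delta_{\mathfrak d}(x)=\Delta_{\mathfrak d}^{\op}(x)\R_{\mathfrak d}$ becomes $\R\Delta_A(\phi(x))=\Delta_A^{\op}(\phi(x))\R$, which holds for all of $A$ because $\phi$ is surjective; and applying $\phi^{\otimes 3}$ to the hexagon relations $(\Delta_{\mathfrak d}\otimes\id)\R_{\mathfrak d}=\R_{\mathfrak d}^{13}\R_{\mathfrak d}^{23}$ and $(\id\otimes\Delta_{\mathfrak d})\R_{\mathfrak d}=\R_{\mathfrak d}^{13}\R_{\mathfrak d}^{12}$ yields the corresponding relations for $\R$. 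The quasiclassical limit is computed by reducing mod $\hbar$: $\tfrac{\R-1\otimes 1}{\hbar}\equiv(U(p)\otimes U(p))(r_{\mathfrak d})=(p\otimes p)(r_{\mathfrak d})=r$, so $(A,\R)$ is a quantization of $(\mathfrak g,r)$. The algebra isomorphism $A\cong U(\mathfrak g)[[\hbar]]$ is built into the Etingof--Kazhdan construction, which realizes $Q(\mathfrak g)$ on the topologically free module $U(\mathfrak g)[[\hbar]]$ with undeformed product. Finally, since $\mathfrak g\mapsto D(\mathfrak g)$, the splitting $p$, the functor $Q$, and the pushforward of $\R_{\mathfrak d}$ are all functorial, the assignment $(\mathfrak g,r)\mapsto(A,\R)$ extends $Q$ to a functor on quasitriangular Lie bialgebras, as required.

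The main obstacle is the essential input of the second step: proving that the quantization of the double carries a universal $R$-matrix with quasiclassical limit $r_{\mathfrak d}$, equivalently that $Q(D(\mathfrak g))\cong\D(Q(\mathfrak g))$ quasitriangularly. This is not formal—it is the part that genuinely uses the Drinfeld-category and associator machinery underlying Theorem~\ref{EK1} (the braiding on the category of modules produced by a Drinfeld associator is precisely what supplies $\R_{\mathfrak d}$), together with the compatibility of the quantization functor with duals and with the restricted double, and the attendant completion and convergence issues that place $\R$ in $A\,\widehat\otimes\,A$ rather than $A\otimes A$. Everything after this input is a formal transport of structure along $Q(p)$.
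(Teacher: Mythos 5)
The paper itself contains no proof of Theorem~\ref{EK2}: the result is quoted from \cite{EK1} (see also \cite{ES}, Ch.~21), so your attempt can only be compared with the argument in those references. Your classical preliminaries coincide with that argument and are correct: a classical $r$-matrix on $\mathfrak{g}$ is the same datum as a Lie bialgebra map $p:\D(\mathfrak{g})\to\mathfrak{g}$ with $p|_{\mathfrak{g}}=\id$ and $p|_{\mathfrak{g}^*}:\xi\mapsto(\xi\otimes\id)(r)$ (up to standard sign and side conventions), one indeed has $(p\otimes p)(r_{\D(\mathfrak{g})})=r$, and the quantization of the double carries a universal $R$-matrix with quasiclassical limit $r_{\D(\mathfrak{g})}$, coming from the braiding of the Drinfeld category; you correctly identify this last point as the essential non-formal input. (You actually invoke more than you need: the identification $Q(\D(\mathfrak{g}))\cong\D(Q(\mathfrak{g}))$ with the quantum double is a separate and harder theorem, while quasitriangularity of $Q(\D(\mathfrak{g}))$ with the right quasiclassical limit already suffices for your transport along the surjection $Q(p)$, and that transport of the intertwining and hexagon identities is indeed formal.)

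The genuine gap is the clause $A\simeq U(\mathfrak{g})[[\hbar]]$ \emph{as algebras}. You assert this is ``built into'' the construction of Theorem~\ref{EK1}, but it is not: that theorem only provides a quantization which is topologically free as a $\Bbb C[[\hbar]]$-\emph{module}, with no control over the multiplication, and such control cannot be added formally, because a quantization of a given Lie bialgebra can have genuinely deformed product. For instance, take $\mathfrak{g}$ two-dimensional abelian with zero cobracket (a triangular Lie bialgebra with $r=0$): the algebra generated over $\Bbb C[[\hbar]]$ by $x,y$ with $[x,y]=\hbar x$ and the standard cocommutative coproduct is a quantization of $(\mathfrak{g},0)$, yet it is noncommutative, hence not isomorphic to $U(\mathfrak{g})[[\hbar]]=\Bbb C[x,y][[\hbar]]$ as an algebra. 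So the algebra statement is a property of the particular quantization one builds, and your push-forward of $\R_{\D}$ endows $Q(\mathfrak{g})$ with a quasitriangular structure without telling you anything about the product of $Q(\mathfrak{g})$. The cited proof obtains this clause by transporting a different object: instead of the $R$-matrix of the quantized double, one pushes forward along $p$ the \emph{tensor structure} (twist) $J$ of the Etingof--Kazhdan fiber functor. Since the fiber functor for the double is isomorphic to the forgetful functor, this realizes the quantization concretely as $U(\mathfrak{g})[[\hbar]]$ with undeformed product, coproduct $x\mapsto J^{-1}\Delta(x)J$, and $R$-matrix $(J^{21})^{-1}e^{\hbar\Omega/2}J$ with $\Omega=r+r^{21}$, so the algebra claim holds by construction and functoriality is inherited. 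To complete your proof you would have to either switch to this twist-transport, or prove separately that the quantization produced by Theorem~\ref{EK1} has undeformed multiplication --- which is not available from the statement you are allowed to use.
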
 

\subsection{Poisson-Lie groups} (\cite{ES}, Ch. 2, \cite{CP}, Ch. 1). The notion of a Lie bialgebra is the infinitesimal version of the notion of a Poisson-Lie group. 

\subsubsection{}
\begin{definition} A {\bf Poisson-Lie group} is a Lie group $G$ equipped with Poisson bracket such that multiplication $G\times G \to G$ is Poisson map.
\end{definition} 

The Poisson bracket can be defined by the {\bf Poisson bivector} $\Pi$: 
\begin{equation}
\{f,g\} = (df\otimes dg,\Pi),~~~ \Pi \in \Gamma(G,\Lambda^{2}TG).
\end{equation}
Trivializing $TG$ by right translations, one can view $\Pi$ as a function
\begin{equation}
\label{Pidef}
\Pi: ~ G \to \Lambda^{2} \mathfrak{g}.
\end{equation}
The Jacobi identity for $\lbrace,\rbrace$ then translates to the condition that the {\bf Schouten bracket} of $\Pi$ with itself is zero: $[\Pi,\Pi]=0$ (see \cite{ES}, p.8). Also, the condition that $\Pi$ is a Poisson-Lie structure (i.e., invariant under multiplication) translates into the equation
\begin{equation}
\Pi(xy) = \Pi (x) + x \Pi(y) x^{-1},
\end{equation}
i.e., $\Pi$ is a {\bf 1-cocycle} of $G$ with coefficients in $\wedge^2\mathfrak{g}$. 
Therefore, the map 
$$
\delta = d\Pi: \mathfrak{g}\to \Lambda^2 \mathfrak{g},~ \delta\in Z^1(\mathfrak{g},\Lambda^2 \mathfrak{g})
$$ 
is a Lie bialgebra structure on $\mathfrak{g}$. 

\begin{theorem}\label{dri} (Drinfeld) The functor $G\mapsto \mathrm{Lie}\,G$ is an equivalence between the category of simply connected real or complex Poisson-Lie groups and the category of finite dimensional Lie bialgebras over $\mathbb{R}$ or $\mathbb{C}$.
\end{theorem}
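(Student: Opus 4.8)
The plan is to establish the equivalence by constructing the functor, proving it is well-defined, and then exhibiting a quasi-inverse; the heart of the matter is the integration of an infinitesimal cobracket to a global Poisson structure. First I would verify that the assignment $G\mapsto \mathrm{Lie}\,G$ really lands in Lie bialgebras. Given a Poisson-Lie group $G$ with Poisson bivector $\Pi$, the discussion preceding the theorem already shows that the right-trivialized bivector $\Pi:G\to\Lambda^2\mathfrak g$ is a $1$-cocycle (from Poisson-Lie invariance $\Pi(xy)=\Pi(x)+x\Pi(y)x^{-1}$) and that its differential at the identity $\delta=d\Pi:\mathfrak g\to\Lambda^2\mathfrak g$ lies in $Z^1(\mathfrak g,\Lambda^2\mathfrak g)$. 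I would then check that $\delta$ satisfies the co-Jacobi identity, i.e. that $\delta^*:\Lambda^2\mathfrak g^*\to\mathfrak g^*$ is a Lie bracket: this is exactly the first-order consequence of the Jacobi identity $[\Pi,\Pi]=0$ for the Schouten bracket, extracted by differentiating twice at $e$. Thus the functor is well-defined on objects; on morphisms, a Poisson-Lie homomorphism differentiates to a Lie bialgebra morphism by functoriality of $d$ at the identity.

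Next I would address faithfulness and fullness. Since a morphism of simply connected Lie groups is determined by its derivative (the classical Lie theory statement), the functor is faithful on the group side, and a Lie bialgebra morphism $\phi:\mathfrak g\to\mathfrak h$ integrates uniquely to a Lie group homomorphism $\Phi:G\to H$ by simple connectedness; one must then confirm $\Phi$ is automatically Poisson. This follows because the two bivector fields $\Phi_*\Pi_G$ and $\Pi_H\circ\Phi$ are both $1$-cocycles integrating the same infinitesimal cobracket $\phi\circ\delta_{\mathfrak g}=\delta_{\mathfrak h}\circ\phi$, and a $1$-cocycle of a simply connected group is determined by its derivative at $e$. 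So fullness reduces to the same integration principle used for objects.

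The genuinely hard part is \emph{essential surjectivity}: given an abstract finite-dimensional Lie bialgebra $(\mathfrak g,\delta)$, I must produce a simply connected Poisson-Lie group integrating it. By Lie's third theorem there is a unique simply connected Lie group $G$ with $\mathrm{Lie}\,G=\mathfrak g$, so the problem is to integrate the cocycle $\delta\in Z^1(\mathfrak g,\Lambda^2\mathfrak g)$ to a group cocycle $\Pi\in Z^1(G,\Lambda^2\mathfrak g)$ with $d\Pi=\delta$, and to check $[\Pi,\Pi]=0$. The key mechanism is that for a simply connected $G$ the van Est / Lie-theoretic comparison map from group cohomology to Lie algebra cohomology is an isomorphism in the relevant degree, so the infinitesimal cocycle $\delta$ exponentiates to a unique smooth (holomorphic) group cocycle $\Pi$; equivalently one solves the left-invariant Maurer-Cartan-type ODE determined by $\delta$ and uses simple connectedness to globalize. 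Once $\Pi$ exists, the compatibility $[\Pi,\Pi]=0$ is again a $1$-cocycle identity whose derivative at $e$ is precisely the co-Jacobi condition on $\delta$, so it holds identically. I expect the integration of the cocycle and the verification that the resulting $\Pi$ defines a genuine (Jacobi-satisfying) Poisson structure to be the main obstacle; everything else is a differentiation-at-the-identity argument combined with uniqueness from simple connectedness.
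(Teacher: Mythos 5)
The paper itself gives no proof of Theorem \ref{dri}: it is stated as Drinfeld's result with a pointer to the references ([ES], Ch.~2; [CP], Ch.~1), so there is no in-paper argument to compare against. Your proposal is, in outline, exactly the standard proof found in those references, and it is correct: (a) differentiation of the multiplicative bivector gives a Lie bialgebra structure; (b) full faithfulness follows from classical Lie theory for simply connected groups plus the fact that a smooth group $1$-cocycle on a connected group is determined by its derivative at $e$, applied to the two cocycles $x\mapsto \Lambda^2(d\Phi)(\Pi_G(x))$ and $x\mapsto \Pi_H(\Phi(x))$; (c) essential surjectivity comes from the bijection $Z^1(G,V)\xrightarrow{\sim} Z^1(\mathfrak{g},V)$ for simply connected $G$ (van Est, or equivalently integrating the homomorphism $X\mapsto(\delta(X),X)$ into the semidirect product $\Lambda^2\mathfrak{g}\rtimes\mathfrak{g}$), followed by the Jacobi check. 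The one place where your sketch compresses a genuine computation is the final step: to run the ``derivative at $e$'' argument on $[\Pi,\Pi]$ you must first prove that the Schouten square of a multiplicative bivector field is itself multiplicative, i.e.\ that the right-trivialization of $[\Pi,\Pi]$ is a group $1$-cocycle with values in $\Lambda^3\mathfrak{g}$, and then that its derivative at $e$ equals (a multiple of) the co-Jacobiator $\mathrm{Alt}\left((\delta\otimes\mathrm{id})\circ\delta\right)$. Neither assertion is formal; both require an explicit Schouten-bracket computation, and they are precisely the content of the key lemma in the standard treatments. With that lemma supplied, your argument is complete, including the uniqueness statements needed to make $G\mapsto\mathrm{Lie}\,G$ an equivalence rather than merely essentially surjective.
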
 

Note that there are also other, more algebraic flavors of Poisson Lie groups which can be defined over any field of characteristic zero -- {\bf Poisson formal groups} and 
{\bf Poisson algebraic groups}, which are defined similarly and have similar properties, see e.g. \cite{EK2} (the only change is 
that the underlying group is formal or algebraic rather than a Lie group). The only significant difference 
is that in the algebraic case, Theorem \ref{dri} does not hold, as there exist finite dimensional 
Lie algebras which are not Lie algebras of algebraic groups. 

\begin{definition} A {\bf deformation quantization} of an affine Poisson algebraic group $(G,\Pi)$ is a flat formal Hopf algebra deformation $B$ of $\mathcal O(G)$ over $\Bbb C[[\hbar]]$ such that 
the induced Poisson bracket on $\mathcal O(G)$ is given by $\Pi$. 
\end{definition}

Quantizations of Poisson-Lie and Poisson formal groups are defined similarly. 

The following theorem is a dual version of Theorem \ref{EK1}. 

\begin{theorem}\label{EK3}\footnote{Theorems \ref{EK1},\ref{EK2},\ref{EK3} answer questions of V. Drinfeld.} (\cite{EK2}, see also \cite{ES}, Ch. 21) Any Poisson group $G$ (of any flavor) admits a functorial deformation quantization $B=\mathcal O_\hbar(G)$. Moreover, there is a linear isomorphism $B\cong \mathcal O(G)[[\hbar]]$ 
such that for $f,g\in \mathcal O(G)$ their deformed product is given by 
$$
f*g=fg+\hbar C_1(f,g)+\hbar^2 C_2(f,g)+...,
$$
where $C_i$ are bidifferential operators on $G$. 
\end{theorem}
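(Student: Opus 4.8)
The plan is to deduce the statement from the quantization of Lie bialgebras (Theorem \ref{EK1}) by \emph{dualizing}, via Drinfeld's quantum duality principle relating quantized enveloping algebras to quantized function algebras. Let $(\mathfrak{g},\delta)$ be the (finite-dimensional) Lie bialgebra of $G$, so that $\delta=d\Pi$ as in Theorem \ref{dri}, and let $\mathfrak{g}^*$ be the dual Lie bialgebra, whose bracket is dual to $\delta$ and whose cobracket is dual to the bracket of $\mathfrak{g}$. By Theorem \ref{EK1} there is a QUE algebra $U_\hbar(\mathfrak{g}^*)$, functorial in the Lie bialgebra, with $U_\hbar(\mathfrak{g}^*)/\hbar\, U_\hbar(\mathfrak{g}^*)\cong U(\mathfrak{g}^*)$ and co-Poisson structure the cobracket of $\mathfrak{g}^*$. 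I will obtain $\O_\hbar(G)$ as the Drinfeld dual of $U_\hbar(\mathfrak{g}^*)$.

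Concretely, for a Hopf algebra $H$ over $\Bbb C[[\hbar]]$ of QUE type, let $\pi:=\id-\iota\varepsilon$ be the projection onto the augmentation ideal ($\iota$ the unit), and put $\delta_n:=\pi^{\otimes n}\circ\Delta^{(n-1)}:H\to H^{\otimes n}$, with $\Delta^{(n-1)}$ the iterated coproduct. Define
\[
H':=\{\,a\in H:\ \delta_n(a)\in \hbar^n H^{\otimes n}\ \text{ for all } n\ge 1\,\}.
\]
Drinfeld's principle asserts that $H'$ is a topologically free Hopf subalgebra, commutative modulo $\hbar$, that $H\mapsto H'$ is functorial and inverse to $K\mapsto K^\vee:=\sum_{n\ge 0}\hbar^{-n}(\ker\varepsilon_K)^n$, and crucially that $H'$ reduces modulo $\hbar$ to the function algebra of the Poisson group \emph{dual} to the one quantized by $H$. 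I therefore set $\O_\hbar(G):=\bigl(U_\hbar(\mathfrak{g}^*)\bigr)'$; this is a flat formal Hopf algebra deformation over $\Bbb C[[\hbar]]$, functorial in $(\mathfrak{g},\delta)$ and hence, through Theorem \ref{dri}, in $G$.

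The classical limit is now a matter of bookkeeping about the duality. Applying the last assertion with $H=U_\hbar(\mathfrak{g}^*)$, the group dual to the one with Lie algebra $\mathfrak{g}^*$ has Lie algebra $(\mathfrak{g}^*)^*=\mathfrak{g}$, namely $G$ itself; its Poisson structure is the one determined by the cobracket of $\mathfrak{g}$, i.e. by $\delta$. Hence $\O_\hbar(G)/\hbar\,\O_\hbar(G)\cong \O(G)$ as Poisson Hopf algebras with Poisson bracket exactly $\Pi$ (Theorem \ref{dri}). This gives a functorial deformation quantization in the formal setting.

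It remains to analyze the product. Topological freeness $U_\hbar(\mathfrak{g}^*)\cong U(\mathfrak{g}^*)[[\hbar]]$ (PBW) restricts to a $\Bbb C[[\hbar]]$-linear isomorphism $\O_\hbar(G)\cong \O(G)[[\hbar]]$, and expanding the inherited product yields $f*g=fg+\hbar C_1(f,g)+\hbar^2C_2(f,g)+\cdots$. The hard part is to show the $C_i$ are \emph{bidifferential} and not merely bilinear. This is exactly where the universal nature of the Etingof--Kazhdan construction enters: the structure maps of $U_\hbar(\mathfrak{g}^*)$ are assembled from the co-Poisson structure and the Drinfeld associator acting through finitely many tensor factors of $\mathfrak{g}^*$, and under the pairing $\O(G)=U(\mathfrak{g})^*$ each tensor slot of $\mathfrak{g}$ corresponds to a left-invariant vector field on $G$. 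Consequently each $C_i$ is a finite composition of such vector fields, i.e. a bidifferential operator built universally from $\Pi$ and its derivatives. Because these operators are defined globally on $G$, the same formulas produce the quantization for the algebraic and Lie-group flavors as well, and pin down the stated form of the $*$-product. Carrying out this translation precisely, and checking independence of the PBW choices, is the main technical obstacle; everything else follows formally from Theorem \ref{EK1} and the duality functors.
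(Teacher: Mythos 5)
Your proposal cannot be checked against a proof in the paper itself, because the paper gives none: Theorem \ref{EK3} is quoted from \cite{EK2} (see also \cite{ES}, Ch.~21). Measured against that proof, your argument has a genuine gap, and it is located exactly where you place the quantifier ``any flavor''. The quantum duality route you take (quantize the dual Lie bialgebra $\mathfrak{g}^*$ by Theorem \ref{EK1}, then pass to Drinfeld's subalgebra $H'\subset H=U_\hbar(\mathfrak{g}^*)$) can only ever produce a quantization of the \emph{formal} Poisson group: the duality principle says that $H'/\hbar H'$ is the function algebra of the formal group with Lie bialgebra $(\mathfrak{g}^*)^*=\mathfrak{g}$, i.e.\ the completed local ring $\widehat{\mathcal{O}}_{G,e}$, not the global algebra $\mathcal{O}(G)$ of regular (or smooth) functions. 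So your assertion that $\mathcal{O}_\hbar(G)/\hbar\,\mathcal{O}_\hbar(G)\cong\mathcal{O}(G)$ fails precisely in the algebraic and Lie-group flavors, which are the nontrivial cases of the theorem (the formal flavor is the easy one). Two subsidiary errors feed this: $\mathcal{O}(G)$ is not equal to $U(\mathfrak{g})^*$ but is a proper Hopf subalgebra of it ($U(\mathfrak{g})^*$ \emph{is} the formal function algebra); and functoriality in $G$ cannot be routed through Theorem \ref{dri}, which applies only to simply connected Poisson--Lie groups and, as the paper itself notes, fails in the algebraic flavor --- distinct groups can share the same Lie bialgebra, and a construction that sees only $\mathfrak{g}$ cannot distinguish them.

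The step you defer to ``the universal nature of the Etingof--Kazhdan construction'' --- that the $C_i$ are bidifferential operators with globally defined coefficients, so that the star-product actually lives on $\mathcal{O}(G)$ --- is not a technical afterthought; it is the content of \cite{EK2}. Moreover, your sketch of it does not fit your construction: in your setup the star-product is the restriction of the \emph{multiplication} of $U_\hbar(\mathfrak{g}^*)$ to the subspace $H'$, whereas your locality argument pairs a \emph{coproduct} against tensor powers of $\mathfrak{g}$ acting by invariant vector fields. That argument belongs to the route actually taken in \cite{EK2} and \cite{ES}: quantize $\mathfrak{g}$ itself (not $\mathfrak{g}^*$), realize the Etingof--Kazhdan quantization on $U(\mathfrak{g})[[\hbar]]$ with undeformed multiplication and deformed coproduct $\Delta_\hbar$, and define $(f*g)(x):=(f\otimes g)(\Delta_\hbar(x))$ for $f,g\in\mathcal{O}(G)\subset U(\mathfrak{g})^*$; one then proves that each coefficient of $\Delta_\hbar$ in powers of $\hbar$ is dual to a bidifferential operator, because it is a universal expression built from finitely many multiplications, classical coproducts and insertions of the cobracket --- this is what makes $*$ preserve $\mathcal{O}(G)[[\hbar]]$ in every flavor simultaneously. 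If you reorganize your argument along these lines, the detour through $\mathfrak{g}^*$ and the functor $H\mapsto H'$ becomes unnecessary; as written, your proof establishes the theorem only for Poisson formal groups.
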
 

\subsubsection{}
Suppose now that $(G,\Pi)$ is a Poisson-Lie group, and the underlying 
Lie bialgebra $\mathfrak{g}$ has a quasitriangular structure $r$. 
In this case we have the following proposition. 

\begin{proposition} The Poisson bivector $\Pi$ expresses via $r$ by the formula
\begin{equation}
\Pi = L(r)-R(r),
\end{equation}
 where $L$ and $R$ are left and right translations.
\end{proposition}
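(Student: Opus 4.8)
The plan is to pass to the right-trivialized picture, where $\Pi$ is the function $\Pi\colon G\to\Lambda^2\mathfrak{g}$ of \eqref{Pidef}, and to identify $L(r)-R(r)$ there with an explicit group $1$-cocycle having the same linearization as $\Pi$; equality will then follow from a uniqueness argument. Recall that multiplicativity of $\Pi$ is exactly the cocycle identity $\Pi(xy)=\Pi(x)+\mathrm{Ad}_x\Pi(y)$, that $\Pi(e)=0$, and that the derivative of $\Pi$ at the identity is the cobracket $\delta=d\Pi$. By hypothesis $r$ is a quasitriangular structure, so $\delta(x)=[x\otimes\M1+\M1\otimes x,\,r]$ for all $x\in\mathfrak{g}$.

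First I would right-trivialize the two invariant bivectors. Since $R_{g^{-1}}\circ L_g$ is conjugation by $g$, its differential at $e$ is $\mathrm{Ad}_g$, so the right-trivialization of the left-invariant field $L(r)$ at the point $g$ is $\mathrm{Ad}_g(r)\in\Lambda^2\mathfrak{g}$ (with $\mathrm{Ad}$ acting diagonally), while the right-invariant field $R(r)$ trivializes to the constant $r$. Hence, as a function $G\to\Lambda^2\mathfrak{g}$,
\begin{equation}
(L(r)-R(r))(g)=\mathrm{Ad}_g(r)-r,
\end{equation}
which is manifestly the coboundary of the constant $0$-cochain $r$ and is therefore automatically a multiplicative bivector (a $1$-cocycle vanishing at $e$).

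Next I would compute its linearization. Differentiating $\mathrm{Ad}_{\exp(tx)}(r)$ at $t=0$ gives $\mathrm{ad}_x(r)=[x\otimes\M1+\M1\otimes x,\,r]$, which is precisely $\delta(x)$ by the definition of the quasitriangular structure. Thus $\Pi$ and $L(r)-R(r)$ are two multiplicative bivectors on $G$ sharing the same value $0$ at $e$ and the same first-order term $\delta$. I would then invoke uniqueness: on a connected group a multiplicative bivector is determined by its linearization. Concretely, if $\Psi=\Pi-(L(r)-R(r))$, then $\Psi$ is a $1$-cocycle with $d_e\Psi=0$, and differentiating $\Psi(g\exp(tx))=\Psi(g)+\mathrm{Ad}_g\Psi(\exp(tx))$ at $t=0$ shows that every left-invariant derivative of $\Psi$ vanishes, so $\Psi$ is locally constant, hence $\Psi\equiv\Psi(e)=0$. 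Equivalently, one may appeal to the equivalence of categories in Theorem \ref{dri}, which identifies a simply connected Poisson-Lie group with its Lie bialgebra. This yields $\Pi=L(r)-R(r)$.

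The main point to get right is the bookkeeping of the trivialization conventions, so that $L(r)$ produces $\mathrm{Ad}_g(r)$ with the correct sign and the linearization matches $\delta$ rather than $-\delta$. It is worth noting that once this is arranged, the classical Yang--Baxter equation and the $\mathfrak{g}$-invariance of $\Omega=r+r^{21}$ are not needed for the formula itself; they are only what guarantees internal consistency, i.e.\ that the expression $L(r)-R(r)$ is genuinely a Poisson bivector. Since $\Pi$ is assumed Poisson and is shown to equal $L(r)-R(r)$ purely by matching first-order data, the Poisson property of $L(r)-R(r)$ comes out as a by-product.
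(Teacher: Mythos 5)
Your proof is correct. Note that the paper itself states this proposition without proof (as with many results in these lecture notes), so there is nothing to compare against; your argument --- right-trivializing to identify $L(r)-R(r)$ with the coboundary cocycle $g\mapsto \mathrm{Ad}_g(r)-r$, checking that its linearization is $[x\otimes \M1+\M1\otimes x,\,r]=\delta(x)$, and then invoking uniqueness of a multiplicative bivector with prescribed linearization on a connected group --- is the standard one, and your bookkeeping of the trivialization conventions and signs is right. Two small points are worth making explicit: the statement implicitly requires $G$ connected (your uniqueness step uses exactly this, and without it the claim is false on other components), and the fact that $\mathrm{Ad}_g(r)-r$ actually lies in $\Lambda^2\mathfrak{g}$ rather than just $\mathfrak{g}\otimes\mathfrak{g}$ rests on the $\mathrm{Ad}$-invariance of $\Omega=r+r^{21}$ (which for connected $G$ follows from its $\mathfrak{g}$-invariance, itself a consequence of the skewness of $\delta$); your closing remark that this is a consistency by-product rather than an input to the equality of cocycles is accurate, since the uniqueness argument runs perfectly well for $\mathfrak{g}\otimes\mathfrak{g}$-valued cocycles.
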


\subsection{Manin triples} (\cite{CP}, 1.3B, \cite{ES}, Ch. 4). 

\subsubsection{} A natural source of Lie bialgebras is the notion of a Manin triple.
 
\begin{definition} A {\bf Manin triple} is a triple of Lie algebras $(\mathfrak{g},\mathfrak{g}_+,\mathfrak{g}_-)$ such that $\mathfrak{g} = \mathfrak{g}_+ \oplus \mathfrak{g}_-$ as vector spaces and $\mathfrak{g}$ is equipped with an invariant symmetric bilinear form which identifies 
$\mathfrak{g}_-$ with $\mathfrak{g}_+^*$ such that $\mathfrak{g}_+,\mathfrak{g}_-$ are isotropic.
\end{definition} 

Note that in this case $\mathfrak{g}_+,\mathfrak{g}_-$ are necessarily Lagrangian (i.e., maximal isotropic). 

If $(\mathfrak{g},\mathfrak{g}_+,\mathfrak{g}_-)$ is a Manin triple then $\mathfrak{g}_+^* \simeq \mathfrak{g}_-$, so $\mathfrak{g}_+$ has a natural Lie coalgebra structure $\delta$. 

\begin{exercise} Show that $(\mathfrak{g}_+,\delta)$ is a Lie bialgebra. 
\end{exercise} 

\subsubsection{} Thus, every Manin triple defines a Lie bialgebra. It turns out that 
one can also go back and recover a Manin triple from a Lie bialgebra.
This construction is a classical analog of the Drinfeld quantum double construction, 
and is called {\bf the classical Drinfeld double construction}. 

Namely, let $\mathfrak{g}_{+}$ be a Lie bialgebra, and $\mathfrak{g}_- := \mathfrak{g}_+^*$. Recall 
that the Lie cobracket of  $\mathfrak{g}_{+}$ makes $\mathfrak{g}_-$ into a Lie algebra. 
Define $\D(\mathfrak{g}_{+}) := \mathfrak{g} = \mathfrak{g}_+ \oplus \mathfrak{g}_-$, and let 
$(\cdot,\cdot)$ be the symmetric bilinear form on $\mathfrak{g}$ 
which arises from the natural pairing of $\mathfrak{g}_+$ with $\mathfrak{g}_-$.

\begin{exercise} 
Show that there exists a unique Lie bracket on $\mathfrak{g}$ extending the bracket on $\mathfrak{g}_+,\mathfrak{g}_-$ such that $(\cdot,\cdot)$ is invariant, and 
$(\mathfrak{g},\mathfrak{g}_+,\mathfrak{g}_-)$ is a Manin triple. 
Namely, this extension of the bracket is defined by the formula
\begin{equation}
[a, b] = {\rm ad}^*_{a}(b) - {\rm ad}^*_{b}(a),\ a\in \mathfrak g_+,b\in \mathfrak g_-.
\end{equation}
\end{exercise} 

The Lie algebra $\mathfrak{g}$ is called the {\bf classical Drinfeld double} of the Lie bialgebra $\mathfrak{g}_+$. If $\mathfrak{g}_+$ is finite dimensional, then $\mathfrak{g}$ is a Lie bialgebra itself, with cobracket defined by the condition that $\mathfrak{g}_+$ and $\mathfrak{g}_-^{\cop}$ are Lie subbialgebras, i.e., 
$$
\delta_{\mathfrak{g}} = \delta_{{\mathfrak{g}}_{+}} - \delta_{{\mathfrak{g}}_{-}}.
$$ 

\begin{exercise} Show that if $\mathfrak{g}_+$ is a finite dimensional Lie bialgebra and 
$\mathfrak{g}$ its Drinfeld double then 
\begin{equation}
\D(\mathfrak{g}) \cong \mathfrak{g} \oplus \mathfrak{g},
\end{equation}
with cobracket 
\begin{equation} 
\delta_{\D(\mathfrak{g})} = (\delta_{\mathfrak{g}},-\delta_{\mathfrak{g}}).
\end{equation}
\end{exercise} 

\subsubsection{} 
\begin{exercise} Show that the Lie bialgebra $\mathfrak{g}=\D(\mathfrak{g}_+)$ is quasi-triangular, with $r=\sum_i a_i \otimes a_i^*$, where $\lbrace a_i\rbrace,\lbrace a^*_i\rbrace$ are dual bases of $\mathfrak{g}_+$ and $\mathfrak{g}_-$.
\end{exercise} 

\begin{example} Let $\mathfrak{g}$ be a simple Lie algebra, $\mathfrak{b_\pm}\subset \mathfrak{g}$ -- a pair of opposite Borel subalgebras, $\mathfrak{h}=\mathfrak{b}_+\cap \mathfrak{b}_-$ the corresponding Cartan subalgebra. Let $\phi_\pm: \mathfrak{b}_\pm\to \mathfrak{h}$ be the projections. 
Then a Manin triple can be constructed as follows: 
\begin{equation}
\widetilde{\mathfrak{g}} = \mathfrak{g} \oplus \mathfrak{h} = \widetilde{\mathfrak{b}}_+\oplus \widetilde{\mathfrak{b}}_-,
~~~
\widetilde{\mathfrak{b}}_+ =  (\id,\phi_+)(\mathfrak{b}_+),
~~~
\widetilde{\mathfrak{b}}_- =   (\id,-\phi_-)(\mathfrak{b}_-).
\end{equation}
Indeed, $\widetilde{\mathfrak{g}}$ carries an invariant symmetric inner product $(\cdot,\cdot)_{\widetilde{\mathfrak{g}}} = (\cdot,\cdot)_{\mathfrak{g}} - (\cdot,\cdot)_{\mathfrak{h}}$, and 
the Lie subalgebras $\widetilde{\mathfrak{b}}_+,\widetilde{\mathfrak{b}}_-\subset \widetilde{\mathfrak{g}}$ are isotropic with respect to this inner product. This gives rise to a  Lie bialgebra structure on $\widetilde{\mathfrak{b}}_+$ as well as on $\widetilde{\mathfrak{g}} = \D(\mathfrak{b}_+)$. Moreover, the cobracket vanishes on the second summand $\mathfrak{h}$, so we can consider the quotient $(\mathfrak{g}\oplus \mathfrak{h})/\mathfrak{h}= \mathfrak{g}$, and get a quasitriangular structure on $\mathfrak{g}$ given by
\begin{equation}
r = \dfrac{1}{2}\sum_i x_{i}\otimes x_{i} + \sum_{\alpha>0} e_{\alpha}\otimes f_{\alpha}, 
\end{equation} 
where $\lbrace x_i\rbrace$ is an orthonormal basis of $\mathfrak{h}$ and 
$e_\alpha,f_\alpha$ are root elements such that $(e_\alpha,f_\alpha)=1$. 
Note that 
\begin{equation} \Omega = r + r^{21} = \sum_i x_{i}\otimes x_{i} + \sum_{\alpha>0} e_{\alpha}\otimes f_{\alpha},
 + \sum_{\alpha>0} f_{\alpha}\otimes e_{\alpha} 
\end{equation}
is the Casimir tensor of $\mathfrak{g}$.

\subsubsection{}
These results extend to the case when $\mathfrak{g}$ is any
symmetrizable Kac-Moody algebra, but when $\mathfrak{g}$ is infinite dimensional, 
one needs to be careful to take graded duals (instead of full dual). So in this generality $\mathfrak{g}, \mathfrak{b}_+, \mathfrak{b}_-$ are still Lie bialgebras, 
with cobracket defined on generators by 
\begin{equation}
\delta(e_i) = d_ie_i\wedge h_i,~~~
\delta(f_i) = d_if_i\wedge h_i,~~~
\delta(h_i) = 0
\end{equation}
where $d_i$ are the symmetrizing numbers for the generalized Cartan matrix, 
but $\mathfrak{g}$ is not quasitriangular in the literal sense (as $r$ is given by an infinite sum).
\end{example} 

\subsubsection{} An important example of an infinite dimensional Lie bialgebra is the 
{\bf Yangian Lie bialgebra}. To define it, fix a simple finite dimensional Lie algebra $\overline{\mathfrak{g}}$. Then we can define a Manin triple 
\begin{equation}
\mathfrak{g} = \overline{\mathfrak{g}}((t^{-1})),
~~~
\mathfrak{g}_+ = \overline{\mathfrak{g}}[t],
~~~
\mathfrak{g}_- = t^{-1}\overline{\mathfrak{g}}[[t^{-1}]],
~~~\\
(f,g)_{\mathfrak{g}} = \mathrm{Res}\, (f,dg)_{\overline{\mathfrak{g}}}.
\end{equation}

As $\mathfrak{g}_- \simeq \mathfrak{g}_+^*$ there is a Lie bialgebra structure on $\mathfrak{g}_+$ and on $\mathfrak{g}$.  If $a_i$ and $a_i^*$ are dual bases of $\overline{\mathfrak{g}}$ and $\overline{\mathfrak{g}}^*$, then the $r$-matrix is
\begin{equation}
r = \sum_{i}\sum_{n=0}^\infty a_i t^{n}\otimes a_i^* u^{-n-1} =  \Omega \left(\dfrac{1}{u}+\dfrac{t}{u^2}+... \right)= \dfrac{\Omega}{u-t},~~~ \Omega = \sum_i a_i\otimes a_i^*.
\end{equation}
So the classical Yang-Baxter equation for $r$ may be written as the equation
\begin{equation}
[r^{12}(z_1-z_2),r^{13}(z_1-z_3)]+[r^{12}(z_1-z_2),r^{23}(z_2-z_3)]+[r^{13}(z_1-z_3),r^{23}(z_2-z_3)]=0,
\end{equation}
for the function $r(z) = \frac{\Omega}{z}$. 
This equation is called the {\bf classical Yang-Baxter equation with spectral parameter}, and its solution
$r(z)$ is called {\bf Yang's classical $r$-matrix with spectral parameter}. 

Note that we seem to have
\begin{equation}
r(t-u) + r^{21}(u-t) = \dfrac{\Omega}{u-t} + \dfrac{\Omega}{t-u} = 0,
\end{equation}
so at first sight $r$ looks like a triangular structure. However, this is not really so, since 
we expand $r(t-u)$ and $r^{21}(u-t)$ in the regions $|t|<|u|$ and $|t|>|u|$ respectively, 
which don't intersect with each other, and 
\begin{equation}
\left(\dfrac{1}{u-t}\right)_{|t|<|u|} + \left(\dfrac{1}{t-u}\right)_{|t|>|u|} = 
\sum_{n\in \mathbb{Z}}\dfrac{t^n}{u^{n+1}} = t^{-1}\delta(u/t),
\end{equation}
where $\delta(z)=\sum_{n\in \Bbb Z}z^n$ is a non-zero series defining a non-zero distribution on the circle $|z|=1$, namely, $\delta_1$ (even though this distribution is zero outside $1$).   
Drinfeld called such a structure {\bf pseudotriangular}. 

\subsection{Representations theory of $U_q(\mathfrak{g})$} (\cite{J}, Ch. 5, \cite{Lu}, Ch. 6, \cite{CP}, Ch. 10).
Let $\mathfrak{g}$ be a symmetrizable Kac-Moody algebra (for example, finite dimensional or affine), and $\mathfrak{h}$ be the Cartan subalgebra of $\mathfrak{g}$. 

\begin{definition} The {\bf category $\mathcal{O}$} of representations of $\mathfrak{g}$ 
is the category of $\mathfrak{g}$-modules $V$ such that the 
action of $\mathfrak{h}$ is diagonalizable with integer eigenvalues of $h_i$ 
and finite dimensional eigenspaces, 
and all weights fit into finitely many sets of the form $\lambda-Q_+$, where $Q_+$ 
is spanned over $\Bbb Z_+$ by the simple positive roots $\alpha_i$. 
The subcategory of {\bf integrable representations} $\O_{\mathrm{int}} \subset \O$ consists of representations that are locally finite with respect to each $\mathfrak{sl}_2$ subalgebra $\langle e_i,f_i,h_i \rangle$, i.e., such that for every vector $v$ the space 
$U(\mathfrak{sl}_2)v$ is finite dimensional. 
\end{definition}

One can make a similar definition for the quantum group $U_q(\mathfrak{g})$ where 
$q\in \Bbb C^\times$ is not a root of unity. Denote the corresponding categories $\mathcal O_q$,
$\mathcal O_{\mathrm{int},q}\subset \mathcal O_q$. So the elements $K_i$ act on every object 
of $\mathcal O_q$ diagonalizably with eigenvalues being integer powers of $q$ (i.e., these are representations of type I for each simple root subalgebra). 

\begin{theorem}\label{kacth} (V. Kac, see \cite{Ka}) $\O_{\mathrm{int}}$ is semisimple and its simple objects are $L_{\lambda}$, $\lambda\in P_+$, where $P_+$ is the set of dominant integral weights. Moreover, the character of $L_\lambda$ is given by the Weyl-Kac character formula.
\end{theorem}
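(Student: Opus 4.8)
The plan is to first classify the simple objects, then establish complete reducibility by means of a central ``Casimir'' element, and finally extract the Weyl--Kac formula from the resulting constraint on weights combined with Weyl-group symmetry. First I would show that every nonzero $V\in\O$ contains a \emph{highest weight vector}, i.e.\ a nonzero weight vector killed by all $e_i$: since the weights of $V$ lie in finitely many sets $\lambda-Q_+$, they are bounded above, so a maximal weight exists and any vector of that weight is highest. Hence every simple object of $\O$ is a quotient of a Verma module $M_\mu$ and so equals its unique irreducible quotient $L_\mu$. If in addition $L_\mu\in\O_{\mathrm{int}}$, applying finite-dimensional $\mathfrak{sl}_2$-theory to the subalgebra $\langle e_i,f_i,h_i\rangle$ acting on the highest weight vector forces $\langle\mu,h_i\rangle\in\mathbb Z_{\ge0}$, i.e.\ $\mu\in P_+$; conversely, for $\lambda\in P_+$ I would check that $f_i^{\langle\lambda,h_i\rangle+1}v_\lambda$ generates a proper submodule of $M_\lambda$, whence $e_i,f_i$ act locally nilpotently on $L_\lambda$ and $L_\lambda\in\O_{\mathrm{int}}$.

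The technical heart is a \emph{generalized Casimir operator} $\Omega$, built from the invariant form via dual bases of the root spaces together with a Cartan correction term. I would verify that $\Omega$ is central (commutes with the $\mathfrak g$-action on every object of $\O$) and that it acts on a highest weight module of highest weight $\mu$ by the scalar $c_\mu:=(\mu+\rho,\mu+\rho)-(\rho,\rho)$, where $\rho$ satisfies $\langle\rho,h_i\rangle=1$. Semisimplicity of $\O_{\mathrm{int}}$ reduces to showing $\mathrm{Ext}^1(L_\lambda,L_\mu)=0$ for $\lambda,\mu\in P_+$. Given an extension $0\to L_\mu\to E\to L_\lambda\to 0$, a highest-weight-vector argument shows it splits whenever $\lambda,\mu$ are incomparable or equal; and if they are comparable with $\lambda\ne\mu$, say $\mu-\lambda\in Q_+\setminus\{0\}$, then
\begin{equation*}
c_\mu-c_\lambda=(\mu-\lambda,\,\mu+\lambda+2\rho)>0,
\end{equation*}
because $\mu+\lambda+2\rho$ is strictly dominant and each simple root has positive square-length, so $c_\mu\ne c_\lambda$ and the central element $\Omega$ splits $E$ through its generalized-eigenspace decomposition into submodules. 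This gives the required vanishing, and a standard socle argument promotes it to full semisimplicity. I expect constructing and controlling $\Omega$ --- which in the Kac--Moody case is a formal infinite sum that must nonetheless act finitely on each weight space and be genuinely central --- to be the main obstacle, with the strict positivity estimate above (which uses $(\alpha_i,\alpha_i)>0$ and strict dominance, not positive-definiteness of the form) the other delicate point.

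Finally, for the character formula I would work in a completion of the Grothendieck group, where $\mathrm{ch}\,M_\mu=e^\mu/D$ with $D=\prod_{\alpha>0}(1-e^{-\alpha})^{\dim\mathfrak{g}_\alpha}$. Writing $\mathrm{ch}\,L_\lambda=\sum_{\mu\le\lambda}b_\mu\,\mathrm{ch}\,M_\mu$ with $b_\lambda=1$ by unitriangularity, centrality of $\Omega$ forces $b_\mu=0$ unless $c_\mu=c_\lambda$; for $\lambda\in P_+$ (so that $\lambda+\rho$ is regular dominant) the surviving $\mu$ are exactly the dot-orbit points $\mu=w\cdot\lambda:=w(\lambda+\rho)-\rho$, $w\in W$. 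Integrability makes $\mathrm{ch}\,L_\lambda$ invariant under every simple reflection, hence under all of $W$; multiplying by the anti-invariant Weyl denominator $e^{\rho}D$ turns the expansion into $\sum_{w\in W}b_{w\cdot\lambda}\,e^{w(\lambda+\rho)}$, and $W$-anti-invariance together with $b_\lambda=1$ and the regularity of $\lambda+\rho$ pins down $b_{w\cdot\lambda}=(-1)^{\ell(w)}$. This yields
\begin{equation*}
\mathrm{ch}\,L_\lambda=\frac{\sum_{w\in W}(-1)^{\ell(w)}e^{w(\lambda+\rho)-\rho}}{\prod_{\alpha>0}(1-e^{-\alpha})^{\dim\mathfrak{g}_\alpha}},
\end{equation*}
the Weyl--Kac character formula.
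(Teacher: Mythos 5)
The paper does not prove this theorem: it is quoted as a known result, with a pointer to Kac's book \cite{Ka} (Chapters 9--10), and your sketch reconstructs exactly the argument given there --- highest-weight classification of the simples, the generalized Casimir operator $\Omega$, separation of Casimir eigenvalues via $(\mu-\lambda,\mu+\lambda+2\rho)>0$, and the Weyl-denominator/anti-invariance computation. So there is no in-paper proof to compare against; yours is the canonical proof of the cited result, and its overall architecture is sound.

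One step, however, is asserted in the wrong logical order and, as written, is not a valid deduction. You claim that the Casimir constraint $c_\mu=c_\lambda$, together with regularity of $\lambda+\rho$, already forces every surviving $\mu$ in $\mathrm{ch}\,L_\lambda=\sum_{\mu\le\lambda}b_\mu\,\mathrm{ch}\,M_\mu$ to lie on the dot-orbit $\{w\cdot\lambda:\,w\in W\}$, and you invoke $W$-anti-invariance only afterwards, to fix the signs $b_{w\cdot\lambda}=(-1)^{\ell(w)}$. But $|\mu+\rho|^2=|\lambda+\rho|^2$ with $\mu\le\lambda$ does not by itself place $\mu+\rho$ in the Weyl orbit of $\lambda+\rho$: a coset of the root lattice can meet the quadric $\{\nu:\,|\nu|^2=|\lambda+\rho|^2\}$ in points that are $W$-inequivalent or lie on walls, so the support restriction cannot be extracted from the Casimir alone. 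The correct assembly (Kac, Theorem 10.4) is: anti-invariance of $e^{\rho}D\,\mathrm{ch}\,L_\lambda$ \emph{first} shows that the support $\{\mu+\rho:\,b_\mu\ne 0\}$ is $W$-stable and contains no points fixed by a reflection; hence each surviving orbit has a regular dominant representative $\nu+\rho$ with $\nu\le\lambda$ and $c_\nu=c_\lambda$; and then the very positivity computation you used for semisimplicity, namely $(\lambda-\nu,\lambda+\nu+2\rho)>0$ for $\lambda-\nu\in Q_+\setminus\{0\}$, forces $\nu=\lambda$. Since all three ingredients already appear in your write-up, this is a repairable misordering rather than a missing idea, but as stated the step is unjustified and the character-formula part of the argument does not yet close.
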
 

\begin{theorem} (G. Lusztig, \cite{Lu}) Theorem \ref{kacth} also holds for the category $\O_{\mathrm{int},q}$.
\end{theorem}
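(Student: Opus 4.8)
The plan is to transfer, essentially verbatim, the proof of Theorem \ref{kacth} to the quantum setting, exploiting the fact (Subsection \ref{prob1}, Problems 12--13) that for $q$ not a root of unity the type I integrable representation theory of each $\mathfrak{sl}_2$-subalgebra $\langle e_i,f_i,K_i^{\pm 1}\rangle$ is identical to the classical one. First I would set up quantum highest weight theory: using the quantum PBW theorem for $U_q(\mathfrak{g})$, define Verma modules $M_\lambda$ (on which $U_q(\mathfrak{n}_-)$ acts freely) for integral weights $\lambda$, together with their unique irreducible quotients $L_\lambda$. The character of $M_\lambda$ is then the classical one, $\mathrm{ch}\,M_\lambda=e^\lambda\prod_{\alpha>0}(1-e^{-\alpha})^{-\dim\mathfrak{g}_\alpha}$, since the monomial PBW basis and its weights do not depend on $q$.

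Next I would classify the simple objects. Restricting $L_\lambda$ to each $i$-th $\mathfrak{sl}_2$ and applying the classification of type I integrable $U_q(\mathfrak{sl}_2)$-modules shows that $L_\lambda$ is integrable if and only if $\langle\lambda,\alpha_i^\vee\rangle\in\mathbb{Z}_{\ge0}$ for all $i$, i.e. $\lambda\in P_+$; conversely every simple object of $\O_{\mathrm{int},q}$ is a highest weight module and hence equals some $L_\lambda$ with $\lambda\in P_+$. The same $\mathfrak{sl}_2$-argument shows that the formal character of any integrable module is invariant under each simple reflection $s_i$, hence under the whole Weyl group $W$.

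The two remaining ingredients --- the character formula and semisimplicity --- both rest on a quantum Casimir element and the associated linkage principle, and this is where the main work lies. I would construct a central element of (a suitable completion of) $U_q(\mathfrak{g})$ acting on $M_\lambda$, and on all its subquotients, by the single scalar $q^{(\lambda,\lambda+2\rho)}$, and compute the quantum Shapovalov form to obtain the linkage principle: any $L_\mu$ occurring in $M_\lambda$ satisfies $\mu\in W\cdot\lambda$ for the dot action. The Weyl--Kac formula then follows by Kac's argument: writing $\mathrm{ch}\,L_\lambda=\sum_\mu b_\mu\,\mathrm{ch}\,M_\mu$ with the sum restricted to $\mu\in W\cdot\lambda$ by linkage, multiplying by the Weyl denominator, and using the $W$-(anti)invariance established above to pin down the alternating sum $\sum_{w\in W}(-1)^{\ell(w)}e^{w(\lambda+\rho)-\rho}$. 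Finally, for semisimplicity I would observe that two distinct dominant integral weights never lie in the same $W$-dot-orbit, so by the linkage principle $L_\lambda$ and $L_\mu$ lie in different blocks for $\lambda\neq\mu$; together with the rigidity of highest weight modules (no self-extensions in category $\O$) this forces every object of $\O_{\mathrm{int},q}$ to be a direct sum of the $L_\lambda$.

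I expect the construction and control of the quantum Casimir and Shapovalov determinant in the Kac--Moody generality to be the principal obstacle: the Casimir lives in a completion, and its eigenvalue computation, as well as the factorization of the Shapovalov determinant into quantum root hyperplanes, require care. The cleanest route is a specialization argument treating $q$ as generic, so that these quantum data degenerate to their classical counterparts as $q\to1$ and the classical linkage is inherited; everything else is then a routine transcription of the proof of Theorem \ref{kacth}.
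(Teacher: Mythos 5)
The paper contains no proof of this statement at all: it is quoted from Lusztig's book \cite{Lu}, so there is nothing internal to compare against. Your outline is, in substance, the standard argument and the one behind the cited proof: transport Kac's classical proof of Theorem \ref{kacth} to the quantum setting by means of a quantum Casimir operator (in \cite{Lu} it is constructed from the quasi-$R$-matrix), get the norm/linkage constraint on highest weights of subquotients of Verma modules, and then run Kac's character-formula and complete-reducibility arguments. So the strategy is the right one, and correct in outline.

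Three places, however, are thinner than your sketch suggests. First, your complete-reducibility step is not a formal consequence of ``different blocks plus no self-extensions'': objects of $\mathcal{O}_{\mathrm{int},q}$ need not a priori have finite length, and Ext-vanishing between simples does not by itself give semisimplicity in such a category. One has to run Kac's actual argument (every nonzero object of category $\mathcal{O}$ has a primitive vector; decompose under the Casimir eigenvalue; show the sum of the highest weight submodules generated by primitive vectors is direct and exhausts the module), which does quantize verbatim. Second, you invoke the full linkage principle $\mu\in W\cdot\lambda$ obtained from a quantum Shapovalov determinant; in Kac--Moody generality this is a serious theorem and, more importantly, it is not needed. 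Both Kac's character-formula argument and the splitting of extensions between distinct $L_\lambda$, $L_\mu$ with $\lambda,\mu\in P_+$ use only the norm condition $(\mu+\rho,\mu+\rho)=(\lambda+\rho,\lambda+\rho)$, which is exactly what the Casimir eigenvalue $q^{(\lambda,\lambda+2\rho)}$ provides --- and note that extracting the equality of exponents from the equality of these scalars is precisely where the hypothesis that $q$ is not a root of unity enters. This direct argument also covers algebraic non-roots of unity, which your fallback ``specialize from generic $q$'' route does not reach without an additional integral-form argument. Third, the ``quantum PBW theorem'' you use to equate $\mathrm{ch}\,M_\lambda$ with its classical value is, in Kac--Moody generality, really the triangular decomposition together with the equality of graded dimensions of $U_q(\mathfrak{n}_-)$ and $U(\mathfrak{n}_-)$ (a quantum analogue of the Gabber--Kac theorem); this is available in \cite{Lu} but is itself nontrivial, not a formal consequence of a monomial basis, since no such basis exists for imaginary root directions.
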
 

The simple objects of category $\O$ for $\mathfrak{g}$ 
are the simple modules $L_{\lambda}$ which are irreducible 
quotients of Verma modules $M_{\lambda}$ with highest weight $\lambda\in P$ (the weight lattice). 
These modules are easy to define and classify, but hard to study, in particular their characters 
are very complicated and expressed through {\bf Kazhdan-Lusztig polynomials}. However, 
things do not get any worse for quantum groups, as long as $q$ is generic: 

\begin{theorem} (\cite{EKa})\footnote{This theorem answers a question of V. Drinfeld and I. M. Gelfand.}  For generic $q$ the characters of the irreducible modules 
with highest weight $\lambda$ for $\mathfrak{g}$ and $U_q(\mathfrak{g})$
are the same: ${\rm ch} L_{\lambda} = {\rm ch} L^q_{\lambda}$.
\end{theorem}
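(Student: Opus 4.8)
The plan is to reduce the statement to a comparison of contravariant (Shapovalov) forms on Verma modules. By the PBW theorem for $U_q(\mathfrak{g})$ at generic $q$, the quantum Verma module $M^q_\lambda$, freely generated from a highest weight vector $v_\lambda$ by the lower-triangular subalgebra, has the same character as the classical Verma module $M_\lambda$: in each weight $\mu$ one has $\dim (M^q_\lambda)_\mu = \dim (M_\lambda)_\mu = P(\lambda-\mu)$, the Kostant partition function, independently of $q$. The irreducible quotient is $L^q_\lambda = M^q_\lambda/\mathrm{rad}(S^q_\lambda)$, where $S^q_\lambda$ is the contravariant form built from the standard anti-involution exchanging $e_i$ and $f_i$ and fixing the $K_i$. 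This form is homogeneous, so it restricts to a pairing $S^q_{\lambda,\mu}$ on each finite-dimensional weight space, and $\dim (L^q_\lambda)_\mu = \mathrm{rank}\,S^q_{\lambda,\mu}$; the same holds classically. Thus the theorem is equivalent to the equality $\mathrm{rank}\,S^q_{\lambda,\mu} = \mathrm{rank}\,S_{\lambda,\mu}$ for generic $q$ and every $\mu$.

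Next I would work over the ring $\mathbb{C}[q,q^{-1}]$. Writing $h_i = \frac{K_i - K_i^{-1}}{q_i - q_i^{-1}}$ as in the quasiclassical limit, one can choose the integral form so that the matrix entries of $S^q_{\lambda,\mu}$ in a PBW basis are regular at $q=1$ and specialize there to the entries of the classical Shapovalov matrix $S_{\lambda,\mu}$. Since the rank of a matrix over $\mathbb{C}[q,q^{-1}]$ is lower semicontinuous under specialization, the generic rank over $\mathbb{C}(q)$ is at least its value at $q=1$, giving $\mathrm{rank}\,S^q_{\lambda,\mu} \geq \mathrm{rank}\,S_{\lambda,\mu}$, that is $\mathrm{ch}\,L^q_\lambda \geq \mathrm{ch}\,L_\lambda$ coefficientwise. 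This is the easy half; the content of the theorem is that this inequality is in fact an equality.

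For equality the clean mechanism is the Jantzen filtration, which exists for quantum groups just as classically. Deforming the highest weight along a generic line $\lambda\mapsto\lambda+t\nu$, the Jantzen sum formula expresses $\sum_{i\geq 1}\mathrm{ch}\,(M^q_\lambda)^i = \sum_{\alpha>0}\sum_{n\geq 1} c_{\alpha,n}\,\mathrm{ch}\,M^q_{\lambda-n\alpha}$, where $c_{\alpha,n}$ is the order of vanishing in $t$ of the factor of $\det S^q_{\lambda,\mu}$ indexed by $(\alpha,n)$. The quantum Kac--Kazhdan determinant formula identifies that factor, up to an invertible scalar in $\mathbb{C}[q,q^{-1}]$, with the quantum integer $[\langle \lambda+\rho,\alpha^\vee\rangle - n]_{q_\alpha}$, the evident $q$-deformation of the classical linear factor $\langle \lambda+\rho,\alpha^\vee\rangle - n$, occurring with the same multiplicity $P(\lambda-\mu-n\alpha)$. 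For transcendental $q$ one has $[m]_{q_\alpha}=0$ iff $m=0$, so each quantum factor has a simple zero exactly where its classical counterpart does; hence the coefficients $c_{\alpha,n}$, and with them the whole Jantzen sum formula, agree with the classical ones. A downward induction on the weight ordering then propagates $\mathrm{ch}\,L^q_\lambda = \mathrm{ch}\,L_\lambda$ to all $\lambda$ at once. The main obstacle is exactly the quantum determinant input: one must establish the $q$-Kac--Kazhdan formula with the correct factors and multiplicities and verify that passing to generic $q$ neither merges distinct factors nor splits coincident ones, so that the combinatorics of zeros --- and therefore the character recursion --- is preserved verbatim from the classical case.
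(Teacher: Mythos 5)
The paper itself gives no proof of this theorem: it is quoted from \cite{EKa}, and the method there is entirely different from yours. Etingof--Kazhdan deduce it from the quantization-functor machinery that this paper records as Theorems \ref{EK1} and \ref{EK2}: one proves that the Drinfeld--Jimbo algebra is isomorphic, as a Hopf algebra, to the functorial EK quantization of the Kac--Moody Lie bialgebra, and the EK quantization is built directly on the classical category $\mathcal O$ over $\mathbb C[[\hbar]]$ (it deforms the coproduct/tensor structure but comes with a character-preserving equivalence of abelian categories with classical $\mathcal O[[\hbar]]$); the statement for transcendental numerical $q$ then follows by base change from formal $q$. This detour exists for a reason: in finite type one can instead invoke Whitehead's lemmas to show $U_\hbar(\mathfrak g)\cong U(\mathfrak g)[[\hbar]]$ as algebras, but for symmetrizable Kac--Moody $\mathfrak g$ (the case the theorem is really about --- note the surrounding text allows affine $\mathfrak g$ and invokes Kazhdan--Lusztig polynomials) no such cohomological rigidity is available, and the direct Shapovalov-form route you propose runs into the problems below.

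Your reduction to ranks of contravariant forms, the PBW input, and the semicontinuity inequality $\dim(L^q_\lambda)_\mu\ge\dim(L_\lambda)_\mu$ are all correct, but the final step has a genuine gap: equality of Jantzen \emph{sum} formulas does not imply equality of characters of simples. The sum formula controls $\sum_{i\ge 1}\mathrm{ch}\,M^i_\lambda$, in which composition factors of deep layers are counted with multiplicity; it gives no handle on $\mathrm{ch}\,M^1_\lambda=\mathrm{ch}\,\mathrm{rad}\,M_\lambda$ alone, which is what $\mathrm{ch}\,L_\lambda$ requires. Concretely, on a two-dimensional weight space let $d_1\mid d_2$ be the elementary divisors of the deformed form over $(\text{field})[[t]]$. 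Nothing you establish rules out quantum $t$-valuations $(0,2)$ against classical valuations $(1,1)$: the determinant valuations agree (so the two sum formulas agree, which is all the quantum Kac--Kazhdan formula gives), the classical determinantal divisors are at least as divisible as the quantum ones (so specialization semicontinuity holds), and yet $\dim(L^q_\lambda)_\mu=1\ne 0=\dim(L_\lambda)_\mu$. The downward induction does not repair this: expanding the matched sums in the simple characters already matched by induction only yields equality of $\sum_{i\ge 1}[M^i_\lambda:L_\mu]$, never of $[M^1_\lambda:L_\mu]$. Indeed, if the sum formula plus such an induction determined simple characters, Jantzen's 1979 formula would have computed Kazhdan--Lusztig polynomials by itself; it does not. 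To close your argument you would need the individual Jantzen layers --- equivalently all elementary divisors, not just $\det$ --- to match, which is a strictly stronger statement requiring a new idea; and this is on top of the unproved input you flag yourself, namely a quantum Kac--Kazhdan determinant formula for arbitrary symmetrizable Kac--Moody $\mathfrak g$, imaginary roots and their multiplicities included.
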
 

\begin{theorem}\label{dete} If $\mathfrak{g}$ is finite dimensional then the category $\Rep_{\mathrm{f.d.}} U_q(\mathfrak{g})$ of finite dimensional representations of $U_q(\mathfrak{g})$, viewed as a monoidal category, determines $q$ up to $q\mapsto q^{-1}$. As a braided monoidal category it determines $q$ uniquely. 
\end{theorem}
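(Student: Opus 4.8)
The plan is to manufacture, out of the categorical data, explicit scalar invariants that are functions of $q$, to argue that any (braided) monoidal equivalence must preserve them, and then to check that they pin down $q$ with exactly the claimed ambiguity. Fix $\mathfrak{g}$ finite dimensional and $q\in\Bbb C^\times$ not a root of unity, and work inside $\Rep_{\mathrm{f.d.}}U_q(\mathfrak{g})$ (type I modules). By the character coincidence $\mathrm{ch}\,L_\lambda=\mathrm{ch}\,L^q_\lambda$ noted above, the Grothendieck ring of this category is canonically the classical representation ring $R(\mathfrak{g})$, \emph{independently of $q$}: the simple objects are the $L_\lambda$, $\lambda\in P_+$, with $q$-independent tensor product multiplicities. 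Hence the isomorphism classes of simple objects, and distinguished ones such as the fundamentals $L_{\omega_i}$ (characterized, e.g., as generators of $R(\mathfrak{g})$ of minimal Frobenius--Perron dimension), are intrinsic to the monoidal category, so they can be transported across any monoidal equivalence and compared for different values of $q$.

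For the braided statement I would use the braiding directly. Let $V=L_{\omega_i}$ and consider $c_{V,V}\in\End(V\otimes V)$: this is an endomorphism of an object of the category, so the collection of its eigenvalues (with multiplicities) is an invariant of the braided monoidal structure. Decomposing $V\otimes V=\bigoplus_\mu L_\mu$ by the $q$-independent fusion rule, the universal $\R$-matrix shows that $c_{V,V}$ acts on the $L_\mu$-block by a scalar $\varepsilon_\mu\,q^{(c_\mu-2c_V)/2}$, where $c_\lambda=\langle\lambda,\lambda+2\rho\rangle$ and $\varepsilon_\mu=\pm1$; for $\mathfrak{sl}_2$ with $V=L_1$ these are $q^{1/2}$ on $L_2$ and $-q^{-3/2}$ on $L_0$. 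Since $q$ is not a root of unity these scalars are genuinely nonconstant, and replacing $q$ by $q^{-1}$ inverts each of them (it replaces $c$ by $c^{-1}$); as $c\ne c^{-1}$ the collection distinguishes $q$ from $q^{-1}$ and in fact recovers $q$ itself (for $\mathfrak{sl}_2$ the top eigenvalue squares to $q$). Any braided equivalence preserves this data, so the braided monoidal category determines $q$ uniquely.

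That the monoidal structure alone cannot do better than $q\mapsto q^{-1}$ I would see by exhibiting an equivalence: the assignment $e_i\mapsto e_i$, $f_i\mapsto f_i$, $K_i\mapsto K_i^{-1}$ intertwines the defining relations and coproduct of $U_q(\mathfrak{g})$ with those of $U_{q^{-1}}(\mathfrak{g})$, hence is a Hopf algebra isomorphism $U_q(\mathfrak{g})\xrightarrow{\sim}U_{q^{-1}}(\mathfrak{g})$ and induces a monoidal equivalence $\Rep_{\mathrm{f.d.}}U_q(\mathfrak{g})\cong\Rep_{\mathrm{f.d.}}U_{q^{-1}}(\mathfrak{g})$. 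It carries the $\R$-matrix of $U_q$ to that of $U_{q^{-1}}$, i.e. the braiding to its inverse, so it is \emph{not} braided --- consistent with the previous paragraph.

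For the matching lower bound --- that the bare monoidal structure still determines $q$ up to inversion --- I would extract quantum dimensions. The category carries a canonical spherical structure implemented by the grouplike pivotal element $K_{2\rho}$ (so that $S^2=K_{2\rho}(-)K_{2\rho}^{-1}$), giving $\mathrm{qdim}(L_\lambda)=\prod_{\alpha>0}\frac{[\langle\lambda+\rho,\alpha^\vee\rangle]_{q_\alpha}}{[\langle\rho,\alpha^\vee\rangle]_{q_\alpha}}$. These are explicit Laurent polynomials in $q$, manifestly invariant under $q\mapsto q^{-1}$, whose joint values over $\lambda\in P_+$ determine the symmetric function $q+q^{-1}$, hence $q$ up to inversion. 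The hard part will be exactly this step: one must show these dimensions are invariants of the \emph{bare} monoidal category, i.e. that the spherical structure defining them is canonical and thus preserved by every monoidal equivalence --- equivalently, one must isolate a gauge-invariant combination of the associativity constraints (the $6j$-symbols) reproducing $q+q^{-1}$. Unlike the braiding eigenvalues, no dimension is visible from the tensor product and duals alone without first pinning down such a canonical pivotal structure, so verifying its canonicity, together with the injectivity of the resulting dimension function modulo $q\mapsto q^{-1}$, is where the genuine work lies.
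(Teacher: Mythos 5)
Your braided argument is fine and coincides with the paper's (the paper recovers $q$ from the eigenvalues of the squared braiding, which is exactly your top eigenvalue squared), but both steps of your purely monoidal argument have real problems. The map $e_i\mapsto e_i$, $f_i\mapsto f_i$, $K_i\mapsto K_i^{-1}$ is \emph{not} a Hopf algebra isomorphism $U_q(\mathfrak{g})\to U_{q^{-1}}(\mathfrak{g})$: it is the bar involution, an algebra isomorphism that fails on the coproduct, since $(\varphi\otimes\varphi)\Delta_q(e_i)=e_i\otimes K_i^{-1}+1\otimes e_i$ whereas $\Delta_{q^{-1}}(e_i)=e_i\otimes K_i+1\otimes e_i$ (this failure is precisely what Lusztig's quasi-$R$-matrix is built to repair). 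The monoidal equivalence $\Rep U_q(\mathfrak{g})\simeq\Rep U_{q^{-1}}(\mathfrak{g})$ is nevertheless true; for $\mathfrak{sl}_2$ a correct isomorphism $U_{q^{-1}}\to U_q$ is $e\mapsto fK$, $f\mapsto q^{-2}eK^{-1}$, $K\mapsto K$ (essentially the antipode composed with the Cartan involution: now the images are $(1,K)$- and $(K^{-1},1)$-skewprimitive respectively, so the coproducts match), and similarly in higher rank.

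The more serious issue is the step you yourself flag as open, because it is the actual content of the first statement: you need a monoidal invariant recovering $q+q^{-1}$, and your quantum dimensions are defined only after choosing the pivotal element $K_{2\rho}$, whose canonicity (invariance under an arbitrary monoidal equivalence) you do not establish. The paper's remark closes this with a construction that needs no pivotal structure at all. For $\mathfrak{g}=\mathfrak{sl}_2$ take the two-dimensional simple object $V$; it is self-dual, so pick an isomorphism $\phi:V\to V^*$, unique up to a scalar, and form $(\phi^*)^{-1}\phi:V\to V^{**}$, which is independent of that scalar. The trace of any morphism $g:V\to V^{**}$, namely the composite $\mathbb{1}\xrightarrow{\mathrm{coev}_V}V\otimes V^*\xrightarrow{g\otimes\mathrm{id}}V^{**}\otimes V^*\xrightarrow{\mathrm{ev}_{V^*}}\mathbb{1}$, uses only evaluation and coevaluation morphisms, hence is preserved by every monoidal equivalence. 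Computing it gives $-q-q^{-1}$, which determines the pair $\{q,q^{-1}\}$ since $q$ is not a root of unity; a similar argument works for other simple $\mathfrak{g}$. Substituting this trace argument for your dimension argument (and fixing the isomorphism above) completes the proof.
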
 

\begin{remark} The second statement of Theorem \ref{dete} is easier since $q$ can be recovered from the eigenvalues of the squared braiding. But the first statement shows that up to inversion $q$ can be recovered from the associativity isomorphism alone. For instance, if $\mathfrak{g}=\mathfrak{sl}_2$ and $V$ 
the 2-dimensional tautological representation of $U_q(\mathfrak{g})$, then there exists an isomorphism 
$\phi: V\to V^*$, and one can compute the quantum trace of the isomorphism 
$(\phi^*)^{-1}\phi: V\to V^{**}$ (which can be defined just from the monoidal structure). 
This trace is clearly independent on the choice of $\phi$ and equals $-q-q^{-1}$. 
A similar argument can be used for other simple Lie algebras $\mathfrak{g}$.  
\end{remark}

\section{Affine quantum groups} The goal of this section is to discuss affine quantum groups and their finite dimensional representations. The theory of such representations is very rich and has applications in several areas of mathematics and mathematical physics. Before doing so, however, let us discuss 
finite dimensional representations of classical affine algebras, which are significantly simpler.  

\subsection{Finite dimensional representations of $\widehat{\mathfrak{g}}$} (\cite{Ch}).
Let $\widehat{\mathfrak{g}}$ be an affine Lie algebra, $\widehat{\mathfrak{g}} = \mathfrak{g}[t,t^{-1}]\oplus \mathbb{C}{\rm k}$, where $\mathfrak{g}$ is a simple Lie algebra and ${\rm k}$ is the central element (\cite{Ka}). The commutator in this Lie algebra has the form 
$$
[a(t),b(t)] = [a,b](t) + \mathrm{Res}\,(da,b)\cdot {\rm k}.
$$
\begin{proposition} 
In finite dimensional representations of $\widehat{\mathfrak{g}}$ we have ${\rm k}=0$.
\end{proposition}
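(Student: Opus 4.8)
The plan is to show that the scalar (level) by which the central element ${\rm k}$ acts must vanish, by exploiting that ${\rm k}$ lies in the derived subalgebra $[\widehat{\mathfrak g},\widehat{\mathfrak g}]$. Choosing $a,b\in\mathfrak g$ with $(a,b)\neq 0$ (possible since the invariant form is nondegenerate on the simple Lie algebra $\mathfrak g$), the given commutator rule, evaluated on the modes $t^{1}$ and $t^{-1}$ and using $\mathrm{Res}(da,b)=(a,b)$ for these modes, yields
\begin{equation}
(a,b)\,{\rm k} = [a\otimes t,\,b\otimes t^{-1}] - [a\otimes 1,\,b\otimes 1],
\end{equation}
where the second term is the bracket presenting $[a,b]\otimes 1$. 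Thus ${\rm k}$ is written as a difference of two Lie brackets in $\widehat{\mathfrak g}$.

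First I would treat an irreducible finite-dimensional representation $V$ with action $\rho$. Since ${\rm k}$ is central, Schur's lemma forces $\rho({\rm k})=c\cdot\mathrm{id}$ for a scalar $c$. Taking the trace of the identity above in $\mathrm{End}(V)$, both terms on the right are commutators of operators, hence traceless, so $(a,b)\,c\,\dim V = 0$; as $(a,b)\neq 0$ we get $c=0$, i.e.\ ${\rm k}=0$ on $V$. Applying this to each composition factor of an arbitrary finite-dimensional $V$ shows that $\rho({\rm k})$ is strictly triangular in a basis adapted to a composition series, hence \emph{nilpotent}.

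The main obstacle is upgrading nilpotency to the vanishing of $\rho({\rm k})$ as an operator, since a central element could a priori act as a nonzero nilpotent. I would remove it by showing $\rho({\rm k})$ is in fact \emph{semisimple}, and then concluding from semisimple $+$ nilpotent $=0$. To this end, normalize the invariant form so that $(\theta,\theta)=2$ for the highest root $\theta$, and take the $\mathfrak{sl}_2$-triple $e_\theta,f_\theta,h_\theta$ with $[e_\theta,f_\theta]=h_\theta$ and $(e_\theta,f_\theta)=1$. Then $e_\theta\otimes 1,\,f_\theta\otimes 1,\,h_\theta\otimes 1$ span a copy of $\mathfrak{sl}_2$, while one checks directly from the bracket that $e_0:=f_\theta\otimes t$, $f_0:=e_\theta\otimes t^{-1}$, $h_0:={\rm k}-h_\theta\otimes 1$ span a second (``affine'') $\mathfrak{sl}_2$-triple: indeed $[e_0,f_0]=-h_\theta\otimes 1+{\rm k}=h_0$ and $[h_0,e_0]=2e_0$, $[h_0,f_0]=-2f_0$. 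In a finite-dimensional representation the Cartan elements $h_\theta\otimes 1$ and $h_0$ of these two copies of $\mathfrak{sl}_2$ each act diagonalizably (standard $\mathfrak{sl}_2$-theory), and they commute because ${\rm k}$ is central and $h_\theta\otimes 1$ commutes with itself. Hence ${\rm k}=h_0+h_\theta\otimes 1$ acts as a sum of two commuting diagonalizable operators, so $\rho({\rm k})$ is diagonalizable. Being simultaneously nilpotent, it is zero, which completes the proof.
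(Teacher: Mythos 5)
Your proof is correct and follows essentially the same strategy as the paper's: first $\rho({\rm k})$ is shown to be nilpotent because ${\rm k}$ is (up to scale) a commutator, hence traceless on each composition factor (the paper phrases this via the Heisenberg subalgebra spanned by $ht$, $ht^{-1}$, ${\rm k}$), and then semisimple because it is a sum of commuting Cartan elements of $\mathfrak{sl}_2$-triples, so that nilpotent $+$ semisimple forces ${\rm k}=0$. The only cosmetic difference is that the paper writes ${\rm k}=\sum_i k_i h_i$ as a combination of all the simple affine coroots, while you use the two-term decomposition ${\rm k}=h_0+h_\theta\otimes 1$ coming from the highest root.
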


\begin{proof} We have 
 $[ht,ht^{-1}]=2{\rm k}$, i.e., these elements generate a 3-dimensional Heisenberg Lie algebra. In any finite dimensional representation of the Heisenberg algebra, ${\rm k}$ has to be nilpotent. But ${\rm k}=\sum_i k_i h_i$ is semisimple, as all the generators $h_i$ are semisimple.
\end{proof} 
 
Now define the {\bf evaluation homomorphism} $\phi_z:\, \widehat{\mathfrak{g}} \to \mathfrak{g},~ \phi_z(a(t)) = a(z),~\phi({\rm k})=0$, where $ z\in \mathbb{C}^\times $. For a representation $V$ 
of $\mathfrak{g}$, define the {\bf evaluation representation} 
$V(z):=\phi^*V$. 

The classification of finite dimensional irreducible representations of $\widehat{\mathfrak{g}}$ 
is given by the following theorem. 

\begin{theorem}
(i) If $V_1,...,V_n$ are non-trivial irreducible finite dimensional representations of $\mathfrak{g}$ and $z_1,...,z_n\in \Bbb C^\times$ are distinct then the representation $V_1(z_1)\otimes ... \otimes V_n(z_n)$ is irreducible.

(ii) Any finite dimensional irreducible representation of $\widehat{\mathfrak{g}}$ is of this form uniquely up to permutation.
\end{theorem}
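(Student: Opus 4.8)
The plan is to reduce everything to the loop algebra $\mathfrak{g}[t,t^{-1}]=\mathfrak{g}\otimes\mathbb{C}[t,t^{-1}]$, through which any finite dimensional representation factors since $\mathrm{k}=0$ by the preceding proposition, and then to exploit the interplay between the simplicity of $\mathfrak{g}$ and the commutative algebra $\mathbb{C}[t,t^{-1}]$. The first ingredient I would establish is a structural lemma: for $\mathfrak{g}$ simple and $R$ commutative, every Lie ideal of $\mathfrak{g}\otimes R$ has the form $\mathfrak{g}\otimes J$ for an ideal $J\subseteq R$. This follows because, under the adjoint action of $\mathfrak{g}\otimes 1$, the module $\mathfrak{g}\otimes R$ is isotypic of type $\mathrm{ad}\,\mathfrak{g}$ (which is irreducible as $\mathfrak{g}$ is simple), so any $\mathfrak{g}$-submodule is $\mathfrak{g}\otimes W$ for a subspace $W\subseteq R$; the ideal condition together with $[\mathfrak{g},\mathfrak{g}]=\mathfrak{g}$ then forces $W=J$ to be an ideal of $R$.

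For part (i) I would argue by computing the image of the action. On $V_1(z_1)\otimes\cdots\otimes V_n(z_n)$ the element $x\otimes f$ acts as $\sum_i f(z_i)\rho_i(x)$ in the $i$-th tensor slot, that is, through the composite $\mathfrak{g}\otimes\mathbb{C}[t,t^{-1}]\to\bigoplus_i\mathfrak{g}$, $x\otimes f\mapsto(f(z_1)x,\dots,f(z_n)x)$, followed by the slot-by-slot (coproduct) action of $\bigoplus_i\mathfrak{g}$. Since the $z_i$ are distinct, the Chinese Remainder Theorem makes the scalar evaluation $\mathbb{C}[t,t^{-1}]\to\mathbb{C}^n$ surjective, hence so is the map to $\bigoplus_i\mathfrak{g}$; therefore the image acting on the tensor product is all of $\bigoplus_i\mathfrak{g}$ acting as an external tensor product. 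An external tensor product of irreducibles over a direct sum of Lie algebras is irreducible, so $V_1(z_1)\otimes\cdots\otimes V_n(z_n)$ is irreducible.

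For part (ii), given an irreducible $V$, its kernel is an ideal, so by the lemma it equals $\mathfrak{g}\otimes J$ with $J$ cofinite (cofinite since $\dim V<\infty$), and $V$ is a module over $\mathfrak{g}\otimes A$ with $A=\mathbb{C}[t,t^{-1}]/J$ a finite dimensional commutative algebra. Writing $A=\prod_i A_i$ as a product of local Artinian algebras $A_i=\mathbb{C}\oplus\mathfrak{m}_i$, the subspace $\mathfrak{r}=\bigoplus_i\mathfrak{g}\otimes\mathfrak{m}_i$ is a nilpotent ideal, and crucially it is perfect in $\mathfrak{g}\otimes A$: from $[\mathfrak{g}\otimes 1,\mathfrak{g}\otimes\mathfrak{m}_i]=\mathfrak{g}\otimes\mathfrak{m}_i$ we get $\mathfrak{r}=[\mathfrak{g}\otimes A,\mathfrak{r}]$. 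By Lie's theorem the solvable ideal $\mathfrak{r}$ acts on the irreducible $V$ through a character $\lambda$ that annihilates $[\mathfrak{g}\otimes A,\mathfrak{r}]=\mathfrak{r}$; hence $\mathfrak{r}$ acts by zero and $V$ descends to $\mathfrak{g}\otimes(A/\mathrm{rad}\,A)=\bigoplus_i\mathfrak{g}$. An irreducible over this semisimple Lie algebra is an external tensor product $W_1\boxtimes\cdots\boxtimes W_k$ of irreducibles of the summands; discarding the trivial factors leaves nontrivial $V_i$ supported at the distinct points $z_i\in\mathrm{Spec}\,A\subset\mathbb{C}^\times$, giving the desired form. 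Uniqueness follows because the support $\{z_i\}$ is recovered intrinsically as the set of maximal ideals of $A=\mathbb{C}[t,t^{-1}]/\ker$, and the external tensor decomposition over $\bigoplus_i\mathfrak{g}$ is unique.

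The main obstacle is the step in (ii) showing that the nilpotent ideal $\mathfrak{r}$ acts by zero: this is exactly where the simplicity of $\mathfrak{g}$ (via $[\mathfrak{g},\mathfrak{g}]=\mathfrak{g}$, which makes $\mathfrak{r}$ perfect) combines with Lie's theorem (the weight of a solvable ideal vanishes on its bracket with the whole algebra) to collapse all the infinitesimal directions at each point. Everything else — the ideal lemma, the CRT surjectivity in (i), and the uniqueness bookkeeping — is comparatively routine once this collapse is in place.
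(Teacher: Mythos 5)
Your proof is correct, but note that the paper itself does not prove this theorem: it states it with a reference to Chari's paper [Ch], proving only the preceding proposition (that ${\rm k}=0$ in finite dimensional representations) and posing the exercise about why the points must be distinct. So the comparison is with the literature and with the route the paper sketches later. Your argument is the commutative-algebra one: every Lie ideal of $\mathfrak{g}\otimes R$ is $\mathfrak{g}\otimes J$ (isotypicity of the adjoint action plus $[\mathfrak{g},\mathfrak{g}]=\mathfrak{g}$), the Chinese Remainder Theorem for part (i), and for part (ii) reduction to $\mathfrak{g}\otimes A$ with $A$ Artinian, where the nilpotent ideal $\mathfrak{g}\otimes\mathrm{rad}(A)$ is perfect as an ideal and hence acts by zero by Lie's theorem, so $V$ factors through $\bigoplus_i\mathfrak{g}$ and is an external tensor product. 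This is a genuinely different route from the one used by Chari and Chari--Pressley, and which the paper itself sketches at $q=1$ in the later subsection on the loop polarization: there one shows that a finite dimensional irreducible module has a highest weight vector for $\widetilde{\mathfrak{n}}_+=\mathfrak{n}_+[t,t^{-1}]$, computes the possible characters of $\widetilde{\mathfrak{h}}=\mathfrak{h}[t,t^{-1}]$ on it, and identifies the module with a tensor product of evaluation modules. Your approach is more elementary and self-contained; the highest-weight approach has the advantage that it deforms to $U_q(\widehat{\mathfrak{g}})$, where it produces the Drinfeld polynomials, whereas your key tools (Lie's theorem and the ideal structure of $\mathfrak{g}\otimes A$) have no quantum counterpart. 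One detail worth writing out in your uniqueness step: to see that the support $\{z_i\}$ equals the set of maximal ideals of $\mathbb{C}[t,t^{-1}]/\ker$, you need that each nontrivial $V_i$ is a faithful $\mathfrak{g}$-module (automatic since $\mathfrak{g}$ is simple, as the kernel of $\rho_i$ is a proper ideal), which forces the kernel of the action of $\bigoplus_i\mathfrak{g}$ on $V_1\boxtimes\cdots\boxtimes V_n$ to vanish, so that no point drops out of the support.
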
 

The following exercise explains why $z_i$ in this theorem need to be distinct. 

\begin{exercise} Let $V,W$ be nontrivial irreducible representations of $\mathfrak{g}$. Show that $V\otimes W$ is reducible, and so $V(z)\otimes W(z) = (V\otimes W)(z)$ is reducible. 

{\bf Hint:} Write $\Hom_{\mathfrak{g}}(V\otimes W,V\otimes W)$ as $\Hom_{\mathfrak{g}}(V\otimes V^*,W\otimes W^*)$ and then 
use that if $V$ is nontrivial then $V\otimes V^*$ contains $\Bbb C$ and $\mathfrak{g}$. 
\end{exercise} 

\subsection{The quantum group $U_q(\widehat{\mathfrak{sl}}_2)$ and its representations} (\cite{CP}, 12.2, \cite{CP2}).

\subsubsection{} The simplest affine quantum group is the {\bf quantum affine algebra} $U_q(\widehat{\mathfrak{sl}}_2)$, which has generators $\langle e_i,f_i,K_i^{\pm}\rangle_{i=0,1}$ and defining relations 
\begin{equation}
K_ie_i=q^2e_iK_i,\ K_if_i=q^{-2}f_iK_i,\ [e_i,f_i]=\frac{K_i-K_i^{-1}}{q-q^{-1}},\ i=0,1; 
\end{equation}
\begin{equation}
K_0 K_1 = K_1 K_0,~~~
[e_0,f_1] = 0,~~~
[e_1,f_0] = 0;
\end{equation}
\begin{equation}
K_0 e_1 K_0^{-1} = q^{-2}e_1,~~~
K_1 e_0 K_1^{-1} = q^{-2}e_0,~~~
K_0 f_1 K_0^{-1} = q^{2}f_1,~~~
K_1 f_0 K_1^{-1} = q^{2}f_0;
\end{equation}
\begin{equation}
e_i^3 e_j - (q^2+1+q^{-2})e_i^2 e_j e_i + (q^2+1+q^{-2})e_i e_j e_i^2 - e_j e_i^3 = 0,\ i\ne j;
\end{equation}
\begin{equation}
f_i^3 f_j - (q^2+1+q^{-2})f_i^2 f_j f_i + (q^2+1+q^{-2})f_i f_j f_i^2 - f_j f_i^3 = 0,\ i\ne j,
\end{equation}
where the last two sets of relations are the Serre relations (here $q\in \Bbb C^\times$ is not a root of unity). This is a Hopf algebra with coproduct, counit and antipode defined as usual for each individual quantum $\mathfrak{sl}_2$-subalgebra generated by $e_i,f_i,K_i$ for $i=0,1$.  Thus the element $\bold K := K_0 K_1$ is central. 

For an affine Lie algebra $\mathfrak{g}$, we say that a finite dimensional representation $Y$ is of {\bf type I} if so is its restriction to every root $\mathfrak{sl}_2$-subalgebra 
generated by $e_i,f_i,h_i$ (see Problem \ref{typeI} in Subsection \ref{prob1}). 

\begin{exercise}  Show that $\bold K$ acts by $1$ in all finite dimensional type I representations of $U_q(\widehat{\mathfrak{sl}}_2)$.  
\end{exercise} 

The quantum group $U_q(\widehat{\mathfrak{sl}}_2)$ is the quantized Kac-Moody algebra with Cartan matrix 
\begin{equation}
A=
\left(
\begin{array}{cc}
2 & -2\\
-2 & 2
\end{array}
\right).
\end{equation}

\begin{remark} In the classical limit $q\to 1$ the Hopf algebra $U_q(\widehat{\mathfrak{sl}}_2)$ degenerates into 
the universal enveloping algebra $U(\widehat{\mathfrak{sl}}_2)$ of the Lie algebra $\widehat{\mathfrak{sl}}_2=\mathfrak{sl}_2\otimes \Bbb C[t,t^{-1}]\oplus \Bbb C{\rm k}$, the affine Kac-Moody algebra attached to the same Cartan matrix. In this limit the generators $e_i,f_i,h_i$ specialize as follows: 
\begin{equation}
e_1 = e, ~~~ f_1 = f, ~~~ h_1 = h, ~~~ e_0 = f\cdot t, ~~~ f_0 = e\cdot t^{-1},~~~ h_0 = {\rm k}-h,
\end{equation}
where $K_i=q^{h_i}$ and $\bold K=q^{\rm k}$. 
\end{remark} 

\subsubsection{}
\begin{definition} (i) The {\bf evaluation homomorphism} $\phi_z:\, U_q(\widehat{\mathfrak{sl}}_2) \to U_q(\mathfrak{sl}_2)$ is the composition $\phi_z := \varphi\circ \tau_z$, where 
\begin{equation}
\varphi(e_1) = \varphi(f_0) = e, ~~~
\varphi(f_1) = \varphi(e_0) = f, ~~~
\varphi(K_1) = \varphi(K_0^{-1}) = K
\end{equation}
and $\tau_z$ is an automorphism of $U_q(\widehat{\mathfrak{sl}}_2)$ defined by 
\begin{equation}
\tau_z(e_0) = z e_0,~~~
\tau_z(f_0) = z^{-1} f_0,~~~
\text{and $\tau_z=\id$ on the other generators.}
\end{equation}

(ii) Given a representation $Y$ of $U_q(\mathfrak{sl}_2)$, the corresponding {\bf evaluation representation} $Y(z)$ of $U_q(\widehat{\mathfrak{sl}}_2)$ corresponding to $z\in \Bbb C^\times$
is the representation of $U_q(\widehat{\mathfrak{sl}}_2)$ on the space $Y$ given by the formula
$\pi_{Y(z)}(a) = \pi_Y(\phi_z(a))$ for all $a\in U_q(\widehat{\mathfrak{sl}}_2)$. More generally, 
given a representation $Y$ of $U_q(\widehat{\mathfrak{sl}}_2)$, we may define 
its twist $Y(z)$ by $\pi_{Y(z)}=\pi_Y\circ \tau_z$. 
\end{definition} 

Note that $Y(z)(w)=Y(zw)$, $(X\otimes Y)(z)=X(z)\otimes Y(z)$, $Y(z)^*=Y^*(z)$ 
for any representations $X,Y$ of $U_q(\widehat{\mathfrak{sl}}_2)$.

We warn the reader, however, that unlike the classical case $q=1$, 
$\phi_z$ is {\bf not} a Hopf algebra homomorphism, 
and as a result for representations $W,U$ of $U_q({\mathfrak{sl}}_2)$, $(W\otimes U)(z)$ 
is typically {\bf not} isomorphic to $W(z)\otimes U(z)$ (as demonstrated, for instance, by the examples below). In particular, $W\mapsto W(z)$
is not a monoidal functor from the category of $U_q({\mathfrak{sl}}_2)$-modules to the category of  $U_q(\widehat{\mathfrak{sl}}_2)$-modules. 

\begin{example} Let $V= \mathbb{C}^2$ be the 
two-dimensional tautological representation of $U_q(\mathfrak{sl}_2)$. Then 
the representation $V(z)$ is defined by the formulas
\begin{equation}
e_1
\to 
\left(
\begin{array}{cc}
0 & 1\\
0 & 0
\end{array}
\right),
~~~
f_1
\to 
\left(
\begin{array}{cc}
0 & 0\\
1 & 0
\end{array}
\right),
~~~
K_1
\to 
\left(
\begin{array}{cc}
q & 0\\
0 & q^{-1}
\end{array}
\right),
\end{equation}
\begin{equation}
e_0
\to 
\left(
\begin{array}{cc}
0 & 0\\
z & 0
\end{array}
\right),
~~~
f_0
\to 
\left(
\begin{array}{cc}
0 & z^{-1}\\
0 & 0
\end{array}
\right),
~~~
K_0
\to 
\left(
\begin{array}{cc}
q^{-1} & 0\\
0 & q
\end{array}
\right).
\end{equation}
\end{example} 

\begin{exercise}\label{ty1} Check that any nontrivial two-dimensional representation of 
 $U_q(\widehat{\mathfrak{sl}}_2)$ (of type I) is isomorphic to $V(w)$ for a unique $w\in \Bbb C^\times$.
 \end{exercise} 

\subsubsection{}
By Exercise \ref{ty1}, the dual $V(z)^*$ is isomorphic to $V(w)$ for some $w\in \Bbb C^\times$. To find the evaluation parameter $w$, note that in $V(w)$ we have 
\begin{equation}
w = {\rm tr} (e_0 e_1). 
\end{equation}
But 
\begin{equation}
\pi_{V(z)^*}(e_{0}) = \pi_{V(z)}(S(e_{0}))^* = \pi_{V(z)}(-e_{0} K^{-1}_{0})^*,~~~
\pi_{V(z)^*}(e_{1}) = \pi_{V(z)}(S(e_{1}))^* = \pi_{V(z)}(-e_{1} K^{-1}_{1})^*,
\end{equation}
so we find
\begin{equation}
w = {\rm tr}_{V(z)} (e_1 K_1^{-1} e_0 K_0^{-1}) \Rightarrow w = q^{2}z.
\end{equation}
Thus
\begin{equation}
V(z)^* \cong V(q^2 z),~~~
V(z)^{**} \cong V(q^4 z). 
\end{equation}
So we see that in the category of finite dimensional representations
of $U_q(\widehat{\mathfrak{sl}}_2)$ (unlike $U_q(\mathfrak g)$ for finite dimensional $\mathfrak g$), 
taking the double dual is a non-trivial operation on the set of isomorphism classes of irreducible representations. 

\begin{remark} In fact, it is easy to show that the equalities $W(z)^*=W^*(q^2z)$, $W(z)^{**} = W(q^4 z)$ hold for any finite dimensional representation $W$ of $U_q({\mathfrak{sl}}_2)$, in contrast with the equality $Y(z)^*=Y^*(z)$ for a representation $Y$ 
of $U_q(\widehat{\mathfrak{sl}}_2)$.\footnote{This difference arises because, as we have already mentioned, the functor 
$\Rep (U_q(\mathfrak{sl}_2))\to \Rep(U_q(\widehat{\mathfrak{sl}}_2))$ sending $W$ to $W(z)$ is not a monoidal functor.} More generally, for any finite dimensional Lie algebra $\mathfrak g$ and any finite dimensional representation $Y$ of $U_q(\widehat{\mathfrak g})$, $Y^{**} = Y(q^{2h^{\vee}})$, where $h^{\vee}$ is the dual Coxeter number of $\mathfrak g$.
The reason is that the squared antipode $S^2$ for a quantized Kac-Moody algebra $\mathfrak{a}$ has the form $S^2 = \mathrm{Ad}\,(q^{2\rho_{\mathfrak{a}}})$, and 
for $\mathfrak{a}=\widehat{\mathfrak{g}}$ we have 
$\rho_{\widehat{\mathfrak{g}}}=\rho_{\mathfrak{g}}+h^\vee d$, where $d$ is the grading element. 
\end{remark} 

\subsubsection{}  Since $V(z)^*=V(q^2z)$, we have homomorphisms 
\begin{equation}
V(z)\otimes V(q^2 z) = V(z)\otimes V(z)^{*} \xleftarrow{\mathrm{coev}} \mathbb{C},
\end{equation} 
\begin{equation}
V(z)\otimes V(q^{-2} z) = V(zq^{-2})^{*} \otimes V(zq^{-2}) \xrightarrow{\mathrm{ev}} \mathbb{C},
\end{equation}
which show that these tensor products are reducible.

\begin{exercise} 
Show that there is a pair of short exact sequences
\begin{equation}
0 \to \mathbb{C} \to V(z)\otimes V(q^2 z) \to W(qz) \to 0,
\end{equation} 
\begin{equation}
0 \to W(q^{-1}z) \to V(z)\otimes V(q^{-2} z) \to \mathbb{C}\to 0,
\end{equation}
where $W$ is the three dimensional irreducible representation of 
$U_q(\mathfrak{sl}_2)$, and prove that these sequences are {\bf not} split. 
\end{exercise} 

It follows that the category of finite dimensional representations of $U_q(\widehat{\mathfrak{sl}}_2)$ does not admit a braiding, as $X\otimes Y$ is not always isomorphic to $Y\otimes X$. However, as we will see, the Jordan-H\"older series of these two representations always coincide.

\begin{exercise} Show that $V(z)\otimes V(u)$ is irreducible for all $z,u\in \Bbb C^\times$ except when $z/u = q^{\pm 2}$, and $V(z)\otimes V(u) \simeq V(u)\otimes V(z)$ away from these points.
\end{exercise} 

\begin{remark} More generally, we will see that if $X,Y$ are any two irreducible representations 
of $U_q(\widehat{\mathfrak{sl}}_2)$ then the representation $X(z)\otimes Y(u)$
is irreducible except when $z/u$ takes finitely many values. Moreover, this is true and not difficult to check for $U_q(\widehat{\mathfrak{g}})$ for any simple Lie algebra $\mathfrak{g}$. 
\end{remark} 

\subsection{Classification of irreducible finite dimensional representations of 
$U_q(\widehat{\mathfrak{sl}}_2)$}\label{classif} (\cite{CP}, 12.2, \cite{CP2}). 

Let $V_a$ be the irreducible representation of $U_q(\mathfrak{sl}_2)$ with highest weight $a\in \mathbb{Z}_{\geq 0}$. 

\begin{question} When is the representation $V_{a_1}(z_1)\otimes ... \otimes V_{a_n}(z_n)$ of $U_q(\widehat{\mathfrak{sl}}_2)$ irreducible?
\end{question} 

To address this question, we introduce the following definition. 

\begin{definition} A {\bf string} in $\mathbb{C}^\times$ is a finite geometric progression with common ratio $q^2$, i.e. $b,~ q^2 b,~ q^4 b,~...,~q^{2m} b$. Two strings $S,T$ are in {\bf special position} if $S \cup T$ is a string containing $S$ and $T$ properly; otherwise they are in {\bf general position}.
\end{definition} 

Let us now assign to $V_a(z)$ its string $s_{a}(z) := \{ q^{-a+1}z,~...~,q^{a-3}z,q^{a-1}z \}$ of length $a$. For example, for the two-dimensional representation $V(z)$ the string is $s_1(z)=\{ z \}$.

\begin{theorem} (V. Chari -- A. Pressley)
(i) The tensor product $V_{a_1}(z_1)\otimes ... \otimes V_{a_n}(z_n)$ is irreducible $\Leftrightarrow$ the strings $s_{a_i}(z_i)$ are pairwise in general position.

(ii) Such irreducible tensor products are isomorphic $\Leftrightarrow$ they differ by permutation of factors.

(iii) Any irreducible finite dimensional representation of $U_q(\widehat{\mathfrak{sl}}_2)$ (of type $I$) is isomorphic to such a product.
\end{theorem}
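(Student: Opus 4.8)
The plan is to prove all three parts through the theory of $\ell$-highest weight modules and their Drinfeld polynomials. First I would set up highest weight theory for $U_q(\widehat{\mathfrak{sl}}_2)$: a finite-dimensional type I module $V$ is called $\ell$-highest weight if it is generated by a vector $v$ annihilated by $e_0,e_1$ on which the commuting ``loop-Cartan'' currents (the generators $\psi^{\pm}_{r}$ of Drinfeld's second realization) act by scalars. I would show that every irreducible finite-dimensional type I module is $\ell$-highest weight with a one-dimensional highest weight space, and that the generating series of these scalars is necessarily the expansion of a rational function of the form $q^{\deg P}\,P(q^{-2}u)/P(u)$ for a unique monic polynomial $P$ with $P(0)\neq 0$ --- the Drinfeld polynomial --- whence irreducible modules are classified by such $P$ (one irreducible $L(P)$ for each $P$). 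This abstract classification is the backbone of part (iii); the remaining content of (iii) is to recognize every $L(P)$ as a tensor product of evaluation modules.

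Next I would compute the Drinfeld polynomial of the evaluation module $V_a(z)$ and verify that its set of roots (in the standard normalization) coincides with the string $s_a(z)=\{q^{-a+1}z,\dots,q^{a-1}z\}$. I would then establish the multiplicativity lemma: the tensor product $v_1\otimes\cdots\otimes v_n$ of highest weight vectors is again an $\ell$-highest weight vector, and the Drinfeld polynomial of a tensor product of $\ell$-highest weight modules is the product of the factors' Drinfeld polynomials. Consequently $V_{a_1}(z_1)\otimes\cdots\otimes V_{a_n}(z_n)$ is $\ell$-highest weight with Drinfeld polynomial $P=\prod_i P_{a_i,z_i}$, whose roots are the union of the strings $s_{a_i}(z_i)$. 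Since every monic $P$ with $P(0)\neq 0$ factors, uniquely up to reordering, as a product of string-polynomials, every $L(P)$ is a quotient of some such tensor product, which yields the surjectivity needed for (iii).

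For the irreducibility criterion (i) I would first reduce to the two-factor case. Using that an $\ell$-highest weight module is irreducible iff it is also cogenerated by its lowest weight line (equivalently, both it and the reversed product are cyclic), I would show the full product is irreducible iff every pairwise product $V_{a_i}(z_i)\otimes V_{a_j}(z_j)$ is irreducible. For two factors I would analyze the normalized intertwiner $\check R(z/w)\colon V_a(z)\otimes V_b(w)\to V_b(w)\otimes V_a(z)$ coming from the universal $R$-matrix; its scalar normalization is an explicit rational function of $z/w$ whose zeros and poles occur exactly at the resonances $z/w\in q^{2\mathbb{Z}}$ that correspond to the two strings being in special position. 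The two-factor product is irreducible precisely when $\check R$ and its opposite-direction companion are both isomorphisms, i.e. when the strings are in general position; this simultaneously gives the direction ``$\Leftarrow$'' of the product being isomorphic to $V_b(w)\otimes V_a(z)$.

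Finally, part (ii) follows by combining the invariance of the Drinfeld polynomial under isomorphism with a combinatorial lemma: a multiset that is a union of strings pairwise in general position determines those strings uniquely up to order, reconstructed greedily from the root multiset. Thus isomorphic irreducible products have equal Drinfeld polynomials, hence the same multiset of strings, hence the same factors up to permutation. The main obstacle I expect is the two-factor step --- computing the normalization of $\check R(z/w)$ and matching its zero locus precisely with the special-position condition --- together with the passage between the Serre-relation presentation in which the problem is posed and Drinfeld's second realization, where highest weight theory and the polynomial invariant become transparent.
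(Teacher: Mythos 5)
The paper states this theorem without proof, attributing it to Chari and Pressley and citing \cite{CP}, 12.2 and \cite{CP2}; so there is no internal argument to compare yours against. Your outline is, in substance, the proof from those references, and its ingredients are exactly the ones the paper assembles around the statement without welding them into a proof: Drinfeld's classification of irreducibles by loop highest weights and Drinfeld polynomials (stated in the subsection on the Faddeev--Reshetikhin--Takhtajan formalism), the formula for $P_{V_a(1)}$ identifying its roots with the string $s_a(1)$, the exercise that a finite multiset in $\mathbb{C}^\times$ decomposes uniquely into strings pairwise in general position (your combinatorial lemma for part (ii)), and the meromorphic $R$-matrix theory of the subsection following the theorem.

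Three of your steps carry essentially all the content, and it is worth being explicit about why they are nontrivial. First, multiplicativity of Drinfeld polynomials on tensor products is not formal, because the coproduct is not known in closed form in the Drinfeld generators; one needs either the estimate that $\Delta$ applied to the Cartan current equals $\psi(u)\otimes\psi(u)$ modulo terms annihilating top-weight vectors, or the FRT relation $\Delta T(z)=T(z)\otimes T(z)$ together with triangularity, as the paper exploits in its Yangian subsection --- this is precisely the ``passage between presentations'' you flag as an obstacle. Second, the converse half of your reduction to pairs --- pairwise irreducibility (equivalently, cyclicity of both orderings of each pair) implies irreducibility of the full product --- is a genuine theorem, proved for affine $\mathfrak{sl}_2$ in \cite{CP2} by induction on the number of factors; it is not an instance of general module theory, though the forward half is trivial. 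Third, beware of circularity: the Corollary following Proposition \ref{irre} in the paper is exactly your two-factor criterion, but the paper deduces that corollary \emph{from} the Chari--Pressley theorem (its proof invokes ``the description of irreducible representations in terms of strings''), so it cannot be used as a step; your plan to compute the scalar normalization of $\check R(z/w)$ and match its zeros and poles with the special-position resonances (or, equivalently, to determine directly when both $V_a(z)\otimes V_b(w)$ and $V_b(w)\otimes V_a(z)$ are highest weight) is the necessary independent substitute. With these three pieces of work actually carried out, your outline does prove (i)--(iii).
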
 

\begin{exercise} Show that any finite multiset in $\mathbb{C}^\times$ can be uniquely presented as a union of strings, pairwise in general position.
\end{exercise} 

\begin{example} Here are some examples of such decompositions: 
\begin{figure}[!h]
\begin{center}
\scalebox{0.8}{
\begin{tikzpicture}
\tikzmath{\r=0.05;};
\draw[fill=black,thick,radius=\r,inner sep=0] (0,0) circle;
\draw[fill=black,thick,radius=\r,inner sep=0] (1,0) circle;
\draw[fill=black,thick,radius=\r,inner sep=0] (2,0) circle;
\draw[fill=black,thick,radius=\r,inner sep=0] (3,0) circle;
\draw[fill=black,thick,radius=\r,inner sep=0] (4,0) circle;

\node at (2,0.5) {$2$};
\node at (3,0.5) {$3$};

\node at (5,0) {$=$};

\draw[fill=black,thick,radius=\r,inner sep=0] (6,0.5) circle;
\draw[fill=black,thick,radius=\r,inner sep=0] (7,0.5) circle;
\draw[fill=black,thick,radius=\r,inner sep=0] (8,0.5) circle;
\draw[fill=black,thick,radius=\r,inner sep=0] (9,0.5) circle;
\draw[fill=black,thick,radius=\r,inner sep=0] (10,0.5) circle;

\draw[fill=black,thick,radius=\r,inner sep=0] (8,0) circle;
\draw[fill=black,thick,radius=\r,inner sep=0] (9,0) circle;

\draw[fill=black,thick,radius=\r,inner sep=0] (9,-0.5) circle;
\end{tikzpicture}
\begin{tikzpicture}
\draw[white] (-2,-0.3)--(-2,0.3);

\tikzmath{\r=0.05;};
\draw[fill=black,thick,radius=\r,inner sep=0] (0,0) circle;
\draw[fill=black,thick,radius=\r,inner sep=0] (1,0) circle;
\draw[fill=black,thick,radius=\r,inner sep=0] (2,0) circle;

\node at (0,0.5) {$2$};
\node at (2,0.5) {$2$};

\node at (3,0) {$=$};

\draw[fill=black,thick,radius=\r,inner sep=0] (4,0.5) circle;
\draw[fill=black,thick,radius=\r,inner sep=0] (5,0.5) circle;
\draw[fill=black,thick,radius=\r,inner sep=0] (6,0.5) circle;

\draw[fill=black,thick,radius=\r,inner sep=0] (4,0) circle;

\draw[fill=black,thick,radius=\r,inner sep=0] (6,-0.5) circle;
\end{tikzpicture}
}
\end{center}
\end{figure}
\end{example} 

So we see that finite dimensional irreducible type I representations $Y$ of $U_q(\widehat{\mathfrak{sl}}_2)$ are parametrized by finite multisets in $\mathbb{C}^\times$. Note that such a multiset is the multiset of roots of a unique polynomial with constant term $1$. This polynomial $P_Y(z)$
is called the {\bf Drinfeld polynomial} of $Y$. For example, $P_{V_a(1)}(z)=(1-q^{-a+1}z)(1-q^{-a+3}z)...(1-q^{a-3}z)(1-q^{a-1}z)$.

\subsection{$R$-matrices with a spectral parameter} (\cite{CP}, Ch. 12, \cite{CP2}, \cite{EFK}, Lecture 9).

\subsubsection{} 
The Hopf algebra $U_q(\widehat{\mathfrak{sl}}_2)/(\bold K-1)$ has a universal $\R$-matrix 
$$
\R = \sum_i a_i \otimes a_i^*
$$ 
coming from the quantum double construction. This is an infinite sum, so if $X,Y$ are finite dimensional representations then $\R|_{X\otimes Y}$ does not make sense, in general. 
However, one can consider the power series
\begin{equation}
(\tau_z\otimes \M1)(\R)|_{X\otimes Y} = R_{X(z)\otimes Y} \in \End(X\otimes Y)[[z]]
\end{equation}
which is called {\bf the R-matrix with spectral parameter}. 

\begin{theorem} (V. Drinfeld) Assume that $|q|<1$. Then this series converges for $|z|<r$ for some $r$. 
If $X,Y$ are irreducible, then 
\begin{equation}
R_{XY}(z) = f_{XY}(z)\overline{R}_{XY}(z)
\end{equation}
where $\overline{R}_{XY}$ is a matrix-valued rational function which does not vanish at any $z\in \Bbb C^\times$ and $f_{XY}$ is a scalar meromorphic function such that $f_{XY}(0)=1$.
\end{theorem}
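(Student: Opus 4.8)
The plan is to separate the statement into an analytic part (convergence of the power series $R_{XY}(z)$) and an algebraic part (its factorization through a rational intertwiner), using two inputs available above: the universal identity $\R\Delta(a)=\Delta^{\op}(a)\R$, and the fact that for all but finitely many values of $z$ the modules $X(z)\otimes Y$ and $Y\otimes X(z)$ are irreducible and isomorphic. First I would observe that, wherever it converges, $P\circ R_{XY}(z)$ is a homomorphism of $U_q(\widehat{\mathfrak{sl}}_2)$-modules $X(z)\otimes Y\to Y\otimes X(z)$: this is the intertwining identity transported through $\tau_z\otimes \M1$, together with $(X\otimes Y)(z)=X(z)\otimes Y(z)$ and $Y(z)(w)=Y(zw)$ recorded earlier. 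So the whole theorem reduces to pinning down one scalar degree of freedom inside a one-dimensional space of intertwiners.

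For convergence I would use the explicit factorized form of the universal $R$-matrix of $U_q(\widehat{\mathfrak{sl}}_2)$: a Cartan factor $\Theta$ times an ordered product, over the positive roots of the affine root system, of $q$-exponentials built from the real and imaginary root vectors (each normalized by $q$-factorials $[k]_q!$). Applying $\tau_z\otimes \M1$ multiplies the contribution of a positive root vector by $z$ raised to its $\delta$-degree, so on the finite-dimensional space $X\otimes Y$ every matrix coefficient of $R_{XY}(z)$ becomes an honest power series in $z$, with the $z^0$-coefficient coming only from the finite root $\alpha_1$. The role of the hypothesis $|q|<1$ is precisely in bounding these coefficients: the root vectors of growing $\delta$-degree enter with factors of $q$ raised to growing powers, so the power series in $z$ have geometrically controlled coefficients and converge on some disk $|z|<r$.

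Granting convergence, I would build the normalized matrix $\overline{R}_{XY}(z)$ by hand. The intertwining equations for a map $X(z)\otimes Y\to Y\otimes X(z)$ form a finite linear system whose coefficients are Laurent polynomials in $z$ (the generators $e_0,f_0$ contribute $z^{\pm1}$). Over the field $\mathbb{C}(z)$ this system has a one-dimensional solution space, by Schur's lemma applied to the generically irreducible, isomorphic modules; Cramer's rule then exhibits a rational matrix solution. Extracting and discarding its scalar content (the common denominator and the scalar $\gcd$ of the numerators, both scalar rational functions of $z$) yields the minimal representative $\overline{R}_{XY}(z)$, which is regular and nonvanishing on all of $\mathbb{C}^\times$. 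Since $R_{XY}(z)$ is, for generic $z$, another element of the same one-dimensional intertwiner space, it must equal $f_{XY}(z)\,\overline{R}_{XY}(z)$ for a scalar function $f_{XY}$; this $f_{XY}$ is meromorphic because $R_{XY}$ is analytic near $0$ and $\overline{R}_{XY}$ is rational and invertible away from $0$. Finally $f_{XY}(0)=1$ is read off from the $z^0$-term: the constant term of $R_{XY}(z)$ is the $\mathfrak{sl}_2$ $R$-matrix (no $e_0,f_0$), so normalizing $\overline{R}_{XY}$ to agree with it at $z=0$ fixes $f_{XY}(0)=1$.

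The principal obstacle is the convergence estimate: the reduction to a one-dimensional intertwiner space and the factorization are essentially formal, but bounding the growth of the coefficients of $R_{XY}(z)$ genuinely needs the explicit product formula for $\R$ and is exactly where $|q|<1$ is used (for $|q|=1$ the series need not converge). A secondary subtlety is justifying that the scalar content can be stripped so that $\overline{R}_{XY}(z)$ is literally nonvanishing on $\mathbb{C}^\times$ and not merely rational; the location of its (absent) zeros and of the poles absorbed into $f_{XY}$ can be cross-checked against the fusion short exact sequences for $V(z)\otimes V(q^{\pm2}z)$ computed above.
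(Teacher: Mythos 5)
The paper itself offers no proof of this statement — it is quoted as Drinfeld's theorem, with \cite{CP}, \cite{CP2}, \cite{EFK} as references — so there is no internal argument to compare yours with, and I assess it on its own terms. Your algebraic half is correct and nearly complete. An intertwiner $X(z)\otimes Y\to Y\otimes X(z)$ is a solution of a finite linear system with Laurent-polynomial coefficients in $z$; since $P\circ R_{XY}(z_0)$ is a nonzero intertwiner for small $z_0\ne 0$ (the value $R_{XY}(0)$ being the invertible finite-type $R$-matrix) and both tensor products are irreducible off a finite set of $z_0$, Schur's lemma makes the solution space one-dimensional at generic points, hence exactly one-dimensional over $\Bbb C(z)$; Cramer's rule then produces a rational solution, and factoring out the scalar content leaves a matrix of polynomials with entry-$\gcd$ equal to $1$, which vanishes nowhere on $\Bbb C$ (a common zero at $z_0$ would put $z-z_0$ into the $\gcd$). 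Two small repairs are needed. First, your $\overline R_{XY}$ is nonvanishing but \emph{not} invertible away from $0$ (for $X=Y=V$ it fails to be invertible at $z=q^{-2}$, as the paper's formula \eqref{Rmat4by4} shows), so $f_{XY}$ must be defined as a ratio of matching nonzero matrix entries rather than by inverting $\overline R_{XY}$. Second, $f_{XY}(0)=1$ requires the observation that $f_{XY}$ is automatically regular and nonzero at $z=0$, because $R_{XY}(0)$ is invertible while $\overline R_{XY}(0)\ne 0$; after that one rescales $\overline R_{XY}$ by a constant.

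The genuine gap is the convergence assertion — the only place where the hypothesis $|q|<1$ does any work — which you rightly flag as the principal obstacle but then describe rather than prove, and whose mechanism you mislocate. The factorized (Khoroshkin--Tolstoy/Damiani) formula for the universal $R$-matrix of $U_q(\widehat{\mathfrak{sl}}_2)$ is itself a substantial theorem, nowhere available in this paper (where $\mathcal R$ is defined only through dual bases in the quantum double), so it enters your argument as an unproved black box. More importantly, ``root vectors of growing $\delta$-degree enter with factors of $q$ raised to growing powers'' cannot by itself be the reason for convergence: when $|q|=1$ every power of $q$ has modulus $1$, so that mechanism alone would yield a positive radius of convergence for all $q$ on the unit circle as well, contradicting the failure of convergence there that you yourself acknowledge. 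What actually has to be controlled are \emph{denominators}: the inverse $q$-factorials $1/[k]_q!$ in the $q$-exponential factors and the dual-basis and imaginary-root pairing constants of the shape $n/[2n]_q$ and $1/(1-q^{2n})$. For $|q|<1$ these are bounded (indeed geometrically small), while for $|q|=1$ they blow up along subsequences (small denominators); that is the true dichotomy behind the hypothesis. On top of this one still needs the elementary but necessary estimate that root vectors of $\delta$-degree $n$ act on the fixed finite-dimensional space $X\otimes Y$ with operator norm at most $C^n$, so that the coefficient of $z^N$, a sum over partition-type data of products of such terms, is bounded by $C_1^N$. Until these estimates are actually carried out, the first sentence of the theorem remains unproved.
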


\subsubsection{} The fact that $X^{**}=X(q^4)$ gives rise to a certain $q$-difference equation on $f_{XY}$, which implies that $f_{XY}$ extends to a meromorphic function on $\Bbb C^\times$. Also, the operator $\overline{R}_{XY}$ satisfies the unitarity condition
\begin{equation}
\overline{R}_{XY}(z)\overline{R}_{YX}^{21}(z^{-1}) = \M1 \otimes \M1, 
\end{equation}
and the operator $P\circ R_{XY}(z/w): X(z)\otimes Y(w)\to Y(w)\otimes X(z)$ 
is an isomorphism away from finitely many values of $z/w$. Finally, 
we have the {\bf hexagon relations} 
$$
R_{X\otimes Y,Z}(z)=R_{XZ}^{13}(z)R_{YZ}^{23}(z),\ 
R_{X,Y\otimes Z}(z)=R_{XZ}^{13}(z)R_{XY}^{12}(z), 
$$
which imply the {\bf quantum Yang-Baxter equation with multiplicative spectral parameter} 
$$
R_{XY}^{12}(z_1/z_2)R_{XZ}^{13}(z_1/z_3)R_{YZ}^{23}(z_2/z_3)=R_{YZ}^{23}(z_2/z_3)R_{XZ}^{13}(z_1/z_3)R_{XY}^{12}(z_1/z_2).
$$
Thus $R$ almost defines a braiding on the category of finite dimensional representations 
(except not quite since $R(z)$ and $R(z)^{-1}$ have poles, and in fact we know that a true braiding does not exist since sometimes $X\otimes Y\ncong Y\otimes X$). This ``almost braiding" is called 
a {\bf meromorphic braided structure}. 

\begin{example} Let $V=V(1)= \langle v_{+},v_{-} \rangle$ be the tautological $2$-dimensional representation. Then 
\begin{equation}
\label{Rmat4by4}
\def\arraystretch{1.5}
\overline R_{VV}(z) =
\left(
\begin{array}{cccc}
1 & 0 & 0 & 0 \\
0 & \dfrac{q(z-1)}{z-q^{2}} & \dfrac{1-q^2}{z-q^{2}} & 0 \\
0 & \dfrac{z(1-q^2)}{z-q^{2}} & \dfrac{q(z-1)}{z-q^{2}} & 0 \\
0 & 0 & 0 & 1
\end{array}
\right)
\end{equation}
where the $R$-matrix is written in the basis  $\langle v_{+}\otimes v_{+}, v_{+}\otimes v_{-}, v_{-}\otimes v_{+} , v_{-}\otimes v_{-} \rangle$ of $\mathbb{C}^2 \otimes \mathbb{C}^2$. We see that 
$\overline R$ has a pole at $z=q^{2}$, and $\overline R^{-1}$ has a pole at $z=q^{-2}$.

If we make the change of variable $z=e^{u}$, we will obtain 
a solution $\widetilde R(u):=\overline{R}(e^{u})$ of the {\bf quantum Yang-Baxter equation with additive spectral parameter} 
$$
\widetilde R_{XY}^{12}(u_1-u_2)\widetilde R_{XZ}^{13}(u_1-u_3)\widetilde R_{YZ}^{23}(u_2-u_3)=\widetilde R_{YZ}^{23}(u_2-u_3)\widetilde R_{XZ}^{13}(u_1-u_3)\widetilde R_{XY}^{12}(u_1-u_2).
$$
Since $\widetilde R(u)$ is a rational function in $e^{u}$, it is called a {\bf trigonometric R-matrix} (as it is a trigonometric function of $iu$). 
\end{example} 

\subsubsection{}
\begin{proposition}\label{irre} If $X(w)\otimes Y$ is irreducible then $\overline{R}_{XY}(w)$ is well-defined and invertible. 
\end{proposition}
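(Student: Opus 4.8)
The plan is to work with the normalized braiding $\check R_{XY}(z) := P \circ \overline R_{XY}(z)$, which for each $z$ at which $\overline R_{XY}$ is regular is a homomorphism of $U_q(\widehat{\mathfrak{sl}}_2)$-modules $X(z)\otimes Y \to Y \otimes X(z)$, and which depends rationally on $z$. The one structural input I would isolate at the outset is the following consequence of Schur's lemma: since $X(w)\otimes Y$ is irreducible and $\dim\big(X(w)\otimes Y\big) = \dim\big(Y\otimes X(w)\big)$, \emph{any} nonzero module homomorphism $X(w)\otimes Y \to Y\otimes X(w)$ has zero kernel and hence, by equality of dimensions, is an isomorphism. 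With this in hand, the invertibility half of the statement is immediate once regularity is known: if $\overline R_{XY}$ is regular at $w$, then $\check R_{XY}(w)$ is an honest module map, it is nonzero because $\overline R_{XY}$ does not vanish on $\mathbb{C}^\times$, and therefore it is an isomorphism.

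So the real content, and the step I expect to be the main obstacle, is ruling out a pole of $\overline R_{XY}$ at $z=w$. Here I would argue by contradiction. Suppose $\overline R_{XY}(z)=(z-w)^{-k}M(z)$ with $k\ge 1$, $M$ regular near $w$ and $M(w)\ne 0$. The action of any $a\in U_q(\widehat{\mathfrak{sl}}_2)$ on $X(z)\otimes Y$ and on $Y\otimes X(z)$ is a Laurent polynomial in $z$, hence regular at $w\ne 0$; so multiplying the intertwining identity for $\check R_{XY}(z)$ by $(z-w)^{k}$ and letting $z\to w$ shows that the leading Laurent coefficient $P\,M(w)$ is itself a nonzero homomorphism $X(w)\otimes Y\to Y\otimes X(w)$. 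By the Schur observation above it is an isomorphism, so $M(w)$ is invertible.

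The contradiction then comes from unitarity. Since $M(w)$ is invertible, $\overline R_{XY}(z)^{-1}=(z-w)^{k}M(z)^{-1}$ is regular at $w$ and \emph{vanishes} there (as $k\ge 1$). But the unitarity relation $\overline R_{YX}^{21}(z^{-1})=\overline R_{XY}(z)^{-1}$ then gives $\overline R_{YX}(w^{-1})=0$, contradicting the fact (part of the normalization theorem applied to the pair $(Y,X)$) that $\overline R_{YX}$ does not vanish at any point of $\mathbb{C}^\times$. Hence $\overline R_{XY}$ is regular at $w$, and combined with the first paragraph this proves both claims. The only points requiring care are that the normalization theorem and the unitarity relation apply verbatim to $(Y,X)$, and that the leading Laurent coefficient of a module-valued morphism is again a morphism at $z=w$ — both of which hold because the module structures on $X(z)\otimes Y$ and $Y\otimes X(z)$ are regular at the nonzero point $w$.
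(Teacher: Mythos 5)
Your proposal is correct and follows essentially the same route as the paper's proof: bound the pole order of $\overline R_{XY}$ at $w$ by showing the leading Laurent coefficient of $P\circ\overline R_{XY}(z)$ is a nonzero intertwiner $X(w)\otimes Y\to Y\otimes X(w)$, invoke Schur's lemma to get invertibility, and then use unitarity together with the nonvanishing of $\overline R_{YX}$ to rule out a genuine pole. The only differences are presentational (you argue by contradiction with $k\ge 1$ and split off the invertibility step, where the paper treats the pole order $m\ge 0$ directly and concludes $m=0$), and you usefully make explicit the point the paper leaves implicit, namely that the limit is a module map because the actions on $X(z)\otimes Y$ and $Y\otimes X(z)$ are regular at $w\ne 0$.
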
 

\begin{proof} Suppose the order of pole of $\overline{R}_{XY}(z)$ at $z=w$ is $m$. Since $\overline{R}_{XY}(z)$ is nonvanishing, $m\ge 0$. The map $\lim_{z\to w}(z-w)^mP\circ \overline{R}_{XY}(z)$ 
is a nonzero homomorphism of representations $X(w)\otimes Y\to Y\otimes X(w)$, so by Schur's lemma it is an isomorphism. So by unitarity $\overline R_{YX}(z)$ has a zero of order $m$ at $w^{-1}$. Since $\overline R$ is non-vanishing, we conclude that $m=0$, as claimed.   
\end{proof} 

\begin{corollary} The representation $V_{a_1}(z_1)\otimes...\otimes V_{a_n}(z_n)$ is irreducible if and only if all $R$-matrices 
$$
\overline R_{ij}(z_i/z_j):\, V_{a_i}(z_i)\otimes V_{a_j}(z_j) \to V_{a_i}(z_i)\otimes V_{a_j}(z_j)
$$ 
are defined and invertible. In this case we have isomorphisms
\begin{equation}
P\circ \overline R_{ij}(z_i/z_j):\, V_{a_i}(z_i)\otimes V_{a_j}(z_j) \to V_{a_j}(z_j) \otimes V_{a_i}(z_i).
\end{equation}
\end{corollary}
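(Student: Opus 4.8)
The plan is to reduce the $n$-fold statement to the two-factor case and then combine the Chari--Pressley classification with Proposition \ref{irre}. Throughout I use the twisting identities $(X\otimes Y)(z)=X(z)\otimes Y(z)$ and $Y(z)(w)=Y(zw)$ to normalize a pair: twisting $V_{a_i}(z_i)\otimes V_{a_j}(z_j)$ by $z_j^{-1}$ identifies it with $V_{a_i}(z_i/z_j)\otimes V_{a_j}$, and since twisting by a scalar is an autoequivalence it preserves (ir)reducibility. Writing $X=V_{a_i}$, $Y=V_{a_j}$, $w=z_i/z_j$, the pair is irreducible if and only if $X(w)\otimes Y$ is, and the operator in question is $\overline R_{XY}(w)=\overline R_{ij}(z_i/z_j)$.

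For the forward implication, suppose $M:=V_{a_1}(z_1)\otimes\cdots\otimes V_{a_n}(z_n)$ is irreducible. By the Chari--Pressley theorem the strings $s_{a_i}(z_i)$ are then pairwise in general position; since general position is a condition on each unordered pair, each pair $s_{a_i}(z_i),s_{a_j}(z_j)$ is in general position, so by the $n=2$ case of the same theorem each $V_{a_i}(z_i)\otimes V_{a_j}(z_j)$ is irreducible. After the normalization above, Proposition \ref{irre} applied to $X(w)\otimes Y$ yields that $\overline R_{ij}(z_i/z_j)$ is defined and invertible. This is the easy half.

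For the converse I argue by contraposition. If $M$ is reducible, the Chari--Pressley theorem produces a pair of strings, say $s_{a_i}(z_i)$ and $s_{a_j}(z_j)$, in special position, so that $V_{a_i}(z_i)\otimes V_{a_j}(z_j)$ is reducible; I must show $\overline R_{ij}(z_i/z_j)$ then fails to be defined (a pole of $\overline R$) or fails to be invertible (a pole of $\overline R^{-1}$). This two-factor converse of Proposition \ref{irre} is the main obstacle: the proof of \ref{irre} uses Schur's lemma together with the unitarity relation $\overline R_{XY}(z)\,\overline R_{YX}^{21}(z^{-1})=1$, and Schur is no longer available once the product is reducible, so this direction of the equivalence is genuinely not formal.

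To surmount this I would determine the pole/zero locus of the normalized two-factor $R$-matrix explicitly and match it to the special-position ratios. In the fundamental case the matrix $\overline R_{VV}(z)$ of \eqref{Rmat4by4} is singular on $\mathbb{C}^\times$ only at the pole of $\overline R$ at $z=q^2$ and the pole of $\overline R^{-1}$ at $z=q^{-2}$, which are exactly the ratios putting $s_1(z_i)$ and $s_1(z_j)$ in special position. For general $a,b$ one uses that $V_a(z)$ is realized on the string $s_a(z)$, i.e.\ as the top constituent of $V(q^{-a+1}z)\otimes\cdots\otimes V(q^{a-1}z)$, and builds $\overline R_{V_aV_b}(z)$ by fusion from the hexagon relations $R_{X\otimes Y,Z}=R_{XZ}^{13}R_{YZ}^{23}$ and $R_{X,Y\otimes Z}=R_{XZ}^{13}R_{XY}^{12}$, expressing it as a product of fundamental $R$-matrices at the shifted arguments $q^{2k}(z/w)$. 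Its singular points then lie among $z/w=q^{2k}$, and a direct bookkeeping over the ranges dictated by $a$ and $b$ shows they coincide with the special-position ratios; hence reducibility of the pair forces $\overline R_{ij}(z_i/z_j)$ to be undefined or non-invertible, completing the equivalence. Finally, the asserted isomorphism is immediate once $\overline R_{ij}(z_i/z_j)$ is defined and invertible: $P\circ\overline R_{ij}(z_i/z_j)$ is by construction a homomorphism $V_{a_i}(z_i)\otimes V_{a_j}(z_j)\to V_{a_j}(z_j)\otimes V_{a_i}(z_i)$ (it intertwines $\Delta$ with $\Delta^{\mathrm{op}}$), and it is bijective because both $P$ and $\overline R_{ij}(z_i/z_j)$ are.
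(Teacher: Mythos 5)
Your reduction to the two-factor case via the Chari--Pressley string description, your forward implication, and the final claim that $P\circ\overline R_{ij}(z_i/z_j)$ is an isomorphism of representations once $\overline R_{ij}(z_i/z_j)$ is defined and invertible are all correct, and this much coincides with the paper's own (one-line) proof, which cites exactly Proposition \ref{irre} together with the string classification. The gap is in the converse, which you rightly identify as the non-formal half but do not actually prove. Your plan hinges on the sentence that ``a direct bookkeeping over the ranges dictated by $a$ and $b$ shows they coincide with the special-position ratios,'' and that bookkeeping is the entire difficulty. Concretely, fusion realizes $\overline R_{V_aV_b}(z)$ only up to a scalar as the restriction of a product of fundamental $R$-matrices to a subquotient: poles of the individual factors can cancel on the relevant subspace, and the normalizing scalar relating the fused product to the normalized $\overline R_{V_aV_b}$ has zeros and poles of its own. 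So this construction yields at best an \emph{upper} bound on the singular locus, whereas the contrapositive you need is the \emph{lower} bound --- that at every special-position ratio either $\overline R$ or $\overline R^{-1}$ genuinely is singular --- and nothing in your sketch rules out cancellation at exactly those points.

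The converse can be closed without any pole computation, using only the facts already cited, and this is presumably what the paper intends. Suppose the strings $s_{a_i}(z_i)$ and $s_{a_j}(z_j)$ are in special position, so both orderings of the pair are reducible, and suppose for contradiction that $\overline R_{ij}(z_i/z_j)$ is defined and invertible. Since the intertwining property of $R_{XY}(z)$ holds as an identity of rational functions of $z$, the operator $P\circ\overline R_{ij}(z_i/z_j)$ would then be an isomorphism of representations $V_{a_i}(z_i)\otimes V_{a_j}(z_j)\to V_{a_j}(z_j)\otimes V_{a_i}(z_i)$. But for two strings in special position the two orderings are never isomorphic: exactly one of them is generated by the tensor product of the highest weight vectors (for a pair of fundamental representations this is visible in the paper's two non-split exact sequences for $V(z)\otimes V(q^{\pm 2}z)$; in general it is part of the Chari--Pressley theory), and since each ordering has a one-dimensional top weight space, an isomorphism would have to carry a generating top vector to a non-generating one. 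This contradiction shows that reducibility of the $n$-fold product forces some $\overline R_{ij}(z_i/z_j)$ to be undefined or non-invertible, which is exactly the implication your fusion argument was meant to supply.
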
 

\begin{proof} This follows from Proposition \ref{irre} and the description of irreducible representations in terms of strings. 
\end{proof} 

\subsection{Affine quantum groups of higher rank} (\cite{CP}, 12.2, \cite{EFK}, Lecture 9). 

\subsubsection{}
Let us now briefly discuss quantum affine algebras $U_q(\widehat{\mathfrak{g}})$ 
of higher rank. The quantum affine algebra 
$U_q(\widehat{\mathfrak{g}})$ is a Hopf algebra with Kac-Moody generators 
$e_i,f_i,h_i$, $i=0,...,r$, where these generators for $i>0$ generate 
the ``finite-dimensional" quantum group $U_q(\mathfrak{g})$.
We still have the 1-parameter group of automorphisms 
$\tau_z$ of $U_q(\widehat{\mathfrak{g}})$ defined in the same way as in the $\mathfrak{sl}_2$ case: 
$\tau_z(e_0)=ze_0$, $\tau_z(f_0)=z^{-1}f_0$, and $\tau_z$ preserves all other generators. 
Thus for every representation $Y$ of $U_q(\widehat{\mathfrak{g}})$ we can still define the twist 
$Y(z)$ by $\pi_{Y(z)}(a):=\pi_Y(\tau_z(a))$. 

\subsubsection{} So how to construct finite dimensional representations of $U_q(\widehat{\mathfrak{g}})$? First consider the Hopf algebra $U_q(\widehat{\mathfrak{sl}}_n)$. 
This Hopf algebra still has an evaluation homomorphism 
$\varphi:\, U_q(\widehat{\mathfrak{sl}}_n) \to U_q(\mathfrak{sl}_n)$, defined 
to be the identity on $U_q(\mathfrak{sl}_n)$ and 
by the formula $e_0 \mapsto f_{\theta}$, $f_0\mapsto e_\theta$ on the affine root generators, where 
$\theta=(1,0,...,0,-1)$ is the highest root and $f_\theta=E_{n,1}$ is obtained from the recursion 
\begin{equation}
E_{i,i-1}=f_i,~~~
E_{i,i-m} = [E_{i,i-1},E_{i-1,i-m}]_{q},\ [x,y]_q = xy - q yx
\end{equation}
(and similarly for $e_\theta=E_{1,n}$). Thus as before, we can define $\phi_z=\varphi\circ \tau_z$, and 
for any representation $W$ of $U_q(\mathfrak{sl}_n)$ we can define the pull-back $\phi_z^*W = W(z)$, consider tensor products of these and take subquotients. This 
allows us to construct many interesting finite dimensional representations. 

However, it is no longer true for $n>2$ that all irreducible representations are such tensor products, and  things get much more complicated. It is only true that any irreducible is a quotient of such tensor product. The ultimate understanding of the structure of irreducible representations of $U_q(\widehat{\mathfrak{sl}}_n)$ (and general $U_q(\widehat{\mathfrak{g}})$) came only from the works of Varagnolo, Vasserot and Nakajima on equivariant $K$-theory of Nakajima quiver varieties.

\subsubsection{} For $\mathfrak{g}\ne \mathfrak{sl}_n$, 
things get worse --- there is no evaluation homomorphism:
\begin{equation}
\begin{array}{ccc}
U_q (\widehat{\mathfrak{g}}) & \vspace{0.1cm} \xrightarrow{\not\mathrm{ev}} & U_q (\mathfrak{g})\\
\hspace{0.5cm}\nwarrow & & \nearrow \id \hspace{0.5cm} \\ 
& U_q (\mathfrak{g})  & 
\end{array}
\end{equation}
For example, as was shown by Drinfeld, the adjoint representation $\mathfrak{g}$ does not lift to the quantum affine algebra, although $\mathfrak{g}\oplus \mathbb{C}$ does. In more general cases, to extend to $U_q (\widehat{\mathfrak{g}})$ a finite dimensional representation $V_\lambda$ of $U_q(\mathfrak{g})$ with highest weight $\lambda$, one has to add lots of lower weight representations:
\begin{equation}
\widehat{V}_{\lambda} = V_{\lambda} \oplus\bigoplus_{\mu < \lambda} c_{\mu \lambda} V_{\mu}.
\end{equation}
This is best understood through the geometric approach of Varagnolo, Vasserot and Nakajima. 

\subsubsection{}
\begin{proposition} The Grothendieck ring of the category of finite dimensional 
representations of $U_q(\widehat{\mathfrak{g}})$ is commutative.
\end{proposition}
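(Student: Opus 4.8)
The plan is to exploit the meromorphic braided structure from the $\widehat{\mathfrak{sl}}_2$ discussion, which extends to $U_q(\widehat{\mathfrak{g}})$ for any simple $\mathfrak{g}$. The key fact I would use is that for irreducible finite dimensional representations $X,Y$, the twisted tensor products $X(z)\otimes Y(w)$ and $Y(w)\otimes X(z)$ are related by an intertwiner $P\circ R_{XY}(z/w)$ which is an isomorphism away from finitely many values of $z/w$. The idea is that even though a genuine braiding fails (since $X\otimes Y\ncong Y\otimes X$ in general at special points), the \emph{Jordan--H\"older series} of $X\otimes Y$ and $Y\otimes X$ always coincide, and this is exactly the statement that $[X][Y]=[Y][X]$ in the Grothendieck ring.

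First I would introduce spectral parameters: given irreducible representations $X,Y$, form $X(z)\otimes Y(w)$, which by the higher-rank analogue of the remark after the root-of-unity discussion is irreducible for all but finitely many values of $z/w$. For generic $z/w$, the operator $P\circ R_{XY}(z/w)\colon X(z)\otimes Y(w)\to Y(w)\otimes X(z)$ is a well-defined isomorphism of $U_q(\widehat{\mathfrak{g}})$-modules, so the two products are isomorphic and in particular have equal classes in the Grothendieck ring. Thus $[X(z)]\,[Y(w)]=[Y(w)]\,[X(z)]$ for generic $z,w$.

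Next I would pass to the limit $z,w\to$ the original parameters (e.g.\ $z=w=1$) by a \textbf{specialization} or \textbf{continuity} argument. The classes $[X(z)\otimes Y(w)]$ and $[Y(w)\otimes X(z)]$ in the Grothendieck ring are determined by the collection of composition factors, which vary in an algebraically constructive way with $(z,w)$; since the two classes agree on a Zariski-dense set of parameters, and the Jordan--H\"older multiplicities are locally constant (upper semicontinuous and summing to the fixed total dimension), they must agree everywhere, including at the special points where the modules themselves become reducible and non-isomorphic. This yields $[X][Y]=[Y][X]$ for all irreducibles $X,Y$. Since the classes of irreducibles form a $\mathbb{Z}$-basis of the Grothendieck ring and multiplication is bilinear, commutativity on basis elements gives commutativity of the whole ring.

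The main obstacle is making the continuity/specialization step rigorous: one must argue that the Jordan--H\"older content of $X(z)\otimes Y(w)$ is genuinely constant as a function of the spectral parameters, so that equality of classes for generic parameters forces equality at the special reducibility points. The cleanest way to handle this is to observe that the character (formal weight-space dimension) of $X(z)\otimes Y(w)$ is manifestly independent of $z,w$, and then use that the composition factors are the irreducibles whose Drinfeld polynomials (or $q$-characters) are determined by the parameters; invariance of the total character under swapping the tensor factors, combined with the isomorphism at generic points, pins down the multiplicities. Equivalently, and perhaps most efficiently, one invokes the theory of $q$-characters of Frenkel and Reshetikhin (mentioned in the introduction as the endpoint of Section 5): the $q$-character homomorphism is an injective ring homomorphism from the Grothendieck ring into a commutative polynomial ring, and injectivity into a commutative target immediately forces the Grothendieck ring itself to be commutative. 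I would present the spectral-parameter/intertwiner argument as the conceptual proof and note the $q$-character route as the slick alternative.
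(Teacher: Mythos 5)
Your overall strategy is the same as the paper's: introduce a spectral parameter, use the fact that $P\circ R_{XY}(z/w)\colon X(z)\otimes Y(w)\to Y(w)\otimes X(z)$ is an isomorphism for all but finitely many values of $z/w$, and then specialize to get $[X][Y]=[Y][X]$. The gap is in the specialization step, and it is a genuine one. The claim that the Jordan--H\"older content of $X(z)\otimes Y(w)$ is ``genuinely constant'' in the parameters (or that the multiplicities are ``locally constant'') is false: this content manifestly jumps at special values of $z/w$ --- that is exactly why $X\otimes Y$ can be reducible and non-isomorphic to $Y\otimes X$ there. Semicontinuity only permits multiplicities to jump \emph{up} at special points, and a priori the two families $X(z)\otimes Y$ and $Y\otimes X(z)$ could jump differently; moreover, the irreducibles occurring as composition factors themselves vary with $(z,w)$ (their Drinfeld polynomials move), so the dimension-count you sketch cannot even be set up over a fixed index set of irreducibles. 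The ordinary-character argument does not close the gap either, since the ordinary $\mathfrak{g}$-character does not determine the composition factors of a $U_q(\widehat{\mathfrak{g}})$-module (all the twists $V(z)$ have the same ordinary character). The paper replaces ``continuity'' by a precise algebraic lemma: set $z=1+t$ with $t$ a \emph{formal} variable, so that $M=X(z)\otimes Y$ and $N=Y\otimes X(z)$ are flat deformations over $\mathbb{C}[[t]]$ of $M_0=X\otimes Y$ and $N_0=Y\otimes X$, and the $R$-matrix gives an isomorphism $M[t^{-1}]\cong N[t^{-1}]$; then a Jantzen filtration argument (filtrations on $M_0$ and on $N_0^*$ with ${\rm gr}(M_0)\cong {\rm gr}(N_0^*)^*$) shows that the special fibers have the same composition series. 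Some tool of exactly this kind --- a lattice/Jantzen argument over the discrete valuation ring $\mathbb{C}[[t]]$ --- is the missing ingredient in your write-up.

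Your fallback via $q$-characters is logically sound: an injective ring homomorphism into a commutative ring does force commutativity of the source, and this is a genuinely different route from the paper's. But it rests on the Frenkel--Reshetikhin theorem, which the paper only states at the very end and which is far deeper than the proposition at hand (the multiplicativity and injectivity of $\chi$ are themselves nontrivial theorems). So as written, your conceptual proof has a hole that the $q$-character remark papers over with a large black box; to make the spectral-parameter argument self-contained you need to supply the Jantzen-type specialization lemma.
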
 

\begin{proof} Let $M_0,N_0$ be finite dimensional modules 
over a $\Bbb C$-algebra $A$, and $M,N$ their flat formal deformations 
over $\Bbb C[[t]]$. Suppose $M[t^{-1}]\cong N[t^{-1}]$ as $A((t))$-modules. 
Then $M_0$ and $N_0$ have the same composition series (this follows from the existence of the {\bf Jantzen filtrations} on the left $A$-module $M_0$ and the right $A$-module $N_0^*$ such that ${\rm gr}(M_0)\cong {\rm gr}(N_0^*)^*$). 

Now take $z=1+t$ for a formal parameter $t$, 
$M=X(z)\otimes Y$ and $N=Y\otimes X(z)$, where $X,Y$ are finite dimensional 
representations of $U_q(\widehat{\mathfrak{g}})$.  
We have seen above that the R-matrix with spectral parameter defines an isomorphism $M[t^{-1}]\to N[t^{-1}]$. Thus $M_0=X\otimes Y$ and $N_0=Y\otimes X$ have the same composition series, 
which implies the statement. 
\end{proof} 

\subsection{The loop polarization} (\cite{CP}, 12.2, \cite{CP2}). 
Our next goal is to parametrize irreducible finite-dimensional representations of $U_q(\widehat{\mathfrak{g}})$. Usually, finite dimensional representations are labeled by highest weights. However, finite-dimensional representations of $U_q(\widehat{\mathfrak{g}})$, unlike category $\mathcal{O}$ representations, do not have a locally 
nilpotent action of $U_q(\widehat{\mathfrak{n}}_{+})$ generated by $e_i$, $i=0,...,r$, so they don't have highest weights in the usual sense. However, these representations will have highest weights if we consider a different polarization of $\widehat{\mathfrak{g}}$ -- the {\bf loop polarization}. 
This polarization is depicted on the right of the figure below (with the usual polarization 
on the left for comparison). 

For example, consider the case $q=1$, i.e., finite dimensional representations of $\mathfrak{g}[t,t^{-1}]$. Then we have the {\bf loop polarization}
$$
\mathfrak{g}[t,t^{-1}]=\widetilde{\mathfrak{n}}_+\oplus \widetilde{\mathfrak{h}}\oplus \widetilde{\mathfrak{n}}_-,
$$
where 
$$
\widetilde{\mathfrak{n}}_+=
\mathfrak{n}_+[t,t^{-1}],\ \widetilde{\mathfrak{h}}=\mathfrak{h}[t,t^{-1}],\ \widetilde{\mathfrak{n}}_-= \mathfrak{n}_-[t,t^{-1}],
$$
and every finite dimensional representation $V$ of $\mathfrak{g}[t,t^{-1}]$ 
has a highest weight vector $v$ annihilated by $\widetilde{\mathfrak{n}}_+$ (unique up to scaling if $V$ is irreducible). The weight of this vector is then a character of the ``loop Cartan subalgebra" 
$\widetilde{\mathfrak{h}}$, and one can, in fact, decompose the whole representation $V$ in a direct sum of generalized eigenspaces of $\widetilde{\mathfrak{h}}$.  

\begin{figure}[h!]
\begin{center}
\scalebox{1}{
\begin{tikzpicture}
\tikzmath{\r=0.05;};

\draw[fill=black,thick,radius=\r,inner sep=0] (0,-2) circle;
\draw[fill=black,thick,radius=\r,inner sep=0] (1,-2) circle;
\draw[fill=black,thick,radius=\r,inner sep=0] (2,-2) circle;
\draw[fill=black,thick,radius=\r,inner sep=0] (0,-1) circle;
\draw[fill=black,thick,radius=\r,inner sep=0] (1,-1) circle;
\draw[fill=black,thick,radius=\r,inner sep=0] (2,-1) circle;
\draw[fill=black,thick,radius=\r,inner sep=0] (0,0) circle;
\draw[fill=black,thick,radius=\r,inner sep=0] (1,0) circle;
\draw[fill=black,thick,radius=\r,inner sep=0] (2,0) circle;
\draw[fill=black,thick,radius=\r,inner sep=0] (0,1) circle;
\draw[fill=black,thick,radius=\r,inner sep=0] (1,1) circle;
\draw[fill=black,thick,radius=\r,inner sep=0] (2,1) circle;
\draw[fill=black,thick,radius=\r,inner sep=0] (0,2) circle;
\draw[fill=black,thick,radius=\r,inner sep=0] (1,2) circle;
\draw[fill=black,thick,radius=\r,inner sep=0] (2,2) circle;

\node at (0,1.3) {$f t$};
\node at (0.2,0) {$f$};
\node at (0.1,-1.3) {$f t^{-1}$};
\node at (1,1.3) {$h t$};
\node at (1.2,0) {$h$};
\node at (1.1,-1.3) {$h t^{-1}$};
\node at (2,1.3) {$e t$};
\node at (2.2,0) {$e$};
\node at (2.1,-1.3) {$e t^{-1}$};

\node at (1.6,2.3) {$\mathfrak{\widehat{n}_{+}}$};
\node at (0.75,0) {$\mathfrak{h}$};
\node at (0.6,-2.3) {$\mathfrak{\widehat{n}_{-}}$};

\draw[very thick] (-0.5,2.5)--(-0.5,0.6)--(1.5,0.6)--(1.5,-0.4)--(2.5,-0.4)--(2.5,2.5);

\draw[very thick] (2.5,-2.5)--(2.5,-0.6)--(0.5,-0.6)--(0.5,0.4)--(-0.5,0.4)--(-0.5,-2.5);

\end{tikzpicture}
\begin{tikzpicture}
\tikzmath{\r=0.05;};

\draw[white] (-3,-1)--(-3,1);

\draw[fill=black,thick,radius=\r,inner sep=0] (0,-2) circle;
\draw[fill=black,thick,radius=\r,inner sep=0] (1,-2) circle;
\draw[fill=black,thick,radius=\r,inner sep=0] (2,-2) circle;
\draw[fill=black,thick,radius=\r,inner sep=0] (0,-1) circle;
\draw[fill=black,thick,radius=\r,inner sep=0] (1,-1) circle;
\draw[fill=black,thick,radius=\r,inner sep=0] (2,-1) circle;
\draw[fill=black,thick,radius=\r,inner sep=0] (0,0) circle;
\draw[fill=black,thick,radius=\r,inner sep=0] (1,0) circle;
\draw[fill=black,thick,radius=\r,inner sep=0] (2,0) circle;
\draw[fill=black,thick,radius=\r,inner sep=0] (0,1) circle;
\draw[fill=black,thick,radius=\r,inner sep=0] (1,1) circle;
\draw[fill=black,thick,radius=\r,inner sep=0] (2,1) circle;
\draw[fill=black,thick,radius=\r,inner sep=0] (0,2) circle;
\draw[fill=black,thick,radius=\r,inner sep=0] (1,2) circle;
\draw[fill=black,thick,radius=\r,inner sep=0] (2,2) circle;

\node at (0,1.3) {$f t$};
\node at (0.2,0) {$f$};
\node at (0.1,-1.3) {$f t^{-1}$};
\node at (1,1.3) {$h t$};
\node at (1.2,0) {$h$};
\node at (1.1,-1.3) {$h t^{-1}$};
\node at (2,1.3) {$e t$};
\node at (2.2,0) {$e$};
\node at (2.1,-1.3) {$e t^{-1}$};

\node at (2.15,-2.4) {$\widetilde{\mathfrak{n}}_{+}$};
\node at (1.05,-2.4) {$\widetilde{\mathfrak{h}}$};
\node at (0.15,-2.4) {$\widetilde{\mathfrak{n}}_{-}$};

\draw[very thick] (1.5,2.5)--(1.5,-2.5);
\draw[very thick] (0.5,2.5)--(0.5,-2.5);

\end{tikzpicture}
}
\end{center}
\end{figure}

We would now like to extend this picture to the quantum case. This is nontrivial, since it is not obvious how the loop polarization should deform. The existence of a nice quantum deformation under which 
the loop Cartan subalgebra $\widetilde{\mathfrak{h}}$ deforms in a non-trivial way but remains commutative was an important discovery of Drinfeld. The corresponding realization of the 
quantum affine algebra is called the {\bf Drinfeld loop realization}, and we will briefly discuss it below.

\subsection{The Faddeev-Reshetikhin-Takhtajan formalism} (\cite{ES}, Ch. 11, \cite{DF}). 

\subsubsection{} To explain the essense of the loop realization of quantum affine algebras, we will consider the example $\mathfrak{g}=\mathfrak{sl}_
n$. In this case the loop realization can be interpreted in terms of 
the {\bf Faddeev-Reshetikhin-Takhtajan construction} of $U_q(\widehat{\mathfrak{sl}}_n)$. The FRT construction is the construction of $U_q(\widehat{\mathfrak{sl}}_n)$ starting from the R-matrix, which is how quantum groups appeared historically. 

Let us first describe the FRT construction and the loop generators for the positive part $U_q(\widehat{\mathfrak{sl}}_n^+)\subset U_q(\widehat{\mathfrak{sl}}_n)$, generated 
by $K_i$ and $e_i$. We will then briefly comment on a similar construction for the negative part of the quantum affine algebra, and on how the positive and negative parts are put together.

Recall that the normalized $R$-matrix for $U_q(\widehat{\mathfrak{sl}}_2)$ is
\begin{equation}
\def\arraystretch{2}
\overline{R}_{VV}(z) =
\left(
\begin{array}{cccc}
1 & 0 & 0 & 0 \\
0 & \dfrac{q(z-1)}{z-q^2} & \dfrac{1-q^2}{z-q^2} & 0 \\
0 & \dfrac{z(1-q^2)}{z-q^2} & \dfrac{q(z-1)}{z-q^2} & 0 \\
0 & 0 & 0 & 1
\end{array}
\right)
=
\left(
\begin{array}{ccc}
1 &
\begin{array}{cc}  &  \end{array} &
 \\
\begin{array}{c}  \\  \end{array} &
\fbox{B(z)} &
\begin{array}{c}  \\  \end{array} \\
 &
\begin{array}{cc}  &  \end{array} &
1
\end{array}
\right)
.
\end{equation}
For $U_q(\widehat{\mathfrak{sl}}_n)$ and $V=\mathbb{C}^n$ the matrix $\overline{R}_{VV}(z)$ is given by a similar formula: it acts by $1$ on $v_i \otimes v_i$, and by $B(z)$ on $\langle v_i\otimes v_j,\,v_j \otimes v_i\rangle$ for $j\ne i$. 

Let $\mathcal{R}$ be the universal $R$-matrix of $U_q(\widehat{\mathfrak{sl}}_n)/(\bold K-1)$, and introduce the {\bf $T$-operator}
\begin{equation}
T(z) := (\M1\otimes \pi_{V(z)}) \,(\R^{-1})\, \in \mathrm{Mat}_{n} \otimes U_q(\widehat{\mathfrak{sl}}_n)[[z^{-1}]]\end{equation}
(note that $T(\infty)=T^{(0)}$ is lower triangular). It satisfies the Faddeev-Reshetikhin-Takhtajan equation
\begin{equation}\label{frt}
\overline R^{12}(z/w)T^{13}(z)T^{23}(w) = T^{23}(w)T^{13}(z)\overline R^{12}(z/w).
\end{equation}

The Faddeev-Reshetikhin-Takhtajan construction takes \eqref{frt} as the set of defining relations of the positive half $U_q(\widehat{\mathfrak{gl}}_n^+)$ of the quantum affine algebra $U_q(\widehat{\mathfrak{gl}}_n)$, with the generators being 
the coefficients $t_{ij,k}$ of the matrix elements $t_{ij}(z)$ of $T(z)$, for $k\ge 0$ (where 
$t_{ij,0}=0$ for $i<j$ and $t_{ii}^0$ are invertible). 
More precisely, as was shown by Reshetikhin, Semenov-Tian-Shansky and Ding-Frenkel (\cite{DF}), 
the algebra defined by such generators and relations is isomorphic to 
$U_q(\widehat{\mathfrak{gl}}_n^+)$, and $U_q(\widehat{\mathfrak{sl}}_n^+)$ is obtained by taking the quotient by the relation that the appropriately defined quantum determinant $\det_q T(z)$ equals $1$.

\subsubsection{} Recall that if $A\in  \mathrm{GL}_n(R)$ for a commutative ring $R$
with upper left corner minors $D_k=\det(a_{ij}, 1\le i,j\le k)$ invertible for all $k=1,...,n$ 
then it has a unique {\bf Gauss decomposition} $A=A_-A_0A_+$, where 
$A_-$ is unipotent lower triangular, $A_0$ is diagonal, and $A_+$ is unipotent upper triangular. This also makes sense for matrices over a noncommutative ring $R$,  
when the appropriate non-commutative determinants $D_k\in R$ are invertible (see e.g. \cite{GGRW}). Thus we may consider the Gauss decomposition of the $T$-operator: 
$$
T(z)=T_-(z) T_0(z) T_+(z),
$$
\begin{equation}
\label{GaussMatForT}
T_- =
\left(
\begin{array}{cccc}
1 &   &   &  \\
* & 1 &   &  \\
* & * & 1 &  \\
* & * & * & 1\\
\end{array}
\right),
~~~
T_0 =
\left(
\begin{array}{cccc}
* &   &   &  \\
 & * &   &  \\
 &  & * &  \\
 &  &  & * \\
\end{array}
\right),
~~~
T_+ =
\left(
\begin{array}{cccc}
1 & * & * & *\\
 & 1 & * & *\\
 &  & 1 & *\\
 &  &  & 1\\
\end{array}
\right).
\end{equation} 
The Drinfeld loop realization of $U_q(\widehat{\mathfrak{gl}}_n^+)$ and $U_q(\widehat{\mathfrak{sl}}_n^+)$ is obtained from this decomposition. In particular, one can show that the matrix elements of
\begin{equation}
T_0(z) =
\left(
\begin{array}{cccc}
t^0_{00}(z) &   &   &  \\
 & t^0_{11}(z) &   &  \\
 &  & ... &  \\
 &  &  & t^0_{nn}(z) \\
\end{array}
\right)
\end{equation}
are pairwise commutative, and in the $\mathfrak{sl}_n$ case we have 
$\det_q T(z)=t^0_{00}(z)\, ...\,t^0_{nn}(z)=1$ (where $\det_q T=D_n(T)$).
 
Now, the {\bf loop generators} of $U_q(\widehat{\mathfrak{gl}}_n^+)$ are just the coefficients of the series $t_{i,i-1}^-(z)$, $t_{i,i}^0(z)$, $t_{i,i+1}^+(z)$, which are matrix elements of $T_-, T_0,T_+$ respectively.

In a similar way one can construct the loop generators of $U_q(\widehat{\mathfrak{gl}}_n^-)$, as coefficients of appropriate series $\widetilde t_{i,i-1}^-(z)$, $\widetilde t_{i,i}^0(z)$, $\widetilde t_{i,i+1}^+(z)$ derived from 
$$
\widetilde T(z):=(\pi_{V(z)}\otimes 1)(\mathcal R)
$$
(now a series with nonnegative powers of $z$). 

The loop generators satisfy some commutation relations which are the defining relations of the loop realization of $U_q(\widehat{\mathfrak{gl}}_n)$ and can be found in \cite{CP}, Chapter 12 and \cite{CP2}. We will not give them here for the sake of brevity, but will give them below for the Yangian degeneration of the quantum affine algebra. In particular, the Taylor coefficients of the series $t^0_{i,i}(z)$, $\widetilde t^0_{i,i}(z)$ commute and are the generators of the deformed loop Cartan subalgebra, discussed at the end of previous subsection. 

\subsubsection{} Now let us explain how the loop realization is used for studying finite dimensional 
representations of $U_q(\widehat{\mathfrak{sl}}_n)$. To this end, define the series $\psi_i(z) = t_{ii}^0 (z)/t_{i+1,i+1}^0 (z)$ corresponding to simple roots. Now take a finite dimensional (type I) representation $W$ of $U_q(\widehat{\mathfrak{sl}}_n)$, and pick a highest weight vector $\bold w\in W$ with respect to the usual Cartan subalgebra, of weight $\lambda$ (i.e., $W$ does not have weights higher than $\lambda$). It is clear that 
\begin{equation}
t^{+}_{ij}(z)\bold w=0,\ i<j,
\end{equation}
from weights reasons: if it is not zero, it has to have higher weight than $\lambda$. We also can choose $\bold w$ to be an eigenvector of $\psi_i(z)$:
\begin{equation}
\psi_i(z) \bold w = \Lambda_i(z^{-1}) \bold w,\ \Lambda_i\in \Bbb C[[z]], 
\end{equation}
as the coefficients of $\psi_i(z)$ are a set of commuting operators on the finite dimensional weight subspace $W[\lambda]$. Moreover, it is clear from the loop realization that if $W$ is irreducible then $\dim W[\lambda]=1$. 

\begin{theorem} (Drinfeld) There exists an irreducible finite dimensional representation $W_\Lambda$ 
of $U_q(\widehat{\mathfrak{sl}}_n)$ with highest weight $\{ \Lambda_i(z) \}$  under the loop Cartan subalgebra if and only if 
$$
\Lambda_i(z)=q^{-\deg(P_i)}\frac{P_i(q^2 z)}{P_i(z)}
$$
for some polynomials $P_i$ with constant term $1$. If such $W_\Lambda$ exists, it is unique.
\end{theorem}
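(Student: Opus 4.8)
The plan is to split the statement into three independent pieces: uniqueness of $W_\Lambda$, necessity of the displayed form for $\Lambda_i$, and sufficiency (existence), organizing everything around the Drinfeld loop realization and the triangular decomposition it provides. For the first two pieces I would work with the loop polarization $U_q(\widehat{\mathfrak{sl}}_n)=U^-\cdot U^0\cdot U^+$, where $U^{\pm}$ are generated by the loop generators $t^{\mp}_{i,i\pm1}(z)$ and $U^0$ by the commuting coefficients of the series $\psi_i(z)$. Given a collection $\{\Lambda_i\}$, I would form the one-dimensional module over the loop Borel $U^0\cdot U^+$ on which $U^+$ acts by zero and $\psi_i(z)$ acts by the scalar $\Lambda_i(z^{-1})$, and induce it up to a Verma-type module $M_\Lambda$. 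A standard highest-weight argument (the weight $\Lambda$ occurs with multiplicity one in $M_\Lambda$ and every other weight is strictly lower under the loop grading) then shows $M_\Lambda$ has a unique maximal proper submodule, hence a unique irreducible quotient $L_\Lambda$; and any irreducible module carrying a highest-weight vector of weight $\{\Lambda_i\}$ is a quotient of $M_\Lambda$, hence isomorphic to $L_\Lambda$. This settles uniqueness.

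For necessity, suppose $L_\Lambda$ is finite dimensional and restrict it to each rank-one subalgebra $U_q(\widehat{\mathfrak{sl}}_2)_i\hookrightarrow U_q(\widehat{\mathfrak{sl}}_n)$ attached to the $i$-th node (the subalgebra generated in the loop realization by $x^{\pm}_{i,k}$, $h_{i,l}$, $K_i^{\pm 1}$). The highest-weight vector $\mathbf{w}$ generates under this subalgebra a finite dimensional highest-weight $U_q(\widehat{\mathfrak{sl}}_2)$-module whose loop highest weight is $\Lambda_i$, so it suffices to treat the rank-one case. There I would invoke the string classification of Subsection \ref{classif}: a finite dimensional irreducible is a product $V_{a_1}(z_1)\otimes\cdots\otimes V_{a_m}(z_m)$ with strings pairwise in general position, and I would compute the eigenvalue of $\psi(z)$ on its highest-weight vector directly. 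The key local input is the rank-one identity expressing $\psi(z)$ through the Gauss factors $t^0_{ii}(z)$ of the $T$-operator; evaluating it first on $V(z)$ and then propagating through a tensor product should yield exactly $\Lambda(z)=q^{-\deg P}P(q^2 z)/P(z)$, with $P$ the Drinfeld polynomial whose roots are the union of the strings.

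For sufficiency, given arbitrary $P_i$ with $P_i(0)=1$ I must produce a finite dimensional $W_\Lambda$ realizing the prescribed $\{\Lambda_i\}$. I would use the fundamental evaluation representations $V_{\omega_k}(z)$ (the $q$-deformed exterior powers, pulled back along the evaluation homomorphism $\varphi\colon U_q(\widehat{\mathfrak{sl}}_n)\to U_q(\mathfrak{sl}_n)$), whose Drinfeld polynomials are concentrated at the single node $k$, together with the multiplicativity of the loop highest weight under $\otimes$. Factoring each $P_i$ into linear pieces $1-z/z_{i,j}$, I would form the tensor product of the corresponding twisted fundamental representations, take the subrepresentation generated by the tensor product of their highest-weight vectors, and pass to its irreducible quotient; by construction this quotient is finite dimensional and has loop highest weight exactly $\{\Lambda_i\}$, hence equals $L_\Lambda=W_\Lambda$.

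The hard part will be sufficiency, and specifically the multiplicativity of the loop highest weight under tensor products. This is delicate precisely because $\phi_z$ is \emph{not} a Hopf-algebra homomorphism and the coproduct looks complicated in the loop coordinates, so I would have to check carefully that the top-weight vector survives in the irreducible subquotient and that its $\psi_i(z)$-eigenvalue is genuinely the product of the factors' eigenvalues. Equivalently, the real content is to rule out that the abstractly defined $L_\Lambda$ is infinite dimensional; this is the quantum affine analogue of proving integrability of a highest-weight module, and I expect it to require the explicit fundamental-representation construction rather than a purely formal argument.
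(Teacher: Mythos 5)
The paper itself contains no proof of this theorem: it is quoted, with attribution to Drinfeld, from \cite{CP} and \cite{CP2}, so your proposal has to be judged on its own terms. Its architecture is the standard Chari--Pressley one and is sound: uniqueness by a Verma-type highest-weight argument in the loop triangular decomposition (with the caveat that the ordering argument should run over ordinary $\mathfrak{h}$-weights in $\lambda-Q_+$, not a ``loop grading,'' which carries no natural order), necessity by restriction to the rank-one subalgebras followed by the string classification already quoted earlier in the paper, and existence via tensor products of twisted fundamental evaluation representations.

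The genuine gap is exactly the step you flag and then leave open: multiplicativity of the loop highest weight under tensor products. This is not a peripheral verification but the load-bearing beam of both nontrivial directions. Necessity needs it because the string classification only tells you what the irreducibles of $U_q(\widehat{\mathfrak{sl}}_2)$ are, not what $\psi(z)$ does on the highest-weight vector of $V_{a_1}(z_1)\otimes\cdots\otimes V_{a_m}(z_m)$; existence needs it to know that the cyclic submodule generated by the tensor product of the fundamental highest-weight vectors is a loop highest-weight module whose weight is the prescribed product. The missing idea is available in the paper's own FRT formalism: since $T(z)$ is an image of the universal $R$-matrix, the hexagon relation $(\Delta\otimes\id)\R=\R^{13}\R^{23}$ forces the matricial coproduct $\Delta(t_{ij}(z))=\sum_k t_{ik}(z)\otimes t_{kj}(z)$ (up to the order of tensor factors; this is the quantum affine counterpart of the Yangian exercise $\Delta T=T\otimes T$ stated in the paper). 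Granting the compatibility, via the Gauss decomposition, of the highest-weight condition written with the $t_{ij}(z)$ and with the loop generators $t^{\pm}_{ij}(z),t^0_{ii}(z)$, the lemma becomes a three-line check: if $t_{ij}(z)v_a=0$ for $i<j$, $a=1,2$, then every summand of $\Delta(t_{ij}(z))(v_1\otimes v_2)$ with $i<j$ contains either $t_{ik}(z)v_1$ with $k>i$ or $t_{kj}(z)v_2$ with $k<j$ and hence vanishes, while $\Delta(t_{ii}(z))(v_1\otimes v_2)=t_{ii}(z)v_1\otimes t_{ii}(z)v_2$, which is the desired multiplicativity of the $\psi_i$-eigenvalues. (In the general loop presentation, where no matricial coproduct exists, the substitute is Chari--Pressley's ``approximate coproduct'' lemma, which is considerably harder; for $\mathfrak{g}\ne\mathfrak{sl}_n$ you would have no alternative.) With this lemma inserted, together with the one explicit computation of $\psi(z)$ on the highest-weight vector of a single evaluation representation $V_a(w)$, which fixes the normalization $q^{-\deg P_i}P_i(q^2z)/P_i(z)$, your outline closes into a complete proof; as written, it is an accurate map of the proof with its central span missing.
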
 

So, finite dimensional irreducible representations of $U_q(\widehat{\mathfrak{sl}}_n)$ are labeled by $(n-1)$-tuples of finite multisets in $\mathbb{C}^\times$ (the roots of $P_i$). The polynomials $P_i$ are called {\bf Drinfeld polynomials}. We have already seen them in the $\mathfrak{sl}_2$ case.

\begin{remark} (i) For any $\mathfrak{g}$ there is a Gauss decomposition of the universal $\R$-matrix of $U_q(\widehat{\mathfrak{g}})$, $\R = \R_- \R_0 \R_+$, where the second tensor components of $\R_-$, $\R_0$ and $\R_+$ are of positive, zero and negative $U_q(\mathfrak{g})$-weights, respectively. This decomposition can be used to produce a loop realization
of $U_q(\widehat{\mathfrak{g}})$. 

(ii) A similar description of finite dimensional representations is valid 
for $U_q(\widehat{\mathfrak{g}})$ for every simple Lie algebra $\mathfrak{g}$ (with $n-1$ replaced by the rank of $\mathfrak{g}$). 

(iii) We may also define $\phi_i(z)=\widetilde  t_{ii}^0 (z)/\widetilde t_{i+1,i+1}^0 (z)$, 
and the components of these series preserve $\Bbb C\bold w$, but the corresponding eigenvalues are determined by $\Lambda_i(z)$ and don't carry any new information.  

(iv) When $\mathfrak{g}=\mathfrak{sl}_2$, this classification reduces to that of Subsection \ref{classif}, and the Drinfeld polynomial $P_Y(z)$ defined in Subsection \ref{classif} 
coincides with the one defined here. 

(v) The FRT formalism naturally extends to other affine Lie algebras of classical types, 
starting with explicit expressions for their $R$-matrices in the vector representation, see 
\cite{Ba,Ji}. This gives a way to study finite dimensional representations of the corresponding quantum affine algebras. Moreover, this approach can, in fact, be extended to all affine Lie algebras (including exceptional ones), following ideas of Drinfeld, as explained in \cite{Wen}\footnote{More precisely, the paper \cite{Wen} considers the case of Yangians, but the extension to quantum affine algebras is straightforward.}; however, in this case things become less explicit.  
\end{remark} 

\subsection{Yangians} (\cite{CP}, Ch. 12). Yangians were introduced by Drinfeld in the mid 80s following previous work of V. Tarasov in the $\mathfrak{sl}_2$ case. They  are obtained from quantum affine algebras 
in the limit $\hbar\to 0$ where $q=e^{\hbar/2}$ and $z=e^{\hbar u}$.  
In this limit the trigonometric $R$-matrix with spectral parameter $\overline{R}(z)$ 
degenerates into the {\bf rational $R$-matrix with spectral parameter}, which in the $\mathfrak{sl}_2$-case looks like
\begin{equation}
\overline{R}(z)\to \dfrac{u}{u-1}\left(1-\dfrac{P}{u}\right)=
\left(
\begin{array}{cccc}
1 & & & \\
& \frac{u}{u-1} & -\frac{1}{u-1} & \\
& -\frac{1}{u-1} & \frac{u}{u-1} & \\
& & & 1
\end{array}
\right)
,~~~
P(x\otimes y) = y\otimes x.
\end{equation}
Thus the FRT equation in this limit becomes
\begin{equation}
R_Y^{12}(u-v)T^{13}(u)T^{23}(v) = T^{23}(v)T^{13}(u) R_Y^{12}(u-v)
\end{equation}
where $R_Y(u) := 1-\frac{P}{u}$ is the {\bf Yang $R$-matrix}.
This is also the case for $\mathfrak{sl}_n$ and $\mathfrak{gl}_n$. 

\begin{definition} The {\bf Yangian} $Y(\mathfrak{gl}_n)$ is the algebra generated by the entries of $n\times n$ matrices $T^{(k)},~k=0,1,2,\,...$, satisfying quadratic defining relations coming from the decomposition of the equation
\begin{equation}
R_Y^{12}(u-v)T^{1}(u)T^{2}(v) = T^{2}(v)T^{1}(u) R_Y^{12}(u-v),~~~ T(u) = 1+\sum_{n=0}^{+\infty} T^{(n)} u^{-n-1} 
\end{equation}
in $\dfrac{1}{u-v}\mathbb{C}((u^{-1},v^{-1}))$. 
\end{definition} 

\begin{exercise} Show that $Y(\mathfrak{gl}_n)$ is a Hopf algebra with coproduct given by 
\begin{equation}
\Delta T = T \otimes T,\text{ i.e. }\Delta T(u)_{ij} = \sum_{k} T(u)_{ik}\otimes T(u)_{kj}
\end{equation}
and antipode $S(T(u))=T(u)^{-1}$. 
\end{exercise} 

To get $Y(\mathfrak{sl}_n)$, one has to quotient out by the relation 
$$
\mathrm{qdet}\,T=1,
$$ 
where $\mathrm{qdet}\,T$ is an appropriate quantum determinant. 
For example,
for $n=2$ 
\begin{equation}
\mathrm{qdet}\,
\left(
\begin{array}{cc}
t_{11} & t_{12} \\
t_{21} & t_{22}
\end{array}
\right)
=
t_{11}(u)t_{22}(u+1) - t_{12}(u)t_{21}(u+1)
\end{equation}
where we decompose 
$(u+1)^{-1}=u^{-1}(1+u^{-1})^{-1} = u^{-1}-u^{-2}+u^{-3}-...$.

\begin{exercise} Show that the matrix elements of $T^{(0)}$ generate $U(\mathfrak{gl}_n)$ (namely, $T^{(0)}_{ij}=E_{ji}$).
\end{exercise} 

\subsection{The loop realization of the Yangian} (\cite{CP}, Ch. 12). 

Let us now describe the loop realization of the Yangian, for simplicity in the case $n=2$. 
To this end, as in the case of quantum affine algebras, consider the Gauss 
decomposition $T(u)=T^-(u)T^0(u)T^+(u)$. We have 
\begin{equation}
T^{+}(u) =
\left(
\begin{array}{cc}
1 & x_{+}(u) \\
0 & 1
\end{array}
\right),\
T^{0}(u) =
\left(
\begin{array}{cc}
t_{11}^0(u) & 0 \\
0 & t_{22}^0(u)
\end{array}
\right),\
T^{-}(u) =
\left(
\begin{array}{cc}
1 & 0\\
x_-(u) & 1
\end{array}
\right),
\end{equation}
with 
$$
H(u) = t_{11}^0(u)/t_{22}^0(u)=1+\sum_{n\geq 0}\, H_n u^{-n-1},~ x^{+} (u)= \sum_{n\geq 0}\, x_n^{+} u^{-n-1} ,~ x^{-}(u) = \sum_{n\geq 0}\, x_n^{-} u^{-n-1}.
$$ 
These generators satisfy the defining relations
\begin{equation}
\def\arraystretch{2}
\begin{array}{l}
\left[ H_k,H_{l} \right]=0,~~~
[H_0,x^{\pm}_{k}] = \pm 2 x^{\pm}_{k},~~~
[x_k^{+},x_{l}^{-}] = H_{k+l},
\\
\left[ H_{k+1},x_{l}^{\pm} \right] - [H_k,x_{l+1}^{\pm}] = \pm(H_k x_l^{\pm} + x_l^{\pm} H_k),
\\
\left[ x_{k+1}^\pm,x_{l}^{\pm} \right] - [x_k^\pm,x_{l+1}^{\pm}] = \pm(x_k^\pm x_l^{\pm} + x_l^\pm x_k^\pm).
\end{array}
\end{equation}

This is another presentation of the Yangian, called {\bf the loop realization}; it can be obtained 
from the corresponding realization of the quantum affine algebra in the limit $q\to 1$. 
\footnote{Thus in contrast with the quantum affine algebra, the Yangian contains only one ``half", corresponding to the negative powers of spectral parameter.}

\subsection{The Yangian of an arbitrary simple Lie algebra}  (\cite{CP}, 12.1) 
\subsubsection{} In fact, the Yangian $Y(\mathfrak{g})$ can be defined for any simple Lie algebra $\mathfrak{g}$, and we have $U(\mathfrak{g}) \subset Y(\mathfrak{g})$. The loop realization of $Y(\mathfrak{g})$ involves generators $x_i^{+}(u),x_i^{-}(u),H_{i}(u)$ 
for all simple roots $i$. We will not discuss 
this in detail here and refer the reader to \cite{CP}, Chapter 12.  

\subsubsection{}The algebra $Y(\mathfrak{g})$ is a filtered deformation (quantization) of $U(\mathfrak{g}[t])$. This means that the associated graded algebra of the Yangian (under a suitable filtration) is the universal enveloping algebra of $\mathfrak{g}[t]$:
\begin{equation}
\mathrm{gr}\, Y(\mathfrak{g}) = U(\mathfrak{g}[t] ).
\end{equation}
Thus the Yangian induces a co-Poisson-Hopf structure on $U(\mathfrak{g}[t])$, i.e., a Lie 
bialgebra structure on $\mathfrak{g}[[t]]$. It can be shown that 
this Lie bialgebra is the {\bf Yangian Lie bialgebra} which comes from the Manin triple 
$(\mathfrak{g}((t^{-1})),\mathfrak{g}[t],t^{-1}\mathfrak{g}[[t^{-1}]])$.  

There is also a Yangian version of Drinfeld's theorem on the classification of finite dimensional representations. 

\begin{theorem} There exists an irreducible finite dimensional representation $W$
of the Yangian $Y(\mathfrak{g})$ generated by a highest weight vector 
$\bold w$ with 
$$
H_i(u)\bold w=\lambda_i(u)\bold w
$$ 
if and only if 
$$
\lambda_i(u)=\frac{P_i(u+1)}{P_i(u)},\ 1\le i\le r,
$$
where $P_i$ are monic polynomials. 
\end{theorem}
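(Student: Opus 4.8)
The plan is to develop highest weight theory for $Y(\mathfrak{g})$ in its loop realization, exactly parallel to the classification already quoted for $U_q(\widehat{\mathfrak{sl}}_n)$, and then to reduce the essential analytic input to the rank one case $Y(\mathfrak{sl}_2)$, whose defining relations are written out explicitly above. First I would set up the triangular (loop) decomposition $Y(\mathfrak{g}) = Y(\widetilde{\mathfrak{n}}_-)\cdot Y(\widetilde{\mathfrak{h}})\cdot Y(\widetilde{\mathfrak{n}}_+)$, where $Y(\widetilde{\mathfrak{n}}_\pm)$ are generated by the modes of the $x_i^\pm(u)$ and $Y(\widetilde{\mathfrak{h}})$ by the commuting modes of the $H_i(u)$. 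Call $\bold w$ a highest weight vector if the modes of every $x_i^+(u)$ annihilate it and $H_i(u)\bold w=\lambda_i(u)\bold w$ for series $\lambda_i(u)\in 1+u^{-1}\mathbb{C}[[u^{-1}]]$. For any such $\lambda=(\lambda_i(u))$ one forms a Verma-type module $M(\lambda)=Y(\mathfrak{g})\otimes_{Y(\widetilde{\mathfrak{b}}_+)}\mathbb{C}_\lambda$; using $\mathrm{gr}\,Y(\mathfrak{g})=U(\mathfrak{g}[t])$ one sees $M(\lambda)$ has finite-dimensional weight spaces bounded above, hence a unique maximal proper submodule and a unique irreducible quotient $L(\lambda)$. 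Any irreducible finite-dimensional $W$ is some $L(\lambda)$, since one may choose a highest weight vector for the ordinary Cartan that is simultaneously an eigenvector of the commuting Cartan modes (these preserve the top ordinary-weight space, which is one-dimensional in the irreducible case). The theorem thus reduces to deciding when $L(\lambda)$ is finite-dimensional.

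For necessity, restrict $L(\lambda)$ to the rank one subalgebra $Y(\mathfrak{sl}_2)_i$ attached to each simple root $i$ (generated by the modes of $x_i^\pm(u),H_i(u)$). Finite-dimensionality is inherited, so it suffices to treat $Y(\mathfrak{sl}_2)$ and a single series $\lambda(u)=1+\sum_{k\ge 0}d_k u^{-k-1}$. The subalgebra generated by $x_0^+,x_0^-,H_0$ is a copy of $U(\mathfrak{sl}_2)$, so $d_0=N\in\mathbb{Z}_{\ge 0}$ and $(x_0^-)^{N}\bold w$ is a lowest weight vector. The core computation is to feed the explicit loop relations (in particular $[x_k^+,x_l^-]=H_{k+l}$ together with the recursions for $[H_{k+1},x_l^\pm]$ and $[x_{k+1}^\pm,x_l^\pm]$) into the vanishing $(x_0^-)^{N+1}\bold w=0$; this forces the formal series $\lambda(u)$ to be the expansion of a rational function with monic numerator and denominator of degree $N$ differing by the shift $u\mapsto u+1$, i.e. $\lambda(u)=P(u+1)/P(u)$ with $\deg P=N$. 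Reading off the roots of the resulting $P_i$ for each $i$ gives the required tuple.

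For sufficiency I would conversely realize every tuple $(P_i)$. In rank one the two-dimensional evaluation modules $V(a)$ (from the evaluation map $Y(\mathfrak{sl}_2)\to U(\mathfrak{sl}_2)$ composed with the shift) have Drinfeld polynomial $u-a$, and in a tensor product $V(a_1)\otimes\cdots\otimes V(a_N)$ the tensor product of highest weight vectors generates a highest weight submodule whose irreducible subquotient $L(\lambda)$ has $\lambda(u)=\prod_j\frac{u-a_j+1}{u-a_j}=P(u+1)/P(u)$ with $P(u)=\prod_j(u-a_j)$; as $P$ is an arbitrary monic polynomial, this settles $\mathfrak{sl}_2$. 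For general $\mathfrak{g}$ one produces the fundamental representations $W_{\varpi_i}(a)$ (with Drinfeld polynomial $u-a$ in the $i$-th slot and $1$ elsewhere), forms tensor products, and takes the irreducible quotient of the cyclic submodule generated by the product of highest weight vectors, which carries the prescribed $(P_i)$.

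The main obstacle is twofold. On the necessity side it is the rank one computation: extracting from the quadratic loop relations that the a priori formal series $\lambda(u)$ is in fact rational of the asserted shape requires a careful generating-function analysis of the lowering modes acting on the finite chain $\bold w, x_0^-\bold w,\ldots,(x_0^-)^{N}\bold w$. On the sufficiency side, the genuinely nontrivial input for general $\mathfrak{g}$ is the existence of fundamental representations with the correct Drinfeld polynomials, since $Y(\mathfrak{g})$ has no evaluation homomorphism when $\mathfrak{g}\ne\mathfrak{sl}_n$; here one must invoke an explicit construction, or the $\hbar\to 0$ degeneration of the corresponding $U_q(\widehat{\mathfrak{g}})$-modules together with the classification already cited in the quantum affine case.
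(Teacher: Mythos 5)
The first thing to say is that the paper does not actually prove this theorem: it is stated as a survey item with a pointer to \cite{CP}, Ch.~12, so there is no internal proof to compare you against, and your proposal must be judged against the standard Drinfeld/Chari--Pressley argument it is clearly modeled on. Measured that way, your architecture is the right one: triangular decomposition in the loop realization, Verma-type modules and irreducible quotients $L(\lambda)$, the observation that every finite dimensional irreducible is an $L(\lambda)$, rank-one reduction for necessity, and tensor products of evaluation/fundamental modules for sufficiency. But what you have written is a plan rather than a proof, and the two places you yourself call ``the main obstacle'' are genuine gaps --- and one of them is set up in a way that would fail as stated.

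On necessity, you organize the rank-one computation around the chain $\bold w, x_0^-\bold w,\ldots,(x_0^-)^{N}\bold w$ of powers of the zero mode and the single relation $(x_0^-)^{N+1}\bold w=0$. These vectors do not span the relevant weight spaces: already for an irreducible tensor product $V_1(a_1)\otimes V_1(a_2)$ of two evaluation modules of $Y(\mathfrak{sl}_2)$, the ordinary-weight-zero space is two dimensional, spanned by $x_0^-\bold w$ and $x_1^-\bold w$, while your chain contains only one vector of that weight; the vanishing of $(x_0^-)^{N+1}\bold w$ by itself cannot detect the higher coefficients of $\lambda(u)$. The correct organization works with the span of \emph{all} modes $x_k^-\bold w$, $k\ge 0$: finite dimensionality of $W$ makes this span finite dimensional, the relation $[H_{k+1},x_l^-]-[H_k,x_{l+1}^-]=-(H_kx_l^-+x_l^-H_k)$ (at $k=0$) shows it is stable under $H_1$, which implements the mode shift $x_l^-\bold w\mapsto x_{l+1}^-\bold w$ up to scalars, whence a shift-invariant linear recurrence; applying the modes $x_j^+$ and $[x_j^+,x_k^-]=H_{j+k}$ converts this into a recurrence for the coefficients of $\lambda(u)$, and the same family of relations at higher $k$ then pins down the functional equation $\lambda(u)P(u)=P(u+1)$. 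On sufficiency, two things are missing. First, you use without comment that the loop highest weight of a tensor product is the product of the loop highest weights; in the loop realization the coproduct is not given by a closed formula, so the statement $\Delta H_i(u)\equiv H_i(u)\otimes H_i(u)$ modulo terms annihilating the tensor product of highest weight vectors is a lemma requiring proof (it is transparent in the FRT picture for $\mathfrak{gl}_n$, but needs an argument for general $\mathfrak{g}$). Second, as you say, for $\mathfrak{g}\ne\mathfrak{sl}_n$ one must construct the fundamental representations, and your two proposed fixes are not fillers but precisely the hard content: an ``explicit construction'' is what is missing, and the $\hbar\to 0$ degeneration of $U_q(\widehat{\mathfrak{g}})$-modules to Yangian modules is itself a nontrivial theory rather than a routine limit. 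So the skeleton is the standard one, but both load-bearing steps remain open, and the rank-one step as written would have to be reorganized before it could be carried out.
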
 

The polynomials $P_i$ are called the {\bf Drinfeld polynomials} of $W$.  

\subsection{$q$-characters} (\cite{FR})

Let us now discuss characters of finite dimensional representations of Yangians (the story for quantum affine algebras is completely parallel). This raises the question what we should mean by the character. 
One possibility is just to take the character of a $Y(\mathfrak{g})$-module 
$W$ as a $\mathfrak{g}$-module. But then we lose a lot of information ---  e.g. such characters don't know about shifts
\begin{equation}
\tau_a(T(u)) = T(u+a), 
\end{equation}
i.e., the character of $W$ and $W(a)$ is the same. We would like to have characters which determine the irreducible representation uniquely. The idea of E. Frenkel and N. Reshetikhin was to use 
the eigenvalues of the loop Cartan generators $H_i(u)$ to define a more refined notion of character. 

For simplicity consider the case $\mathfrak{g}=\mathfrak{sl}_2$. In this case the 
loop Cartan subalgebra is generated by the coefficients of the series $H(u)$. 

\begin{proposition} 
All eigenvalues of $H(u)$ on $W$ 
are of the form $\dfrac{P(u+\tfrac{1}{2})Q(u-\tfrac{1}{2})}{P(u-\tfrac{1}{2})Q(u+\tfrac{1}{2})}$ for uniquely determined coprime monic polynomials $P,Q$.
\end{proposition}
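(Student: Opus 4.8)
The plan is to reduce the statement to the two-dimensional evaluation representations, where $H(u)$ can be diagonalized by hand, and then to track how eigenvalues behave under tensor products and passage to subquotients. First I would reduce to irreducible $W$. The coefficients of $H(u)$ form a commuting family, so the rational functions occurring as eigenvalues of $H(u)$ on $W$ are exactly those occurring on the composition factors of $W$: along any composition series the action is block triangular, and the (generalized) eigenvalues sit on the diagonal blocks. Thus we may assume $W$ is irreducible. By the Yangian analogue of the Chari--Pressley description recalled above, every finite-dimensional irreducible $Y(\mathfrak{sl}_2)$-module is a subquotient of a tensor product $V(a_1)\otimes\cdots\otimes V(a_m)$ of two-dimensional evaluation representations, with the $a_i$ the roots of the Drinfeld polynomial. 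Since the eigenvalues of a commuting family on a subquotient form a subset of those on the ambient module, it suffices to compute the eigenvalues of $H(u)$ on such a tensor product.

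Second, I would compute the eigenvalues on a single factor and then establish multiplicativity. A direct $2\times 2$ computation with the Gauss decomposition $T(u)=T^-(u)T^0(u)T^+(u)$ on $V(a)$ shows that $H(u)=t^0_{11}(u)/t^0_{22}(u)$ acts on the two weight lines by scalars that are, up to the normalization conventions, $\tfrac{u-a}{u-a-1}$ on the highest line and $\tfrac{u-a}{u-a+1}$ on the lowest; equivalently each is of the shape $\tfrac{P_a(u+\frac12)}{P_a(u-\frac12)}$ or $\tfrac{Q_a(u-\frac12)}{Q_a(u+\frac12)}$ with $P_a,Q_a$ monic of degree one. I would then show that on $V(a_1)\otimes\cdots\otimes V(a_m)$, in the weight basis $v_{\varepsilon_1}\otimes\cdots\otimes v_{\varepsilon_m}$ with $\varepsilon_i\in\{+,-\}$, the series $H(u)$ acts triangularly with respect to the weight ordering, the diagonal entry on such a vector being the product of the corresponding single-factor eigenvalues. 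Consequently every eigenvalue of $H(u)$ on the tensor product, and hence on $W$, has the form $\prod_{i\in S_+}\tfrac{u-a_i}{u-a_i-1}\,\prod_{i\in S_-}\tfrac{u-a_i}{u-a_i+1}$ for a partition of the indices into $S_+\sqcup S_-$.

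Third comes the bookkeeping. With $P(u)=\prod_{i\in S_+}(u-a_i-\tfrac12)$ and $Q(u)=\prod_{i\in S_-}(u-a_i+\tfrac12)$ the product above equals exactly
\[
\frac{P(u+\tfrac12)\,Q(u-\tfrac12)}{P(u-\tfrac12)\,Q(u+\tfrac12)}.
\]
A common root $c$ of $P$ and $Q$ contributes a factor $\tfrac{(u-c+\frac12)(u-c-\frac12)}{(u-c-\frac12)(u-c+\frac12)}=1$, so it cancels, and after removing all common factors we obtain coprime monic $P,Q$ of the required form. Uniqueness holds because the elementary factors $\tfrac{u-b+\frac12}{u-b-\frac12}$, $b\in\mathbb{C}$, are multiplicatively independent: writing $\mu(u)=\prod_b g_b(u)^{n_b}$ with $g_b(u)=\tfrac{u-b+\frac12}{u-b-\frac12}$, comparing orders of vanishing at each point gives $n_b=n_{b+1}$ for all $b$, whence all $n_b=0$ when only finitely many are nonzero. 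This determines the exponents $n_b$, and under coprimality $n_b>0$ marks a root of $P$ and $n_b<0$ a root of $Q$, so the pair $(P,Q)$ is recovered from $\mu$.

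The main obstacle is the multiplicativity (triangularity) step: proving that the eigenvalues of the loop Cartan on a tensor product are precisely the products of the eigenvalues on the factors. Unlike the ordinary Cartan, the loop Cartan does not coproduct diagonally: $\Delta H(u)$ differs from $H(u)\otimes H(u)$ by corrections coming from the Gauss decomposition of the $RTT$ coproduct $\Delta T(u)_{ij}=\sum_k T(u)_{ik}\otimes T(u)_{kj}$. The content of the step is exactly that these corrections are strictly weight-lowering in the left tensor factors (and raising in the right), so that they contribute only off-diagonal, weight-decreasing terms and leave the diagonal entries multiplicative; controlling them is where the real work lies.
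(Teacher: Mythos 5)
A preliminary remark: the paper states this proposition without proof (it is quoted from Frenkel--Reshetikhin \cite{FR}), so there is no in-paper argument to compare against and your proposal must stand on its own. Much of it does. The reduction to irreducible $W$ via composition series, the reduction to subquotients of tensor products $V(a_1)\otimes\cdots\otimes V(a_m)$ of two-dimensional evaluation modules (a legitimate, if nontrivial, import of the Yangian analogue of the Chari--Pressley theorem), the rank-one computation (consistent, up to a shift in the evaluation parameter, with the paper's own example, where the two eigenvalues are $\tfrac{u-a+1}{u-a}$ and $\tfrac{u-a-1}{u-a}$), the cancellation of common roots, and the uniqueness argument via multiplicative independence of the factors $g_b(u)=\tfrac{u-b+1/2}{u-b-1/2}$ are all correct.

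The genuine gap is the step you yourself flag as ``where the real work lies'': that the (generalized) eigenvalues of the coefficients of $H(u)$ on a tensor product are products of eigenvalues on the factors. This is not a routine triangularity check; it is essentially the multiplicativity of $q$-characters, i.e.\ the main theorem of this section of the paper, so a proof that leaves it as a black box has its entire mathematical content missing. Moreover, the mechanism as you describe it does not go through naively. In the RTT coproduct, the correction to $\Delta t_{11}(u)$ is $t_{12}(u)\otimes t_{21}(u)$ while the correction to $\Delta t_{22}(u)$ is $t_{21}(u)\otimes t_{12}(u)$; since $t_{12}$ and $t_{21}$ shift weight in opposite directions, these two corrections are strictly triangular with respect to \emph{opposite} orderings of any weight basis, and $H(u)=t^0_{11}(u)/t^0_{22}(u)$ involves the quasideterminant $t^0_{22}=t_{22}-t_{21}t_{11}^{-1}t_{12}$, which mixes them. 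The statement that saves the argument --- that the corrections to $\Delta H(u)$ itself lie in $(\text{strictly lowering})\otimes(\text{strictly raising})$, so that ordering by the weight of a single tensor factor suffices --- is true but is precisely the hard lemma (for quantum affine algebras it is due to Damiani and Frenkel--Mukhin, resting on the Gauss decomposition $\mathcal{R}=\mathcal{R}_-\mathcal{R}_0\mathcal{R}_+$ of the universal $R$-matrix that the paper mentions; a Yangian analogue is needed here), and nothing in your outline produces it. For comparison, the standard intrinsic proof (Knight, Frenkel--Reshetikhin) avoids tensor products altogether: the defining relations give $(u-v\mp 1)\,H(u)\,x^{\pm}(v)=(u-v\pm 1)\,x^{\pm}(v)\,H(u)$ up to contact terms, so applying coefficients of the lowering currents to a generalized eigenvector multiplies the $H(u)$-eigenvalue by factors of the form $\tfrac{u-b-1}{u-b+1}$, which stay in the multiplicative group generated by the $g_b$; combined with Drinfeld's highest-weight theorem (quoted in the paper) this yields the proposition on any irreducible module directly.
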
 

Thus, to every such eigenvalue we can attach a Laurent monomial in variables $Y_b$, $b\in \Bbb C$, as follows: 
\begin{equation}
\dfrac{P(u)}{Q(u)} = \dfrac{\prod\limits_{i=1}^{n} (u-a_i)}{\prod\limits_{j=1}^{m} (u-b_j)}
~~~ \mapsto ~~~
Y_{a_1}\, ... ~ Y_{a_n} \, Y_{b_1}^{-1}\, ... ~ Y_{b_m}^{-1}.
\end{equation}
Note that for the highest weight vector $Q=1$ and $P(u)=P_W(u+\tfrac{1}{2})$ where $P_W$ is the Drinfeld polynomial of $W$, so 
the monomial attached to the highest weight vector involves only positive powers of $Y_b$. 

\begin{definition} The {\bf $q$-character} $\chi_W$ of a finite-dimensional representation $W$ of $Y(\mathfrak{sl}_2)$ is the sum of the monomials attached to all eigenvalues of $H(u)$ on $W$ (taken with multiplicities). 
\end{definition} 

Let $K_0(\Rep\ Y({\mathfrak{sl}}_2))$ be the Grothendieck ring of the category 
of finite dimensional representations of $Y({\mathfrak{sl}}_2)$. 

\begin{theorem} There exists an injective ring homomorphism
\begin{equation}
\chi: K_0(\Rep\, Y({\mathfrak{sl}}_2))
~~~ \to ~~~
\mathbb{Z}[Y_a^{\pm 1} \, ; \, a\in \mathbb{C}] 
\end{equation}
given by $W\mapsto \chi_W$. 
\end{theorem}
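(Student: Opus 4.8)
The plan is to verify the three assertions packaged in the statement in turn: that $\chi$ is well defined and additive (so that it descends to a homomorphism of abelian groups on $K_0$ valued in Laurent polynomials), that it is multiplicative (so that it respects the tensor product), and that it is injective. For additivity, recall that the coefficients of $H(u)=1+\sum_{n\ge 0}H_nu^{-n-1}$ pairwise commute, so on any finite dimensional $W$ they admit a simultaneous generalized eigenspace decomposition; by the preceding Proposition each joint eigenvalue is a series $P(u+\tfrac12)Q(u-\tfrac12)/\bigl(P(u-\tfrac12)Q(u+\tfrac12)\bigr)$ with $P,Q$ coprime monic, to which the definition attaches the Laurent monomial $\prod_i Y_{a_i}\prod_j Y_{b_j}^{-1}$. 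Since generalized eigenspace multiplicities of a commuting family are additive over short exact sequences, $\chi_W=\chi_{W'}+\chi_{W''}$ whenever $0\to W'\to W\to W''\to 0$, so $\chi$ is a well defined group homomorphism $K_0(\Rep\,Y(\mathfrak{sl}_2))\to\mathbb Z[Y_a^{\pm1}]$.

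For multiplicativity I would use the coproduct $\Delta t_{ij}(u)=\sum_k t_{ik}(u)\otimes t_{kj}(u)$ together with the Gauss decomposition $T=T^-T^0T^+$. In rank one $t_{11}(u)=t^0_{11}(u)$ and $t_{22}(u)=x_-(u)t^0_{11}(u)x_+(u)+t^0_{22}(u)$, with $x_+$ raising and $x_-$ lowering the $\mathfrak h$-weight. Filtering $W\otimes W'$ by the weight of the first tensor factor, the off-diagonal term in $\Delta t^0_{11}(u)=t_{11}(u)\otimes t_{11}(u)+t_{12}(u)\otimes t_{21}(u)$ strictly increases that weight, hence acts by zero on the associated graded, so $\Delta t^0_{11}(u)$ has the same generalized eigenvalues as $t^0_{11}(u)\otimes t^0_{11}(u)$; the analogous weight-filtration argument (or the centrality of the quantum determinant, which relates $t^0_{22}$ to $t^0_{11}$) treats $t^0_{22}(u)$. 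Thus the joint eigenvalues of $H(u)$ on $W\otimes W'$ are, with multiplicity, the products of those on $W$ and on $W'$. Because $P/Q\mapsto\prod Y_{a_i}\prod Y_{b_j}^{-1}$ is a homomorphism from ratios of monic polynomials to Laurent monomials, products of eigenvalues map to products of monomials, giving $\chi_{W\otimes W'}=\chi_W\,\chi_{W'}$ and making $\chi$ a ring homomorphism.

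For injectivity I would exploit that, by the Chari--Pressley classification, $K_0$ has a $\mathbb Z$-basis $\{[L_P]\}$ indexed by monic Drinfeld polynomials $P$, and introduce the Frenkel--Reshetikhin partial order on monomials: set $A_c:=Y_{c-1/2}Y_{c+1/2}$ and declare $m\preceq m'$ iff $m'/m$ is a product of nonnegative powers of the $A_c$. To the highest weight vector of $L_P$ (for which $Q=1$ and the Drinfeld polynomial is $P$) is attached the dominant monomial $m_P:=\prod_c Y_{c-1/2}$ over the roots $c$ of $P$, and since the highest weight space of an irreducible is one dimensional, $m_P$ occurs in $\chi_{L_P}$ with coefficient $1$. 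The crux is to show that every other monomial of $\chi_{L_P}$ is obtained from $m_P$ by multiplying by negative powers of the $A_c$, hence is strictly $\prec m_P$; granting this and the injectivity of $P\mapsto m_P$, the matrix expressing $\{\chi_{L_P}\}$ in the monomial basis is unitriangular for $\preceq$, so the $\chi_{L_P}$ are $\mathbb Z$-linearly independent and $\chi$ is injective.

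The main obstacle is exactly this maximality claim. For $\mathfrak{sl}_2$ I expect to prove it concretely rather than via the general Frenkel--Mukhin algorithm. First compute $\chi_{V(a)}=Y_{a-1/2}+Y_{a+1/2}^{-1}$ for the two dimensional evaluation module directly from the loop generators (equivalently from the $R$-matrix), where maximality is visible since the lower monomial is $m_P\cdot A_a^{-1}$. Then use that a tensor product of evaluation modules whose strings are pairwise in general position is irreducible, so that its $q$-character — which by multiplicativity is the product of the $\chi_{V(\cdot)}$ — is already $\chi_{L_P}$ with $m_P$ its evident maximal term; the single-string higher dimensional irreducibles are handled through their explicit evaluation realization, again reading off that lowering only multiplies by $A_c^{-1}$. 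Assembling these cases establishes maximality and multiplicity one for every $P$, completing the argument. I expect the higher rank analogue of this maximality step to be the genuinely hard input, but in rank one the explicit string description keeps it elementary.
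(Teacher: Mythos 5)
The paper itself offers no proof of this theorem: it is stated as a result of Frenkel and Reshetikhin with \cite{FR} as the reference, so there is no internal argument to compare against. Your proposal is correct, and it is essentially the standard Frenkel--Reshetikhin argument specialized to $Y(\mathfrak{sl}_2)$: additivity of generalized eigenspace multiplicities for the commuting family of coefficients of $H(u)$, multiplicativity from block-triangularity of the coproduct of the loop Cartan current with respect to a weight filtration of $W\otimes W'$, and injectivity from unitriangularity of the $\chi_{L_P}$ with respect to the dominance order generated by the $A_c^{-1}$ (your maximal-element argument does work for a partial order, and the needed multiplicity-one and maximality statements for single-string evaluation modules are exactly what the paper's exercise on $\chi_{V_m(0)}$ records). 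One wrinkle worth fixing in the write-up: the ``analogous weight-filtration argument'' for $t^0_{22}(u)$ is not literally analogous, since the off-diagonal term of $\Delta t_{22}(u)=t_{21}(u)\otimes t_{12}(u)+t_{22}(u)\otimes t_{22}(u)$ is triangular in the \emph{opposite} direction to that of $\Delta t_{11}(u)$, and moreover $t^0_{22}\neq t_{22}$, so no single filtration handles both currents at once. Your parenthetical fallback is the right repair: in $Y(\mathfrak{sl}_2)$ the quantum determinant relation $\mathrm{qdet}\,T(u)=t^0_{11}(u)\,t^0_{22}(u+1)=1$ gives $t^0_{22}(u)=t^0_{11}(u-1)^{-1}$, hence $H(u)=t_{11}(u)\,t_{11}(u-1)$, and the filtration argument for the single matrix entry $t_{11}(u)$ (which equals its own Gauss component) already yields multiplicativity. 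With that route made explicit, the proof is complete in rank one.
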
 

In particular, we see that 
\begin{equation}
\chi_{W\otimes U}=\chi_W\chi_U. 
\end{equation}

\begin{corollary} 
$K_0(\Rep\ Y({\mathfrak{sl}}_2))$ is an integral domain.
\end{corollary}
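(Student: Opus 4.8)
The plan is to read off the statement directly from the injective ring homomorphism $\chi$ of the preceding theorem. The key observation is that $\chi$ realizes $K_0(\Rep\, Y(\mathfrak{sl}_2))$ as a unital subring of the Laurent polynomial ring $R := \mathbb{Z}[Y_a^{\pm 1}\,;\,a\in\mathbb{C}]$; so it suffices to prove that $R$ is an integral domain and then invoke the general fact that a subring of an integral domain is again an integral domain.

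First I would verify that $R$ is an integral domain. Any two elements $x,y\in R$ involve only finitely many of the variables $Y_a$, say $Y_{a_1},\dots,Y_{a_k}$, so the product $xy$ is computed inside the finitely generated Laurent subring $\mathbb{Z}[Y_{a_1}^{\pm 1},\dots,Y_{a_k}^{\pm 1}]$. This subring is the localization of the ordinary polynomial ring $\mathbb{Z}[Y_{a_1},\dots,Y_{a_k}]$ at the multiplicative set generated by the variables. Since a polynomial ring in finitely many variables over the domain $\mathbb{Z}$ is a domain, and a localization of a domain is a domain, each such finitely generated subring of $R$ has no zero divisors. As any potential zero-divisor relation takes place in one of these subrings, $R$ itself is an integral domain.

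Next I would transport this back through $\chi$. Because $\chi$ is an injective ring homomorphism, it identifies $K_0(\Rep\, Y(\mathfrak{sl}_2))$ with its image $\chi(K_0)\subseteq R$, which is a unital subring. If two classes $[W],[U]$ satisfied $[W]\cdot[U]=0$ with both nonzero, then applying $\chi$ and using $\chi_{W\otimes U}=\chi_W\chi_U$ would give a vanishing product $\chi_W\chi_U=0$ of two nonzero elements of $R$, contradicting that $R$ is a domain. Together with $1\neq 0$ in $K_0$ (the class of a nonzero representation maps to a nonzero Laurent monomial, hence is nonzero), and commutativity of $K_0$ — which follows at once since $\chi$ is injective and lands in the commutative ring $R$ — this shows $K_0(\Rep\, Y(\mathfrak{sl}_2))$ is an integral domain.

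I expect no serious obstacle: the corollary is a formal consequence of the injectivity and multiplicativity of $\chi$. The only points deserving a moment of care are the reduction of the infinite-variable ring $R$ to its finitely generated Laurent subrings, so that the standard domain-plus-localization argument applies, and the straightforward bookkeeping by which an injective ring homomorphism into a domain forces the source to be a domain.
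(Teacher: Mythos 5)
Your proof is correct and matches the paper's intended argument: the corollary is stated there as an immediate consequence of the injective ring homomorphism $\chi$ into the Laurent polynomial ring $\mathbb{Z}[Y_a^{\pm 1}\,;\,a\in\mathbb{C}]$, which is exactly the route you take. Your only additions — checking that the infinite-variable Laurent ring is a domain via its finitely generated subrings, and spelling out how injectivity transports the no-zero-divisor property — are the details the paper leaves implicit.
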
 

\begin{example} Let $V = V_1$ be the 2-dimensional irreducible representation of $\mathfrak{sl}_2$, and $V_1(a)$ be the corresponding evaluation representation of $Y(\mathfrak{sl}_2)$, $a\in \mathbb{C}$. 
Then we have two eigenvectors of $H(u)$ -- $v_+$ (of weight $1$) and $v_-$ (of weight $-1$). 
One can compute that the corresponding eigenvalues of $H(u)$ give the following rational functions 
$P/Q$: 
\begin{equation}
\dfrac{P_{v_{+}}(u)}{Q_{v_{+}}(u)} = u-a+\tfrac{1}{2},
~~~
\dfrac{P_{v_{-}}(u)}{Q_{v_{-}}(u)} = \dfrac{1}{u-a-\tfrac{1}{2}}.
\end{equation}
This implies that 
$$
\chi_{V_1(a)} = Y_{a-\tfrac{1}{2}} + Y_{a+\tfrac{1}{2}}^{-1}.
$$
Hence 
\begin{equation}
\chi_{V_1(a)\otimes V_1(b)} = \left(Y_{a-\tfrac{1}{2}}  + Y_{a+\tfrac{1}{2}}^{-1}\right)\left(Y_{b-\tfrac{1}{2}} + Y_{b+\tfrac{1}{2}}^{-1}\right) = Y_{a-\tfrac{1}{2}} Y_{b-\tfrac{1}{2}}+ Y_{a-\tfrac{1}{2}} Y_{b+\tfrac{1}{2}}^{-1} + Y_{a+\tfrac{1}{2}}^{-1} Y_{b-\tfrac{1}{2}} + Y_{a+\tfrac{1}{2}}^{-1} Y_{a+\tfrac{1}{2}}^{-1}.
\end{equation}

It follows (without any computation!) that this tensor product is irreducible unless $a-b=\pm 1$ -- as otherwise there is only one monomial which has only positive powers of the variables -- the one corresponding to the highest weight. On the other hand, if $a=b\pm 1$ then there is also monomial $1$, which is the $q$-character of the trivial representation. Of course, in this example it is easy to see that this tensor product is irreducible by direct computation, but this method can be used in more complicated situations as well and provides a simple test for irreducibility of finite dimensional representations of Yangians and quantum affine algebras, in particular of representations constructed as tensor products.  

Also, we get from this the $q$-character of the 3-dimensional irreducible representation $V_2$. Namely, we have a short exact sequence  
\begin{equation}
0
~ \to ~
\mathbb{C}
~ \to ~
V_1(-\tfrac{1}{2})\otimes V_1(\tfrac{1}{2})
~ \twoheadrightarrow ~
V_2(0)
~ \to ~
0
\end{equation}
so
\begin{equation}
\chi_{V_2(0)} =
Y_{-1} Y_{0} + Y_{-1} Y_{1}^{-1} +  Y_{0}^{-1} Y_{1}^{-1}.
\end{equation}
\end{example} 

\begin{exercise} Show that
\begin{equation}
\chi_{V_{m}(0)} = Y_{-{m\over 2}}...Y_{m-4\over 2} Y_{m-2\over 2} +Y_{-{m\over 2}}...Y_{m-4\over 2}Y_{m\over 2}^{-1} +...+ Y_{-{m-2\over 2}}^{-1}...Y_{m-2\over 2}^{-1}Y_{m\over 2}^{-1}.
\end{equation}
\end{exercise} 

Using $q$-characters, one can prove that the Grothendieck ring of finite dimensional 
representations of $Y(\mathfrak{g})$ is the polynomial ring in affinizations of fundamental representations of $\mathfrak{g}$ with arbitrary shifts:

\begin{theorem} $K_0(\Rep\, Y(\mathfrak{g})) \cong \mathbb{C}[\widehat V_{\omega_i}(a) ~ ; a\in \mathbb{C}, i=1,...,r]$, 
where $\widehat V_{\omega_i}$ are affinizations of the fundamental representations of $\mathfrak{g}$, i.e. irreducible representations with Drinfeld polynomials $P_i=u$ and $P_j=1$ for $j\ne i$.  
\end{theorem}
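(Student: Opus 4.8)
The plan is to combine Drinfeld's classification of simple modules with the multiplicativity of the $q$-character to identify $K_0$ with a polynomial ring via a unitriangular change of basis. First I would set up the candidate map. By the classification theorem above, the isomorphism classes of simple finite-dimensional $Y(\mathfrak{g})$-modules (of the relevant type) are in bijection with $r$-tuples $\mathbf{P}=(P_1,\dots,P_r)$ of monic polynomials, equivalently with finite multisets of pairs $(i,a)$, $1\le i\le r$, $a\in\mathbb{C}$, recording the roots of the $P_i$. Writing $X_{i,a}$ for the symbol attached to $(i,a)$, these multisets are exactly the monomials of $\mathbb{C}[X_{i,a}]$, and the fundamental affinization $\widehat V_{\omega_i}(a)$ corresponds to the generator $X_{i,a}$, since its Drinfeld data are $P_i=u-a$ and $P_j=1$ for $j\ne i$. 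Thus the simple classes $[L(\mathbf{P})]$ form a $\mathbb{Z}$-basis of $K_0(\Rep Y(\mathfrak{g}))$ indexed by the monomials of $\mathbb{C}[X_{i,a}]$. Because $K_0$ is commutative (proved above), there is a well-defined ring homomorphism
\[
\Phi:\ \mathbb{C}[X_{i,a}] \longrightarrow K_0(\Rep Y(\mathfrak{g}))\otimes_{\mathbb{Z}}\mathbb{C},\qquad X_{i,a}\longmapsto [\widehat V_{\omega_i}(a)],
\]
and it suffices to prove that $\Phi$ is an isomorphism.

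Next I would introduce the structure that controls the simple constituents. Using the loop polarization, every simple $L(\mathbf{P})$ has a one-dimensional highest $\ell$-weight space, whose eigenvalue under the loop Cartan series $H_i(u)$ is recorded by the dominant monomial $m_{\mathbf{P}}=\prod_{i,a}X_{i,a}$. The essential tool is the $q$-character homomorphism for $Y(\mathfrak{g})$ (the higher-rank analogue of the injective multiplicative map constructed above for $\mathfrak{sl}_2$): it is an injective ring homomorphism into a Laurent polynomial ring in variables $Y_{i,c}$, and on a simple module one has $\chi_{L(\mathbf{P})}=m_{\mathbf{P}}+(\text{strictly lower monomials})$, where ``lower'' refers to the partial order $m'\le m$ iff $m/m'$ is a monomial with nonnegative exponents in the Frenkel--Reshetikhin factors $A_{i,c}$. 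This order is locally finite, since the $q$-character of any finite-dimensional module is a finite sum, so each dominant monomial dominates only finitely many others occurring in a fixed product.

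With this in place the conclusion is a triangularity argument. For a monomial $X^{\mathbf{n}}=\prod_{i,a}X_{i,a}^{\,n_{i,a}}$ whose associated multiset is $\mathbf{P}$, multiplicativity of $\chi$ gives
\[
\chi\bigl(\Phi(X^{\mathbf{n}})\bigr)=\prod_{i,a}\bigl(\chi_{\widehat V_{\omega_i}(a)}\bigr)^{n_{i,a}}=m_{\mathbf{P}}+(\text{strictly lower}),
\]
since the product of the highest monomials of the fundamental factors is exactly $m_{\mathbf{P}}$ and every other term is strictly smaller. Comparing with the expansion in the simple basis, and using injectivity of $\chi$ to separate classes, this yields $\Phi(X^{\mathbf{n}})=[L(\mathbf{P})]+\sum_{\mathbf{Q}<\mathbf{P}}c_{\mathbf{Q}}[L(\mathbf{Q})]$. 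Hence the matrix expressing the images $\Phi(X^{\mathbf{n}})$ in the basis $\{[L(\mathbf{P})]\}$ is unitriangular for a locally finite partial order, so it is invertible over $\mathbb{Z}$. Therefore $\Phi$ carries the monomial basis of $\mathbb{C}[X_{i,a}]$ to a basis of $K_0\otimes\mathbb{C}$, which makes $\Phi$ a ring isomorphism and proves the theorem.

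The hard part will be the structural input in the second step rather than the bookkeeping in the third. One must establish the injective, multiplicative $q$-character for a general simple $\mathfrak{g}$ and, crucially, the three facts that make the change of basis unitriangular: that the tensor product of the highest $\ell$-weight vectors of the fundamental factors is cyclic and generates a quotient isomorphic to $L(\mathbf{P})$, so that $m_{\mathbf{P}}$ occurs with multiplicity exactly one; that every other constituent has strictly lower highest $\ell$-weight; and that the dominance order by the $A_{i,c}$ is locally finite so the unitriangular matrix can be inverted over $\mathbb{Z}$. All of this rests on the highest-$\ell$-weight theory of the loop realization, which is the genuinely nontrivial ingredient; once it is granted, the passage to a polynomial ring is formal.
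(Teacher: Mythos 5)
Your proposal is correct and is exactly the argument the paper gestures at: the paper gives no proof of this theorem, saying only that it follows ``using $q$-characters,'' and your triangularity argument --- simples indexed by Drinfeld polynomial data, the multiplicative injective $q$-character homomorphism, and a unitriangular (with respect to the $A_{i,c}$-dominance order, with finite intervals) transition matrix between products of fundamental affinizations and the simple classes --- is the standard way to fill in that hint. The structural inputs you defer to the end (the higher-rank $q$-character theory, the highest-$\ell$-weight/cone property, and the finiteness needed to invert the unitriangular matrix over $\mathbb{Z}$) are precisely the Frenkel--Reshetikhin and Frenkel--Mukhin results the paper implicitly invokes, so your reduction is complete at the level of rigor the paper itself adopts.
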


\vskip .1in

{\bf P.E.}: Department of Mathematics, MIT, Cambridge, MA 02139, USA, 
\email{etingof@math.mit.edu}

{\bf M.S.}: Perimeter Institute for Theoretical Physics, Waterloo, ON N2L 2Y5, Canada, \linebreak \email{semenyakinms@gmail.com}
\end{document}